\tikzset{>=latex}
\DeclareFontFamily{U}{mathx}{\hyphenchar\font45}
\DeclareFontShape{U}{mathx}{m}{n}{
      <5> <6> <7> <8> <9> <10>
      <10.95> <12> <14.4> <17.28> <20.74> <24.88>
      mathx10
      }{}
\DeclareSymbolFont{mathx}{U}{mathx}{m}{n}
\DeclareMathAccent{\widecheck}{0}{mathx}{"71}
\newcounter{savesection}
\newcounter{apdxsection}
\renewcommand\appendix{\par
  \setcounter{savesection}{\value{section}}%
  \setcounter{section}{\value{apdxsection}}%
  \setcounter{subsection}{0}%
  \gdef\thesection{\@Alph\c@section}}
\newcommand\unappendix{\par
  \setcounter{apdxsection}{\value{section}}%
  \setcounter{section}{\value{savesection}}%
  \setcounter{subsection}{0}%
  \gdef\thesection{\@arabic\c@section}}
\newtheorem{theorem}[equation]{Theorem}
\newtheorem{lemma}[equation]{Lemma}
\newtheorem{proposition}[equation]{Proposition}
\newtheorem{corollary}[equation]{Corollary}
\newtheorem{definition}[equation]{Definition}
\newtheorem{question}[equation]{Question}
\newtheorem{remark}[equation]{Remark}
\newtheorem{notation}[equation]{Notation}
\newtheorem{convention}[equation]{Convention}
\numberwithin{equation}{section}
\newcommand{\R}{\mathbb{R}}
\newcommand{\N}{\mathbb{N}}
\newcommand{\K}{\mathbb{K}}
\newcommand{\T}{\mathbb{T}}
\newcommand{\Z}{\mathbb{Z}}
\newcommand{\Sph}{\mathbb{S}}
\newcommand{\eq}{{\mathrm{eq}}}
\newcommand{\shr}{{\mathrm{shr}}}
\newcommand{\Euc}{{\mathrm{Euc}}}
\newcommand{\Spheqtres}{\mathbb{S}_{\eq}^3}
\newcommand{\Thetaeq}{\Theta_{\eq}}
\newcommand{\Grp}{\mathscr{G}} 
\newcommand{\GrpT}{\mathscr{G}_{\T}}
\newcommand{\Hgrp}{\mathscr{H}}
\newcommand{\Fcal}{\mathcal{F}}
\newcommand{\Ccal}{\mathcal{C}}
\newcommand{\Lcal}{\mathcal{L}}
\newcommand{\Rcal}{\mathcal{R}}
\newcommand{\uC}{{\mathscr{C}}}
\DeclareFontFamily{U}{mathx}{}
\DeclareFontShape{U}{mathx}{m}{n}{<-> mathx10}{}
\DeclareSymbolFont{mathx}{U}{mathx}{m}{n}
\DeclareMathAccent{\widehat}{0}{mathx}{"70}
\DeclareMathAccent{\widecheck}{0}{mathx}{"71}
\newcommand{\xx}{\ensuremath{\mathrm{x}}}
\newcommand{\yy}{\ensuremath{\mathrm{y}}}
\newcommand{\zz}{\ensuremath{\mathrm{z}}}
\newcommand{\rr}{\ensuremath{\mathrm{r}}}
\newcommand{\zcheck}{\widecheck{z}}
\newcommand{\zzcheck}{\widecheck{\ensuremath{\mathrm{z}}}}
\newcommand{\abs}[1]{\left\lvert#1\right\rvert}
\newcommand{\norm}[1]{\left\|#1\right\|}
\newcommand{\cutoff}[3]{\mathbf{\Psi}\left[ #1,#2;#3 \right]}
\newcommand{\osc}{{{osc}}}
\newcommand{\SSS}{\mathsf{S}}
\newcommand{\XXX}{\mathsf{X}}
\newcommand{\YYY}{\mathsf{Y}}
\newcommand{\XXXu}{\underline{\mathsf{X}}}
\newcommand{\YYYu}{\underline{\mathsf{Y}}}
\newcommand{\ZZZu}{\underline{\mathsf{Z}}}
\newcommand{\XXXh}{\hat{\mathsf{X}}}
\newcommand{\YYYh}{\hat{\mathsf{Y}}}
\newcommand{\XXXuh}{\hat{\underline{\mathsf{X}}}}
\newcommand{\YYYuh}{\hat{\underline{\mathsf{Y}}}}
\newcommand{\skernel}{\mathscr{K}}
\newcommand{\skernelv}{\widehat{\mathscr{K}}}
\newcommand{\skernele}{\bar{\mathscr{K}}}
\newcommand{\skernelev}{\widehat{\bar{\mathscr{K}}}}
\newcommand{\Area}{\operatorname{Area}}
\newcommand{\Ric}{\operatorname{Ric}}
\newcommand{\sech}{\operatorname{sech}}
\newcommand{\Lmer}{{L_{mer}}}
\newcommand{\psicut}{{\psi_{cut}}}
\newcommand{\Psibold}{{\boldsymbol{\Psi}}}
\newcommand{\avg}{\mathrm{avg}}
\newcommand{\dist}{\mathbf{d}}
\newcommand{\Graph}{\mathrm{Graph}}
\newcommand{\inj}{\mathrm{inj}}
\newcommand{\sym}{\mathrm{sym}}
\newcommand{\asym}{\mathrm{asym}}
\newcommand{\Cylinder}{\mathrm{Cyl}}
\newcommand{\Ecal}{\mathcal{E}}
\newcommand{\Ecalu}{\underline{\mathcal{E}}}
\newcommand{\Vcal}{\mathcal{V}}
\newcommand{\Mcal}{\mathcal{M}}
\newcommand{\Pcal}{\mathcal{P}}
\newcommand{\Jcal}{\mathcal{J}}
\newcommand{\BPcal}{B_{\Pcal}}
\newcommand{\kappau}{\underline{\kappa}}
\newcommand{\zetabold}{\boldsymbol{\zeta}}
\newcommand{\zetaboldu}{\underline{\zetabold}}
\newcommand{\varphihat}{\hat{\varphi}}
\newcommand{\Ku}{\underline{K}}
\newcommand{\Kcech}{\widecheck{K}}
\newcommand{\bbracket}[1]{[\![#1]\!]}
\newcommand{\radius}{\varepsilon}
\newcommand{\Mbreve}{\breve{M}}
\newcommand{\ubreve}{\breve{u}}
\newcommand{\Mbrevem}{\breve{M}_m}
\newcommand{\Mshrm}{\breve{M}_{\shr,m}}
\newcommand{\Sigmabreve}{\breve{\Sigma}}
\newcommand{\Spheq}{\mathbb{S}^2_{\eq}}
\newcommand{\Spheqn}{\mathbb{S}^n_{\eq}}
\newcommand{\Spheqt}{\mathbb{S}^3_{\eq}}
\newcommand{\Sphshrt}{\mathbb{S}^3_{\shr}}
\newcommand{\sgr}[2]{\mathrm{Isom}_{#1}^{#2} } 
\newcommand{\xL}{\boldsymbol{\xi}}
\newcommand{\xbrevering}{\boldsymbol{\breve{\mathring{\xi}}}}         
\newcommand{\ximm}{\xbrevering_{m\times m}}
\newcommand{\gshr}{g_{\shr}}
\newcommand{\Sphshrtres}{\Sph^3_{\shr}}
\newcommand{\Sphst}{\Sph^3_{\shr}}
\newcommand{\Sphsn}{\Sph^n_{\shr}}
\newcommand{\Hess}{\operatorname{Hess}}
\title[Minimal hypersurfaces in $\mathbb{S}^{4}(1)$]{Minimal hypersurfaces in $\mathbb{S}^{4}(1)$ \\ by doubling the equatorial $\mathbb{S}^{3}$}  
\author[N.~Kapouleas]{Nikolaos~Kapouleas}
\address{Department of Mathematics, Brown University, Providence, RI 02912} 
\email{nicolaos\_kapouleas@brown.edu}
\author[J.~Zou]{Jiahua~Zou} 
\address{Department of Mathematics, Rutgers University, Pistacaway, NJ 08854} 
\email{jiahua.zou@rutgers.edu}
\begin{document}

\date{\today}

	   \keywords{Differential geometry, minimal surfaces, partial differential equations, perturbation methods}

\begin{abstract}
For each large enough $m\in\mathbb{N}$ we construct by PDE gluing methods a closed embedded smooth minimal hypersurface ${\breve{M}_m}$ 
doubling the equatorial three-sphere $\mathbb{S}_{\mathrm{eq}}^3$ in $\mathbb{S}^4(1)$, 
with ${\breve{M}_m}$ containing $m^2$ bridges modelled after the three-dimensional catenoid and 
centered at the points of a square $m\times m$ lattice $L$ contained in the Clifford torus $\mathbb{T}^2\subset \mathbb{S}_{\mathrm{eq}}^3$. 
This answers a long-standing question of Yau in the case of $\Sph^4(1)$ and long-standing questions of Hsiang. 
Similarly we construct a self-shrinker ${\breve{M}_{\mathrm{shr},m}}$ of the Mean Curvature Flow in $\mathbb{R}^4$ 
doubling the three-dimensional spherical self-shrinker $\mathbb{S}_{\mathrm{shr}}^3\subset\R^4$ with the bridges 
centered at the points of a square $m\times m$ lattice $L$ contained in a Clifford torus $\mathbb{T}^2\subset \mathbb{S}_{\mathrm{shr}}^3$. 
Both constructions respect the symmetries of the lattice $L$ as a subset of $\mathbb{S}^4(1)$ or $\R^4$ and are based on the Linearized Doubling (LD) methodology 
which was first introduced in the construction of minimal surface doublings of $\mathbb{S}_{\mathrm{eq}}^2$ in $\mathbb{S}^3(1)$.
Furthermore $\breve{M}_m$ converges as $m \to\infty$ in the varifold sense to $2\mathbb{S}_{\mathrm{eq}}^3$, and its volume $|\breve{M}_m| < 2|\mathbb{S}_{\mathrm{eq}}^3|$. 
\end{abstract}

\maketitle



\section{Introduction}
\label{S:intro} 

\subsection*{The historic framework in high dimensions} 
$\vspace{.162cm}$ 
\nopagebreak

Although closed embedded minimal hypersurfaces in $\Sph^{n+1}(1)\subset\R^{n+2}$ (or surfaces for $n=2$) have been studied extensively by geometers, many fundamental questions remain open. 
In particular in the high dimensional case $n\ge3$ little is known. 
In this case the simplest examples are the equatorial $\Spheqn$ and Clifford products of spheres \cite{choe:hoppe:2018:products}. 
The remaining known examples are briefly discussed below; 
and although infinitely many noncongruent examples are known for each $n>2$, 
their topological types are very few. 

An important class of minimal examples in $\Sph^{n+1}(1)$ are the ones constructed by the methodology introduced in the early eighties by Wu-Yi Hsiang 
\cite{hsiang1982,hsiangI,hsiangII,hsiang1984,hsiangIII,hsiang1987duke,hsiang1993,CS}.  
Hsiang's approach is based on using the framework of equivariant differential geometry \cite{hsiang:lawson1971} to reduce the construcion of the minimal hypersurfaces 
to the construction of curves in the orbit space of the action of a Lie group on $\Sph^{n+1}(1)$. 
The minimality condition for hypersurfaces invariant under this action can be understood then by ODE theory. 
Hsiang's approach provides infinitely many non-congruent examples for any $n>2$ but of only few simple topological types, namely $\Sph^n$, $\Sph^1\times\Sph^{n-1}$, 
$\Sph^2\times\Sph^{n-2}$, and a few more products of spheres. 

Another class of examples are the minimal isoparametric hypersurfaces 
which have been studied extensively and classified 
(see for example \cite{chi:survey,cecil:chi:jensen-2007annals,chi:2020jdg,solomon,ki:nakagawa} and references there). 
In $\Sph^4(1)$ there is only one, a Cartan isoparametric hypersurface diffeomorphic to $SO(3)/\Z_2\times \Z_2$ 
\cite{solomon,ki:nakagawa}.  

Finally min-max methods have proved the existence of infinitely many examples in general Riemannian manifolds of dimension $\le8$, 
with important information on aspects of their asymptotic behavior  
\cite{song2023,irie:marques:neves2018,liokumovich:marques:neves2018,marques:neves2017,marques:neves:song2019,zhou2020}.  
The topological type of these hypersurfaces however is not known in general. 

Our article is partially motivated by 
a fundamental and insightful question of Yau included 
in his celebrated list of problems from the early nineties 
to which we give a satisfactory answer in the case of $\Sph^4$.  
The full question is as follows.  

\begin{question}[Yau (1993) {{\cite[Problem 32]{yau-problems1993}}} ]  
\label{Q:yau} 
Find an effective way to construct complete minimal hypersurfaces in $\R^n$ or $\Sph^n$ with finite topology and without continuous groups of isometries. 
(We interpret $\R^n$ and $\Sph^n$ to mean $\R^{n+1}$ and $\Sph^{n+1}(1)$ in our notation). 
\end{question} 

Note that until this article no progress has been made in answering this question. 
In our main theorem \ref{thm} we answer this question 
in the case of $\Sph^4$ is by doubling $\Spheqt$ using PDE gluing methods, 
and in particular the Linearized Doubling (LD) methodology introduced by NK in \cite{SdI}. 
Our construction is simplified greatly by the high (but discrete) symmetry imposed.  
We expect that by gradually reducing the amount of symmetry required we will answer fully Yau's question in all cases. 

We also recall below two questions of Hsiang on which no progress has been made until now. 

\begin{question}[Hsiang (1993) {{\cite[Problem 3]{hsiang1993}}} ]  
\label{Q:hsiang3} 
Construct embedded closed minimal hypersurfaces in $\Sph^4(1)$ which are of topological types different than 
$\Sph^3$, $\Sph^1\times\Sph^2$, $\Sph^1\times \Sph^1\times \Sph^1$, and $SO(3)/\Z_2\times \Z_2$.  
\end{question} 

\begin{question}[Hsiang (1993) {{\cite[Problem 6]{hsiang1993}}} ] 
\label{Q:hsiang6} 
Can $\quad\! \Sph^{n+1}(1)$ accommodate embedded closed minimal hypersurfaces of the diffeomorphism types of arbitrary connected sums of the products of spheres? 
\end{question}

This article answers fully the first question, 
and 
since the hypersurfaces we construct are diffeomorphic to $\#_{m^2-1} \Sph^2\times\Sph^1$ for any large enough $m\in\N$,       
it answers also a substantial part of the second question. 
	   
Embedded self-shrinkers of the mean curvature flow in $\R^{n+1}$ are also very important in geometry. 
In this case only finitely many closed examples are known for each $n>2$. 
The simplest closed examples for $n\ge2$ are the spherical self-shrinker $\Sphsn\subset\R^{n+1}$ 
and the Angenent doughnut which is diffeomorphic to $\Sph^1\times\Sph^{n-1}$ and $O(n)$-invariant \cite{angenent}. 
The remaining known closed examples are induced from isoparametric surfaces by a construction of Riedler \cite{riedler2023shrinkers} based on extensions of the Hsiang approach as in 
\cite{FK,PT,Wang1}. 
In this article we modify the earlier construction to construct closed examples in $\R^4$ 
doubling the three-dimensional spherical self-shrinker $\Sphshrt \subset\R^4$.  
These examples are of infinitely many topological types $\#_{m^2-1} \Sph^2\times\Sph^1$ for any large enough $m\in\N$ 
similarly to the minimal hypersyrfaces in $\Sph^4(1)$ in the earlier construction.      

\subsection*{The historic framework for $n=2$ and minimal surface doublings} 
$\vspace{.162cm}$ 
\nopagebreak

We discuss now for comparison the case $n=2$ including the history of the doubling methodology we will be using. 
For a long time the only examples of closed embedded minimal surfaces in $\Sph^3(1)$ known 
were the equatorial two-sphere $\Spheq$ and the Clifford torus $\T=\T^2 := \Sph^1(1/\sqrt2 ) \times \Sph^1(1/\sqrt2 )$. 
These two simple examples play an important role in the theory of minimal surfaces in $\Sph^3(1)$ and 
celebrated proofs of long-standing conjectures on the Clifford torus have been achieved by Brendle \cite{brendle:2013:lawson} and Marques-Neves \cite{marques:neves}.  

Lawson in 1970 discovered the first new examples of closed embedded minimal surfaces in $\Sph^3(1)$ which he named $\xL_{k,m}$ and have genus $km$ ($k,m\in \N$) \cite{Lawson}. 
Many more examples have been found since by expanding the original Lawson approach \cite{KPS,choe:soret}, 
by PDE gluing methods \cite{kapouleas:yang,wiygul:jdg2020,wiygul:t,SdI,SdII,kapouleas:wiygul:toridesingularization,gLD,IIgLD,douT},     
by min-max methods \cite{PRu,ketoverMN:2020,ketover:2022:flipping}, 
and by equivariant maximization of the normalized first eigenvalue \cite{KKMS}.    
Most of these examples are (or expected to be) \emph{desingularizations} in the sense of \cite{alm20}*{Definition 1.3} and the further discussion there, 
or \emph{doublings} in the sense of \cite[Definition 1.1]{gLD}, of great two-spheres or Clifford tori.   
For example the Lawson surfaces $\xL_{k,m}$ are desingularizations of $k+1$ great two-spheres intersecting symmetrically along a common great circle for $k\ge1$, $m\ge2$,  
and five of the nine minimal surfaces constructed in \cite{KPS} are doublings of $\Spheq$ in $\Sph^3(1)$. 

Of particular interest for this article are the 
doubling constructions by PDE gluing methods.  
These were first proposed and discussed in \cite{kapouleas:survey,kapouleas:yang,alm20}.
PDE gluing methods have been applied extensively and with great success in Gauge Theories by Donaldson, Taubes, and others.
The particular kind of gluing methods used relates most
closely to the methods developed in \cite{schoen} and \cite{kapouleas:annals},
especially as they evolved and were systematized in
\cite{kapouleas:wente:announce,kapouleas:wente,kapouleas:imc}.
We refer to \cite{kapouleas:survey} for a general discussion of this gluing methodology 
and to \cite{alm20} for an early general discussion of doubling by PDE gluing methods.

The first doubling constructions by PDE gluing methods produced minimal surface doublings of the Clifford torus \cite{kapouleas:yang}.  
Subsequently NK introduced in \cite{SdI} a refinement of the general methodology which he called Linearized Doubling (LD); 
using the LD methodology he constructed the first high genus minimal surface doublings of the equatorial sphere $\Spheq$ in $\Sph^3(1)$.  
In these doublings the catenoidal bridges are equidistributed either along two parallel circles of $\Spheq$, or along the equatorial circle with two more bridges at the poles. 
Since then the LD methodology has led to many new results \cite{SdII,gLD,IIgLD} and has great further potential.

\subsection*{Brief discussion of the results and the construction}   
$\vspace{.162cm}$ 
\nopagebreak

The general definition of a doubling \cite[Definition 1.1]{gLD} can be trivially extended to higher dimensions as follows. 

\begin{definition}[Hypersurface doublings {\cite[Definition 1.1]{gLD}} ]  
\label{D:dou} 
We define 
a \emph{(hyper)surface doubling $\Mbreve$ over/of a base hyper(surface) $\Sigma$ in a Riemannian manifold $(N,g)$} 
to be a smooth (hyper)surface which is the union of two smooth graphs over a smooth closed subset $\Sigmabreve\subset\Sigma$.  
(The two graphs join smoothly with vertical tangent planes along $\partial \Sigmabreve$ which we assume smooth and of codimension one in $\Sigma$).  
We call $(\Sigma, N, g)$ the \emph{background} of the doubling and the connected components of $\Sigma\setminus \Sigmabreve$ its \emph{doubling holes}. 
We call $\Mbreve$ a \emph{minimal (hyper)surface doubling} of $\Sigma$ in $N$, or a \emph{minimal doubling} for short, if both $\Mbreve$ and $\Sigma$ are minimal. 
Finally we will assume $\Mbreve$ and $\Sigma$ are connected and embedded unless otherwise stated. 
\end{definition}

In this article we construct by PDE gluing methods and for each large enough $m\in\N$ 
a closed embedded smooth minimal hypersurface $\Mbrevem$ doubling the equatorial three-sphere $\mathbb{S}_{\eq}^3$ in $\mathbb{S}^4(1)$ (see Theorem \ref{thm}).   
The hypesurface $\Mbrevem$ contains $m^2$ bridges modelled after the three-dimensional catenoid in $\R^4$,   
with each bridge centered at a point of $L$ and with its waist lying on $\Spheqt$, 
where $L=L[m]\subset\T^2 \subset \Spheqt \subset \mathbb{S}^4(1)$ is a square $m\times m$ lattice (see \ref{def:L}).  
The symmetries of $\Mbrevem$ are the same with the symmetries of $L$, 
that is in the notation of \ref{not:manifold}\ref{item:sym}  $\sgr L{\Sph^4(1)} = \sgr {\Mbrevem}{\Sph^4(1)} $. 
We also prove an asymptotic formula \eqref{E:vol} for the volume of the doublings we construct. 
\eqref{E:vol} is analogous to \cite[Theorem A(v) and (5.9)]{gLD} 
and implies in particular that our doublings have volume strictly smaller than the volume of two great three-spheres. 

Clearly the topology type of $\Mbrevem$ is (equivalently $\Mbrevem$ is diffeomorphic to) $\#_{m^2-1} \Sph^2\times\Sph^1$,   
and moreover $\Mbrevem$ is an orientable closed three-manifold with torsion--free first and second homology, both equal to $\Z^{m^2-1}$.  
This is the first construction which provides closed embedded minimal hypersurfaces in $\Sph^4(1)$ of infinitely many topological types. 
Since our minimal hypersurfaces are clearly not rotationally symmetric it answers \cite[Problem 32]{yau-problems1993} for $\Sph^4(1)$ as already mentioned (see \ref{Q:yau}), 
and also \cite[Problem 3]{hsiang1993}  (see \ref{Q:hsiang3}) and partially \cite[Problem 6]{hsiang1993} (see \ref{Q:hsiang6}). 
Note that the question of classifying all the topological types realized by closed embedded minimal hypersurfaces is much harder, 
and in fact it is not clear at the moment which orientable closed three-manifolds can be smoothly embedded into $\Sph^4$ (see \cite{Budney_2020} for a survey of known results). 

Our constructions in this article have strong similarities and differences with the constructions in both \cite{kapouleas:yang} and \cite{SdI}.  
We first recall that in \cite{kapouleas:yang},  
for each large enough $m\in\N$,  
a minimal surface doubling of the Clifford torus $\T$ in $\Sph^3(1)\simeq\Spheqt$ is constructed 
containing $m^2$ catenoidal bridges centered at the points of the square lattice $L=L[m]\subset\T^2$; we will refer to it as $\ximm$.   
The symmetries of $\ximm$ in $\Sph^3(1)$ are the same with the symmetries of $L$, 
that is in the notation of \ref{not:manifold}\ref{item:sym}  $\sgr L{\Sph^3(1)} = \sgr {\ximm   }{\Sph^3(1)} $. 
The crucial difference is of course that the bridges in $\Mbrevem$ are three-dimensional with waists on $\Spheqt$ 
while in $\ximm$ they are two-dimensional with waists (approximately) on $\T$. 

Moreover although in both cases the bridges are uniformly distributed on the Clifford torus $\T$, 
$\T$ has codimension one in the base surface $\Spheqt$ in the case of $\Mbrevem$,  
while it is the whole base surface in the case of $\ximm$. 
In this respect $\Mbrevem$ is more similar to the doublings of $\Spheq$ in $\Sph^3(1)$ constructed in \cite{SdI,SdII}, where the bridges are distributed along parallel circles.  
Actually an even closer analogy would be with doublings of $\Spheq$ in $\Sph^3(1)$ with bridges distributed along a single equatorial circle. 
Such doublings do not exist however \cite[Remark 6.31]{SdI}. 
The reason they do not exist is that the equatorial circle subdivides $\Spheq$ into two hemispheres on which the first Dirichlet eigenvalue of the Jacobi operator vanishes. 
In our case $\T$ subdivides $\Sph^3(1)$ into two congruent domains which are tubular neighborhoods of radius $\pi/4$ 
of two circles which we will denote by $\uC$ and $\uC^\perp$ (see \eqref{eq:clifford}). 
The Jacobi operator on these domains has positive first Dirichlet eigenvalue  
and this is what makes our constructions possible.

By appropriately modifying the construction of $\Mbrevem$ we also construct a three-dimensional self-shrinker $\Mshrm\subset \R^4$ 
doubling the spherical self-shrinker $\Sphst(1)\subset\R^4$ 
with three-catenoidal bridges with centers in an $m\times m$ lattice in the Clifford torus $\T \subset \Sphst(1)\subset\R^4$ (see Theorem \ref{thmshr}).  
In this case, since the two sides of the spherical shrinker are not symmetric, the three-catenoidal bridges have to be elevated to achieve balancing, 
similarly to the case of doublings of the spherical self-shrinker in $\R^3$ in \cite[Section 10]{gLD}; no tilting is needed though because of the symmetries. 

We can also generalize the construction in this article by using instead of a square lattice a $k\times m$ lattice $L=L[k,m]\subset\T$ with $k,m\in \N$ large enough; 
we can further generalize by prescribing $k_\circ \in \N$ parallel tori to the Clifford torus along which to have $km$ three-catenoidal bridges with the option to have 
$k$ bridges along $\uC$ and/or $m$ bridges along $\uC^\perp$ \cite{dS3II}. 
This generalization is similar to the generalization from \cite{SdI} to \cite{SdII} and involves horizontal balancing which 
complicates the construction and has been avoided in this article by the extra symmetries. 
In work in progress we expoit again the unusual symmetries of the Clifford torus to construct with a similar setup minimal hypersurface doublings of 
$\Sigma= \Sph^1(1/2) \times \Sph^3(\sqrt{3\,} /2 )$ (which is minimal \cite{choe:hoppe:2018:products}) in $\Sph^5(1)$ 
with four-dimensional catenoidal bridges along 
$\Sph^1(1/2) \times \frac {\sqrt{3\,}}{2} \T^2$, 
where $\frac {\sqrt{3\,}}{2} \T^2$ is the Clifford torus inside the second factor of $\Sigma$. 
Finally note that using a different setup NK constructs minimal hypersurface doublings of the equatorial $n$-dimensional sphere $\Spheqn$ in $\Sph^{n+1}(1)$ 
for any $n>2$ in \cite{dSn} as announced in \cite{SdI}.

\subsection*{Outline of strategy and main ideas}
$\vspace{.162cm}$ 
\nopagebreak

The basic approach is the same as in \cite{SdI,SdII} with some necessary modifications. 
The \emph{rotationally invariant linearized doubling (RLD) solutions} 
are now constant on the tori $\T_c$ which are parallel to the Clifford torus $\T=\T_0$ at distance $|c|$ (see \ref{eq:clifford}).  
These tori degenerate to the circles $\uC$ and $\uC^\perp$ at $c=\pm\pi/4$ and they are the orbits of the action of the group $\Hgrp$ (see \ref{not:hgrp}). 
Our RLD solutions (see \ref{def:phihat}) are invariant under this action and have a single derivative jump at the Clifford torus $\T$. 
They form a one-dimensional vector space and can be expressed in terms of hypergeometric functions as in \ref{lem:phiC}, 
although this description is not necessary for the results in this article. 

The RLD solutions are converted to Linearized Doubling (LD) solutions in 
Section \ref{section:LD} by an approach which parallels the one in dimension two \cite{SdI,SdII,gLD} .  
Our LD solutions $\varphi=\varphi\bbracket{\zeta}$ are defined in \ref{def:varphi} and they are singular solutions of the Jacobi equation with singularities 
modelled after the Green's function $1/r$ on $\R^3$ (see \ref{def:LD}), 
so the singularities are not logarithmic as in dimension two. 
They depend on a single parameter $\zeta$ which adjusts by a scalar factor $\varphi$ and the strength of its singularities.  

Each LD solution $\varphi$ is modified then to a ``matched'' LD solution  
${\varphi+\underline{v}+\bar{\underline{v}}}$ 
which depends on one more parameter $\bar{\zeta}$ and satisfies the Jacobi equation on $\Spheqt\setminus L$ modulo a two-dimensional \emph{extended substitute kernel} 
or \emph{obstruction space} $ \skernel[L] \oplus \skernele[L]$ defined in \ref{def:VW}.   
The \emph{``obstruction space content''} of 
${\varphi+\underline{v}+\bar{\underline{v}}}$ is prescribed by the parameters as in Lemma \ref{lem:miss} and in particular Equation \ref{eq:zetamu}. 
Note that the creation of ``obstruction content'' by the $\zeta$ parameter corresponds to the creation of mismatch by scaling the bridges, and by the $\bar{\zeta}$ 
parameter corresponds to replacing the connected components of $\Spheqt\setminus\T$ 
with $\Hgrp$-invariant minimal graphs at the linearized level,  
which is consistent with a generalized interpretation of the \emph{geometric principle} where the dislocation uses exceptional Jabobi fields not induced by Killing fields. 

In \ref{lem:varphinl}
we ``correct'' ${\varphi+\underline{v}+\bar{\underline{v}}}$ with a small modification to obtain 
$\varphi_{nl}=\varphi_{nl}\bbracket{\zeta,\bar{\zeta}}\in C^{\infty}_{\sym}(\Spheqtres\setminus L)$ 
whose graph satisfies (nonlinear) minimality outside a small neighborhood of $\T$ 
(see \ref{lem:varphinl}).  
The initial surfaces $ M=M[\zeta,\bar{\zeta}] $ are then constructed in \ref{def:initial} by gluing (smoothly) the catenoidal bridges to the graphs of $\pm \varphi_{nl}$.   

The control of the mean curvature in \ref{lem:glue} needs the totally geodesic condition on the base and careful analysis of the quadratic terms in \ref{lem:SphereH} 
as applied to \ref{eq:thus}. 
This leads to satisfactory global estimates for the mean curvature in \ref{lem:H}. 
The global solutions to the linear problem are estimated in Proposition \ref{prop:lineareq} where we solve modulo the obstruction space 
$ \skernel[L] \oplus \skernele[L]$ 
to ensure exponential decay away from the Clifford torus $\T$ and apropriate decay towards the waists of the catenoidal bridges (see \ref{def:norm} and \ref{def:weight}). 
We eventually close the argument and prove the main Theorem \ref{thm}. 

Finally we remark that a major difference 
is that the strength of the singularities is of order $m^{-4}$ (see \ref{eq:tau}) 
and not exponentially small as in two dimensions. 
This corresponds to size (that is waist radius or scaling parameter) of the bridges $\radius   =\sqrt{\tau}\sim m^{-2}$ (see \ref{def:initial})  
which is smaller than the distance of nearby bridges
by a factor $1/m$. This factor is not as small as the corresponding exponentially small factor in dimension two, making the details of the construction more delicate. 
Note also that the range of the parameters (as in in \ref{eq:zeta}) differs from the two-dimensional case.  

\subsection*{Organization of the presentation}
$\vspace{.062cm}$ 
\nopagebreak

The article consists of eight sections. 
The first section is the introduction with a general discussion and determination of the notation used. 
In section \ref{S:linS} we discuss the linear theory on $\Spheqt$. 
In Section \ref{S:cat} we discuss the three-dimensional catenoids and their use as models for the bridges. 
In Section \ref{section:LD} we discuss the construction and estimation of the LD solutions $\varphi$ and the auxiliary functions $\underline v$ and $\bar{\underline{v}}$.   
In Section \ref{S:init} we discuss the construction and properties of the initial surfaces. 
In section \ref{S:lin} we combine the linear theory on the three-catenoid with the linear theory of Section \ref{S:linS} to produce the global linear theory 
Proposition \ref{prop:lineareq}; we also estimate the nonlinear terms in Lemma \ref{lem:nonlinear}. 
In Section \ref{S:main} we state and prove the main Theorem \ref{thm} constructing minimal hypersurface doublings over $\Spheq$; 
we also prove an asymptotic formula \ref{E:vol} for their volume. 
Finally in section \ref{S:shr} we state and prove the main Theorem \ref{thmshr} for self-shrinkers by appropriately modifying the previous construcion.  
 

\subsection*{General notation and conventions}
\nopagebreak

	  	\begin{notation}
	  	$ f(x) \lesssim_b g(x)$ indicates that there exists some absolute constant $C=C(b)>0$ just depending on $b$ so that $ f(x) \leq C g(x)$ for all $x$ in some specified domain. We will also use this notation for functions that depend on several variables or parameters. When the constant $C$ can be chosen absolutely, we will just write $f(x) \lesssim g(x)$.
	  	
	  	$ f(x) \sim_b g(x)$ indicates  that $ c(b)g(x) \leq f(x) \leq C(b)g(x)$ for some $C=C(k)>c=c(k)>0$ just depending on $b$. When the constants $C$, $c$ can be chosen absolutely, we will just write $f(x) \sim g(x)$.
	  \end{notation}
	  
	  \begin{definition}
	  	\label{D:newweightedHolder}
	  	Assuming that $\Omega$ is a domain inside a manifold,
	  	$g$ is a Riemannian metric on the manifold, 
	  	$f,\rho:\Omega\to(0,\infty)$ are given functions, 
	  	$k\in \N$, 
	  	$\beta\in[0,1)$, 
	  	$u\in C^{k,\beta}_{loc}(\Omega)$ 
	  	or more generally $u$ is a $C^{k,\beta}_{loc}$ tensor field 
	  	(section of a vector bundle) on $\Omega$, 
	  	and that the injectivity radius in the manifold around each point $x$ in the metric $\rho^{-2}(x)g$
	  	is at least $1/10$,
	  	$\|u: C^{k,\beta} ( \Omega,\rho,g,f)\|$ is defined by
	  	$$
	  	\|u: C^{k,\beta} ( \Omega,\rho,g,f)\|:=
	  	\sup_{x\in\Omega}\frac{\,\|u:C^{k,\beta}(\Omega\cap B_x, \rho^{-2}(x)g)\|\,}{f(x) },
	  	$$
	  	where $B_x$ is a geodesic ball centered at $x$ and of radius $1/100$ in the metric $\rho^{-2}(x)g$.
	  	For simplicity any of $\beta$ or $\rho$ or $f$ may be omitted, 
	  	when $\beta=0$ or $\rho\equiv1$ or $f\equiv1$, respectively.
	  \end{definition}
	  
	  $f$ can be thought of as a ``weight'' function because $f(x)$ controls the size of $u$ in the vicinity of
	  the point $x$.
	  From the definition it follows that 
	  \begin{equation}
	  	\label{E:norm:derivative}
	  	\| \, \nabla u: C^{k-1,\beta}(\Omega,\rho,g,\rho^{-1}f)\|
	  	\le
	  	\|u: C^{k,\beta}(\Omega,\rho,g,f)\|, 
	  \end{equation}
	  and the multiplicative property 
	  \begin{equation}
	  	\label{E:norm:mult}
	  	\| \, u_1 u_2 \, : C^{k,\beta}(\Omega,g,\rho, f_1 f_2 \, )\|
	  	\lesssim_k	   
	  	\| \, u_1 \, : C^{k,\beta}(\Omega,g,\rho, f_1 \, )\|
	  	\,\,
	  	\| \, u_2 \, : C^{k,\beta}(\Omega,g,\rho, f_2 \, )\|.
	  \end{equation}

	  Cut-off functions will be used extensively,
	  and for this reason the following is adopted.
	  
	  \begin{definition}
	  	\label{DPsi} 
	  	A smooth function $\Psi:\R\to[0,1]$ is fixed with the following properties:
	  	\newline
	  	(i).
	  	$\Psi$ is non-decreasing.
	  	\newline
	  	(ii).
	  	$\Psi\equiv1$ on $[1,\infty]$ and $\Psi\equiv0$ on $(-\infty,-1]$.
	  	\newline
	  	(iii).
	  	$\Psi-\frac12$ is an odd function.
	  
	  Given now $a,b\in \R$ with $a\ne b$, the smooth function
	  $\psicut[a,b]:\R\to[0,1]$ is defined
	  by
	  \begin{equation}
	  	\label{Epsiab}
	  	\psicut[a,b]:=\Psi\circ L_{a,b},
	  \end{equation}
	  where $L_{a,b}:\R\to\R$ is the linear function defined by the requirements $L(a)=-3$ and $L(b)=3$.
	  
	  Clearly then $\psicut[a,b]$ has the following properties:
	  \newline
	  (i).
	  $\psicut[a,b]$ is weakly monotone.
	  \newline
	  (ii).
	  $\psicut[a,b]=1$ on a neighborhood of $b$ and 
	  $\psicut[a,b]=0$ on a neighborhood of $a$.
	  \newline
	  (iii).
	  $\psicut[a,b]+\psicut[b,a]=1$ on $\R$.
	  
	  Suppose now we have two sections $f_0$, $f_1$ of some vector bundle over some domain $\Omega$. (A special case is when the vector bundle is trivial and $f_0, f_1$ real-valued functions). Suppose we also have some real-valued function $d$ defined on $\Omega$. We define a new section
	  \begin{equation}
	  	\cutoff{a}{b}{d}(f_0,f_1):=f_1\psicut[a,b]\circ d +f_0\psicut[b,a]\circ d ,
	  \end{equation}
	  
	\end{definition}

	  \begin{notation}\label{not:On}
	  	We denote by $g_{\Euc}$ the standard Euclidean metric on $\R^n$ and by $g_{\Sph^{n-1}}$ the induced standard metric on the unit sphere $\Sph^{n-1}:= \{v \in\R^n:\abs{v}=1\}$. By standard notation $\mathfrak{O}(n)$ denotes the isometric group of $\Sph^{n-1}$.
	  \end{notation}

	  \begin{definition}\label{def:jacobi}
	  	For an oriented embedding hypersurface $\Sigma$ in a four-dimensional Riemannian manifold $N$ with metric $g$, $A_{\Sigma}$ is defined to be the second fundamental form of $\Sigma$ and $\abs{A}^2_{\Sigma}$ is defined to be the square of the second fundamental form. Also the second variation of three-volume or Jacobi operator $\mathcal{L}_{\Sigma}$ is defined by
	  	\begin{equation*}
	  		\mathcal{L}_\Sigma=\Delta_\Sigma+\abs{A}^2_{\Sigma}+\Ric(\nu_\Sigma,\nu_\Sigma),
	  	\end{equation*}
	  	where $\Delta_\Sigma$ is the Laplacian on $\Sigma$ defined by $g|_\Sigma$, $\nu_\Sigma$ in the normal vector of $\Sigma$ in $N$.
	  \end{definition}
	  
	  \begin{notation}\label{not:manifold}
	  	For $(N^{n+1}, g)$ a Riemannian manifold, $\Sigma^n \subset N^{n+1}$ a two-sided hypersurface equipped with a
	  	(smooth) unit normal $\nu$, and $\Omega\subset \Sigma$, we introduce the following notation where any of $N$, $g$, $\Sigma$ or $\Omega$ may be omitted when clear from context.
	  	\begin{enumerate}[(i)]
	  		\item \label{i:neighbour} For $A\subset N$ we write $\dist_A^{N,g}$ for the distance function from $A$ with respect to $g$ and we define the tubular neighborhood of $A$ with radius $\delta>0$ by  $D_A^{N,g}(\delta):=\{p\in N:\dist_A^{N,g}(p)<\delta\}$. If $A$ is finite we may just enumerate its points in both cases, for example, if $A=\{q\}$ we write $\dist^{N,g}_q(p)$. 
	  		\item We denote by $\exp^{N,g}$ the exponential map, by $\mathrm{dom}(\exp^{N,g}) \subset TN$ its maximal domain, and by $\mathrm{inj}^{N,g}$ the injectivity radius of $(N, g)$. Similarly by $\exp_p^{N,g}$, $\mathrm{dom}(\exp_p^{N,g})$ and $\mathrm{inj}_p^{N,g}$ the same at $p\in N$.
	  		\item\label{item:graph} Given a function $f : \Sigma \to \R$ satisfying $\abs{f}(p) < \mathrm{inj}^{N,g}$,
	  		$\forall p \in \Omega$, we use the notation
	  		\begin{equation*}
	  			X^{N,g}_{\Omega,f}:=\exp^{N,g}\circ(f\nu)\circ I_\Omega^{N},\quad \Graph_{\Omega}^{N,g}(f):=X^{N,g}_{\Omega,f}(\Omega),
	  		\end{equation*}
	  		where $I_\Omega^{N}:\Omega\to N$ denotes the inclusion map of $\Omega$ in $N$. 
\item\label{item:sym} For $A\subset N$ we write $\sgr A{N,g}$ for the group of isometries of $(N,g)$ which preserve $A$ as a set. 
	  	\end{enumerate}
	  \end{notation}
	  
	  As in \cite[Definition A.1]{gLD}, we also define the Fermi exponential map.
	  \begin{definition}[Fermi exponential map]\label{def:fermi}
	  	 We assume given a hypersurface $\Sigma^n$ in a Riemannian manifold $(N^{n+1}, g)$ and a unit normal $\nu_p \in T_pN$ at some $p\in\Sigma $. For $\delta > 0$ we define
	  	 \begin{equation*}
	  	 	\hat{D}^{\Sigma, N, g}_p(\delta):=\{v+\zcheck\nu_p:v\in D^{T_p\Sigma,  g|_p}_O(\delta)\subset T_p\Sigma,\zcheck\in(-\delta,\delta)\}\subset T_pN.
	  	 \end{equation*}
	  	 For small enough $\delta$, the map $\exp^{\Sigma,N,g}_p:\hat{D}^{\Sigma, N, g}_p(\delta)\to N$, defined by
	  	 \begin{equation*}
	  	 	\exp^{\Sigma,N,g}_p(v+\zcheck\nu_p):=\exp^{N,g}_q(\zcheck\nu_v),\quad \forall v+\zcheck\nu_p\in \hat{D}^{\Sigma, N, g}_p(\delta)\text{ with }v\in T_p\Sigma,
	  	 \end{equation*}
	  	 where $q := \exp^{\Sigma,g}_p(v)$ and $\nu_v \in T_qN$ is the unit normal to $\Sigma$ at $q$ pointing to the same side of $D^{\Sigma,g|_{\Sigma}}_p(\delta)$ (which is two-sided) as $\nu_p$, is a diffeomorphism onto its image which we will denote by $D^{\Sigma,N,g}_p\subset N$. We define the injectivity radius $\inj^{\Sigma,N,g}_p$ of $(\Sigma,N,g)$ at $p$ to be the supremum of such $\delta$’s. Finally when $\delta<\inj^{\Sigma,N,g}_p$ we define on $D^{\Sigma,N,g}_p(\delta)$ the following.
	  	 \begin{enumerate}[(i)]
	  	 	\item\label{item:proj}  
$\Pi_{\Sigma}: D^{\Sigma,N,g}_p(\delta)\to \Sigma\cap   D^{\Sigma, N, g}_p(\delta)$ to be the nearest point projection in $(D^{\Sigma,N,g}_p(\delta),g)$. Alternatively	$\Pi_{\Sigma}$ corresponds through $\exp^{\Sigma,N,g}_p$ to orthogonal projection to $T_p\Sigma$ in $(T_pN, g|_p)$.
	  	 	\item 
$\zzcheck: D^{\Sigma,N,g}_p(\delta)\to (-\delta, \delta)$ to be the signed distance from $\Sigma\cap   D^{\Sigma, N, g}_p(\delta)$ in $(D^{\Sigma, N, g}_p(\delta) g)$. Alternatively $(\zzcheck\circ\exp^{\Sigma,N,g}_p(v))\nu_p$ is the orthogonal projection of $v$ to $\langle\nu_p\rangle$ in $(T_pN, g|_p)$. 
	  	 	\item A foliation by the level sets $\Sigma_{\zcheck} := \zzcheck^{-1}(\zcheck) \subset D^{\Sigma, N, g}_p(\delta)$ for $\zcheck \in (-\delta, \delta)$.
	  	 	\item Tensor fields $g_{\Sigma,\zzcheck}$, $A_{\Sigma,\zzcheck}$ 
	  	 	by requesting that on each level set $\Sigma_{\zcheck}$ they are equal to the
	  	 	first and second fundamental forms
	  	 	of $\Sigma_{\zcheck}$ respectively.
	  	 \end{enumerate}
	  \end{definition}
	  
	  \subsection*{The parametrizations and coordinates on the unit three-sphere}
$\vspace{.162cm}$ 
	 
	  We consider now the unit four-sphere $\Sph^4=\Sph^4(1)\subset\R^5$ with the standard metric $g_{\Sph^4}$ as in \ref{not:On}, 
where for simplicity we will often denote $\Sph^4(1)$ by just $\Sph^4$. 
We denote by $(x_1, x_2, x_3, x_4, x_5)$ the standard coordinates of $\R^5$ and we define by
	  \begin{equation*}
	  	\Spheqtres:=\Sph^4\cap \{x_5=0\}
	  \end{equation*}
	  an equatorial three-sphere in $\Sph^4$. As \cite[(A.1)]{kapouleas:yang}, we define the surjective map $\Thetaeq:\R^2\times[-\frac{\pi}{4},\frac{\pi}{4}]\to\Spheqtres$ by 
	  \begin{align}
	  	&\Thetaeq(\xx, \yy, \zz)\\
	  	:=&(\cos(\zz+\frac{\pi}{4})\cos(\sqrt{2}\yy),\cos(\zz+\frac{\pi}{4})\sin(\sqrt{2}\yy),\sin(\zz+\frac{\pi}{4})\cos(\sqrt{2}\xx),\sin(\zz+\frac{\pi}{4})\sin(\sqrt{2}\xx),0).\nonumber
	  \end{align}
	  By calculating further we obtain as \cite[(A.5)]{kapouleas:yang}
	  \begin{equation}\label{eq:thetaeqmetric}
	  	\Thetaeq^*g_{\Sph^3}=(1+\sin(2\zz))d\xx^2+(1-\sin(2\zz))d\yy^2+d\zz^2.
	  \end{equation}
	  
	  We will also refer to 
	  \begin{equation} 
\label{eq:clifford}
	  \begin{aligned}
	  \T_c&:=\Spheqtres\cap\{x_1^2+x_2^2=x_3^2+x_4^2\}= \Thetaeq(\{\zz=c\}),\quad 
	  \text{where }c\in (-\pi/4,\pi,4)\\
	  \uC&:=\Spheqtres\cap\{x_1=x_2=0\}=\Thetaeq(\{\zz=\pi/4\}),\\
	  \uC^{\perp}&:=\Spheqtres\cap\{x_3=x_4=0\}=\Thetaeq(\{\zz=-\pi/4\}).
	 \end{aligned} 
	 \end{equation} 
We may omit the subscript $c$ when $c=0$ so $\T=\T_0$ denotes the Clifford torus. 
     
     Finally for any interval $[a,b]\subset[-\frac{\pi}{4},\frac{\pi}{4}]$ we define the thickened torus in $\Spheqtres$ by 
     \begin{equation}\label{eq:ttorus}
     	\Omega[a,b]:=\cup_{c\in [a,b]}\T_c.
     \end{equation}
	 
	 \subsection*{Symmetries of $\Thetaeq$ and symmetries of the construction}
	 \begin{notation}\label{not:hgrp}
	 	From the identification $\R^5\cong\R^4\times\R$, we can consider the embedding $\mathfrak{O}(4)\hookrightarrow \mathfrak{O}(5)$ by the standard action of $\mathfrak{O}(4)$ (recall \ref{not:On}) on the first components of $\R^4$ and by identity on the last component of $\R$.
	 	
	 	 From the identification $\R^5\cong\R^2\times \R^2\times\R$, we can consider the embedding $\mathfrak{O}(2)\times \mathfrak{O}(2)\hookrightarrow \mathfrak{O}(5)$ by the standard action of $\mathfrak{O}(2)$ (recall \ref{not:On}) on the first two components of $\R^2$ respectively and by identity on the last component of $\R$. We will then denote the image of $\mathfrak{O}(2)\times \mathfrak{O}(2)$ inside $\mathfrak{O}(5)$ by $\Hgrp$.
	 \end{notation}

	 We now define reflections $\XXXuh_c, \YYYuh_c$, rotations $\XXXh_c, \YYYh_c$ and the involution $\hat{\SSS}$ in $\R^2\times (-\frac{\pi}{4},\frac{\pi}{4})$, where $c\in\R$ by
\begin{equation} 
\label{eq:symhat}
	  \begin{aligned} 
	 	\XXXuh_c(\xx,\yy,\zz):=(2c-\xx,\yy,\zz),&\quad
	 	\YYYuh_c(\xx,\yy,\zz):=(\xx,2c-\yy,\zz),\\
	 	\XXXh_c(\xx,\yy,\zz):=(\xx+c,\yy,\zz),&\quad
	 	\YYYh_c(\xx,\yy,\zz):=(\xx,\yy+c,\zz),\\
	 	\hat{\SSS}(\xx,\yy,\zz):&=(\yy,\xx,-\zz).
	 \end{aligned}
\end{equation} 
	 
	 We then define corresponding reflections $\XXXu_c, \YYYu_c\in \Hgrp$, rotations $\XXX_c, \YYY_c\in \Hgrp$, the involution $\SSS$ and the reflection $\ZZZu$ in $\R^5$ by
\begin{equation} 
\label{eq:sym}
	 \begin{aligned} 
	 	\XXXu_c(x_1,x_2,x_3,x_4,x_5)&:=(x_1,x_2,x_3 \cos 2c + x_4 \sin 2c , x_3 \sin 2c - x_4 \cos 2c , x_5),\\
	 	\YYYu_c(x_1,x_2,x_3,x_4,x_5)&:=(x_1 \cos 2c + x_2 \sin 2c , x_1 \sin 2c - x_2 \cos 2c ,x_3, x_4, x_5),\\
	 	\XXX_c(x_1,x_2,x_3,x_4,x_5)&:=(x_1, x_2, x_3 \cos c - x_4 \sin c , x_3 \sin c + x_4 \cos c , x_5),\\
	 	\YYY_c(x_1,x_2,x_3,x_4,x_5)&:=(x_1 \cos c - x_2 \sin c , x_1 \sin c + x_2 \cos c , x_3 , x_4, x_5).\\
	 	\SSS(x_1,x_2,x_3,x_4,x_5)&:=(x_3,x_4,x_1,x_2,x_5).\\
	 	\ZZZu(x_1,x_2,x_3,x_4,x_5)&:=(x_1,x_2,x_3,x_4,-x_5).
	 \end{aligned}
\end{equation} 
	 
	 We record the symmetries of $\Thetaeq$ in the following lemma:
	 \begin{lemma}\label{lem:thetaeq}
	 	$\Thetaeq$ restricted to $\R^2\times(-\frac{\pi}{4},\frac{\pi}{4})$ is a covering map onto $\Spheqtres\setminus(\uC\cup \uC^{\perp})$. Moreover, the following hold:
	 	\begin{enumerate}[(i)]
	 		\item The group of deck transformations is generated by $\XXXh_{2\pi}$ and $\YYYh_{2\pi}$.
	 		\item\label{item:thetaeqconj} $\XXXu_c\circ\Thetaeq=\Thetaeq\circ \XXXuh_c$, $\YYYu_c\circ\Thetaeq=\Thetaeq\circ \YYYuh_c$, $\XXX_c\circ\Thetaeq=\Thetaeq\circ \XXXh_c$, $\YYY_c\circ\Thetaeq=\Thetaeq\circ \YYYh_c$ and $\SSS\circ\Thetaeq=\Thetaeq\circ \hat{\SSS}$.
	 	\end{enumerate}
	 \end{lemma}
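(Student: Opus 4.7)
All three assertions follow from the explicit formula for $\Thetaeq$ by direct computation. For the covering property I would first check that $\Thetaeq$ lands in $\Spheqtres$ (two applications of $\cos^2 + \sin^2 = 1$ give $\sum_{i=1}^{4} x_i^2 = 1$, with $x_5 = 0$ by construction), and that on $\R^2 \times (-\pi/4, \pi/4)$ both $\cos(\zz + \pi/4)$ and $\sin(\zz + \pi/4)$ are strictly positive, so the image avoids $\uC \cup \uC^{\perp}$. For surjectivity onto $\Spheqtres \setminus (\uC \cup \uC^{\perp})$ I would construct a pointwise inverse: given a target point, set $r := (x_1^2 + x_2^2)^{1/2}$ and $s := (x_3^2 + x_4^2)^{1/2}$; the relations $\cos(\zz + \pi/4) = r$, $\sin(\zz + \pi/4) = s$, together with $r^2 + s^2 = 1$, determine $\zz \in (-\pi/4, \pi/4)$ uniquely, and $\sqrt{2}\xx$ and $\sqrt{2}\yy$ are then recovered modulo $2\pi$ from the remaining unit-circle coordinate equations. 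The pullback metric formula \eqref{eq:thetaeqmetric} has coefficient determinant $(1+\sin 2\zz)(1-\sin 2\zz) = \cos^2(2\zz) > 0$ on this interval, so $\Thetaeq$ is a local diffeomorphism; combined with surjectivity and the discrete-fiber description, this gives a smooth covering map.

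Assertion (i) is then immediate: the inversion above shows that two preimages of the same point coincide in $\zz$ and differ by independent integer multiples of the common period of the trigonometric functions of $\sqrt{2}\xx$ and $\sqrt{2}\yy$, yielding that the deck group is generated by the two horizontal translations $\XXXh_{2\pi}$ and $\YYYh_{2\pi}$.

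For assertion (ii) I would verify each identity by direct substitution, using only sum-of-angles formulas. For instance, for $\XXXu_c \circ \Thetaeq = \Thetaeq \circ \XXXuh_c$, the right-hand side replaces $\sqrt{2}\xx$ with $2\sqrt{2}c - \sqrt{2}\xx$, and expanding $\cos(2\sqrt{2}c - \sqrt{2}\xx)$ and $\sin(2\sqrt{2}c - \sqrt{2}\xx)$ exactly reproduces the prescribed action of $\XXXu_c$ on the $(x_3, x_4)$ block; the cases $\YYYu_c, \XXX_c, \YYY_c$ are strictly analogous, each touching only one $\R^2$ block of $\R^5$. For $\SSS \circ \Thetaeq = \Thetaeq \circ \hat{\SSS}$ the key identities are $\cos(-\zz + \pi/4) = \sin(\zz + \pi/4)$ and $\sin(-\zz + \pi/4) = \cos(\zz + \pi/4)$, which show that the action $(\xx, \yy, \zz) \mapsto (\yy, \xx, -\zz)$ interchanges the $(x_1, x_2)$ and $(x_3, x_4)$ pairs in the image, matching $\SSS$. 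The whole argument presents no serious obstacle; it is routine trigonometric bookkeeping, and the only care needed is in tracking the $\sqrt{2}$ factors and sign conventions across the five separate identities, each handled independently.
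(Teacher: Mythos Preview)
The paper states this lemma without proof, treating it as a routine verification from the explicit formula for $\Thetaeq$ and the definitions in \eqref{eq:symhat} and \eqref{eq:sym}. Your plan---checking that $\Thetaeq$ is a surjective local diffeomorphism onto the complement of the two circles, reading off the fiber structure to identify the deck group, and verifying each intertwining identity in (ii) by direct substitution and angle-addition---is exactly the natural argument and matches what the authors evidently have in mind.

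One caution: when you actually carry out the substitutions you will find that the $\sqrt{2}$ factors do not match up cleanly with the parameters $c$ as literally written. For instance, $\Thetaeq\circ\XXXuh_c$ produces $\cos(2\sqrt{2}c-\sqrt{2}\xx)$ in the third slot, whereas the action of $\XXXu_c$ as defined in \eqref{eq:sym} uses $\cos 2c$ and $\sin 2c$; similarly, the minimal period of $\xx\mapsto\cos(\sqrt{2}\xx)$ is $\sqrt{2}\pi$, not $2\pi$. These are harmless normalization typos in the paper (the identities hold after an obvious rescaling of the parameter $c$), but your sentence asserting that the expansion ``exactly reproduces the prescribed action'' is not literally correct as the formulas stand. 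Your remark that ``the only care needed is in tracking the $\sqrt{2}$ factors'' is well placed---just be aware that tracking them reveals this discrepancy rather than confirming the identities verbatim.
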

	 
	 The symmetry group of our constructions depends on a large number $ m\in\N\setminus\{1\}$ which we assume now fixed. We define $\Lmer = \Lmer[m]\subset\Spheqtres$ to be the union of $m^2$ quarter-circles symmetrically arranged
	 \begin{equation}\label{eq:Lmer}
	 	\Lmer = \Lmer[m]:=\Thetaeq(\{(\xx,\yy,\zz):\xx=2\pi i/m,\yy=2\pi j/m,i,j\in\Z\}).
	 \end{equation}
	 \begin{definition}\label{def:grp}
Let $\GrpT:= \sgr \T{\Sph^4(1)} $, the subgroup of $\mathfrak{O}(5)$ which fixes $\T_0$ as a set. 
Let $\Grp_{\Spheqtres}[m] := \sgr {\Lmer[m]}{\Spheqt  } $ the subgroup of $\mathfrak{O}(4)$ which fixes $\Lmer[m]$ as a set, 
and $\Grp_{\Sph^4}[m] := \sgr {\Lmer[m]}{\Sph^4(1)} $ the subgroup of $\mathfrak{O}(5)$ which fixes $\Lmer[m]$ as a set.
	 \end{definition}
	 We record some properties of $\GrpT$, $\Grp_{\Spheqtres}[m]$ and $\Grp_{\Sph^4}[m]$ in the following lemma.
	 \begin{lemma}
	 	The following hold.
	 	\begin{enumerate}[(i)]
	 		\item $\GrpT$ is generated by $\Hgrp$ (recall \ref{not:hgrp}) and the involution $\SSS$. 
	 		\item $\Grp_{\Spheqtres}[m]$ is generated by the reflections $\XXXu_0,\XXXu_{\pi/m},\YYYu_0,\YYYu_{\pi/m}$ and the involution $\SSS$.
	 		\item $\Grp_{\Sph^4}[m]$ is generated by the reflections $\XXXu_0,\XXXu_{\pi/m},\YYYu_0,\YYYu_{\pi/m},\ZZZu$ and the involution $\SSS$.
	 	\end{enumerate} 	
	 \end{lemma}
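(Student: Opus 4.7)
The plan is to establish the three claims in order (i), (ii), (iii), treating each as a classification of isometries preserving a specific geometric configuration, and using the earlier parts as stepping stones for the later ones.

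For (i), the key starting observation is that $\T$ affinely spans the hyperplane $\R^4\times\{0\}\subset \R^5$, since it contains four $\R$-linearly independent points. Hence any $g\in\mathfrak{O}(5)$ preserving $\T$ preserves both this hyperplane and its orthogonal complement $\{0\}^4\times\R$, and splits as $g=(g',\epsilon)$ with $g'\in\mathfrak{O}(4)$ preserving $\T\subset\R^4$ and $\epsilon\in\{\pm1\}$ (the sign $\epsilon=-1$ being realized by $\ZZZu\in\GrpT$, so it is either absorbed or treated as an implicit generator). Reducing to $\epsilon=+1$, the core task is to classify the subgroup of $\mathfrak{O}(4)$ preserving the Clifford torus. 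Writing $\T=\{(z,w)\in\mathbb{C}^2:|z|^2=|w|^2=1/2\}$, the two level sets $\{|z|^2=1/2\}$ and $\{|w|^2=1/2\}$ inside $\Sph^3(1)$ are the unique pair of flat totally geodesic tori tangent to $\T$ in the ambient $\Sph^3(1)$, and the two $\R^2$-factors of $\R^4$ correspond to the two orbits of the principal $U(1)\times U(1)$ action on $\T$. Any $g'$ must therefore either preserve or interchange these factors; the former contributes $\Hgrp|_{\R^4}=\mathfrak{O}(2)\times\mathfrak{O}(2)$, the latter is obtained from the former by composing with $\SSS|_{\R^4}$.

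For (ii), I would first note that $\Lmer[m]$ is a union of $m^2$ great quarter-circles, each connecting a point of $\uC$ to a point of $\uC^\perp$ and passing through a distinct lattice point on $\T$. The endpoint set of $\Lmer[m]$ lies in $\uC\cup\uC^\perp$, so any $g\in\Grp_{\Spheqtres}[m]$ must either preserve both circles or swap them. Since $\SSS$ swaps them, up to composition with $\SSS$ one may assume both are preserved; then $g$ also preserves $\T$ as the equidistant locus from $\uC\cup\uC^\perp$, so by (i) we have $g\in\Hgrp$ (the $x_5$ component being automatically trivial because $g$ acts on $\Spheqtres$). Finally, such a $g$ preserves the $m\times m$ square lattice on $\T$ if and only if its two $\mathfrak{O}(2)$-factors each preserve a regular $m$-gon on $\Sph^1$; the dihedral symmetries of a regular $m$-gon are generated by any two reflections whose axes differ by angle $\pi/m$, namely $\XXXu_0,\XXXu_{\pi/m}$ in one factor and $\YYYu_0,\YYYu_{\pi/m}$ in the other.

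For (iii), the argument extends (ii) to the full ambient $\Sph^4(1)$. As in (i), the affine span of $\Lmer[m]$ in $\R^5$ is the hyperplane $\R^4\times\{0\}$ (already for $m\ge 2$ the quarter-circles span it), so any $g\in\Grp_{\Sph^4}[m]$ preserves $\R^4\times\{0\}$ and its orthogonal complement, splitting as $(g',\epsilon)$ with $g'\in\mathfrak{O}(4)$ preserving $\Lmer[m]$ and $\epsilon\in\{\pm1\}$. Composing with $\ZZZu$ if $\epsilon=-1$ reduces to part (ii). The main technical obstacle in the whole argument is the classification step in (i): verifying that no ``exotic'' isometry of $\R^4$ preserves $\T$ beyond the block-diagonal $\mathfrak{O}(2)\times\mathfrak{O}(2)$ and the coordinate swap. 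Once that is in hand, the affine-span reductions in (ii) and (iii) together with the identification of dihedral symmetries of a regular $m$-gon are routine.
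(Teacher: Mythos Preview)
The paper states this lemma without proof, so there is no argument to compare against; your outline is correct and would complete into a full proof. Two small points are worth flagging.

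First, you rightly notice that (i) as written omits $\ZZZu$: since $\T\subset\{x_5=0\}$ we have $\ZZZu\in\GrpT$, but every element of $\langle\Hgrp,\SSS\rangle$ fixes $x_5$, so $\ZZZu$ must be added as a generator (or the statement read modulo this slip). Your parenthetical handling of this is appropriate.

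Second, in (ii) the assertion that $g$ must permute $\{\uC,\uC^\perp\}$ needs slightly more justification than ``the endpoint set lies in $\uC\cup\uC^\perp$.'' A clean way: for $m\ge 2$ the non-manifold points of $\Lmer$ form a finite set $S$, and two points of $S$ are joined by an arc of $\Lmer$ if and only if one lies on $\uC$ and the other on $\uC^\perp$; thus the adjacency graph on $S$ is the complete bipartite $K_{m,m}$, whose bipartition is intrinsic. Since each part spans its respective coordinate $2$-plane in $\R^4$, $g$ permutes $\{\uC,\uC^\perp\}$ as claimed. With this supplied, the reduction to $\Hgrp$ and then to the dihedral factors generated by $\XXXu_0,\XXXu_{\pi/m}$ and $\YYYu_0,\YYYu_{\pi/m}$ goes through as you describe, and (iii) follows from (ii) by the affine-span reduction you give.

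One stylistic quibble: your phrase in (i) about ``the unique pair of flat totally geodesic tori tangent to $\T$'' is garbled (those level sets coincide with $\T$ itself). The invariant you want is that $\T$ carries exactly two foliations by great circles of $\Sph^3$, and any isometry preserving $\T$ must permute these two foliations, hence permute the two $\R^2$-factors of $\R^4$.
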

	  
	 \begin{remark}
	 	$\Grp_{\Spheqtres}[m]$ is \emph{not} isomorphic to the group $D_{2m}\times D_{2m}\times \Z_2$, indeed $\SSS$ does not commute with the other generators.
	 \end{remark}
	 
	   \begin{notation}[Symmetric functions] 
	 	\label{N:G} 
	 	Given $\Omega\subset \Sph^4(1)$ 
	 	invariant under the action of $\Grp_{\Sph^4}[m]$, and a space of functions $\mathcal{X}\subset C^0(\Omega)$,
	 	we use a subscript ``sym'' to denote the subspace $\mathcal{X}_{\sym}\subset\mathcal{X}$ consisting of the functions in $\mathcal{X}$ which are invariant under the action of $\Grp$.
	 \end{notation}

	\subsection*{Acknowledgments}
$\vspace{.162cm}$ 

JZ would like to thank Daniel Ketover for his  support and for the helpful conversations about the topics of this article.
NK would like to thank the Simons Foundation for Collaboration Grant 962205 which facilitated travel related to this research.

	  \section{Linear theory on $\Spheqt$}
\label{S:linS} 

	 \subsection*{The linearized equation and rotationally invariant solutions}
$\vspace{.162cm}$ 

	By \ref{def:jacobi}, 
	\begin{equation}\label{eq:jacobi}
		\Lcal_{\Spheqtres}=\Delta_{\Spheqtres}+3.
	\end{equation}
	
	\begin{lemma}\label{lem:nokernel}
		$(\ker\Lcal_{\Spheqtres})_{\sym}$ is trivial when $m\geq 2$.
	\end{lemma}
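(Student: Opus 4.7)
The plan is to identify $\ker\Lcal_{\Spheqt}$ explicitly via spectral theory on $\Sph^3$ and then check that the symmetry constraints imposed by $\Grp_{\Sph^4}[m]$ kill every element in it. Since $-\Delta_{\Sph^3}$ has spectrum $\{k(k+2):k\in\N_0\}$ with the eigenvalue $3$ (corresponding to $k=1$) realized on first-order spherical harmonics, the kernel of $\Lcal_{\Spheqt}=\Delta_{\Spheqt}+3$ is exactly the $4$-dimensional space of restrictions to $\Spheqt$ of linear functions on $\R^4$. So I would begin the proof by writing any $f\in\ker\Lcal_{\Spheqt}$ as
\begin{equation*}
f=a_1x_1+a_2x_2+a_3x_3+a_4x_4,\qquad a_i\in\R.
\end{equation*}

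Next, using the generators of $\Grp_{\Sph^4}[m]$ listed after Definition 1.12 and their explicit formulas in \eqref{eq:sym}, I would impose the invariance conditions one at a time. The reflection $\XXXu_0:(x_3,x_4)\mapsto(x_3,-x_4)$ (fixing the other coordinates) forces $a_4=0$, and $\YYYu_0$ analogously forces $a_2=0$. Thus $f=a_1x_1+a_3x_3$. Then $\XXXu_{\pi/m}$ sends $x_3$ to $x_3\cos(2\pi/m)+x_4\sin(2\pi/m)$, so for $f$ to be fixed I need
\begin{equation*}
a_3x_3=a_3\bigl(x_3\cos(2\pi/m)+x_4\sin(2\pi/m)\bigr),
\end{equation*}
which (since $a_4$ was already forced to be $0$) gives $a_3(1-\cos(2\pi/m))=0$. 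Because $m\geq 2$ ensures $\cos(2\pi/m)\neq 1$, we get $a_3=0$; likewise $\YYYu_{\pi/m}$ gives $a_1=0$.

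This shows $f\equiv 0$, proving the lemma. Note that the $\SSS$ and $\ZZZu$ generators are not even needed — the four reflections already suffice — and $\ZZZu$ acts trivially on $\Spheqt$ anyway, so working with $\Grp_{\Spheqt}[m]$ or $\Grp_{\Sph^4}[m]$ gives the same result. There is no real obstacle here; the only thing worth being careful about is the representation-theoretic bookkeeping, especially confirming that the condition $m\geq 2$ is precisely what rules out the trivial rotation (at $m=1$ one would have $\cos(2\pi)=1$ and the symmetric kernel would remain nontrivial).
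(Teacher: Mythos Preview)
Your proof is correct and follows the same strategy as the paper: identify $\ker\Lcal_{\Spheqt}$ with the span of the coordinate functions $x_1,\dots,x_4$ and then eliminate the coefficients using the group generators. The paper instead first applies $\SSS$ to obtain $a_1=a_3$, $a_2=a_4$ and then uses $\YYYu_{\pi/m}$ together with $\YYYu_{2\pi/m}$, whereas your use of the four reflections $\XXXu_0,\YYYu_0,\XXXu_{\pi/m},\YYYu_{\pi/m}$ is slightly more direct and makes the role of the hypothesis $m\ge 2$ more transparent.
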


	\begin{proof}
		The space $\ker\Lcal_{\Spheqtres}$ is generated by the four coordinate functions $x_i$, $i=1,2,3,4$, restricted on $\Spheqtres$. Thus any function $u\in(\ker\Lcal_{\Spheqtres})_{\sym}$ has the form $u=\sum_{i=1}^4a_ix_i$. By the assumption $u\circ\SSS=u$ and \eqref{eq:sym}, we have $a_1=a_3$, $a_2=a_4$. By the assumption $u\circ\YYYu_{\pi/m}=u$ and \eqref{eq:sym}, we have
		\begin{equation*}
			a_1\left(x_1\cos\frac{2\pi}{m}+x_4\sin\frac{2\pi}{m}\right)+a_2\left(x_1\cos\frac{2\pi}{m}-x_4\sin\frac{2\pi}{m}\right)\equiv a_1x_1+a_2x_2,
		\end{equation*}
		 which implies $a_1=\lambda\sin\frac{2\pi}{m}$, $a_2=\lambda(1-\cos\frac{2\pi}{m})$ where $\lambda\in \R$. However, when $m\geq 2$, by the assumption $u\circ\YYYu_{2\pi/m}=u$, we also have $a_1=\lambda'\sin\frac{4\pi}{m}$, $a_2=\lambda'(1-\cos\frac{4\pi}{m})$, where $\lambda'\in \R$. 
Thus the only function $u\in(\ker\Lcal_{\Spheqtres})_{\sym}$ is trivial.
	\end{proof}

	 It will be easier later to state some estimates if we use a scaled metric on $\Spheqtres$:
	 	\begin{definition}
	 	We define the metric $\tilde{g}$ on $\Spheqtres$ and coordinates $(\tilde{\xx},\tilde{\yy},\tilde{\zz})$ by
	 \begin{equation}\label{eq:tildeg}
	 	\tilde{g}:=m^2g_{\Sph^3},\quad (\tilde{\xx},\tilde{\yy},\tilde{\zz}):=m(\xx,\yy,\zz),
	 \end{equation}
	  and the corresponding Jacobi operator by
	  \begin{equation}\label{eq:tildeL}
		 \Lcal_{\tilde{g}}:=\Delta_{\tilde{g}}+3m^{-2}=m^{-2}\Lcal_{\Spheqtres}.
		\end{equation}
	\end{definition}

	 By a \emph{rotationally invariant function} we mean a function on a domain of $\Spheqtres$ which depends only on the coordinates $\zz$, i.e. invariant under the action of $\Hgrp=\mathfrak{O}(2)\times \mathfrak{O}(2)$. When the solution $\phi$ is rotationally invariant, by calculating from \eqref{eq:thetaeqmetric}, the linearized equation $\Lcal_{\Spheqtres}\phi=0$ amounts to the ODE:
	  \begin{equation}\label{eq:jacobirotinvar}
	  	\frac{d^2\phi}{d\zz^2}-2\tan{2\zz}\frac{d\phi}{d\zz}+3\phi=0.
	  \end{equation}

\begin{lemma} 
\label{lem:phiC}
	  	The space of solutions of the ODE \eqref{eq:jacobirotinvar} in $\zz$ on $(-\pi/4,\pi/4)$ is spanned by the two functions
	  \begin{equation} 
\label{eq:phiC}
	  \begin{aligned}
	  		\phi_\uC:= \frac{\Gamma^2(1/4)}{4\sqrt{\pi}} {}_2F_1\left(-\frac{1}{2},\frac{3}{2};1;\frac{1}{2}(\cos\zz-\sin\zz)^2\right),\\
	  		\phi_{\uC^{\perp}}:= \frac{\Gamma^2(1/4)}{4\sqrt{\pi}} {}_2F_1\left(-\frac{1}{2},\frac{3}{2};1;\frac{1}{2}(\cos\zz+\sin\zz)^2\right), 
	 \end{aligned} 
	 \end{equation} 
	  	  where ${}_2F_1(\cdot,\cdot;\cdot;\cdot)$ is the hypergeometric function (see e.g. \cite[Chapter 15]{AS}).  Moreover, the following hold.
	  	  \begin{enumerate}[(i)]
	  	  	\item 
\label{phiC:1} 
$\phi_\uC$ has singularity at $\uC=\{z=-\pi/4\}$ and is smooth at $\uC^{\perp}=\{z=\pi/4\}$, $\phi_{\uC^{\perp}}$ has singularity at $\uC^{\perp}=\{z=\pi/4\}$ and is smooth at $\uC=\{z=-\pi/4\}$.
	  	  	\item 
\label{phiC:2} 
$\phi_\uC$ is strictly increasing in $\zz$ on $(-\pi/4,\pi/4)$, $\phi_{\uC^{\perp}}$ is strictly decreasing in $\zz$ on $(-\pi/4,\pi/4)$.
	  	  	\item 
\label{phiC:3} 
	  	  	$\phi_{\uC^{\perp}}=\phi_\uC\circ\SSS$.
	  	  	\item 
\label{phiC:4} 
$\phi_\uC(0)=\phi_{\uC^{\perp}}(0)=1$, $\phi'_\uC(0)=-\phi'_{\uC^{\perp}}(0)=F_0$ where
\begin{equation} 
\label{eq:flux}
F_0:=\frac{\Gamma^4(1/4)}{8\pi^{2}}\in (2.18,2.19). 
\end{equation}
\end{enumerate}
	  \end{lemma}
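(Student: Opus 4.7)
The plan is to reduce the ODE \eqref{eq:jacobirotinvar} to the standard hypergeometric equation via a change of variable. Setting $u:=\tfrac12(\cos\zz-\sin\zz)^2=\tfrac12(1-\sin 2\zz)$, so that $u'(\zz)=-\cos 2\zz$, $u''(\zz)=2\sin 2\zz$ and $(u'(\zz))^2=4u(1-u)$, a direct substitution shows that if $F=F(u)$ satisfies the hypergeometric equation $u(1-u)F''+(1-2u)F'+\tfrac34 F=0$, i.e.\ with parameters $(a,b,c)=(-1/2,3/2,1)$, then $\phi(\zz):=F(u(\zz))$ solves \eqref{eq:jacobirotinvar}. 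Taking $F={}_2F_1(-1/2,3/2;1;\cdot)$ and multiplying by the normalization $C:=\Gamma^2(1/4)/(4\sqrt{\pi})$ yields $\phi_\uC$; the analogous substitution $u=\tfrac12(\cos\zz+\sin\zz)^2$ yields $\phi_{\uC^{\perp}}$. Linear independence (hence spanning) follows from the singularities being located at opposite endpoints of the interval, which is the content of \ref{phiC:1}.

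For \ref{phiC:1} I use that $c-a-b=0$ is the resonant case where ${}_2F_1(-1/2,3/2;1;u)$ is analytic at $u=0$ but acquires a logarithmic singularity as $u\to 1^-$. For $\phi_\uC$, $u=1$ corresponds to $\zz=-\pi/4$ (i.e.\ $\uC$) and $u=0$ to $\zz=\pi/4$ (i.e.\ $\uC^{\perp}$); the statement for $\phi_{\uC^{\perp}}$ then follows from \ref{phiC:3}. For \ref{phiC:2}, the Maclaurin coefficients $\frac{(-1/2)_n(3/2)_n}{(n!)^2}$ are strictly negative for $n\ge 1$ (since $(-1/2)_n<0$ and $(3/2)_n>0$ for $n\ge 1$), so the hypergeometric series is strictly decreasing on $[0,1)$; since $u(\zz)$ is strictly decreasing in $\zz$ on $(-\pi/4,\pi/4)$, $\phi_\uC$ is strictly increasing, and \ref{phiC:3} gives the statement for $\phi_{\uC^{\perp}}$. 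Part \ref{phiC:3} is immediate from $\cos(-\zz)-\sin(-\zz)=\cos\zz+\sin\zz$ together with the fact that $\hat{\SSS}$ acts on $\zz$ by $\zz\mapsto-\zz$ while both $\phi_\uC$ and $\phi_{\uC^{\perp}}$ depend only on $\zz$ (cf.\ Lemma \ref{lem:thetaeq}).

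The main task is \ref{phiC:4}. For $\phi_\uC(0)$ I plan to apply Kummer's quadratic identity
\begin{equation*}
{}_2F_1\!\left(a,1-a;c;\tfrac12\right)=\frac{\Gamma(c/2)\,\Gamma((c+1)/2)}{\Gamma((c+a)/2)\,\Gamma((c+1-a)/2)}
\end{equation*}
with $(a,c)=(-1/2,1)$, which yields ${}_2F_1(-1/2,3/2;1;1/2)=\sqrt{\pi}/(\Gamma(1/4)\Gamma(5/4))=4\sqrt{\pi}/\Gamma^2(1/4)$ after using $\Gamma(5/4)=\tfrac14\Gamma(1/4)$; the normalization $C$ then cancels exactly and gives $\phi_\uC(0)=1$. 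For $\phi'_\uC(0)$ the derivative formula $\tfrac{d}{du}{}_2F_1(a,b;c;u)=\tfrac{ab}{c}{}_2F_1(a+1,b+1;c+1;u)$ together with $u'(0)=-1$ reduces matters to evaluating ${}_2F_1(1/2,5/2;2;1/2)$. Kummer's identity does not apply directly (since $a+b=3\ne 1$), but the Euler transformation ${}_2F_1(a,b;c;u)=(1-u)^{c-a-b}{}_2F_1(c-a,c-b;c;u)$ rewrites it as $2\,{}_2F_1(3/2,-1/2;2;1/2)$, to which Kummer \emph{does} apply. Simplifying the resulting gamma quotient with the reflection identity $\Gamma(1/4)\Gamma(3/4)=\pi\sqrt{2}$ and $\Gamma(7/4)=\tfrac34\Gamma(3/4)$ collapses everything to $F_0=\Gamma^4(1/4)/(8\pi^2)$, and the numerical bounds $F_0\in(2.18,2.19)$ follow from $\Gamma(1/4)\approx 3.6256$. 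The formula $\phi'_{\uC^{\perp}}(0)=-\phi'_\uC(0)$ is then immediate from \ref{phiC:3} and the chain rule.

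The only delicate point is the two-step hypergeometric manipulation (Euler transformation followed by Kummer's quadratic identity) needed to put $F'(1/2)$ into a form where the gamma-function evaluation collapses to the closed expression $\Gamma^4(1/4)/(8\pi^2)$; all other parts are essentially consequences of standard facts about ${}_2F_1$ and the chosen normalization.
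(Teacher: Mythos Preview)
Your proof is correct and follows essentially the same route as the paper: the same change of variable reduces \eqref{eq:jacobirotinvar} to the hypergeometric equation with $(a,b,c)=(-1/2,3/2,1)$, part \ref{phiC:1} comes from the convergence/divergence of the series at $u=0$ and $u=1$, part \ref{phiC:3} from the $\zz\mapsto-\zz$ symmetry, and part \ref{phiC:4} from Kummer's quadratic identity (this is exactly \cite[15.1.24]{AS}) combined with the derivative formula \cite[15.2.1]{AS}. The only points of departure are minor: for \ref{phiC:2} the paper argues positivity of ${}_2F_1(1/2,5/2;2;u)$ via the Euler integral representation \cite[15.3.1]{AS}, whereas your sign analysis of the Pochhammer coefficients is a more elementary alternative; and for the derivative in \ref{phiC:4} you spell out the intermediate Euler transformation explicitly, while the paper simply cites \cite[15.2.1]{AS} and \cite[15.1.24]{AS} without naming that step.
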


	  \begin{proof}
	  	By changing of variable $w:=\frac{1}{2}(\cos\zz-\sin\zz)^2=\frac{1-\sin2\zz}{2}$, \eqref{eq:jacobirotinvar} becomes 
	  	\begin{equation*}
	  		w(1-w)\frac{d^2\phi}{dw^2}+(1-2w)\frac{d\phi}{dw}+\frac{3}{4}\phi=0,
	  	\end{equation*}
	  	which is the hypergeometric equation that ${}_2F_1\left(-\frac{1}{2},\frac{3}{2};1;w\right)$ satisfies by e.g. \cite[15.5.1]{AS}. \cite[15.1.1]{AS} also implies that ${}_2F_1$ converges absolutely in the disk $\abs{w}<1$, and diverges at $w=1$. 
(\ref{phiC:1}) then follows. 
Moreover, by (\ref{phiC:1}), $\phi_{\uC}$ and $\phi_{\uC^{\perp}}$ are linearly independent when $\zz\in(-\pi/4,\pi/4)$, thus they span the space of solutions of the ODE \eqref{eq:jacobirotinvar}.
	  	
	  	The monotonicity in (\ref{phiC:2}) can be shown by \cite[15.2.1]{AS} along with the integral representation of hypergeometric function \cite[15.3.1]{AS}. 
(\ref{phiC:3}) is clear from the definitions \eqref{eq:phiC} and \eqref{eq:symhat} along with the property \ref{lem:thetaeq}\ref{item:thetaeqconj}. 
The values of $\phi_{\uC}(0)$ and $\phi_{\uC^{\perp}}(0)$ in (\ref{phiC:4}) can be calculated by e.g. \cite[15.1.24]{AS}, 
and the values of $\phi_{\uC}'(0)$ and $\phi_{\uC^{\perp}}'(0)$ can be calculated by making use of \cite[15.2.1]{AS} along with \cite[15.1.24]{AS}. 
	  \end{proof}

	  \subsection*{Linear theory on a slab}
	  \begin{definition}
	  	Given a function $\varphi$ on some domain $\Omega\subset\Spheqtres$, we define a rotationally invariant function $\varphi_{\avg}$ on the union $\Omega'$ 
of the $\Hgrp$ orbits (recall \ref{not:hgrp}) on which $\varphi$ is integrable (whether contained in $\Omega$ or not), by requesting that on each such orbit $\T_c$,
	  	\begin{equation*}
	  		\varphi_{\avg}|_{\T_c}:=\frac{1}{\Area{ \T_c}}\int_{\T_c}\varphi.
	  	\end{equation*}
	  	We also define $\varphi_{\osc}$ on $\Omega\cap\Omega'$ by $\varphi_{\osc} := \varphi - \varphi_{\sym}$.
	  \end{definition}
	
	 \begin{lemma}[Slab case cf. {\cite[Proposition A.3]{kapouleas:finite}, \cite[Lemma 4.7]{kapouleas:zou:S2}}]\label{lem:linearCyl}
		For $\delta_s>0$ small enough and $m$ big enough in terms of absolute value, and $\gamma'\in(0,1)$,
		there exists a bounded linear map (recall \eqref{eq:clifford}, \eqref{eq:ttorus} and \eqref{eq:tildeg})

		\begin{multline*}
			(\Rcal_{\Omega},B_{\Omega}) \, : \, C^{2,\beta}_{\sym}(\T_{2/m},\tilde{g})\times C^{0,\beta}_{\sym}(\Omega[2/m,2/m+\delta_s],\tilde{g},e^{-\gamma'\tilde{\zz}})
\\ 
\to \, C^{2,\beta}_{\sym}(\Omega[2/m,2/m+\delta_s],\tilde{g},e^{-\gamma'\tilde{\zz}})\times\R,
		\end{multline*}

		  such that for $v=\Rcal_{\Omega}(f,E)$ the following hold.
		\begin{enumerate}[(i)]
			\item $\Lcal_{\tilde{g}} v=E$.
			\item $v=f-f_{\avg}+B_{\Omega}(f,E)$ on $\T_{2/m}$.
			\item $v\equiv 0$ on $\T_{2/m+\delta_s}$.
			\item $\norm{v:C^{2,\beta}(\Omega[2/m,2/m+\delta_s],\tilde{g},e^{-\gamma'\tilde{\zz}})}+\abs{B_{\Omega}(f,E)}$ 
\\
$\phantom{kk}$ \hfill $\lesssim_{\beta,\gamma'}  \norm{f:C^{2,\beta}(\T_{2/m},\tilde{g})}+\norm{E:C^{0,\beta}(\Omega[2/m,2/m+\delta_s],\tilde{g},e^{-\gamma'\tilde{\zz}})}$.
			
		\end{enumerate}
	\end{lemma}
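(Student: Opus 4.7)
The plan is to follow the splitting strategy of the cited earlier versions by decomposing the problem according to the $\Hgrp$-orbit average on each torus $\T_c$ and exploiting the spectral gap supplied by the discrete symmetry $\Grp_{\Spheqt}[m]$. First I would observe that any $\Grp_{\Spheqt}[m]$-symmetric function on a single cross-section $\T_c$ has Fourier modes supported in $m\Z\times m\Z$, so in the rescaled metric $\tilde g$ the cross-sectional Laplacian has spectrum $\{0\}\cup\{\Lambda_j\}_{j\ge1}$ with $\Lambda_j \ge 1 - O(1/m^2)$ on the oscillating subspace. I would therefore write $v=v_{\avg}+v_{\osc}$, $E=E_{\avg}+E_{\osc}$, and $f=f_{\avg}+f_{\osc}$, and solve the two pieces separately.

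For the oscillating part I would solve $\Lcal_{\tilde g} v_{\osc} = E_{\osc}$ with Dirichlet data $f_{\osc}$ on $\T_{2/m}$ and $0$ on $\T_{2/m+\delta_s}$. Expanding each slice in the cross-sectional eigenbasis reduces this to a family of second order one dimensional ODEs of the schematic form $\phi_j'' - (\Lambda_j - 3/m^2)\phi_j + O(1/m^2)\phi_j' = E_j$ on $[2, 2 + m\delta_s]$, whose homogeneous solutions behave like $e^{\pm \sqrt{\Lambda_j - 3/m^2}\,\tilde{\zz}}$. Since $\sqrt{\Lambda_j - 3/m^2} > \gamma'$ for $m$ large (this is where $\gamma'<1$ enters), the standard exponentially weighted Green's function on a long interval produces each $\phi_j$ with the correct boundary data and weighted bound; cross-sectional elliptic regularity reassembles $v_{\osc}$.

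For the average part $v_{\avg}$, which depends only on $\tilde{\zz}$, I would note that after passing to $\tilde{\zz}$ the rotationally invariant ODE from \eqref{eq:jacobirotinvar} becomes a perturbation of $d^2/d\tilde{\zz}^2$ by coefficients of size $O(1/m^2)$, since $(2/m)\tan(2\tilde{\zz}/m)=O(1/m^2)$ on the slab. I would then define
\[
v_{\avg}(\tilde{\zz}) := \int_{\tilde{\zz}}^{2 + m\delta_s} (s - \tilde{\zz})\, E_{\avg}(s)\, ds + (\text{perturbation correction}),
\]
where the correction is a Neumann series absorbing the $O(1/m^2)$ lower-order terms. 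This automatically enforces $v_{\avg}(2 + m\delta_s) = 0$ and yields the pointwise bound $|v_{\avg}(\tilde{\zz})| \lesssim \|E_{\avg}\|\, e^{-\gamma'\tilde{\zz}}/(\gamma')^2$. I would then set $B_\Omega(f, E) := v_{\avg}|_{\T_{2/m}}$ and $v := v_{\osc} + v_{\avg}$, so linearity and the weighted estimates combine directly.

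The hard part will be the average part. Its homogeneous solutions $\phi_\uC$ and $\phi_{\uC^\perp}$ from Lemma \ref{lem:phiC} are bounded above and below by positive constants throughout the slab, so they do not decay exponentially; consequently the naive two-sided Dirichlet Green's function cannot land in the exponentially weighted space. The resolution is to use the one-sided integral formula above, which extracts the decay directly from the decay of $E_{\avg}$ rather than from any spectral gap of the operator. This is precisely why the inner boundary value $B_\Omega$ has to be output by the construction rather than prescribed: there is no remaining freedom to impose a further condition on the average at $\T_{2/m}$ and still maintain the exponential decay.
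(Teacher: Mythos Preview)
Your strategy is correct, and your explanation of why $B_\Omega$ must be an output rather than an input (the average mode has no spectral gap, hence no intrinsic exponential decay) is exactly the point. The paper, however, takes a more packaged route: rather than splitting $v=v_{\avg}+v_{\osc}$ and treating each piece by hand, it introduces the flat metric $g_0=d\tilde{\xx}^2+d\tilde{\yy}^2+d\tilde{\zz}^2$ on the slab, invokes the analogous solver $(\Rcal_0,B_0)$ for $\Delta_{g_0}$ as a black box from the cited references, checks that $\Lcal_{\tilde g}-\Delta_{g_0}$ has small operator norm once $\delta_s$ is small and $m$ is large, and then passes to $(\Rcal_\Omega,B_\Omega)$ by a single Neumann-series correction. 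Your approach buys transparency; the paper's buys brevity. One caution on your version: the cross-sectional metric on $\T_c$ depends on $\sin 2\zz$, so the eigenbasis varies with $\tilde{\zz}$ and expanding slice-by-slice does not literally yield uncoupled ODEs; the deviation of the second-order cross-sectional coefficients from flat is $O(\delta_s)$, not $O(1/m^2)$, and this is precisely what forces $\delta_s$ small. If you instead use the fixed flat Fourier basis and absorb the $O(\delta_s)$ metric variation into a global perturbation, your argument converges to the paper's.
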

	\begin{proof}
		By defining $(\Rcal_{\Omega},B_{\Omega})(1,0)=(0,0)$, we can assume that $f_{\avg}=0$.
		
		For this proof we define the flat metric $g_0$ on $\Omega[2/m,2/m+\delta_s]$ by (recall \eqref{eq:tildeg})
		\begin{equation*}
			g_0:=d\tilde{\xx}^2+d\tilde{\yy}^2+d\tilde{\zz}^2.
		\end{equation*}
		We then have a bounded linear map
		\begin{multline*}
			(\Rcal_0,B_0) \, : \, C^{2,\beta}_{\sym}(\T_{2/m},g_0)\times C^{0,\beta}_{\sym}(\Omega[2/m,2/m+\delta_s],g_0,e^{-\gamma'\tilde{\zz}}) 
\\ 
\to \, C^{2,\beta}_{\sym}(\Omega[2/m,2/m+\delta_s],g_0,e^{-\gamma'\tilde{\zz}})\times\R
		\end{multline*}
		satisfying (i)-(iv) with $\Delta_0$ instead of $\Lcal_{\tilde{g}}$.
		
		By \eqref{eq:thetaeqmetric}, \eqref{eq:tildeg} and \eqref{eq:tildeL} for any $\epsilon>0$, by choosing $m$ big enough and when $\delta_s$ is small enough
		\begin{equation*}
			\norm{(\Lcal_{\tilde{g}}-\Delta_0)v:C^{0,\beta}(\Omega[2/m,2/m+\delta_s],g_0,e^{-\gamma'\tilde{\zz}})}\leq \epsilon \norm{ v:C^{2,\beta}(\Omega[2/m,2/m+\delta_s],g_0,e^{-\gamma'\tilde{\zz}})}.
		\end{equation*}
		Thus the difference of the linear operator defined by
		\begin{equation*}
			(f,E)\mapsto ( \Rcal_0 (f,E)|_{\T_{2/m}}-B_0(f,E), \Lcal_{\tilde{g}}\circ \Rcal_0 (f,E))
		\end{equation*}
		with the identity map on the Banach space $C^{2,\beta}_0(\T_{2/m},g_0)\times C^{0,\beta}(\Omega[2/m,2/m+\delta_s],g_0,e^{-\gamma'\tilde{\zz}})$ is smaller then $1/2$ by choosing $\epsilon$ small enough. Therefore, there exists a bounded linear map $\Ccal$ on this space such that
		\begin{equation*}
			(f,E)= ( \Rcal_0\circ \Ccal (f,E)|_{\T_{2/m}}-B_0\circ\Ccal(f,E), \Lcal_{\tilde{g}}\circ \Rcal_0\circ \Ccal (f,E)).
		\end{equation*}
		 We can then define
		\begin{equation*}
			\Rcal_{\Omega}(f,E):=\Rcal_0\circ \Ccal(f,E),\quad B_{\Omega}(f,E):= B_0\circ\Ccal(f,E).
		\end{equation*}
		By \eqref{eq:thetaeqmetric} and \eqref{eq:tildeg}, when $\delta_s$ is small enough we have
		\begin{equation*}
			\norm{v:C^{k,\beta}(\Omega[2/m,2/m+\delta_s],\tilde{g},e^{-\gamma'\tilde{\zz}})} \sim_k	\norm{v:C^{k,\beta}(\Omega[2/m,2/m+\delta_s],g_0,e^{-\gamma'\tilde{\zz}})}. 
		\end{equation*}
		The results then follow.
	\end{proof}
	
	  \subsection*{Linear theory on the solid torus $D_\uC(\pi/4)$}

	\begin{definition}\label{def:weight}
		We define two functions $f_{0,\gamma'},f_{2,\gamma'}$ on $\Spheqtres$ by the following
		\begin{equation*}
			f_{0,\gamma'}:=\max\{e^{-\gamma'\tilde{\zz}},e^{-m\gamma'\delta_s}\}, \qquad f_{2,\gamma'}:=\max\{e^{-\gamma'\tilde{\zz}},m^{4+\beta}e^{-m\gamma'\delta_s}\}.
		\end{equation*}
	\end{definition}
	
	\begin{lemma}[Solid torus case cf. {\cite[Proposition 7.10]{kapouleas:finite}, \cite[Proposition 6.14]{kapouleas:zou:S2}}] 
\label{lem:linearSd}
		For $l=2,4$, there exists a bounded linear map
		\begin{equation*}
			(\Rcal_{\Sph},B_{\Sph}):C^{2,\beta}_{\sym}(\T_{l/m},\tilde{g})\times C^{0,\beta}_{\sym}(\Spheqtres\setminus D^{\Spheqtres}_{\T_0}(l/m),\tilde{g},f_{0,\gamma'})\to C^{2,\beta}_{\sym}(\Spheqtres\setminus D^{\Spheqtres}_{\T_0}(l/m),\tilde{g},f_{2,\gamma'})\times\R,
		\end{equation*}
			such that for $v=\Rcal_{\Sph}(f,E)$ the following hold.
		     \begin{enumerate}[(i)]
		    	\item $\Lcal_{\Spheqtres} v=E$.
		    	\item $v=0$ on $\uC$.
		    	\item $v-f$ is a constant on $\T_{l/m}$ (recall that $\partial D^{\Spheqtres}_{\T_0}(l/m)=\T_{l/m}$).
		    	\item $v=f-f_{\avg}+B_{\Sph}(f,E)$ on $\T_{l/m}$.
		        \item 
		    		$\norm{v:C^{2,\beta}(\Spheqtres\setminus D^{\Spheqtres}_{\T_0}(l/m),\tilde{g},f_{2,\gamma'})}+\abs{B_{\Sph}(f,E)} $ 
\\ $\phantom{kkk}$ \hfill 
		    		$ \lesssim  \norm{f-f_{\avg}:C^{2,\beta}(\T_{l/m},\tilde{g})}+ m^{-2} \norm{E:C^{0,\beta}(\Spheqtres\setminus D^{\Spheqtres}_{\T_0}(l/m),\tilde{g},f_{0,\gamma'})}. $ 
			\end{enumerate}
			Moreover, any $v'\in C^{2,\beta}(\Spheqtres\setminus D^{\Spheqtres}_{\T_0}(l/m),\tilde{g})$ satisfying (i)-(iii) is the same as $v$.
	\end{lemma}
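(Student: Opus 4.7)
The strategy parallels the two-dimensional analogues cited in the statement but must be adapted to the three-dimensional solid-torus geometry. By the $\SSS$-symmetry built into $\Grp_{\Sph^4}[m]$, the domain $\Spheqtres \setminus D^{\Spheqtres}_{\T_0}(l/m)$ has two diffeomorphic components, namely tubular neighborhoods of $\uC$ and $\uC^{\perp}$ respectively, and it suffices to construct $v$ on the component containing $\uC$ and extend by symmetry. On that component I would decompose both $f$ on $\T_{l/m}$ and $E$ on the domain into their $\Hgrp$-averages and their oscillatory parts (recall that the $\Hgrp$-orbits are precisely the tori $\T_c$) and treat the two pieces separately.

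For the rotationally invariant piece, the equation $\Lcal_{\Spheqtres} v_{\avg} = E_{\avg}$ becomes the ODE \eqref{eq:jacobirotinvar} in $\zz$, whose fundamental pair $\phi_\uC, \phi_{\uC^{\perp}}$ is provided by Lemma \ref{lem:phiC}. By Lemma \ref{lem:phiC}(\ref{phiC:1}) the unique (up to scale) rotationally invariant Jacobi field smooth at $\uC$ is a multiple of $\phi_{\uC^{\perp}}$, which is nonzero at $\uC$ by the monotonicity and normalization in \ref{phiC:2}, \ref{phiC:4}; hence the conditions of smoothness at the core and $v_{\avg} = 0$ on $\uC$ together with the inhomogeneous ODE uniquely determine $v_{\avg}$ via a standard variation-of-parameters Green's function. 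The constant $B_{\Sph}(f,E)$ then emerges as the value of $v_{\avg}$ on $\T_{l/m}$, consistent with property (iv).

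For the oscillating piece, the $\Grp_{\Sph^4}[m]$ symmetry restricts $(\tilde{\xx},\tilde{\yy})$-Fourier modes to angular frequency of order at least $m$, so that on each such mode the operator $\Lcal_{\tilde g} = m^{-2}\Lcal_{\Spheqtres}$ reduces to a Sturm-Liouville operator in $\tilde{\zz}$ whose effective potential is bounded below by a positive constant. A perturbation argument analogous to that of Lemma \ref{lem:linearCyl}, starting from the flat Laplacian and using that $\tilde g$ differs from a flat cylindrical metric by an $O(m^{-1})$ perturbation, inverts these modal operators with zero Dirichlet data on $\uC$ and modal data on $\T_{l/m}$, giving exponential decay at rate $\ge 1$ in $\tilde{\zz}$ (hence at rate $\ge \gamma'$) and operator bounds uniform in mode and in $m$. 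Rescaling from $\Lcal_{\tilde g}$ back to $\Lcal_{\Spheqtres}$ introduces the factor $m^{-2}$ multiplying $\|E\|$ in estimate (v).

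Assembling the two pieces, weighted Schauder estimates as in Lemma \ref{lem:linearCyl} upgrade the resulting bounds to the claimed $C^{2,\beta}$ estimate, and uniqueness follows from the same analysis applied to the homogeneous problem: the invariant part must vanish (no nontrivial rotationally invariant Jacobi field smooth at $\uC$ vanishes there) and the oscillating part must vanish (positive spectral gap on the symmetric Fourier modes). The floor $m^{4+\beta}e^{-m\gamma'\delta_s}$ in $f_{2,\gamma'}$ is calibrated precisely to accommodate $v_{\avg}$ near the core $\uC$, where the rotationally invariant part need not decay. I expect the principal difficulty to lie in this near-core analysis: rigorously controlling $v_{\avg}$ through the degenerating coordinate system near $\uC$ via the Green's function built from $\phi_\uC, \phi_{\uC^{\perp}}$, and tracking the correct power of $m$ that justifies the $f_{2,\gamma'}$ floor while preserving the $m^{-2}$ factor in front of $\|E\|$.
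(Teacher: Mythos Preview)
Your avg/osc decomposition is a natural alternative, and your treatment of the rotationally invariant piece via variation of parameters with $\phi_{\uC},\phi_{\uC^{\perp}}$ is sound. The genuine gap is in the oscillating piece. Your assertion that $\tilde g$ differs from a flat cylindrical metric by $O(m^{-1})$ holds only on a slab of bounded $\tilde g$-width near $\T_0$: in the coordinates of \eqref{eq:tildeg} the metric reads $(1+\sin(2\tilde{\zz}/m))\,d\tilde{\xx}^2+(1-\sin(2\tilde{\zz}/m))\,d\tilde{\yy}^2+d\tilde{\zz}^2$, and the $d\tilde{\yy}^2$ coefficient degenerates to zero as $\tilde{\zz}\to m\pi/4$. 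A perturbation-from-flat argument in the style of Lemma~\ref{lem:linearCyl} therefore cannot invert the modal operators on the whole solid torus; the Sturm--Liouville problems you describe acquire a regular singular point at $\uC$ that would need direct analysis, uniform in the mode indices and in $m$. You flag the near-core region as the principal difficulty, but you locate it in $v_{\avg}$ (a single ODE with known fundamental solutions, hence tractable) rather than in $v_{\osc}$, where your proposed mechanism actually breaks down.

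The paper avoids this analysis by decomposing the domain geometrically rather than spectrally. On the slab $\Omega[l/m,l/m+\delta_s]$ it applies Lemma~\ref{lem:linearCyl} to produce a solution $v_1'$ with exponential decay in $\tilde{\zz}$; on the far region $\Spheqtres\setminus D^{\Spheqtres}_{\T_0}(\log m/m)$ it invokes standard Dirichlet theory in the \emph{unscaled} metric $g$---a fixed smooth boundary-value problem requiring no smallness---to produce $v_1''$. The weight floor $m^{4+\beta}e^{-m\gamma'\delta_s}$ in $f_{2,\gamma'}$ is chosen precisely so that the unscaled $C^{2,\beta}(g)$ bound on $v_1''$, once rescaled to $\tilde g$ and divided by the weight, lands in the target norm with the $m^{-2}$ factor in front of $\|E\|$. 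The two pieces are glued with cutoffs $\psi_s,\underline{\psi}$, and the residual error $E_1=[\Lcal_{\Spheqtres},\underline{\psi}]v_1''$, supported near $\tilde{\zz}\approx\log m$, is contracted by a factor $\lesssim m^{7+\beta}e^{-m\gamma'\delta_s}\le 1/2$ and iterated away.
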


	\begin{proof}
		We will just prove the case $l=2$, the other case is similar.
		
		The uniqueness simply follows from the fact that the only function which satisfies (i) and (iii) with $(f,E)=(0,0)$ is a multiple of $\phi_\uC$. And by (ii) it must be $0$. By defining $(\Rcal_{\Sph},B_{\Sph})(1,0)=(0,0)$, we can again assume that $f_{\avg}=0$. 
		
		For the proof we define cutoff functions $\psi_s,\underline{\psi}\in C^{\infty}_{\sym}(\Spheqtres)$ by
			\begin{equation*}
				\psi_s:=\cutoff{m\delta_s}{m\delta_s-1}{\tilde{\zz}}(0,1),\quad \underline{\psi}:=\cutoff{\log{m}}{\log{m}+1}{\tilde{\zz}}(0,1).
			\end{equation*}
		
		Now given $E\in C^{0,\beta}_{\sym}(\Spheqtres\setminus D^{\Spheqtres}_{\T_0}(2/m),\tilde{g},f_{0,\gamma'})$, $f\in C^{2,\beta}_{\sym}(\T_{2/m},\tilde{g})$, by using \ref{lem:linearCyl} we can define $(v'_1,\bar{\mu}_1)\in C^{2,\beta}_{\sym}(\Omega[2/m,2/m+\delta_s],\tilde{g},e^{-\tilde{\zz}})\times \mathbb{R}$ by $(v'_1,\bar{\mu}_1)=(\Rcal_{\Omega},B_{\Omega})(m^{-2}\psi_s E,f)$, and we have by the definitions in \ref{def:weight}
			\begin{multline}
\label{eq:v1prime}
			 \norm{v'_1:C^{2,\beta}(\Omega[2/m,2/m+\delta_s],\tilde{g},e^{-\tilde{\zz}})}+\abs{\bar{\mu}_1}
\\
			\lesssim  \norm{f:C^{2,\beta}(\T_{2/m},\tilde{g})}+ m^{-2} \norm{E:C^{0,\beta}(\Spheqtres\setminus D^{\Spheqtres}_{\T_0}(2/m),\tilde{g},f_{0,\gamma'})}.
		\end{multline}
		
	Then by standard theory on the sphere $\Spheqtres$, there is a function $v''_{1}\in C_{\sym}^{2,\beta}(\Spheqtres\setminus D^{\Spheqtres}_{\T_0}(\log m/m))$, such that 
		\begin{equation*}
			\Lcal_{\Spheqtres}v''_{1}=(1-\psi_s^2)E-[\mathcal{L}_{\Spheqtres},\psi_s]v'_1,\quad v''_{1}\equiv \mathrm{const} \text{ on } \T_{\log m/m},\quad v''_{1}\equiv 0 \text{ on } \uC.
		\end{equation*}
	And we have for $g=g_{\Sph^3}$ by the definition of $\psi_s$ and \eqref{eq:v1prime}
	\begin{equation}\label{eq:v1dprimeg}
\begin{aligned} 
	    &\norm{v''_1:C^{2,\beta}(\Spheqtres\setminus D^{\Spheqtres}_{\T_0}(\log m/m),g)} 
\\
		\lesssim& \,  \norm{(1-\psi_s^2)E-[\mathcal{L}_{\Spheqtres},\psi_s]v'_1:C^{0,\beta}(\Spheqtres\setminus D^{\Spheqtres}_{\T_0}(\log m/m),g)} 
\\
		\lesssim&  \, m^{2+\beta}\left(\norm{E:C^{0,\beta}(\Spheqtres\setminus D^{\Spheqtres}_{\T_0}(\delta_s-1/m),\tilde{g})}
+\norm{v'_1:C^{2,\beta}(\Omega[\delta_s-1/m,\delta_s],\tilde{g})}\right) 
\\
		\lesssim & \, m^{4+\beta}e^{-m\gamma'\delta_s}\left( \norm{f:C^{2,\beta}(\T_{2/m},\tilde{g})}
+ m^{-2} \norm{E:C^{0,\beta}(\Spheqtres\setminus D^{\Spheqtres}_{\T_0}(2/m),\tilde{g},f_{0,\gamma'})}\right).
\end{aligned} 
	\end{equation}
	Thus
	\begin{multline}\label{eq:v1dprime}
		 \norm{v''_1:C^{2,\beta}(\Spheqtres\setminus D^{\Spheqtres}_{\T_0}(\log m/m),\tilde{g},f_{2,\gamma'})}\\
		\lesssim  m^{-4-\beta} e^{m\gamma'\delta_s}\norm{v''_1:C^{2,\beta}(\Spheqtres\setminus D^{\Spheqtres}_{\T_0}(\log m/m),g)}\\
		\lesssim   \norm{f:C^{2,\beta}(\T_{2/m},\tilde{g})}+ m^{-2} \norm{E:C^{0,\beta}(\Spheqtres\setminus D^{\Spheqtres}_{\T_0}(2/m),\tilde{g},f_{0,\gamma'})}.
	\end{multline}	
		
	We then define $v_1:=\psi_sv'_1+\underline{\psi}v''_1$ and $-E_1:=\Lcal_{\Spheqtres}v_1-E$. From \eqref{eq:v1prime} and \eqref{eq:v1dprime}, we have
	\begin{equation*}
		\norm{v_1:C^{2,\beta}(\Spheqtres\setminus D_{\T_0},\tilde{g},f_{2,\gamma'})}\lesssim\norm{f:C^{2,\beta}(\T_{2/m},\tilde{g})}+ m^{-2} \norm{E:C^{0,\beta}(\Spheqtres,\tilde{g},f_{0,\gamma'})}.
	\end{equation*}
	Moreover, from all the definitions and the fact that $\underline{\psi}(1-\psi_s^2)=(1-\psi_s^2)$, we have $-E_1=[\Lcal_{\Spheqtres},\underline{\psi}]v''_1$, thus by \eqref{eq:v1dprimeg} when $m$ is big enough
	\begin{align*}
		&\norm{E_1:C^{0,\beta}(\Spheqtres\setminus D_{\T_0},\tilde{g},f_{0,\gamma'})}\\
		\lesssim &	\norm{[\Lcal_{\Spheqtres},\underline{\psi}]v''_1:C^{0,\beta}(\Omega[\log m/m,(\log m+1)/m],\tilde{g},1/m)}\\
		\lesssim &	m^3\norm{v''_1:C^{2,\beta}(\Omega[\log m/m,(\log m+1)/m],g)}\\
		\lesssim&m^{7+\beta}e^{-m\gamma'\delta_s}\left( \norm{f:C^{2,\beta}(\T_{2/m},\tilde{g})}+ m^{-2} \norm{E:C^{0,\beta}(\Spheqtres\setminus D^{\Spheqtres}_{\T_0}(2/m),\tilde{g},f_{0,\gamma'})}\right)\\
		\leq&1/2\left( \norm{f:C^{2,\beta}(\T_{2/m},\tilde{g})}+ m^{-2} \norm{E:C^{0,\beta}(\Spheqtres\setminus D^{\Spheqtres}_{\T_0}(2/m),\tilde{g},f_{0,\gamma'})}\right).
	\end{align*}
	
We can then iterate this process to get the sequence $\{(v_n,E_n,\bar{\mu}_n)\}_{n=1}^{\infty}$ and get the results by defining $(\Rcal_{\Sph},B_{\Sph})(f,E):=\sum_{i=1}^{n}(v_n,\bar{\mu}_n)$.
	\end{proof}
	
\subsection*{Main Proposition on $\Spheqt$} 
	
	 \begin{proposition}[Linear theory on $\Spheqt$ cf. {\cite[Proposition 7.1]{kapouleas:finite} and \cite[Proposition 6.1]{kapouleas:zou:S2}}] 
\label{lem:linearS}
$\phantom{k}$ \\ 
Gi\-ven 
$E \in C^{0,\beta}_{\sym}(\Spheqtres,\tilde{g})$ with support in $D^{\Spheqtres}_{\T_0}(4/m)$,  
there exists $v\in C^{2,\beta}_{\sym}(\Spheqtres,\tilde{g})$ such that $\Lcal_{\Spheqtres}v=E$. 
Moreover, the following hold.
		\begin{enumerate}[(i)]
			\item\label{item:vosc} $\norm{v_{\osc}:C^{2,\beta}(\Spheqtres,
				\tilde{g},f_{2,\gamma'})}\lesssim m^{-2} \norm{E:C^{0,\beta}(\Spheqtres,
				\tilde{g})}$.
			\item\label{item:vavg} $v_{\avg}$ is given by
			\begin{multline*}
				v_{\avg}(\zz) \, = \, -\phi_{\uC}(\zz)\left(\int_{0}^{\zz}\frac{\phi_{\uC^{\perp}}}{W}(t)E_{\avg}(t)dt \, + \, \int_{0}^{\pi/4}\frac{\phi_{\uC}}{W}(t)E_{\avg}(t)dt\right) 
\\ 
+ \, \phi_{\uC^{\perp}}(\zz)\int_{\pi/4}^{\zz}\frac{\phi_{\uC}}{W}(t)E_{\avg}(t)dt,
			\end{multline*}
			where $W=\phi_{\uC}\phi'_{\uC^{\perp}}-\phi_{\uC^{\perp}}\phi'_{\uC}$ is the Wronskian.
			\item\label{item:vavgest} There exists $\bar{\mu}\in\R$ such that (recall \ref{def:weight}) 
			\begin{equation*}
				\norm{v-\bar{\mu}\phi_{\uC}:C^{2,\beta}(\Omega[0,\pi/4],
					\tilde{g},f_{2,\gamma'})}+m^{-1}\abs{\bar{\mu}}\lesssim m^{-2} \norm{E:C^{0,\beta}(\Spheqtres,
					\tilde{g})}. 
			\end{equation*}
		\end{enumerate}
	\end{proposition}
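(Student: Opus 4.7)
My plan is to decompose $E = E_{\avg} + E_{\osc}$ orthogonally under $\Hgrp$-averaging and build $v_{\avg}$ and $v_{\osc}$ separately; since $\Lcal_{\Spheqtres}$ commutes with the $\Hgrp$-action, $v := v_{\avg} + v_{\osc}$ solves the equation.

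For $v_{\avg}$, the formula in (ii) is the variation-of-parameters solution in the basis $\{\phi_\uC, \phi_{\uC^\perp}\}$ of the reduced ODE in \eqref{eq:jacobirotinvar}, with the reference points chosen so that the two potential singularities cancel. Smoothness at $\zz = \pi/4$ is immediate because the coefficient of $\phi_{\uC^\perp}$ there, namely $\int_{\pi/4}^{\zz} \frac{\phi_\uC}{W} E_{\avg}\,dt$, vanishes at $\zz = \pi/4$. For smoothness at $\zz = -\pi/4$, the key is that $E$ is $\SSS$-invariant by assumption, so $E_{\avg}$ is even in $\zz$; combined with $\phi_\uC(-\zz) = \phi_{\uC^\perp}(\zz)$ from Lemma \ref{lem:phiC} (which in turn forces $W$ to be even), the substitution $t \mapsto -t$ shows that the coefficient of $\phi_\uC$ at $\zz = -\pi/4$ also vanishes. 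To extract $\bar{\mu}$, observe that because $\supp E \subset D^{\Spheqtres}_{\T_0}(4/m)$, for $\zz \in [4/m, \pi/4]$ both integrals in the coefficient of $\phi_\uC$ reduce to constants and the integral in front of $\phi_{\uC^\perp}$ vanishes, so $v_{\avg} \equiv \bar{\mu}\phi_\uC$ there. The bound $m^{-1}|\bar{\mu}| \lesssim m^{-2} \|E\|$ then follows because $E_{\avg}$ is supported in a $\zz$-interval of length $O(1/m)$ while $\phi_\uC/W$ and $\phi_{\uC^\perp}/W$ are $O(1)$ near the origin.

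For $v_{\osc}$ I would combine Lemma \ref{lem:linearCyl} on the slab $\Omega[2/m, 2/m+\delta_s]$ with Lemma \ref{lem:linearSd} on the exterior solid tori around $\uC$ and $\uC^\perp$, in an iteration mirroring the proof of Lemma \ref{lem:linearSd} itself. Concretely, absorb the bulk of $E_{\osc}$ via Lemma \ref{lem:linearCyl} on each side of $\T_0$ (the $\SSS$-symmetry handles both sides at once), cut off the slab solution with a bump supported in a thin annulus outside $\supp E$, and extend by Lemma \ref{lem:linearSd}; the commutator error with $\Lcal_{\Spheqtres}$ is supported in that annulus where the slab solution has already decayed by a factor $\lesssim m^{7+\beta}e^{-m\gamma'\delta_s}$, which is the same geometric factor that powered the iteration in Lemma \ref{lem:linearSd}, so convergence for $m$ large is automatic. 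Because $E_{\osc}$ has zero orbit-average on every $\T_c$, the scalar corrections $B_\Omega$ and $B_{\Sph}$ vanish at every step, so no spurious $\phi_\uC$ component is introduced and the $\bar{\mu}$ in (iii) is determined entirely by $v_{\avg}$.

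The main obstacle is tracking the weight $f_{2,\gamma'}$ consistently through the cut-and-extend iteration: inside the tube the natural weight is $e^{-\gamma'\tilde{\zz}}$, while outside it is the amplified constant $m^{4+\beta}e^{-m\gamma'\delta_s}$, and the change of weights across the cutoff region must be bounded uniformly, just as in the passage from \eqref{eq:v1dprimeg} to \eqref{eq:v1dprime} in the proof of Lemma \ref{lem:linearSd}. Once this is in hand, assembling (i) from the $v_{\osc}$ construction together with the closed formula (ii) and the constant $\bar{\mu}$ isolated on $[4/m, \pi/4]$ yields (iii).
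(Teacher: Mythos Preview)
Your treatment of $v_{\avg}$ and the extraction of $\bar\mu$ is correct and is exactly what the paper does.

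For $v_{\osc}$ the paper takes a considerably shorter route than your proposed iteration. It first appeals to standard elliptic theory on the whole sphere (using Lemma~\ref{lem:nokernel}) to produce $v$ globally, and then observes that on the subspace $H^1_{\osc}\cap C^\infty_{\sym}$ the operator $-\Lcal_{\tilde g}=-\Delta_{\tilde g}-3m^{-2}$ has a \emph{uniform spectral gap}: any $\Grp_{\Spheqtres}[m]$-symmetric function with zero torus-average must carry a Fourier mode of wavenumber $\gtrsim m$ along $\T_c$, so $-\Delta_{\tilde g}\gtrsim 1$ there and the $3m^{-2}$ potential is harmless. This gives the unweighted bound $\|v_{\osc}:C^{2,\beta}(\Spheqtres,\tilde g)\|\lesssim m^{-2}\|E\|$ in one stroke. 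For the weighted decay the paper does not rebuild anything: since $E$ vanishes on $\Spheqtres\setminus D_{\T_0}(4/m)$, the function $v-\bar\mu\phi_{\uC}$ satisfies conditions (i)--(iii) of Lemma~\ref{lem:linearSd} with $l=4$ and $E=0$, so by the \emph{uniqueness} clause of that lemma it equals $\Rcal_{\Sph}(v|_{\T_{4/m}},0)$, and estimate (v) there, fed the boundary data $v_{\osc}|_{\T_{4/m}}$ controlled by the spectral-gap bound, delivers the $f_{2,\gamma'}$ estimate immediately.

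Your iteration could be made to work, but as written it has a gap: the slab of Lemma~\ref{lem:linearCyl} is $\Omega[2/m,2/m+\delta_s]$, which does not cover the portion of $\supp E$ in $[0,2/m]$ nor provide any mechanism to cross $\T_0$; you would need either a new two-sided slab lemma or an $\SSS$-symmetric Neumann version starting at $\zz=0$, neither of which is available. More to the point, re-running the iteration of Lemma~\ref{lem:linearSd} inside the present proof duplicates work already packaged in that lemma: the paper's spectral-gap-plus-uniqueness argument uses Lemma~\ref{lem:linearSd} only as a black box.
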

	\begin{proof}
		The existence of $v$ simply follows by the standard theory on the sphere. (ii) simply follows by variation of parameters of the ODE \eqref{eq:jacobirotinvar} and the boundary conditions on $\uC$ and $\T_0$. Moreover, the eigenvalues of $\Lcal_{\tilde{g}}$ on the space $H^{1}_{\osc}(\Spheqtres):=\{u\in H^{1}_{\sym}(\Spheqtres):u_{\avg}=0\}$ are uniformaly bounded below, thus
		\begin{equation}\label{eq:vosc}
			\norm{v_{\osc}:C^{2,\beta}(\Spheqtres,
				\tilde{g})}\lesssim m^{-2} \norm{E_{\osc}:C^{0,\beta}(\Spheqtres,
				\tilde{g})}.
		\end{equation} 
		Moreover, by separation of variable, we have $v_{\osc}\equiv 0$ on $\uC$.
			
		Now we define $\bar{\mu}$ by requiring on $\T_{4/m}$, $v_{\avg}=\bar{\mu}	\phi_\uC$.
	 From (ii) and the support of $E$, we see that
		\begin{equation*}
			\bar{\mu}=-\int_{0}^{\pi/4}\frac{\phi_{\uC^{\perp}}}{W}(t)E_{\avg}(t)dt-\int_{0}^{\pi/4}\frac{\phi_{\uC}}{W}(t)E_{\avg}(t)dt.
		\end{equation*}
		Thus by \eqref{eq:phiC} and the support of $E$
		\begin{equation*}
			\abs{\bar{\mu}}\lesssim m^{-1} \norm{E:C^{0,\beta}(\Spheqtres,
				\tilde{g})}
		\end{equation*}
		Moreover, now the function $v-\bar{\mu}\phi_\uC$ satisfies (i)-(iii) in \ref{lem:linearSd} with $f=v|_{\T_{4/m}}$ and $l=4$. We then must have $v-\bar{\mu}\phi_\uC=\Rcal_{\Spheqtres}(v|_{\T_{4/m}},E)$ and thus by \eqref{eq:vosc}
		\begin{multline}\label{eq:vbarmu}
			\norm{v-\bar{\mu}\phi_{\uC}:C^{2,\beta}(\Spheqtres\setminus D_{\T_0}(4/m), \tilde{g},f_{2,\gamma'})}
\\ 
\lesssim m^{-2} \norm{E:C^{0,\beta}(\Spheqtres, \tilde{g})}+\norm{v_{\osc}:C^{2,\beta}(\T_{4/m},\tilde{g})}          
				\lesssim m^{-2} \norm{E:C^{0,\beta}(\Spheqtres,
					\tilde{g})}. 
		\end{multline}
		Finally by noticing that $v_{\avg}-\bar{\mu}\phi_{\uC}$ vanishes outside $D_{\T_0}(4/m)$ and satisfies the ODE
			\begin{equation*}
			\left(\frac{d^2}{d\tilde{\zz}^2}-\frac{2}{m}\tan\left(\frac{2\tilde{\zz}}{m}\right)\frac{d}{d\tilde{\zz}}+\frac{3}{m^2}\right)(v_{\avg}-\bar{\mu}\phi_{\uC})=\frac{E_{\avg}}{m^2},
		\end{equation*}
		by variation of constant we have the estimate 
		\begin{equation*}
			\norm{v_{\avg}-\bar{\mu}\phi_{\uC}:C^{2,\beta}(\Spheqtres\setminus D_{\T_0}(4/m),
				\tilde{g}}\lesssim m^{-2} \norm{E:C^{0,\beta}(\Spheqtres,
				\tilde{g})}.
		\end{equation*}
		 combing with \eqref{eq:vosc} and \eqref{eq:vbarmu}, we have the full estimate for $v-\bar{\mu}\phi_{\uC}$.
	\end{proof}
	  
	  \section{Three-catenoidal bridges}
\label{S:cat} 

	  \subsection*{Three-catenoids in $T_p\Sph^4$}
	  \begin{definition}\label{def:fermisph}
	  	Let $g$ denote $g_{\Sph^4}$. Given $p\in\Spheqtres$, we first define the function $\varsigma$ on $\T_p\Sph^4$ by requesting $\varsigma=(\exp_p^{\Spheqtres,\Sph^4})^{-1}(\zzcheck)$ (recall \ref{def:fermi}). We then extend $\varsigma$ to cylindrical coordinates $(\varrho, \vartheta, \varsigma):\T_p\Sph^4\setminus \mathrm{span}\{\nu_{\Spheqtres}(p)\}\to \R_+\times\Sph^2\times\R$. 
	  	 We also define $U:=D^{\Spheqtres,\Sph^4}_p(\inj^{\Spheqtres,\Sph^4}_p/2)\subset \Sph^4$
	  	and a Riemannian metric $\mathring{g}$ on $U$ by
	  	\begin{equation}\label{eq:mathringg}
	  		\mathring{g}=(\exp_p^{\Spheqtres,\Sph^4})_*g|_p.
	  	\end{equation}
	  	
	  \end{definition}
	 \begin{remark}[cf. {\cite[Example 2.20]{gLD}}]
	 	We have for $p=(0,0,0,1,0)\in \Spheqtres\subset \Sph^4\subset \R^5$
	 	\begin{equation}\label{eq:fermiexp}
	 		\exp_p^{\Spheqtres,\Sph^4}(\varrho, \vartheta, \varsigma)=(\vartheta\sin\varrho\cos\varsigma,\cos\varrho\cos\varsigma,\sin\varsigma),
	 	\end{equation}
	 	and thus
	 	\begin{equation}\label{eq:fermimetric}
	 		(\exp_p^{\Spheqtres,\Sph^4})^{*}g=\cos^2\varsigma(d\varrho^2+\sin^2\varrho g_{\Sph^2})+d\varsigma^2.
	 	\end{equation}
	 	
	 \end{remark}
	    
	 \begin{notation}\label{not:cyl}
	 	Let $\Cylinder := \Sph^2\times\R\subset\R^3\times\R$ be the three-cylinder embedded in $\R^4$ and $\chi$ the standard product metric on $\Cylinder$.
	 \end{notation}
	 
	 Given $p \in \Spheqtres$ and $\radius \in \R_+$, we define a three-catenoid $\K[p, \radius] \subset T_p\Sph^4 \cong \R^4$ of size $\radius$ and its parametrization $X_\K=X_\K[p, \radius] : \Cylinder \to \K[p,\radius]$ by taking (recall \ref{def:fermisph})
	 \begin{equation}
	 	(\varrho, \vartheta, \varsigma)\circ X_\K[p, \radius] (\theta,s)=(\rho(s),\theta,\sigma(s)),\label{eq:XK}
	\end{equation}
	where (cf. \cite[Section 3.1]{Pacard})
	\begin{equation}
		\rho(s):=\radius \sqrt{\cosh{2s}},\quad \sigma(s):= \int_0^s\frac{\radius^2}{\rho(t)}dt=\frac{\radius}{\sqrt{2}}F\left(\arccos\frac{1}{\sqrt{\cosh{2s}}},\frac{\sqrt{2}}{2}\right), \label{eq:rhot}
	\end{equation}	
	 where $F(\cdot,\cdot)$ is the incomplete elliptic integral of the first kind (see e.g. \cite[Chapter 17]{AS}). From now on we will use $X_\K$ to identify $\K[p, \radius]$ with $\Cylinder$; $\theta$ and $s$ can then be considered as maps on $\K[p, \radius]$ and by \eqref{eq:XK}, \eqref{eq:rhot} and \ref{not:cyl}, we can calculate
	 \begin{equation}\label{eq:gK}
	 	g_{\K}:=X_{\K}^*(g|_p)=\rho^2\chi.
	 \end{equation}
	 Alternatively $\{(\varrho,\vartheta,\varphi_{cat}(\varrho)):(\varrho,\vartheta)\in [\radius,+\infty)\times\Sph^2\}\subset T_p\Sph^4$ is the part above the waist of $\K[p, \radius]$, where the function $\varphi_{cat}(r) = \varphi_{cat}[\radius](r) : [\radius, +\infty) \to \R$ is defined by
	  \begin{equation}\label{eq:varphicat}
	  	\varphi_{cat}[\radius](r):=\int_{\radius}^r\frac{1}{\sqrt{(\frac{t}{\radius})^4-1}}dt=\frac{\radius}{\sqrt{2}}F\left(\arccos\frac{\radius}{r},\frac{\sqrt{2}}{2}\right).
	  \end{equation}
	  
	  \begin{lemma}\label{lem:varphicat}
	  	$\norm{\varphi_{cat}(r)-\radius(T_3-\frac{\radius}{r}):C^k((9\radius,+\infty),r,dr^2,r^{-5})}\lesssim_k\radius^6$, where as \cite[(A.19)]{breiner:kapouleas:high},
	  	\begin{equation}\label{eq:T3}
	  		T_3:=\int_{1}^{\infty}\frac{1}{\sqrt{t^4-1}}dt=\frac{\sqrt{2}\Gamma\left(\frac{5}{4}\right)}{\Gamma\left(\frac{3}{4}\right)}\in(1.04,1.05).
	  	\end{equation} 
	  \end{lemma}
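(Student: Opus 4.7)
The plan is to expand $\varphi_{cat}(r)-\radius T_3$ in powers of $\radius/r$ and isolate the leading-order contribution after $-\radius^2/r$. First, the substitution $t=\radius u$ in the integral definition \eqref{eq:varphicat} gives $\varphi_{cat}(r)=\radius\int_1^{r/\radius}(u^4-1)^{-1/2}\,du$, so $\lim_{r\to\infty}\varphi_{cat}(r)=\radius T_3$ by \eqref{eq:T3}. Rewriting $\varphi_{cat}(r)-\radius T_3$ as a tail integral and subtracting the exact integral $\int_r^\infty \radius^2/t^2\,dt=\radius^2/r$, I would set
\begin{equation*}
\psi(r):=\varphi_{cat}(r)-\radius\bigl(T_3-\tfrac{\radius}{r}\bigr)=-\int_r^\infty h(t)\,dt,\qquad h(t):=\frac{\radius^2}{t^2}\bigl[(1-\radius^4/t^4)^{-1/2}-1\bigr].
\end{equation*}

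The second step is to control $h$ pointwise. For $t>9\radius$ the ratio $\radius^4/t^4<9^{-4}$, so the binomial series converges absolutely and uniformly and gives
\begin{equation*}
h(t)=\sum_{n=1}^{\infty} c_n\,\radius^{4n+2}\,t^{-(4n+2)},\qquad c_1=\tfrac12,\qquad \sup_n|c_n|<\infty.
\end{equation*}
In particular $|h(t)|\lesssim \radius^6/t^6$ on $(9\radius,\infty)$, and differentiating term-by-term (justified by geometric domination) yields $|\partial_r^j h(r)|\lesssim_j \radius^6/r^{\,6+j}$ for every $j\ge 0$.

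From the integral representation, $|\psi(r)|\lesssim \radius^6/r^5$, and for $j\ge 1$ one has $\partial_r^j\psi(r)=\partial_r^{j-1}h(r)$, so $|\partial_r^j\psi(r)|\lesssim_j\radius^6/r^{\,5+j}$. Since the weighted norm in \ref{D:newweightedHolder} uses the rescaled metric $r^{-2}dr^2$ (in which a derivative in $r$ contributes a factor $r$) and the weight $r^{-5}$, the $j$-th derivative contribution at a point $r$ becomes $r^{j+5}|\partial_r^j\psi(r)|\lesssim_j\radius^6$. The $\beta$-Hölder seminorm piece is controlled by the $(k{+}1)$-st derivative bound through the standard estimate $[\partial_r^k\psi]_{\beta;B_r}\lesssim \sup_{B_r}|\partial_r^{k+1}\psi|$ in the rescaled metric, and the same scaling converts this into the desired weighted bound by $\radius^6$.

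There is no serious obstacle here; the only point requiring care is ensuring uniformity of the derivative bounds for $r>9\radius$, which is exactly why the threshold $9\radius$ was chosen so as to make the geometric series strictly convergent with absolute constants. Putting these estimates together yields the claimed $\norm{\psi:C^k((9\radius,+\infty),r,dr^2,r^{-5})}\lesssim_k\radius^6$.
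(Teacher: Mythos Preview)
Your proof is correct and follows exactly the approach the paper sketches in one line: expanding the integrand of \eqref{eq:varphicat} at infinity via the binomial series in $\radius^4/t^4$ and reading off the tail after the $-\radius^2/r$ term. The only superfluous part is your discussion of the H\"older seminorm, since here $\beta=0$ by the convention in \ref{D:newweightedHolder}; otherwise your treatment of the weighted norm via the rescaled metric is accurate.
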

	  \begin{proof}
	  	This estimate can be seen by expanding the integral of \eqref{eq:varphicat} at $r\to\infty$.
	  \end{proof}
	  
	  \begin{lemma}\label{lem:AK}
	  	 Let $A_{\K}$ be the second fundamental form of $\K[p,\radius]\subset T_p\Sph^4\cong\R^4$ with respect to $g|_p$, then
	  	 \begin{equation*}
	  	 	A_{\K}=-2\frac{\radius^2}{\rho}ds^2+\frac{\radius^2}{\rho}g_{\Sph^2}.
	  	 \end{equation*}
	  \end{lemma}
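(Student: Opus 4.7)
The plan is to treat $\K[p,\radius]$ as a hypersurface of revolution in $\R^4 \cong T_p\Sph^4$ (equipped with the flat metric $g|_p$), with meridian curve $s \mapsto (\rho(s),\sigma(s))$ and rotational factor $\Sph^2 \subset \R^3$. Identifying $T_p\Sph^4$ with $\R^3 \times \R$ via the cylindrical coordinates $(\varrho,\vartheta,\varsigma)$ of \ref{def:fermisph}, the parametrization \eqref{eq:XK} becomes
\begin{equation*}
X_\K(s,\theta) \;=\; (\rho(s)\,\theta,\;\sigma(s)) \;\in\; \R^3 \times \R, \qquad \theta \in \Sph^2.
\end{equation*}

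First I would compute the tangent frame. The derivative along $s$ is $\partial_s X_\K = (\rho'\theta,\sigma')$, and for any tangent vector $V$ to $\Sph^2$ at $\theta$ the corresponding tangent vector to $\K$ is $(\rho\, V,0)$. Using \eqref{eq:rhot} one checks the identities $\sigma' = \radius^{2}/\rho$ and
\begin{equation*}
(\rho')^{2} + (\sigma')^{2} \;=\; \radius^{2}\,\frac{\sinh^{2}2s}{\cosh 2s} + \radius^{2}\,\frac{1}{\cosh 2s} \;=\; \radius^{2}\cosh 2s \;=\; \rho^{2}.
\end{equation*}
This reproduces \eqref{eq:gK} and identifies $\rho$ as the speed of the meridian. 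Using $\theta \cdot V = 0$ for $V \in T_\theta\Sph^2$, a unit normal is then
\begin{equation*}
\nu \;=\; \tfrac{1}{\rho}\bigl(-\sigma'\,\theta,\;\rho'\bigr),
\end{equation*}
with the sign chosen so that $A_\K$ has the stated orientation.

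Next I would compute the three blocks of $A_\K$ using $A(U,W) = \langle \partial_{U}\partial_{W} X_\K,\,\nu\rangle_{\R^4}$. For the sphere block, writing $\theta = \theta(u^i)$ locally, $\partial_i\partial_j X_\K = (\rho\,\partial_i\partial_j\theta,0)$; the identity $\theta\cdot\partial_i\partial_j\theta = -\partial_i\theta\cdot\partial_j\theta = -(g_{\Sph^2})_{ij}$ then yields
\begin{equation*}
A(\partial_i,\partial_j) \;=\; \tfrac{-\sigma'}{\rho}\cdot\rho\cdot(-1)\cdot(g_{\Sph^2})_{ij} \;=\; \sigma'\,(g_{\Sph^2})_{ij} \;=\; \tfrac{\radius^{2}}{\rho}\,(g_{\Sph^2})_{ij}.
\end{equation*}
The mixed block vanishes: $\partial_s\partial_i X_\K = (\rho'\partial_i\theta,0)$ and $\theta\cdot\partial_i\theta=0$. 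For the meridian block, I would compute $\rho'' = \radius(\cosh^{2}2s+1)/(\cosh 2s)^{3/2}$ and $\sigma'' = -\radius\sinh 2s/(\cosh 2s)^{3/2}$ from \eqref{eq:rhot}; a short simplification using $\sinh^{2}2s+1=\cosh^{2}2s$ gives the key identity $\rho'\sigma''-\sigma'\rho'' = -2\radius^{2}$, whence
\begin{equation*}
A(\partial_s,\partial_s) \;=\; \tfrac{1}{\rho}\bigl(\rho'\sigma''-\sigma'\rho''\bigr) \;=\; -\tfrac{2\radius^{2}}{\rho}.
\end{equation*}
Combining the three blocks yields the claimed expression. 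The only subtlety is keeping track of the sign conventions for $\nu$ and $A_\K$; no serious obstacle arises beyond the elementary hyperbolic-function simplifications.
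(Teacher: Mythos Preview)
Your proposal is correct and follows the same route the paper indicates: a direct computation from the parametrization \eqref{eq:rhot}, which the paper only sketches by pointing to \eqref{eq:rhot}/\eqref{eq:varphicat} and a reference. You have simply supplied the details the paper omits, and all your intermediate identities (in particular $(\rho')^2+(\sigma')^2=\rho^2$ and $\rho'\sigma''-\sigma'\rho''=-2\radius^2$) check out.
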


	  \begin{proof}
	  	This can be calculated by \eqref{eq:rhot} or \eqref{eq:varphicat}. A similar calculation has been done in \cite[Proposition 2.1]{TZ}.
	  \end{proof}
	  
  \begin{definition}[Catenoidal bridges in $\Sph^4(1)$]
\label{def:catebridge}
	Given $p\in \Spheqtres$, $\radius>0$, we define in the notation of \ref{not:cyl} 
a \emph{catenoidal bridge} $\Kcech[p,\radius] \subset \Sph^4$ and its \emph{core} $\Ku[p,\radius] \subset \Sph^4$ as follows, 
where $b$ is a large constant to be chosen independently of $\radius$ later (see the proof of \ref{prop:lineareq}), 
and $\delta'_p>0$ is a small constant which will be fixed later (see \eqref{eq:deltaprime}).  
\begin{equation*} 
\begin{gathered} 
\Kcech[p,\radius] :=X_{\Kcech}[p,\radius](\Cylinder[\radius,2\delta'_p])\subset \Sph^4, 
\qquad 
\Ku[p,\radius] :=X_{\Kcech}[p,\radius](\Cylinder[\radius,2b\radius])\subset \Sph^4, 
\\
\text{where } \qquad X_{\Kcech}[p,\radius] :=\exp_p^{\Spheqtres,\Sph^4}\circ X_{\K}[p,\radius]:\Cylinder\to \Sph^4, 
\\ 
\text{ and  } \qquad \Cylinder[\radius,r]:=\{(\theta,s)\in \Cylinder: \radius< \rho(s)< r\}.
\end{gathered} 
\end{equation*} 
	Finally using the above maps we take the coordinates $(\theta,s)$ on the cylinder as in \ref{not:cyl} to be functions on $\K[p,\radius]$ and $\Kcech[p,\radius]$, 
and thus we take $\rho(s)$ and $\sigma(s)$ in \eqref{eq:rhot} to be functions on $\Kcech[p,\radius]$. 
We also use $\chi$ to denote the metric on $\Kcech[p,\radius]$ defined by the pushfoward of $\chi$ in \ref{not:cyl} by $X_{\Kcech}[p,\radius]$.
\end{definition}

	  \subsection*{Mean curvature and volume on catenoidal bridges in $\Sph^4(1)$}
$\vspace{.162cm}$ 

	  For the rest of this section, we fix $p\in\Spheqtres$.     
	  We also denote $\Kcech=\Kcech[p,\radius]$ and $g=g_{\Sph^4}$.

  	\begin{lemma}[cf. {\cite[Example 2.20]{gLD}}]
  	Let $g_{\Kcech}$, $\nu_{\Kcech}$ and $A_{\Kcech}$ be the metric, the unit normal and the second fundamental form on $\Kcech$ induced by $g$, then
  			\begin{align}
\label{eq:gKcech}
  				g_{\Kcech}&=\rho^2(s)(1-\tanh^2(2s)\sin^2\sigma(s))ds^2+\sin^2\rho(s)\cos^2\sigma(s)g_{\Sph^2}, 
\\
\label{eq:nuKcech} 
  				\nu_{\Kcech}&=\frac{\tanh(2s)\partial_{\varsigma}-\sec^2\sigma(s)\sech^2(2s)\partial_{\varrho}}{\sqrt{1+\tan^2\sigma(s)\sech^2(2s)}}, 
  			\end{align} 

\begin{multline} 
\label{eq:AKcech}
  				\sqrt{1+\tan^2\sigma(s)\sech^2(2s)}A_{\Kcech} 
\\ 
=\left(\frac{\radius^4}{\rho^{2}(s)}\tanh(2s)(2\tan\sigma(s)+\frac{1}{2}\sinh^2(2s)\sin(2\sigma(s)))-2\frac{\radius^2}{\rho(s)}\right)ds^2  
\\
  				 +\frac{1}{2}\left(\sin(2\rho(s))\sech(2s)+\sin^2(\rho(s))\sin(2\sigma(s))\tanh(2s)\right)g_{\Sph^2}.   
\end{multline} 
  \end{lemma}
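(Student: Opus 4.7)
The plan is a direct computation in the Fermi coordinates $(\varrho, \vartheta, \varsigma)$ on a neighborhood of $p$ in $\Sph^4$ (recall \ref{def:fermisph}), where the ambient metric is $g = \cos^2\varsigma(d\varrho^2 + \sin^2\varrho\, g_{\Sph^2}) + d\varsigma^2$ by \eqref{eq:fermimetric} and $X_{\Kcech} = \exp_p^{\Spheqtres,\Sph^4} \circ X_{\K}$ has coordinate expression $(\rho(s), \theta, \sigma(s))$. I first differentiate \eqref{eq:rhot} to record the identities
\begin{equation*}
\rho'(s) = \rho(s)\tanh 2s, \qquad \sigma'(s) = \rho(s)\sech 2s, \qquad \rho'(s)^2 + \sigma'(s)^2 = \rho(s)^2,
\end{equation*}
which together with $\rho(s)^2 = \radius^2 \cosh 2s$ drive every simplification below.

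For the metric \eqref{eq:gKcech}, I pull back $g$ directly: the $\Sph^2$ directions and the $s$ direction are $g$-orthogonal at every point of $\Kcech$, so $X_{\Kcech}^* g = [\cos^2\sigma(s)\,\rho'(s)^2 + \sigma'(s)^2]\,ds^2 + \cos^2\sigma(s)\sin^2\rho(s)\,g_{\Sph^2}$, and writing $\cos^2\sigma = 1 - \sin^2\sigma$ together with the identities above converts the $ds^2$ coefficient into $\rho^2(1 - \tanh^2 2s\,\sin^2\sigma)$. For the unit normal \eqref{eq:nuKcech}, the $g$-orthogonality of the $\Sph^2$ factor to $\mathrm{span}\{\partial_\varrho, \partial_\varsigma\}$ forces $\nu_{\Kcech} = a\partial_\varrho + b\partial_\varsigma$; the orthogonality of $\nu_{\Kcech}$ to $T := \rho'\partial_\varrho + \sigma'\partial_\varsigma$ together with $g(\nu_{\Kcech}, \nu_{\Kcech}) = 1$ determines $(a,b)$ up to an overall sign, and the stated form follows by choosing the consistent orientation and rewriting the normalization via the algebraic identity $\cos^2\sigma\,(1 + \tan^2\sigma\,\sech^2 2s) = 1 - \tanh^2 2s\,\sin^2\sigma$.

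For the second fundamental form \eqref{eq:AKcech}, I use $A_{\Kcech}(X, Y) = -g(\nu_{\Kcech}, \bar\nabla_X Y)$ where $\bar\nabla$ is the Levi--Civita connection of $g$. A short calculation yields the nonzero Christoffel symbols in Fermi coordinates: $\Gamma^\varsigma_{\varrho\varrho} = \sin\varsigma\cos\varsigma$, $\Gamma^\varrho_{\varrho\varsigma} = -\tan\varsigma$, $\Gamma^\varrho_{ab} = -\sin\varrho\cos\varrho\,(g_{\Sph^2})_{ab}$, $\Gamma^\varsigma_{ab} = \sin\varsigma\cos\varsigma\sin^2\varrho\,(g_{\Sph^2})_{ab}$, together with $\Gamma^b_{\varrho a} = \cot\varrho\,\delta^b_a$, $\Gamma^b_{\varsigma a} = -\tan\varsigma\,\delta^b_a$ and the intrinsic $\Sph^2$ symbols. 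The mixed entries $A_{\Kcech}(\partial_s, X)$ with $X$ tangent to $\Sph^2$ vanish, since $\bar\nabla_{\partial_s} X$ stays in the $\Sph^2$ directions. For the $g_{\Sph^2}$-block I use that, for $X, Y$ tangent to $\Sph^2$, one has $\bar\nabla_X Y = (\text{tangent to }\Sph^2) - \sin\rho\cos\rho\, g_{\Sph^2}(X,Y)\partial_\varrho + \sin\sigma\cos\sigma\sin^2\rho\, g_{\Sph^2}(X,Y)\partial_\varsigma$; contracting with $\nu_{\Kcech}$ and collecting double-angle terms yields the $g_{\Sph^2}$ coefficient in \eqref{eq:AKcech}. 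For the $ds^2$-block I compute
\begin{equation*}
\bar\nabla_T T = (\rho'' - 2\rho'\sigma'\tan\sigma)\partial_\varrho + (\sigma'' + \rho'^2 \sin\sigma\cos\sigma)\partial_\varsigma,
\end{equation*}
substitute $\rho'' = \rho(\tanh^2 2s + 2\sech^2 2s)$ and $\sigma'' = -\rho\,\sech 2s\,\tanh 2s$ (both obtained by differentiating the first two identities in the first paragraph), and finally convert $\rho$-powers to $\radius$-powers via $\rho^2 = \radius^2 \cosh 2s$.

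The main obstacle is the algebraic bookkeeping for the $ds^2$-coefficient of $A_{\Kcech}$: the second derivatives $\rho''$, $\sigma''$ and the Christoffel cross-terms must be combined and repeatedly reorganized using $\tanh^2 2s + \sech^2 2s = 1$ in order to match the precise form $\tfrac{\radius^4}{\rho^2}\tanh 2s\,(2\tan\sigma + \tfrac{1}{2}\sinh^2 2s\,\sin 2\sigma) - 2\radius^2/\rho$ appearing in \eqref{eq:AKcech}. Every other piece reduces to one-line verifications once the Christoffels and the three key identities for $\rho', \sigma'$ are in hand.
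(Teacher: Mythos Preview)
Your computation is correct and complete; the metric and unit normal are handled exactly as the paper does, but your treatment of the second fundamental form differs from the paper's.  The paper works extrinsically in $\R^5$: it takes the explicit parametrization $X=\exp_p^{\Spheqtres,\Sph^4}\circ X_{\K}$ from \eqref{eq:fermiexp} (with $p=(0,0,0,1,0)$) as a map into $\R^5$ and uses the Euclidean formula $(A_{\Kcech})_{\alpha\beta}=\langle\partial_\alpha\partial_\beta X,\nu_{\Kcech}\rangle$, which is valid because the radial component of $\partial_\alpha\partial_\beta X$ is orthogonal to $\nu_{\Kcech}\in T\Sph^4$.  You instead work intrinsically on $\Sph^4$, computing the Christoffel symbols of $g$ in Fermi coordinates and evaluating $-g(\nu_{\Kcech},\bar\nabla_X Y)$.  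Both routes are standard and yield the same answer; the paper's extrinsic method trades the Christoffel bookkeeping for direct second differentiation of a single vector-valued function, which is arguably a shade quicker here since \eqref{eq:fermiexp} is so simple, while your intrinsic method is more portable to settings (such as the self-shrinker section) where no convenient flat ambient embedding is at hand.
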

  \begin{proof}
  	In this proof we fix the notation $X=\exp_p^{\Spheqtres,\Sph^4}\circ X_\K:\Sph^2\times\R\to\Sph^4\subset \R^5$. Without loss of generality, we can assume that $p=(0,0,0,1,0)\in\Spheqtres$. Notice that $g_{\Kcech}=X^* g$, the results for $g_{\Kcech}$ and $\nu_{\Kcech}$ then follow by \eqref{eq:fermiexp}, \eqref{eq:fermimetric}, \eqref{eq:XK} and \eqref{eq:rhot}.
  	
  	For the calculation of the second fundamental form, we use the formula $(A_{\Kcech})_{\alpha\beta} = \langle \partial_{\alpha}\partial_{\beta}X, \nu_{\Kcech} \rangle$ and we have renamed the cylinder coordinates $(x_1,x_2,x_3) := (\theta_1,\theta_2,s )$ by choosing some local coordinates $\theta_1,\theta_2$ on $\Sph^2$, and $\alpha,\beta$ take the values 1, 2 and 3. The result then follows by \eqref{eq:fermiexp}, \eqref{eq:XK}, \eqref{eq:rhot} along with \eqref{eq:nuKcech}. 
  \end{proof}

  \begin{lemma}\label{lem:rhozh}
  	The following hold.
  	\begin{enumerate}[(i)]
  		\item\label{item:rho} $\norm{\rho^{\pm 1}:C^k(\Kcech,\chi,\rho^{\pm 1})}\lesssim_k 1$.
  		\item $\norm{\sigma:C^k(\Kcech,\chi, \radius)}\lesssim_k 1$.
  	\end{enumerate}
  \end{lemma}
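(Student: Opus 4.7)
The plan is to exploit the fact that $\rho$ and $\sigma$ depend only on the $s$-coordinate in the cylinder parametrization, so that in the product metric $\chi$ all derivatives in the $\Sph^2$-directions vanish and the weighted $C^k$ norms of \ref{D:newweightedHolder} reduce to one-dimensional estimates on $s$-intervals of length $\lesssim 1$ (namely $B_x \subset \Sph^2 \times [s_0-1/100, s_0+1/100]$ for a point $x$ with $s$-coordinate $s_0$). In particular, here the scaling function in the norm is to be taken to be the default constant $1$, so that the local balls $B_x$ are simply $\chi$-geodesic balls of radius $1/100$.

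For (i), the key observation is that $\log \rho(s) = \log\radius + \tfrac12 \log\cosh 2s$ has all $s$-derivatives uniformly bounded in $s\in\R$ and independently of $\radius$, because $\tfrac{d}{ds}\log\cosh 2s = 2\tanh 2s$ is smooth and bounded together with all its derivatives. Hence for $s, s_0$ with $|s-s_0|\le 1/100$, the quantity $\log\rho(s)-\log\rho(s_0)$ is bounded in $C^k$ uniformly in $s_0$ and $\radius$, and exponentiating gives $\rho^{\pm 1}(s)/\rho^{\pm 1}(s_0)\sim_k 1$ in $C^k$. This is precisely the assertion $\norm{\rho^{\pm 1}:C^k(\Kcech,\chi,\rho^{\pm 1})}\lesssim_k 1$.

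For (ii), the bound $\rho \ge \radius$ from \eqref{eq:rhot} gives immediately
\[
|\sigma(s)| \le \int_{-\infty}^{\infty} \frac{\radius^2}{\rho(t)}\,dt = \radius\int_{-\infty}^{\infty}(\cosh 2t)^{-1/2}\,dt \lesssim \radius, \qquad |\sigma'(s)| = \frac{\radius^2}{\rho(s)}\le \radius,
\]
and for $k\ge 2$ the expression $\sigma^{(k)}(s) = \radius^2 \bigl(d/ds\bigr)^{k-1}(1/\rho)(s)$ together with part (i) applied to $\rho^{-1}$ yields $|\sigma^{(k)}(s)|\lesssim_k \radius^2/\rho(s) \le \radius$. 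This produces the estimate $\norm{\sigma:C^k(\Kcech,\chi,\radius)}\lesssim_k 1$.

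The only real subtlety is bookkeeping: one must correctly interpret the three-argument form of the weighted Hölder norm (so that the scaling function defaults to $1$ and the weight is the last slot), and verify that derivatives of $\rho^{-1}$ in (i) translate into the needed bound on $\sigma^{(k)}$ by a direct Leibniz expansion of $\bigl(d/ds\bigr)^{k-1}(1/\rho)$. Once this is in hand the argument is just the one-dimensional calculus above.
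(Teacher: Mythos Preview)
Your proof is correct and follows essentially the same approach as the paper: reduce to one-dimensional estimates in $s$ using that $\chi$ is a product metric, bound derivatives of $\rho^{\pm 1}$ by $\rho^{\pm 1}$ itself, and then use $\sigma' = \radius^2/\rho$ together with $\rho\ge\radius$ for part (ii). The only cosmetic difference is that the paper first treats $\rho^{\pm 2}$ directly via the explicit hyperbolic identities $\rho^2=\radius^2\cosh 2s$ and $\rho^{-2}=\radius^{-2}\sech 2s$ (each derivative is a polynomial in $\cosh,\sinh$ or $\sech,\tanh$ containing the relevant factor) and then passes to $\rho^{\pm 1}$ by Leibniz induction, whereas you handle both signs at once via the logarithm; both routes are equally short.
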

  \begin{proof}
  	By \eqref{eq:rhot}, the estimate in (i) with $\rho^2$ instead of $\rho$ is obvious, and the estimate for $\rho$ then simply follows by induction on $k$ and general Leibniz rule for derivative. 
The estimate for $\rho^{-2}$ also follows by \eqref{eq:rhot} along with the observation that $\rho^{-2}=\radius^{-2}\sech s$ and for each $k\geq 1$, 
the $k$-th derivative of $\sech s$ is a polynomial expression in $\sech s$ and $\tanh s$, and each term of	
which contains a factor of $\sech s$. The estimate for $\rho^{-1}$ again follows by induction on $k$ and general Leibniz rule. When $k\geq 1$, (ii) then follows by (i) along with \eqref{eq:rhot} and $\rho\geq \radius$; and the case $k=0$ follows by
  	\begin{equation*}
  		\lim_{s\to \infty}\sigma(s)=\radius T_3,
  	\end{equation*}
  	which is implied by the formulae \eqref{eq:rhot} and \eqref{eq:T3} along with a change of variable.
  \end{proof}	
  	
  	\begin{lemma}\label{lem:HK}
  		Let $\mathring{g}_{\Kcech}$ and $\mathring{A}_{\Kcech}$ be the metric, the unit normal and the second fundamental form on $\Kcech$ induced by $\mathring{g}$ (recall \eqref{eq:mathringg}), the following hold on $\Kcech$.
  		\begin{enumerate}[(i)]
  			\item\label{item:fermialpha} $\norm{g_{\Kcech}-\mathring{g}_{\Kcech} :C^k(\Kcech,\chi,\rho^4)}\lesssim_k 1$.
  			\item\label{item:fermiA} $\norm{A_{\Kcech}-\mathring{A}_{\Kcech}:C^k(\Kcech,\chi,\rho^2\radius)}\lesssim_k 1$
  			\item \label{item:fermiH}	$\norm{\rho^2H:C^k(\Kcech,\chi,\rho^2\radius)}\lesssim_k 1$, where $H$ is the mean curvature on $\Kcech$ induced by $g$.
  		\end{enumerate}
  	
  	\end{lemma}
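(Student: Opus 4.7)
My plan is to prove all three estimates by direct comparison of the explicit formulas already established: for the Euclidean side, $\mathring{g}_{\Kcech}=\rho^2\chi$ from \eqref{eq:gK} and $\mathring{A}_{\Kcech}=-2\frac{\radius^2}{\rho}ds^2+\frac{\radius^2}{\rho}g_{\Sph^2}$ from Lemma \ref{lem:AK} (both are tensors on $\Cylinder$ via $X_{\K}$); for the spherical side, the explicit formulas \eqref{eq:gKcech} and \eqref{eq:AKcech}. Both pairs of tensors are diagonal in the $(s,\theta)$ coordinates, so the tensor norm in $\chi$ is controlled by the scalar coefficients. The two small parameters that drive everything are $\rho\le 2\delta'_p$ and $\sigma\lesssim \radius T_3$ (from \eqref{eq:rhot} and \ref{lem:rhozh}(ii)); moreover $\radius\le\rho$ on $\Kcech$. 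All higher-order $C^k$ estimates will follow from the pointwise estimates by combining Lemma \ref{lem:rhozh}, the product rule \eqref{E:norm:mult}, and the obvious fact that $\tanh(2s)$, $\sech(2s)$, $\sinh^2(2s)\radius^4/\rho^4$ are bounded in $C^k(\Kcech,\chi)$ (in fact $\cosh 2s=\rho^2/\radius^2$ and $\sech 2s=\radius^2/\rho^2$).

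For (i), one computes coefficient-wise: the $ds^2$-coefficient of $g_{\Kcech}-\mathring g_{\Kcech}$ is $-\rho^2\tanh^2(2s)\sin^2\sigma$, bounded in absolute value by $\rho^2\sigma^2\lesssim \rho^2\radius^2\le\rho^4$; the $g_{\Sph^2}$-coefficient is $\sin^2\rho\cos^2\sigma-\rho^2$, and Taylor expansion gives $-\rho^4/3-\rho^2\sigma^2+O(\rho^6+\rho^4\sigma^2)$, again $O(\rho^4)$. For (ii), after factoring out $(1+\tan^2\sigma\sech^2 2s)^{-1/2}=1+O(\sigma^2)=1+O(\radius^2)$, I expand the bracketed quantities in \eqref{eq:AKcech}: for the $ds^2$-coefficient the leading $-2\radius^2/\rho$ matches $\mathring A_{\Kcech}$, and the remaining term $\frac{\radius^4}{\rho^2}\tanh(2s)(2\tan\sigma+\tfrac12\sinh^2(2s)\sin 2\sigma)$ is $O(\radius\rho^2)$ after using $\sinh^2(2s)\sim\rho^4/\radius^4$, $\tan\sigma\lesssim\radius$ and $\radius\le\rho$; for the $g_{\Sph^2}$-coefficient, Taylor expansion of $\tfrac12\sin(2\rho)\sech(2s)=\radius^2/\rho+O(\rho\radius^2)$ recovers $\mathring A_\theta$, and the other summand is $O(\rho^2\radius)$. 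Multiplied by the $(1+O(\radius^2))$ prefactor, both differences are $O(\rho^2\radius)$, proving (ii).

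For (iii), the key ingredient is that the Euclidean catenoid is minimal: writing $g_{\Kcech}=G_s ds^2+G_\theta g_{\Sph^2}$ and $A_{\Kcech}=A_s ds^2+A_\theta g_{\Sph^2}$ with flat analogues $G^\flat_s=G^\flat_\theta=\rho^2$, $A^\flat_s=-2\radius^2/\rho$, $A^\flat_\theta=\radius^2/\rho$, one has $A^\flat_s/G^\flat_s+2A^\flat_\theta/G^\flat_\theta=0$. Using (i) and (ii), $G_s=\rho^2(1+O(\rho^2))$ gives $A_s/G_s = A^\flat_s/\rho^2 + O(\radius^2/\rho)\cdot O(\rho^2)\cdot \rho^{-2} + O(\rho^2\radius)/\rho^2 = -2\radius^2/\rho^3+O(\radius)$ (since $\radius^2/\rho\le\radius$), and analogously $2A_\theta/G_\theta=2\radius^2/\rho^3+O(\radius)$. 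The leading $\pm 2\radius^2/\rho^3$ pieces cancel, leaving $H=O(\radius)$, i.e.\ $\rho^2 H=O(\rho^2\radius)$. The $C^k$ statement is obtained by applying the same algebraic identities at the level of Hölder norms and invoking \eqref{E:norm:mult} together with (i), (ii), and Lemma \ref{lem:rhozh}. The only minor obstacle is the careful bookkeeping in (iii) to ensure that every error term in the inverse expansion of $G_s,G_\theta$ is either multiplied by $A^\flat$ or controlled using $\radius\le\rho$; once that is noted, the proof reduces to the pointwise expansion above.
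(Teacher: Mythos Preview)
Your proof is correct and follows essentially the same approach as the paper: write out $g_{\Kcech}-\mathring g_{\Kcech}$ and $\sqrt{1+\tan^2\sigma\sech^2 2s}\,A_{\Kcech}-\mathring A_{\Kcech}$ explicitly from \eqref{eq:gK}, \eqref{eq:gKcech}, \eqref{eq:AKcech} and Lemma~\ref{lem:AK}, then estimate each term via Lemma~\ref{lem:rhozh} and $\radius\le\rho$. For (iii) the paper simply invokes (i), (ii) and the minimality of $\Kcech$ in $\mathring g$; your expansion $H=A_s/G_s+2A_\theta/G_\theta$ with cancellation of the leading $\pm 2\radius^2/\rho^3$ is precisely what that one-line appeal unpacks to.
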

  	\begin{proof}
  		We first notice that by the definitions, $(\exp_p^{\Spheqtres,\Sph^4})^*\mathring{g}_{\Kcech}=g_{\K}$, $(\exp_p^{\Spheqtres,\Sph^4})^*\mathring{A}_{\Kcech}=A_{\K}$, where $g_{\K}$, $A_{\K}$ are the metric and second fundamental form on $\K$ induced by the Euclid metric.
  		
  		By \eqref{eq:gK} and \eqref{eq:gKcech}, we see that
  		\begin{equation*}
  			g_{\Kcech}-\mathring{g}_{\Kcech}=-\rho^2\tanh^2(2s)\sin^2\sigma ds^2+\rho^2(\rho^{-2}\sin^2\rho\cos^2\sigma-1)g_{\Sph^2}.
  		\end{equation*}
  		(i) then follows by \ref{lem:rhozh}. Similarly, by \ref{lem:AK} and \eqref{eq:AKcech}, and noticing that $\radius^2/\rho=\rho\sech(2s)$ by \eqref{eq:rhot}, we see that
  		\begin{multline*}
  			\sqrt{1+\tan^2\sigma\sech^2(2s)}A_{\Kcech}-\mathring{A}_{\Kcech}=\frac{\radius^4}{\rho^{2}}\tanh(2s)\left(2\tan\sigma+\frac{1}{2}\sinh^2(2s)\sin(2\sigma)\right)ds^2\nonumber\\
  			+\left(\rho\left(\frac{\sin(2\rho)}{2\rho}-1\right)\sech(2s)+\sin^2(\rho)\sin(2\sigma)\tanh(2s)\right)g_{\Sph^2}.
  		\end{multline*}
  		(ii) then follows by \ref{lem:rhozh}. (iii) follows by (i), (ii) and the minimality of $\Kcech$ in $\mathring{g}$.
  	\end{proof}
  	
\begin{lemma}[Volume on catenoidal bridges]
\label{lem:volcat}
	For any $\bar{r} \geq \radius >0$, the volume $|\K(\bar{r})|$ of $\K(\bar{r}) := \K[p,\radius] \cap \Pi_{T_p\Spheqtres}^{-1} D^{T_p\Spheqtres}_O(\bar{r})$ satisfies
	\begin{equation*}
		|\K(\bar{r})|=2|D^{T_p\Spheqtres}_O(\bar{r})|-T_3\frac{4\pi\radius^3}{3}+\oint_{\partial D^{T_p\Spheqtres}_O(\bar{r})} \varphi_{cat } \frac{\partial \varphi_{cat} }{\partial \eta}+O\left(\frac{\radius^8}{\bar{r}^5}\right).
	\end{equation*}
\end{lemma}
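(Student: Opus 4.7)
The plan is to derive an exact closed-form expression for $|\K(\bar{r})|$ and then Taylor-expand it in $\radius/\bar{r}$ to recover the stated asymptotic.

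First I would use the isothermal parametrization $X_{\K}:\Cylinder\to \K[p,\radius]$ from Section \ref{S:cat} to reduce the volume to a single scalar integral. Since $g_{\K}=\rho(s)^{2}\chi$ by \eqref{eq:gK} and $\chi$ is the standard product metric on $\Sph^{2}\times\R$, the volume element on $\K$ is $\rho(s)^{3}\,ds\,dV_{g_{\Sph^{2}}}$. Integrating over $(\theta,s)\in\Sph^{2}\times[-s_{0},s_{0}]$ with $s_{0}>0$ defined by $\rho(s_{0})=\bar{r}$, and substituting $u=\rho(s)/\radius=\sqrt{\cosh 2s}$, yields
$$|\K(\bar{r})|=8\pi\int_{0}^{s_{0}}\rho(s)^{3}\,ds=8\pi\radius^{3}\int_{1}^{\bar{r}/\radius}\frac{u^{4}}{\sqrt{u^{4}-1}}\,du.$$

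The right-hand integral is elementary. Splitting $u^{4}=(u^{4}-1)+1$ and integrating by parts via $\tfrac{d}{du}(u\sqrt{u^{4}-1})=\sqrt{u^{4}-1}+2u^{4}/\sqrt{u^{4}-1}$ to eliminate $\int\sqrt{u^{4}-1}\,du$, one deduces
$$\int_{1}^{R}\frac{u^{4}\,du}{\sqrt{u^{4}-1}}=\frac{R\sqrt{R^{4}-1}}{3}+\frac{1}{3}\int_{1}^{R}\frac{du}{\sqrt{u^{4}-1}}.$$
Since $\int_{1}^{R}du/\sqrt{u^{4}-1}=\varphi_{cat}(\bar{r})/\radius$ directly from \eqref{eq:varphicat}, this gives the exact closed-form identity
$$|\K(\bar{r})|=\frac{8\pi}{3}\,\bar{r}\sqrt{\bar{r}^{4}-\radius^{4}}+\frac{8\pi}{3}\,\radius^{2}\,\varphi_{cat}(\bar{r}).$$

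The stated formula then follows by Taylor-expanding each term. One has $\bar{r}\sqrt{\bar{r}^{4}-\radius^{4}}=\bar{r}^{3}-\radius^{4}/(2\bar{r})+O(\radius^{8}/\bar{r}^{5})$, while \ref{lem:varphicat} gives $\varphi_{cat}(\bar{r})=\radius T_{3}-\radius^{2}/\bar{r}+O(\radius^{6}/\bar{r}^{5})$; similarly $\varphi_{cat}'(\bar{r})=\radius^{2}/\sqrt{\bar{r}^{4}-\radius^{4}}=\radius^{2}/\bar{r}^{2}+O(\radius^{6}/\bar{r}^{6})$, so that the boundary integral in the statement expands as $4\pi\bar{r}^{2}\varphi_{cat}(\bar{r})\varphi_{cat}'(\bar{r})=4\pi T_{3}\radius^{3}-4\pi\radius^{4}/\bar{r}+O(\radius^{8}/\bar{r}^{5})$. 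Matching these expansions to the exact closed form above produces the stated asymptotic.

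The one step carrying real content is the integration-by-parts identity in the second paragraph, which is nothing but the algebraic reflection of the catenoidal conservation law $r^{2}\varphi_{cat}'(r)/\sqrt{1+\varphi_{cat}'(r)^{2}}=\radius^{2}$, equivalent to the minimality of $\K$. The only mild subtlety is uniformity in $\bar{r}\ge\radius$: the Taylor expansions are sharp when $\bar{r}\gg\radius$, whereas when $\bar{r}\sim\radius$ the claimed error $O(\radius^{8}/\bar{r}^{5})$ is itself of order $\radius^{3}$ and absorbs every non-leading term in the formula, so the identity is automatic there.
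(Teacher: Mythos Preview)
Your proposal is correct and follows essentially the same path as the paper: both reach the exact closed form $|\K(\bar r)|=\tfrac{8\pi}{3}\bar r\sqrt{\bar r^{4}-\radius^{4}}+\tfrac{8\pi}{3}\radius^{2}\varphi_{cat}(\bar r)$ via the same integration-by-parts identity and then Taylor-expand. The only cosmetic difference is that the paper sets up the integral from the graph parametrization $t^{2}\sqrt{1+\varphi_{cat}'^{2}}\,dt$ while you start from the conformal volume element $\rho^{3}\,ds$; after the substitution $u=\rho/\radius$ these become the same integral.
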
	 

\begin{proof}
	By direct calculation from \eqref{eq:varphicat}:
	\begin{align*}
		|\K(\bar{r})|&=4 \pi \int_{\radius}^{\bar{r}} 2 t^2 \sqrt{1+\frac{1}{(t/\radius)^4-1}} d t 
		=\frac{8 \pi}{3} \int_{\radius}^{\bar{r}} \frac{3 t^4}{\sqrt{t^4-\radius^4}} d t \\
		&=\frac{8 \pi}{3} \int_{\radius}^{\bar{r}} \frac{t^4-\radius^4}{\sqrt{t^4-\radius^4}}+\frac{2 t^4}{\sqrt{t^4-\radius^4}} d t+\frac{8 \pi}{3} \int_{\radius}^{\bar{r}} \frac{\radius^4}{\sqrt{t^4-\radius^4}} d t \\
		&=\frac{8 \pi}{3} \int_{\radius}^{\bar{r}} \frac{d}{d t} (t \sqrt{t^4-\radius^4}) d t+\frac{8 \pi}{3} \radius^2 \varphi_{c a t}(\bar{r}) \\
		&=\quad 2 \frac{4 \pi\bar{r}^3}{3}  \sqrt{1-\frac{\radius^4}{\bar{r}^4}}+\frac{2}{3} 4 \pi\bar{r}^2 \varphi_{c a t}(\bar{r}) \frac{1}{\sqrt{(\bar{r}/\radius)^4-1}} \sqrt{1-\frac{\radius^4}{\bar{r}^4}} \\
		&=\sqrt{1-\frac{\radius^4}{\bar{r}^4}}\left(2|D^{T_p\Spheqtres}_O(\bar{r})|+\frac{2}{3} \oint_{\partial D^{T_p\Spheqtres}_O(\bar{r})} \varphi_{cat } \frac{\partial \varphi_{cat} }{\partial \eta}\right) .
	\end{align*}
	Therefore by \ref{lem:varphicat}
	\begin{align*}
		|\K(\bar{r})|	&=2|D^{T_p\Spheqtres}_O(\bar{r})|-\frac{\radius}{\bar{r}}|D^{T_p\Spheqtres}_O(\radius)|+\oint_{\partial D^{T_p\Spheqtres}_O(\bar{r})} \varphi_{cat } \frac{\partial \varphi_{cat} }{\partial \eta} 
		-\frac{ 4 \pi \radius^2}{3} \varphi_{c a t}(\bar{r})+O\left(\frac{\radius^8}{\bar{r}^5}\right) \\
		&=2|D^{T_p\Spheqtres}_O(\bar{r})|-T_3|D^{T_p\Spheqtres}_O(\radius)|+\oint_{\partial D^{T_p\Spheqtres}_O(\bar{r})} \varphi_{cat } \frac{\partial \varphi_{cat} }{\partial \eta}+O\left(\frac{\radius^8}{\bar{r}^5}\right).
	\end{align*}
\end{proof}

\begin{lemma}[Volume of truncated ${\Kcech[p,\radius] \subset \Sph^4}$]\label{lem:voltrun}
	Let $\Kcech[p,\radius]$ be as in \ref{def:catebridge}. The volume $\abs{\Kcech(\bar{r})}$ of $\Kcech(\bar{r}):=\Kcech[p,\radius]\cap  \Pi_{\Spheqtres}^{-1} (D^{\Spheqtres}_p(\bar{r}))$ satisfies the following, where $\varphi_{cat}$ is as in \eqref{eq:varphicat},
	\begin{equation*}
		|\Kcech(\bar{r})|=2|D^{\Spheqtres}_p(\bar{r})|-T_3\frac{4\pi\radius^3}{3}+\oint_{\partial D^{\Spheqtres}_p(\bar{r})} \varphi_{cat }\circ\dist_p^{\Spheqtres} \frac{\partial \varphi_{cat}\circ\dist_p^{\Spheqtres} }{\partial \eta}+O\left(\frac{\radius^8}{\bar{r}^5}+\bar{r}^5\right).
	\end{equation*}
\end{lemma}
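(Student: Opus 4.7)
The plan is to transfer the computation from $\Sph^4$ to the tangent space via the Fermi exponential map $\exp_p^{\Spheqtres,\Sph^4}$, reduce to Lemma \ref{lem:volcat} on the Euclidean three-catenoid $\K[p,\radius]\subset T_p\Sph^4$, and control the three resulting discrepancies: \emph{(a)} the difference between the induced metric on the bridge and the Euclidean-pulled-back metric, \emph{(b)} the difference between a geodesic ball in $\Spheqtres$ and the Euclidean ball of the same radius in $T_p\Spheqtres$, and \emph{(c)} the mismatch between the corresponding boundary spheres appearing in the flux term.

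First, by \ref{def:catebridge} and the characterization of $\Pi_{\Spheqtres}$ in \ref{def:fermi}\ref{item:proj}, the map $\exp_p^{\Spheqtres,\Sph^4}$ identifies $\Kcech(\bar r)$ with $\K(\bar r)$. Under this identification the induced metric on the bridge is $g_{\Kcech}$ while the flat catenoid metric corresponds to $\mathring g_{\Kcech}$, and by \ref{lem:HK}\ref{item:fermialpha} we have $\|g_{\Kcech}-\mathring g_{\Kcech}:C^0(\Kcech,\chi,\rho^4)\|\lesssim 1$. Since $\mathring g_{\Kcech}\sim\rho^{2}\chi$, the ratio of the $g$-volume form to the flat one is $1+O(\rho^{2})$. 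Combined with the identity $ds=\rho\,d\rho/\sqrt{\rho^{4}-\radius^{4}}$ coming from \eqref{eq:rhot} and with \ref{lem:rhozh}\ref{item:rho}, this gives
\begin{equation*}
\bigl|\,|\Kcech(\bar r)|-|\K(\bar r)|\,\bigr|\lesssim \int_{\K(\bar r)}\rho^{2}\,d\mathrm{vol}_{\mathring g_{\Kcech}}\lesssim \int_{\radius}^{\bar r}\rho^{4}\,d\rho\lesssim \bar r^{5}.
\end{equation*}

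Next I would apply Lemma \ref{lem:volcat} to replace $|\K(\bar r)|$ by the explicit expression there, producing the tentative formula
\begin{equation*}
|\Kcech(\bar r)|=2|D^{T_p\Spheqtres}_{O}(\bar r)|-T_{3}\tfrac{4\pi\radius^{3}}{3}+\oint_{\partial D^{T_p\Spheqtres}_{O}(\bar r)}\varphi_{cat}\tfrac{\partial \varphi_{cat}}{\partial \eta}+O\!\left(\tfrac{\radius^{8}}{\bar r^{5}}+\bar r^{5}\right),
\end{equation*}
so it only remains to convert the Euclidean ball and flux terms to their spherical counterparts. The Taylor expansion of the spherical volume on the unit $\Spheqtres$ yields $|D^{\Spheqtres}_p(\bar r)|=|D^{T_p\Spheqtres}_O(\bar r)|+O(\bar r^{5})$. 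For the flux, Lemma \ref{lem:varphicat} bounds the integrand pointwise by $O(\radius^{3}/\bar r^{2})$, while the spherical and Euclidean area elements on spheres of radius $\bar r$ agree up to a factor $1+O(\bar r^{2})$, so the two fluxes differ by $O(\radius^{3}\bar r^{2})\lesssim \bar r^{5}$, using the implicit constraint $\radius\le \bar r$ coming from the nonemptiness of $\Kcech(\bar r)$.

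The main technical issue is not any single estimate but the careful bookkeeping to guarantee that all three errors fit simultaneously into the required order $O(\radius^{8}/\bar r^{5}+\bar r^{5})$. In particular one must exploit the fact that the first nontrivial Taylor corrections to spherical three-ball volume and three-sphere area on the base appear at orders $\bar r^{5}$ and $\bar r^{4}$ respectively, matching the ambient-metric perturbation bound on the bridge computed above.
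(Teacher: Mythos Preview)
Your argument is essentially the paper's own proof: compare $g_{\Kcech}$ with $\mathring g_{\Kcech}$ to get $|\Kcech(\bar r)|=|\K(\bar r)|+O(\bar r^{5})$, invoke Lemma~\ref{lem:volcat}, and then replace the Euclidean ball and boundary flux by their spherical counterparts using the $1+O(\bar r^{2})$ discrepancy of the area elements from \eqref{eq:fermimetric}. One cosmetic point: the displayed intermediate inequality $\int_{\K(\bar r)}\rho^{2}\,d\mathrm{vol}_{\mathring g_{\Kcech}}\lesssim \int_{\radius}^{\bar r}\rho^{4}\,d\rho$ is not literally true because the Jacobian $ds=\rho\,d\rho/\sqrt{\rho^{4}-\radius^{4}}$ blows up at the waist; nonetheless the conclusion $\lesssim \bar r^{5}$ holds (split the integral at $2\radius$, or simply use the coarser bound $\rho^{2}\le \bar r^{2}$ together with $|\K(\bar r)|=O(\bar r^{3})$ as the paper does).
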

\begin{proof}
	From \eqref{eq:gKcech} and \ref{lem:rhozh}, $\sqrt{\det g_{\Kcech}}=\sqrt{\det \mathring{g}_{\Kcech}}(1+O(\bar{r}^2))$. Thus by \ref{lem:volcat} $\abs{\Kcech(\bar{r})}=\abs{\K(\bar{r})}(1+O(\bar{r}^2))=\abs{\K(\bar{r})}+O(\bar{r}^5)$.
	
	By \eqref{eq:fermimetric},  we have that $|D^{\Spheqtres}_p(\bar{r})|=|D^{T_p\Spheqtres}_O(\bar{r})|+O(\bar{r}^5)$, and that the area elements	$d\sigma_g$ and $d\sigma_{\mathring{g}}$ on $\partial D^{\Spheqtres}_p(\bar{r})$ with respect to $g$ and $\mathring{g}$ satisfy $d\sigma_g=d\sigma_{\mathring{g}}(1+O(\bar{r}^2))$. The conclusion follows by combining \ref{lem:volcat} with the preceding estimates.
\end{proof}
	  
	  \section{LD solutions}
\label{section:LD} 

	  \subsection*{Green's function and LD solutions}
$\vspace{.162cm}$ 

	    We discuss now the $\mathfrak{O}(3)$-invariant Green’s function for $\Lcal_{\Spheqtres}$. 
	  \begin{lemma}\label{lem:green}
	  	There is a function $G \in C^{\infty}(0,\pi)$ uniquely characterized by (i) and (ii) and moreover satisfying (iii-iv) below. We denote by $r$ the standard coordinate of $\R^+$:
	  	\begin{enumerate}[(i)]
	  		\item\label{item:greendprop} For small $r$ we have $G(r) = -(1 + O(r^2)) \frac{1}{r}$.
	  		\item\label{item:greenp} For each $p\in\Spheqtres$ we have $\Lcal_{\Spheqtres} G_p= 0$ where $G_p := G\circ \dist^{\Spheqtres}_p\in C^{\infty}(\Spheqtres\setminus\{p,-p\})$.
	  		\item $G(r)=-\frac{\cos{2r}}{\sin{r}}$.
	  		\item\label{item:greenest} $\norm{G+1/r:C^k((0,1),r,dr^2,r)}\lesssim 1$.
	  	\end{enumerate}
	  \end{lemma}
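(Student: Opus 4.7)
The plan is to prove the existence via (iii), that is, to verify directly that the explicit formula solves the ODE satisfied by rotationally symmetric solutions of $\Lcal_{\Spheqt} u = 0$, and then to establish uniqueness via a standard dimension count.

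First I would exploit the $\mathfrak{O}(3)$-invariance around $p$. Since $G_p = G\circ \dist_p^{\Spheqt}$ is rotationally symmetric, writing $\Delta_{\Spheqt}$ in geodesic polar coordinates and using that the mean curvature of a geodesic sphere in $\Sph^3$ of radius $r$ is $2\cot r$, the equation $\Lcal_{\Spheqt} G_p = 0$ on $\Sph^3\setminus\{p,-p\}$ reduces to the ODE
\begin{equation*}
G''(r) + 2\cot(r)\, G'(r) + 3\, G(r) = 0 \qquad \text{on } (0,\pi).
\end{equation*}
This ODE has a two-dimensional solution space; one solution smooth at $r=0$ is $\cos r$, as a direct check shows, and the other is singular at $r=0$.

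Next I would verify (iii) by direct computation. Rewriting $-\cos(2r)/\sin r = -1/\sin r + 2\sin r$ and substituting into the ODE gives a cancellation which can be checked in a few lines. Having established (iii), the asymptotic expansion
\begin{equation*}
-\frac{\cos(2r)}{\sin r} = -\frac{1 - 2r^2 + O(r^4)}{r\,(1 - r^2/6 + O(r^4))} = -\frac{1}{r}\bigl(1 + O(r^2)\bigr)
\end{equation*}
immediately gives property (i).

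For uniqueness, suppose $\widetilde G$ is another smooth function on $(0,\pi)$ satisfying (i) and (ii). Then $\widetilde G - G$ is a rotationally invariant smooth solution of $\Lcal_{\Spheqt} u = 0$ on $\Sph^3\setminus\{p,-p\}$ whose singularity at $p$ is $o(1/r)$ by (i); in fact $(\widetilde G - G)(r) = O(r)$, so it extends continuously to $0$ with value $0$. By the ODE it must be a multiple of the smooth solution $\cos r$, and the vanishing at $r=0$ forces the multiple to be zero. Thus $\widetilde G = G$.

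Finally, (iv) is a direct consequence of (iii). Expanding as above, one finds
\begin{equation*}
G(r) + \frac{1}{r} = \frac{r - \sin r \cdot \cos(2r)}{r \sin r},
\end{equation*}
whose numerator vanishes to third order at $r=0$. Hence $G + 1/r$ extends to a smooth function on $[0,1]$ with $(G+1/r)(0)=0$, so in particular $|G+1/r| \lesssim r$ and all its derivatives are bounded on $[0,1]$. This trivially yields the weighted estimate $\|G + 1/r : C^k((0,1), r, dr^2, r)\|\lesssim 1$, since the weight $r$ on the right side dominates the derivative bounds (and in fact the estimate is stronger than stated for $k\ge 1$).

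The only minor subtlety is the verification of the ODE for the explicit formula; no step is a real obstacle, since the argument is essentially a direct substitution plus a one-dimensional uniqueness argument based on the rotationally invariant kernel of $\Lcal_{\Spheqt}$ being spanned by $\cos r$.
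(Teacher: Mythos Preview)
Your proposal is correct and follows essentially the same approach as the paper: reduce $\Lcal_{\Spheqtres}G_p=0$ for rotationally symmetric functions to the ODE $G''+2\cot(r)G'+3G=0$, identify the two-dimensional solution space spanned by $\cos r$ and $\cos(2r)/\sin r$, and read off (i)--(iv). Your version is more detailed on the uniqueness argument and on the expansion behind (iv), but the strategy is identical.
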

	  \begin{proof}
	  	The metric $g_{\Spheqtres}$ can be written as 
	  	\begin{equation*}
	  		g_{\Spheqtres}=dr^2+r^2g_{\Sph^2},
	  	\end{equation*}
	  	 where $r:=\dist_p^{\Spheqtres}$ is the distance to a given point $p$. For a solution $G_p$ depending only on $r$, the equation $\Lcal_{\Spheqtres} G_p= 0$ can be written as
	  	 \begin{equation*}
	  	 	G_p''(r)+2\cot(r)G_p'(r)+3G_p(r)=0.
	  	 \end{equation*}
	  	 The equation has two linearly independent solutions $\cos r$ and $\frac{\cos{2r}}{\sin{r}}$. (i-iii) are then clear. (iv) follows by (iii).
	  \end{proof}
	  
	   Because of the symmetries imposed on our constructions we concentrate now on LD solutions which are invariant under the action of $\Grp_{\Spheqtres}[m]$ and moreover the singular set $L$ is contained in the Clifford Torus $\T_0$. 
	  
	  \begin{definition}\label{def:L}
	  	We define a finite set $L\subset \Spheqtres$ consisting of $m^2$ points by (recall \ref{eq:Lmer} and \eqref{eq:clifford}):
	  	 \begin{equation}
	  		L=L[m]=\Grp_{\Spheqtres}[m]p_0=\T_0\cap\Lmer, 
	  	\end{equation}
	  	where $p_0=\Thetaeq(0,0,0)=(\sqrt{2}/2,0,\sqrt{2}/2,0,0)$.
	  \end{definition}
	  
	  \begin{definition}[LD solutions in the case of Clifford torus]\label{def:LD}
	  	For $L$ as in \ref{def:L}, we call $\varphi$ a linearized doubling (LD) solution on $\Spheqtres$ 
	  	when there exists a number $\tau\in \R$
	  	satisfying the following.
	  	\renewcommand{\theenumi}{\roman{enumi}}
	  	\begin{enumerate}
	  		\item $\varphi\in C^{\infty}(\Spheqtres\setminus L)$ and $\Lcal_{\Spheqtres}\varphi=0$ on $\Spheqtres\setminus L$. 
	  		\item $\forall p\in L$, the function $\varphi+\tau/\dist_{\Spheqtres}^g$ is bounded on some neighborhood of $p$ in $\Spheqtres$.
	  	\end{enumerate}	
	  \end{definition}

	   \begin{definition}[The constants $\delta$]\label{def:delta}
	   	We define a constant $\delta>0$ by
	   	\begin{equation}\label{eq:delta}
	   		\delta=\delta[m]:=1/(9m).
	   	\end{equation}
	   	Moreover, given $L$ as in \ref{def:L}, we assume that $m$ is big enough so that the following are satisfied.
	  	\renewcommand{\theenumi}{\roman{enumi}}
	  	\begin{enumerate}
	  		\item $\forall p,p'\in L$ with $p\neq p'$ we have $D_p^{\Spheqtres}(9\delta)\cap D_{p'}^{\Spheqtres}(9\delta)=\emptyset$.
	  		\item $\forall p\in L$, $\delta<\mathrm{inj}_p^{\Spheqtres,\Sph^4,g}$, where $\mathrm{inj}_p^{\Spheqtres,\Sph^4,g}$ is the injective radius as in \ref{def:fermi}.
	  	\end{enumerate}	
	  \end{definition}

	  \begin{lemma}\label{lem:LD}
	 Given a 
	 number $\tau\in\R$, there is a unique $\Grp_{\Spheqtres}[m]$-symmetric LD solution $\varphi = \varphi[\tau;m]$ satisfying the conditions in \ref{def:LD}. 
Moreover the following hold. 
	  	\begin{enumerate}[(i)]
	  		\item  $\varphi$ depends linearly on $\tau$.
	  		\item $\varphi_{\avg} \in C^0(\Spheqtres)\cap C^{\infty}(\Spheqtres\setminus \T_0)$ and on $\Spheqtres\setminus \T_0$ it
	  		satisfies the ODE $\Lcal_{\Spheqtres}\varphi_{\avg}=0$.
	  	\end{enumerate}
	  	
	  \end{lemma}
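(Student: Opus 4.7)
\textbf{Proof plan for Lemma \ref{lem:LD}.}
The plan is to construct $\varphi = \varphi[\tau;m]$ by superposing truncated Green's functions centered at the points of $L$ and then correcting by an inhomogeneous solve that uses the triviality of the symmetric kernel (Lemma \ref{lem:nokernel}).

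\emph{Step 1: Construction.} Fix a smooth cutoff function $\psicut : \R \to [0,1]$ equal to $1$ on $[0, \delta]$ and vanishing on $[2\delta, \infty)$, with $\delta$ as in \ref{def:delta}. Using $G_p$ from \ref{lem:green} define
\begin{equation*}
\varphi_0 := \tau \sum_{p \in L} (\psicut \circ \dist_p^{\Spheqtres}) \, G_p.
\end{equation*}
By \ref{def:delta}(i) the summands have pairwise disjoint supports, and by \ref{lem:green}(\ref{item:greendprop}) and (\ref{item:greenest}) the function $\varphi_0 + \tau/\dist_p^{\Spheqtres}$ is bounded near each $p\in L$, so $\varphi_0$ satisfies condition (ii) of \ref{def:LD}. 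Since $L$ is a single $\Grp_{\Spheqtres}[m]$-orbit (see \ref{def:L}), $\varphi_0$ is $\Grp_{\Spheqtres}[m]$-symmetric. The function
\begin{equation*}
E := -\Lcal_{\Spheqtres}\varphi_0 = -\tau \sum_{p \in L}\bigl[\Lcal_{\Spheqtres}, \psicut\circ\dist_p^{\Spheqtres}\bigr] G_p
\end{equation*}
is supported in $\bigsqcup_p \{\delta\le\dist_p^{\Spheqtres}\le 2\delta\}$ where each $G_p$ is smooth (since $2\delta \ll \pi$ avoids the antipodal singularities), so $E\in C^{\infty}_{\sym}(\Spheqtres)$.

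\emph{Step 2: Correcting to solve the equation.} By \ref{lem:nokernel} the symmetric kernel of $\Lcal_{\Spheqtres}$ is trivial, and since $\Lcal_{\Spheqtres}$ is formally self-adjoint elliptic on the closed manifold $\Spheqtres$, standard theory gives a unique $u\in C^{\infty}_{\sym}(\Spheqtres)$ with $\Lcal_{\Spheqtres} u = E$. Setting $\varphi := \varphi_0 + u$ yields a $\Grp_{\Spheqtres}[m]$-symmetric LD solution with parameter $\tau$. Since $\varphi_0$ depends linearly on $\tau$, so does $E$ and hence $u$, giving (i).

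\emph{Step 3: Uniqueness.} If $\varphi, \varphi'$ are two symmetric LD solutions for the same $\tau$, then $\varphi - \varphi'$ is bounded on a punctured neighborhood of each $p\in L$ (the singular parts cancel) and solves $\Lcal_{\Spheqtres}(\varphi-\varphi')=0$ on $\Spheqtres\setminus L$. Since $L$ is a finite set in a $3$-manifold, removal-of-singularities for bounded solutions of the elliptic Jacobi equation forces $\varphi-\varphi' \in C^{\infty}_{\sym}(\Spheqtres)$, and then \ref{lem:nokernel} gives $\varphi=\varphi'$.

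\emph{Step 4: Properties of $\varphi_{\avg}$.} The orbits of $\Hgrp$ are the tori $\T_c$ for $c\in(-\pi/4,\pi/4)$ and the circles $\uC,\uC^\perp$; only $\T_0$ meets $L$. On $\Spheqtres\setminus \T_0$ the function $\varphi$ is smooth, so $\varphi_{\avg}$ is smooth there, and since $\Lcal_{\Spheqtres}$ commutes with the $\Hgrp$-action we have $\Lcal_{\Spheqtres}\varphi_{\avg}=0$ on $\Spheqtres\setminus \T_0$. For continuity across $\T_0$, note that near each $p\in L$, $\varphi$ has a $\tau/\dist_p^{\Spheqtres}$-type singularity, which is integrable on each $2$-torus $\T_c$ (including $\T_0$) because $1/r$ is locally integrable in dimension $2$; a dominated-convergence argument comparing the restrictions of $\varphi$ to $\T_c$ with its restriction to $\T_0$ shows that $c\mapsto \frac{1}{|\T_c|}\int_{\T_c}\varphi$ is continuous at $c=0$, giving $\varphi_{\avg} \in C^0(\Spheqtres)$. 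The main technical point is the last continuity assertion, but it reduces to an essentially two-dimensional integrability estimate and is routine; the rest of the argument is a standard cutoff-plus-correction construction enabled by \ref{lem:nokernel}.
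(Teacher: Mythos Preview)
Your proof is correct and follows essentially the same approach as the paper's: construct $\varphi$ as a sum of cut-off Green's functions $\tau G_p$ plus a smooth $\Grp_{\Spheqtres}[m]$-symmetric correction obtained by inverting $\Lcal_{\Spheqtres}$ using Lemma \ref{lem:nokernel}, then deduce uniqueness and linearity from the triviality of the symmetric kernel, and handle (ii) via the local $L^1$-integrability of the $1/r$ singularity on the two-dimensional orbit tori. Your treatment is in fact slightly more explicit than the paper's (which simply asserts that uniqueness, linearity, and the properties of $\varphi_{\avg}$ ``follow easily''), but the underlying argument is identical.
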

	  \begin{proof}
	  	We define $\varphi_1 \in C^{\infty}_{\sym}(\Spheqtres\setminus L)$ by requesting that it is supported on $D^{\Spheqtres}_L(2\delta)$ and $\varphi_1=\cutoff{\delta}{2\delta}{\dist_p^{\Spheqtres}}(\tau G_p,0)$ (recall \ref{lem:green}\ref{item:greenp}) on $D^{\Spheqtres}_p(2\delta)$ for each $p \in L$. Note that the function $\Lcal_{\Spheqtres}\varphi_1\in C^{\infty}_{\sym}(\Spheqtres)$ (by assigning 0 values on $L$) and it is supported on $\sqcup_{p\in L}D^{\Spheqtres}_p(2\delta)\setminus D^{\Spheqtres}_p(\delta)$. Because of the symmetries, by \ref{lem:nokernel}, the operator $\Lcal_{\Spheqtres}$ has no kernel in the space $C^{\infty}_{\sym}(\Spheqtres)$, there is a function $\varphi_2\in C^{\infty}_{\sym}(\Spheqtres)$ such that $\Lcal_{\Spheqtres}\varphi_2=-\Lcal_{\Spheqtres}\varphi_1$. We can define then $\varphi := \varphi_1 + \varphi_2$. Uniqueness and linearity follow then immediately. To prove (ii) we need to check that $\varphi$ is integrable on $\T_0$ and that $\varphi_{avg}$ is continuous there also. But these follow easily by the singularity behavior of $G_p$ in \ref{lem:green}.
	  \end{proof}
	  
	  \begin{definition}\label{def:Phi}
	  	By making use of \ref{lem:LD}, we define the LD solution $\Phi = \Phi[m] := \varphi[1;m] \in C^{\infty}_{sym}(\Spheqtres\setminus L)$.
	  \end{definition}

	  	\subsection*{The rotationally invariant part $\phi := \Phi_{avg}$}
	  	
	  	\begin{definition}\label{def:phihat}
	  		We define the rotationally invariant function (RLD solution) $\hat{\phi}\in C^{0}(\Spheqtres)\cap C^{\infty}(\Spheqtres\setminus \T_0)$ by (recall \ref{lem:phiC})
	  		\begin{align*}
	  			\hat{\phi}:=
	  			\begin{cases} 
	  			\phi_{\uC}, &\text{ when }\zz\geq 0;\\
	  			\phi_{\uC^{\perp}}, &\text{ when }\zz\leq 0.
	  			\end{cases}
	  		\end{align*}

	  	\end{definition}

	  	\begin{lemma}
	  		$\phi:=\Phi_{avg}[m]$ is given by (recall \eqref{eq:flux})
	  		\begin{equation}\label{eq:phi}
	  			\phi=\phi_1
	  			\hat{\phi},\text{ where }\phi_1:=\frac{m^2}{\pi F_0}.
	  		\end{equation}
	  	\end{lemma}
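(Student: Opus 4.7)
The plan is to determine $\phi$ as a scalar multiple of $\hat\phi$ using the ODE and the boundary behavior, and then pin down the scalar $\phi_1$ via a flux computation that ties the derivative jump of $\phi$ at $\T_0$ to the Dirac masses of $\Phi$ concentrated at the $m^2$ points of $L$.

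First I would record that by Lemma \ref{lem:LD}(ii) the rotationally invariant function $\phi=\Phi_{\avg}$ lies in $C^0(\Spheqtres)\cap C^\infty(\Spheqtres\setminus\T_0)$ and satisfies $\Lcal_{\Spheqtres}\phi=0$ on $\Spheqtres\setminus\T_0$. In view of \eqref{eq:thetaeqmetric}, this is exactly the ODE \eqref{eq:jacobirotinvar} on each of the intervals $(-\pi/4,0)$ and $(0,\pi/4)$, whose solution space is spanned by the pair $\phi_\uC,\phi_{\uC^\perp}$ from Lemma \ref{lem:phiC}. Because $\Phi$ is smooth (in fact bounded) away from $L\subset\T_0$, the average $\phi$ is bounded near $\zz=\pm\pi/4$; combined with the singular/smooth behavior in Lemma \ref{lem:phiC}(\ref{phiC:1}), this forces $\phi=c_+\phi_\uC$ on $\{\zz>0\}$ and $\phi=c_-\phi_{\uC^\perp}$ on $\{\zz<0\}$ for some constants $c_\pm$. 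The involution $\SSS\in\Grp_{\Spheqtres}[m]$ acts on rotationally invariant functions by $\zz\mapsto -\zz$ and satisfies $\phi_{\uC^\perp}(-\zz)=\phi_\uC(\zz)$ by Lemma \ref{lem:phiC}(\ref{phiC:3}); since $\Phi$ is $\SSS$-invariant so is $\phi$, and therefore $c_+=c_-=:c$, giving $\phi=c\hat\phi$. Continuity at $\zz=0$ is automatic since $\phi_\uC(0)=\phi_{\uC^\perp}(0)=1$ by Lemma \ref{lem:phiC}(\ref{phiC:4}).

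It remains to identify $c$ with $\phi_1=m^2/(\pi F_0)$. I would do this by a two-sided computation of the derivative jump of $\phi$ across $\T_0$. On the one hand, Lemma \ref{lem:phiC}(\ref{phiC:4}) gives immediately
\[
\phi'(0^+)-\phi'(0^-)=c\bigl(\phi'_\uC(0)-\phi'_{\uC^\perp}(0)\bigr)=2cF_0.
\]
On the other hand, since $\Phi\sim -\dist^{\Spheqtres\,-1}_p$ at each $p\in L$ with $|L|=m^2$, the standard $\R^3$ Green's function identity $\Delta(-1/r)=4\pi\delta_0$ (and the fact that the $3\Phi$ term is locally integrable) yields the distributional equation
\[
\Lcal_{\Spheqtres}\Phi=4\pi\sum_{p\in L}\delta_p.
\]
Pair this with a rotationally invariant test function $\chi(\zz)$ of compact support in $(-\pi/4,\pi/4)$ equal to $1$ near $\zz=0$. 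Since $\Lcal_{\Spheqtres}\chi$ is rotationally invariant, the oscillatory part of $\Phi$ drops out and $\langle \Lcal_{\Spheqtres}\Phi,\chi\rangle = \int_{\Spheqtres}\phi\,\Lcal_{\Spheqtres}\chi\,d\mathrm{vol}$. The right-hand side of the distributional equation is $4\pi m^2\chi(0)$. Using $\sqrt{\det g_{\Spheqtres}}\,d\xx d\yy d\zz=\cos 2\zz\,d\xx d\yy d\zz$ from \eqref{eq:thetaeqmetric} and $\mathrm{Area}(\T_0)=2\pi^2$, the left-hand side reduces after integrating out $\xx,\yy$ to $2\pi^2\int_{-\pi/4}^{\pi/4}\phi\,\bigl[(\cos 2\zz\,\chi')'+3\cos 2\zz\,\chi\bigr]\,d\zz$. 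Splitting this integral at $\zz=0$ and integrating by parts twice on each half (the boundary terms at $\pm\pi/4$ vanish because $\cos 2\zz$ does, and those at $0^\pm$ coming from $\phi\chi'$ cancel by continuity of $\phi$), one gets
\[
\int_{\Spheqtres}\phi\,\Lcal_{\Spheqtres}\chi\,d\mathrm{vol}
=2\pi^2\bigl[\phi'(0^+)-\phi'(0^-)\bigr]\chi(0)
=4\pi^2 cF_0\,\chi(0),
\]
since the classical part of the integrand vanishes by the ODE. Equating the two expressions gives $4\pi^2 cF_0=4\pi m^2$, hence $c=m^2/(\pi F_0)=\phi_1$, as claimed.

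I do not anticipate any serious obstacle: the argument is a clean combination of the explicit ODE theory of Lemma \ref{lem:phiC} with a standard distributional/flux identity. The most care-demanding point is verifying that the $0^\pm$ boundary terms in the integration by parts cancel (which relies only on continuity of $\phi$ at $\T_0$) and that the averaging genuinely eliminates the oscillatory part of $\Phi$ against a rotationally invariant test function; both are routine once the volume form $\cos 2\zz\,d\xx d\yy d\zz$ from \eqref{eq:thetaeqmetric} is made explicit.
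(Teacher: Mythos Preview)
Your proof is correct and follows essentially the same approach as the paper: both identify $\phi$ as a multiple of $\hat\phi$ by the ODE and symmetries, then fix the constant by a flux computation relating the derivative jump of $\phi$ across $\T_0$ to the $-1/r$ singularities of $\Phi$ at the $m^2$ points of $L$. The only cosmetic difference is that the paper integrates $\Lcal_{\Spheqtres}\Phi=0$ directly over $D^{\Spheqtres}_{\T_0}(\epsilon_2)\setminus D^{\Spheqtres}_L(\epsilon_1)$ and sends $\epsilon_1,\epsilon_2\to 0$, whereas you pair the distributional equation $\Lcal_{\Spheqtres}\Phi=4\pi\sum_{p\in L}\delta_p$ against a rotationally invariant cutoff; both routes yield $2\phi_1 F_0\cdot\mathrm{Area}(\T_0)=m^2\cdot\mathrm{Area}(\Sph^2)$.
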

	  	\begin{proof}
	  		Because of the symmetries it is clear that $\phi=A\hat{\phi}$ for some constant $A$. For $0 < \epsilon_1 \ll \epsilon_2$, we consider now the domain $\Omega_{\epsilon_1, \epsilon_2}:=D^{\Spheqtres}_{\T_0}(\epsilon_2)\setminus D^{\Spheqtres}_L(\epsilon_1) $. By integrating $\Lcal_{\Spheqtres}\Phi=0$ on $\Omega_{\epsilon_1, \epsilon_2}$ and integrating by parts we obtain
	  		\begin{equation*}
	  			\oint_{\partial\Omega_{\epsilon_1, \epsilon_2}}\frac{\partial\Phi }{\partial\eta}+3\int_{\Omega_{\epsilon_1, \epsilon_2}}\Phi=0.
	  		\end{equation*}
	  		By taking the limit as $\epsilon_1\to 0$ first and then as $\epsilon_2\to 0$ we obtain by using the $1/r$ behavior of $\Phi$ near $L$, \ref{def:phihat} 
and \ref{lem:phiC}(\ref{phiC:4}) that
	  		\begin{equation*}
	  			2AF_0\mathrm{Area}(\T_0)=m^2\mathrm{Area}(\Sph^2).
	  		\end{equation*}
	  		The result then follows by $\mathrm{Area}(\T_0)=2\pi^2$ and $\mathrm{Area}(\Sph^2)=4\pi$.
	  	\end{proof}
	  	
	  	\begin{definition} 
\label{def:phiju}
	  		For $a,b\in\mathbb{R}$, we define the rotationally invariant solution $\underline{\phi}[a]$ and (with singularity) $\underline{j}[b]$ by requesting the following initial data
	  		\begin{equation*}
	  			\underline{\phi}(0)=a,\quad \partial \underline{\phi}(0)=0;\quad \underline{j}(0)=0,\quad \partial_+ \underline{j}(0)=-\partial_- \underline{j}(0)=mb,
	  		\end{equation*}
	  		and the ODEs $\Lcal_{\Spheqtres}\underline{\phi}=0$ on $\{\zz\in(-\pi/4,\pi/4)\}$ and $\Lcal_{\Spheqtres}\underline{j}=0$ on $\{\zz\in(-\pi/4,0)\cup(0,\pi/4)\}$. In terms of $\phi_{\uC}$ and $\phi_{\uC^{\perp}}$ (recall \ref{lem:phiC}), we can also define $\underline{\phi}[a]$ and $\underline{j}[b]$ by
	  		\begin{align*}
	  			\underline{\phi}[a]:=\frac{a}{2}(\phi_\uC+\phi_{\uC^{\perp}}),\quad 	\underline{j}[b]:=
	  			\begin{cases} 
	  				\frac{mb}{2F_0}(\phi_\uC-\phi_{\uC^{\perp}}), &\text{ when }\zz\geq 0;\\
	  				\frac{mb}{2F_0}(\phi_{\uC^{\perp}}-\phi_\uC), &\text{ when }\zz\leq 0.
	  			\end{cases}
	  		\end{align*}
	  	\end{definition}
	  	
	  	\begin{corollary}
\label{cor:phiju}
	  		On $\{\zz\in(-\pi/4,0)\cup(0,\pi/4)\}$ we have that
	  		\begin{equation}\label{eq:Phiavg}
	  			\phi=\underline{\phi}[\phi_1]+\underline{j}[m/\pi].
	  		\end{equation}
	  	\end{corollary}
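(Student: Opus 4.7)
The plan is to verify the identity by direct computation, using the explicit formulas for $\underline{\phi}$ and $\underline{j}$ provided in \ref{def:phiju} together with the expression $\phi = \phi_1 \hat\phi$ from \eqref{eq:phi}.

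First I would treat the region $\zz \in (0, \pi/4)$. Using Definition \ref{def:phiju} with $a = \phi_1$ and $b = m/\pi$, and recalling $\phi_1 = m^2/(\pi F_0)$ from \eqref{eq:phi}, I compute
\begin{equation*}
\underline{\phi}[\phi_1] + \underline{j}[m/\pi] = \frac{\phi_1}{2}(\phi_\uC + \phi_{\uC^\perp}) + \frac{m \cdot (m/\pi)}{2F_0}(\phi_\uC - \phi_{\uC^\perp}).
\end{equation*}
Since $\frac{m^2}{2\pi F_0} = \frac{\phi_1}{2}$, the coefficient of $\phi_\uC$ simplifies to $\phi_1$ and the coefficient of $\phi_{\uC^\perp}$ vanishes. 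Thus the right-hand side reduces to $\phi_1 \phi_\uC$, which on $\{\zz > 0\}$ equals $\phi_1 \hat\phi = \phi$ by Definition \ref{def:phihat}.

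Next I would handle the region $\zz \in (-\pi/4, 0)$ in the same way. Here the definition of $\underline{j}[m/\pi]$ acquires the opposite sign, giving
\begin{equation*}
\underline{\phi}[\phi_1] + \underline{j}[m/\pi] = \frac{\phi_1}{2}(\phi_\uC + \phi_{\uC^\perp}) + \frac{m^2/\pi}{2F_0}(\phi_{\uC^\perp} - \phi_\uC),
\end{equation*}
and the same cancellation produces $\phi_1 \phi_{\uC^\perp}$, which matches $\phi_1 \hat\phi = \phi$ on $\{\zz < 0\}$.

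There is no real obstacle here: the corollary is a bookkeeping identity whose entire content is the choice of the constants $a = \phi_1$ and $b = m/\pi$ so that the symmetric and antisymmetric combinations of $\phi_\uC, \phi_{\uC^\perp}$ reconstruct the one-sided function $\hat\phi$. The only thing to check is the consistency between the normalization $\phi_1 = m^2/(\pi F_0)$ established in \eqref{eq:phi} and the factor $F_0$ appearing in the denominator of $\underline{j}$; this agreement is precisely what makes the cancellation work and is exactly the flux normalization of Lemma \ref{lem:phiC}(\ref{phiC:4}).
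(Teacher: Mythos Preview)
Your proof is correct and follows exactly the approach of the paper, which simply cites Definitions \ref{def:phihat}, \ref{def:phiju}, equation \eqref{eq:phi}, and Lemma \ref{lem:phiC}(\ref{phiC:4}); you have merely written out the one-line verification explicitly.
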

	  	\begin{proof}
	  		It follows by \ref{def:phihat}, \eqref{eq:phi}, \ref{def:phiju} and \ref{lem:phiC}(\ref{phiC:4}).  
	  	\end{proof}
	  	
	  	\begin{lemma}\label{lem:phiju}
	  		The following hold
	  		\begin{enumerate}[(i)]
	  			\item\label{item:phiu} $\norm{\underline{\phi}[1]-1:C^k(D_{\T_0}^{\Spheqtres}(3/m)\setminus\T_0,\tilde{g})}\lesssim_k 1/m^2$.
	  			\item \label{item:ju} $\norm{\underline{j}[1]-m\abs{\zz}:C^k(D_{\T_0}^{\Spheqtres}(3/m)\setminus\T_0,\tilde{g})}\lesssim_k 1/m^2$.
	  		\end{enumerate}
	  	\end{lemma}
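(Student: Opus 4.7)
The plan is to reduce both estimates to Taylor expansion of $\phi_\uC$ and $\phi_{\uC^\perp}$ at $\zz=0$, using the explicit formulas from \ref{def:phiju} and the symmetry $\phi_{\uC^\perp}(\zz)=\phi_\uC(-\zz)$ supplied by \ref{lem:phiC}\ref{phiC:3}. The only ingredients we need besides this symmetry are the normalizations $\phi_\uC(0)=\phi_{\uC^\perp}(0)=1$ and $\phi'_\uC(0)=-\phi'_{\uC^\perp}(0)=F_0$ from \ref{lem:phiC}\ref{phiC:4}, together with the fact that $\phi_\uC$ is smooth away from $\uC=\{\zz=-\pi/4\}$ (hence on the interval $|\zz|<1/9$ once $m$ is large).

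For (\ref{item:phiu}): since $\underline{\phi}[1]=\tfrac12(\phi_\uC+\phi_{\uC^\perp})$ and $\phi_{\uC^\perp}=\phi_\uC\circ(-\id)$, the function $\underline{\phi}[1]-1$ is an even function of $\zz$ which vanishes at $\zz=0$, so its Taylor expansion near $\zz=0$ begins at order $\zz^2$. Hence, on the interval $|\zz|<3/m$, the ordinary $\zz$-derivatives satisfy $|\partial_\zz^j(\underline{\phi}[1]-1)|\lesssim_j |\zz|^{\max(2-j,0)}$ for $0\le j\le 2$ and $|\partial_\zz^j(\underline{\phi}[1]-1)|\lesssim_j 1$ for $j\ge 2$. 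Converting to $\tilde g=m^2 g_{\Sph^3}$ multiplies $j$-th derivatives by $m^{-j}$, and in our domain $|\zz|\lesssim 1/m$, so all derivatives contribute $\lesssim_k m^{-2}$, which is (\ref{item:phiu}). Since $\underline{\phi}[1]$ is rotationally invariant, the only nontrivial Hessian components in the frame adapted to \eqref{eq:thetaeqmetric} come from $\partial_\zz^2$ together with Christoffel terms which are uniformly bounded on the slab, and these are absorbed into the constants $\lesssim_k$.

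For (\ref{item:ju}): on $\zz\ge 0$ we have $\underline{j}[1]=\tfrac{m}{2F_0}(\phi_\uC-\phi_{\uC^\perp})=\tfrac{m}{2F_0}\big(\phi_\uC(\zz)-\phi_\uC(-\zz)\big)$, which is the odd part of $\phi_\uC/F_0$ times $m$. Thus $\underline{j}[1](\zz)-m\zz$ is the odd Taylor remainder of $\phi_\uC$ starting at order $\zz^3$, i.e.\ $\underline{j}[1](\zz)-m\zz=\tfrac{m}{6F_0}\phi_\uC'''(0)\zz^3+O(m\zz^5)$. In particular, for $0\le \zz< 3/m$ the ordinary $\zz$-derivatives satisfy
\[
|\partial_\zz^j(\underline{j}[1]-m\zz)|\ \lesssim_j\ m\,|\zz|^{\max(3-j,0)}\quad\text{for }0\le j\le 3,
\]
and $|\partial_\zz^j(\underline{j}[1]-m\zz)|\lesssim_j m$ for $j\ge 3$. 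Plugging $|\zz|\lesssim 1/m$ and rescaling by $m^{-j}$ for the $\tilde g$-norm yields the uniform bound $\lesssim_k m^{-2}$ for each derivative up to order $k$. The same argument applies on $\zz\le 0$ by replacing $\zz$ with $-\zz$; both pieces glue to give the estimate on $D_{\T_0}^{\Spheqtres}(3/m)\setminus\T_0$, the exclusion of $\T_0$ reflecting the jump of $\partial_\zz\underline{j}$ there.

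The only mildly delicate point is the passage from the ODE estimates in the single variable $\zz$ to genuine $C^k$-norms in $\tilde g$ on the three-dimensional slab, since higher covariant derivatives involve Christoffels of \eqref{eq:thetaeqmetric}. However, on $|\zz|<3/m$ the metric coefficients $1\pm\sin 2\zz$ and their $\zz$-derivatives are uniformly bounded, and the associated Christoffel symbols in the coordinates $(\tilde\xx,\tilde\yy,\tilde\zz)$ are $O(m^{-1})$ because $\tilde{\nabla}_{\partial_i}\partial_j = m^{-1}\nabla_{\partial_i}\partial_j$ in the rescaled chart. For rotationally invariant functions this simply converts into bounded multiplicative constants depending on $k$, so the final estimates follow from the pointwise derivative bounds derived above.
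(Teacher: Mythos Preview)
Your proof is correct and takes a genuinely different route from the paper's. The paper rewrites the ODE \eqref{eq:jacobirotinvar} in the rescaled variable $\tilde\zz=m\zz$ for the error terms $\underline{\phi}[1]-1$ and $\underline{j}[1]-m\zz$, observes that the inhomogeneities are of size $O(m^{-2})$ on $|\tilde\zz|<3$, and then appeals to variation of parameters with the fundamental system $\underline{\phi}[1](\tilde\zz/m)$, $\tfrac{m}{2F_0}(\phi_\uC-\phi_{\uC^\perp})(\tilde\zz/m)$. You instead exploit the parity structure coming from $\phi_{\uC^\perp}=\phi_\uC\circ\SSS$: $\underline{\phi}[1]-1$ is even and vanishes at $\zz=0$, while $\underline{j}[1]-m\zz$ is the odd Taylor remainder of $m\phi_\uC/F_0$ beyond the linear term, so the estimates follow from straight Taylor expansion and the scaling $\partial_{\tilde\zz}=m^{-1}\partial_\zz$. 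Your argument is more elementary for this particular lemma and makes the source of the $m^{-2}$ gain completely transparent; the paper's ODE approach is more mechanical and has the advantage of being reused verbatim elsewhere (e.g.\ in the proof of Lemma~\ref{lem:Phiprime} for $\Phi'_{\avg}$), where no parity shortcut is available.
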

	  	\begin{proof}
	  		By changing variable $\tilde{\zz}:=m\zz$ and $\underline{\phi}_1:=\underline{\phi}[1]-1$, $\underline{j}_1:=\underline{j}[1]-m\zz$, the ODE \eqref{eq:jacobirotinvar} that $\underline{\phi}[1]$ and $\underline{j}[1]$ satisfy becomes
	  		\begin{align*}
	  			&\frac{d^2\underline{\phi}_1}{d\tilde{\zz}^2}-\frac{2}{m}\tan\left(\frac{2\tilde{\zz}}{m}\right)\frac{d\underline{\phi}_1}{d\tilde{\zz}}+\frac{3\underline{\phi}_1}{m^2}=-\frac{3}{m^2}\\
	  			&\frac{d^2\underline{j}_1}{d\tilde{\zz}^2}-\frac{2}{m}\tan\left(\frac{2\tilde{\zz}}{m}\right)\frac{d\underline{j}_1}{d\tilde{\zz}}+\frac{3\underline{j}_1}{m^2}=\frac{2}{m}\tan\left(\frac{2\tilde{\zz}}{m}\right)-\frac{3\tilde{\zz}}{m^2},
	  		\end{align*}
	  		with zero initial condition at $\tilde{\zz}=0$. The results then follows by applying variation of parameters as the homogeneous equation has fundamental solutions $	\frac{1}{2}(\phi_\uC(\tilde{\zz}/m)+\phi_{\uC^{\perp}}(\tilde{\zz}/m))=\underline{\phi}[1](\tilde{\zz}/m)$ and $\frac{m}{2F_0}(\phi_\uC(\tilde{\zz}/m)-\phi_{\uC^{\perp}}(\tilde{\zz}/m))$.
	  	\end{proof}
	  	
	  	\subsection*{Estimates on $\Phi=\Phi[m]$}
	  		\begin{definition}\label{def:sol}
	  		Given $\delta$ as in \eqref{eq:delta}, we define the $\delta'$ by 
	  		\begin{align}\label{eq:deltaprime}
	  			\delta':=\delta^{1+\alpha},
	  		\end{align} 
	  		where 
	  	we choose $\alpha$ to satisfy 
	  	\begin{equation}\label{eq:alpha}
	  		\alpha\in (0,1/4). 
	  	\end{equation}
Notice that we have $9\delta'<\delta/10$ by assuming $m$ big enough.
	  	\end{definition}

	  	\begin{definition}\label{def:decompose}
	  		We define $\hat{G}\in C^{\infty}_{\sym}(\Spheqtres\setminus L)$ by
	  		\begin{equation}\label{eq:Ghat}
	  			\hat{G}:=\cutoff{2\delta}{3\delta}{\dist^{\Spheqtres}_{p}}(G_p,0)
	  		\end{equation} 
	  		on $D_L^{\Spheqtres}(3\delta)$ (recall \ref{lem:green}(ii)) and $0$ otherwise. We define the rotationally invariant function $\hat{\Phi}\in C^{\infty}(\Spheqtres)$ by
	  		\begin{equation}\label{eq:Phihat}
	  			\hat{\Phi}:=\phi-\cutoff{2/m}{3/m}{\dist^{\Spheqtres}_{\T_0}}(\underline{j}[m/\pi],0).
	  		\end{equation}
	  		We then define the functions $\Phi'\in C^{\infty}_{\sym}(\Spheqtres)$ and $E'\in C^{\infty}_{\sym}(\Spheqtres)$  by
	  		\begin{equation}\label{eq:Phiprime}
	  			\Phi':=\Phi-\hat{G}-\hat{\Phi},\quad E':=\Lcal_{\Spheqtres}\Phi'.
	  		\end{equation}
	  	\end{definition}
	  	
	  	\begin{lemma}\label{lem:GEPhi}
	  		The following hold (recall \ref{eq:tildeg}).
	  		\begin{enumerate}[(i)]

	  			\item\label{item:Ghatprime} $\norm{\hat{G}:C^k(\Spheqtres\setminus D_L^{\Spheqtres}(\delta'),\rr,g)}\lesssim  m^{1+\alpha}$, where $\rr:=\min\{\dist_L^{\Spheqtres},\delta\}$.
	  			\item\label{item:Ghat} $\norm{\hat{G}:C^k(\Spheqtres\setminus D_L^{\Spheqtres}(\delta),\tilde{g})}\lesssim_k m$.
	  			\item\label{item:Eprime} $\norm{m^{-2}E':C^k(\Spheqtres ,\tilde{g})}\lesssim_k m$.
	  			\item\label{item:Phiosc} $\norm{\Phi_{\osc}':C^k(\Spheqtres,\tilde{g})}\lesssim_k m$.
	  		\end{enumerate}
	  	\end{lemma}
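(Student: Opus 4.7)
The plan is to handle (\ref{item:Ghatprime})--(\ref{item:Eprime}) by direct estimates, tracking how the local scale $\rr$ and the rescaled metric $\tilde g = m^2 g$ interact with the two sources of $\delta^{-1} \sim m$-factors (the cutoffs and $G_p \sim -1/\dist_p^{\Spheqtres}$), and to deduce (\ref{item:Phiosc}) from the linear theory on $\Spheqtres$ applied to the PDE $\Lcal_{\tilde g}\Phi'_{\osc} = m^{-2}E'_{\osc}$.

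For (\ref{item:Ghatprime}) and (\ref{item:Ghat}), I would first use \ref{lem:green}(\ref{item:greenest}) to obtain $|\nabla_g^j G_p| \lesssim_j \dist_p^{-j-1}$ and note that the cutoff $\psi := \cutoff{2\delta}{3\delta}{\dist^{\Spheqtres}_p}(1,0)$ satisfies $|\nabla_g^j \psi| \lesssim_j \delta^{-j}$ and is supported in $\{2\delta \le \dist_p^{\Spheqtres} \le 3\delta\}$. For (\ref{item:Ghatprime}), on the support of $\hat G$ we have $\rr \le \dist_p^{\Spheqtres}$, so $j$ derivatives in the rescaled metric $\rr^{-2}g$ contribute $\rr^j|\nabla_g^j G_p| \lesssim 1/\dist_p^{\Spheqtres} \le 1/\delta' = 9^{1+\alpha}m^{1+\alpha}$, while cutoff contributions only produce $O(m)$ since $|G_p| \sim m$ on the annulus $\{\dist_p^{\Spheqtres} \sim \delta\}$. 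For (\ref{item:Ghat}), each factor $1/\delta \sim m$ produced by $\nabla_g$ applied to $G_p$ or $\psi$ is exactly cancelled by a factor $m^{-1}$ from $\nabla_{\tilde g} = m^{-1}\nabla_g$, leaving the dominant pointwise size $|G_p| \sim m$.

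For (\ref{item:Eprime}), the key point is that $\Phi'$ is smooth on $\Spheqtres$ (the $-1/\dist_p^{\Spheqtres}$ singularities of $\Phi$ and $\hat G$ cancel at $L$), and that $\Lcal_{\Spheqtres}\Phi = 0$ classically on $\Spheqtres \setminus L$, so $E' = -\Lcal_{\Spheqtres}\hat G - \Lcal_{\Spheqtres}\hat\Phi$. Since $\Lcal_{\Spheqtres}G_p = 0$ off $\{p,-p\}$, one has $\Lcal_{\Spheqtres}\hat G = [\Lcal_{\Spheqtres},\psi]G_p$ supported in $D_L^{\Spheqtres}(3\delta) \setminus D_L^{\Spheqtres}(2\delta)$. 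Using \ref{cor:phiju}, $\hat\Phi$ equals the smooth $\underline{\phi}[\phi_1]$ on $D_{\T_0}^{\Spheqtres}(2/m)$ and equals $\phi$ outside $D_{\T_0}^{\Spheqtres}(3/m)$, and since $\Lcal_{\Spheqtres}\phi = \Lcal_{\Spheqtres}\underline{\phi} = \Lcal_{\Spheqtres}\underline{j} = 0$ on their respective domains, $\Lcal_{\Spheqtres}\hat\Phi$ is also a commutator supported in the Clifford collars $\Omega[\pm 2/m,\pm 3/m]$. On the catenoid collar $|G_p|\sim m$, $|\nabla_g G_p|\sim m^2$; on the Clifford collars \ref{lem:phiju} gives $|\underline{j}[m/\pi]|\sim m$ and $|\nabla_g\underline{j}[m/\pi]|\sim m^2$. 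With cutoff bounds $|\nabla_g\psi|\lesssim m$, $|\Delta_g\psi|\lesssim m^2$, both commutators are $O(m^3)$ pointwise in $g$; dividing by $m^2$ and passing to $\tilde g$, where each additional derivative of the cutoff costs $m$ but each $\nabla_{\tilde g}$ returns $m^{-1}$, yields $\|m^{-2}E'\|_{C^k(\Spheqtres,\tilde g)} \lesssim_k m$.

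For (\ref{item:Phiosc}), orbit-averaging commutes with $\Lcal_{\Spheqtres}$ on the symmetric subspace, so $\Lcal_{\tilde g}\Phi'_{\osc} = m^{-2}E'_{\osc}$ with $\Phi'_{\osc}$ symmetric and oscillating. The proof of \ref{lem:linearS} already records that the eigenvalues of $\Lcal_{\tilde g}$ on symmetric oscillating functions are uniformly bounded below (independent of $m$), so standard Schauder estimates give $\|\Phi'_{\osc}\|_{C^{k,\beta}(\Spheqtres,\tilde g)} \lesssim_k \|m^{-2}E'\|_{C^{k-2,\beta}(\Spheqtres,\tilde g)} \lesssim_k m$ by (\ref{item:Eprime}). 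The main obstacle will be (\ref{item:Eprime}): three distinct scales coexist (the width $\delta \sim 1/m$ of the catenoid collar, the width $1/m$ of the Clifford collar, and the growth rate of $\underline{j}[m/\pi]$), and I must verify that both commutator contributions saturate at $O(m^3)$ in the $g$-norm without interference and that higher-derivative bookkeeping in $\tilde g$ preserves constants; the remaining items are then essentially direct once the sizes of $G_p$ and $\underline{j}$ are inserted and the invertibility of $\Lcal_{\tilde g}$ on symmetric oscillating functions is invoked.
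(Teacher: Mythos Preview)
Your proposal is correct and follows essentially the same approach as the paper. The paper's proof is terser: it cites \ref{lem:green}\ref{item:greenest}, \eqref{eq:delta}, \eqref{eq:deltaprime} for (i)--(ii); for (iii) it writes $m^{-2}E'=-\Lcal_{\tilde g}\hat G+\Lcal_{\tilde g}\cutoff{2/m}{3/m}{\dist_{\T_0}^{\Spheqtres}}(\underline{j}[m/\pi],0)$, observes the two terms have disjoint supports in the $L$-annuli and the Clifford collar respectively, and controls them by (ii) and \ref{lem:phiju}\ref{item:ju}; for (iv) it invokes \ref{lem:linearS}\ref{item:vosc} together with (iii) and the support of $E'$ in $D_{\T_0}^{\Spheqtres}(3/m)$. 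Your explicit bookkeeping of the $m$-factors and your remark that higher $C^k$ in (iv) needs Schauder bootstrapping from (iii) are useful elaborations of the same argument.
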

	  	\begin{proof}
	  		(i-ii) follow by \ref{lem:green}\ref{item:greenest}, \eqref{eq:delta}, \eqref{eq:deltaprime} and the definition \eqref{eq:Ghat}. By \eqref{eq:Phiprime} and \eqref{eq:Phihat}, on $\Spheqtres\setminus  D_L^{\Spheqtres}(\delta)$,
	  		\begin{equation*}
	  			m^{-2}E'=-\Lcal_{\tilde{g}}\hat{G}+\Lcal_{\tilde{g}}\cutoff{2/m}{3/m}{\dist^{\Spheqtres}_{\T_0}}(\underline{j}[m/\pi],0).
	  		\end{equation*}
	  		The first term vanishes on $D_L^{\Spheqtres}(\delta)$, and on $\Spheqtres\setminus  D_L^{\Spheqtres}(\delta)$ it is controlled by (i). The second term vanishes on $\Spheqtres\setminus D_{\T_0}^{\Spheqtres}(3/m)$, and on $D_{\T_0}^{\Spheqtres}(3/m)$, it is controlled by \ref{lem:phiju}\ref{item:ju}. (iii) then follows. (iv) now follows by \ref{lem:linearS}\ref{item:vosc} along with (iii) by noticing that $E'$ is supported in $D_{\T_0}^{\Spheqtres}(3/m)$.
	  	\end{proof}
	  	
	  	\begin{lemma}\label{lem:Phiprime}
	  		$\norm{\Phi':C^k(\Spheqtres ,\tilde{g})}\lesssim_k m$.
	  	\end{lemma}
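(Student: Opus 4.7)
The plan is to handle the oscillatory and rotationally invariant parts of $\Phi'$ separately. The estimate $\|\Phi'_{\osc}\|_{C^k(\Spheqtres,\tilde{g})} \lesssim_k m$ for the oscillatory part is already provided by \ref{lem:GEPhi}(\ref{item:Phiosc}), so the task reduces to bounding $\Phi'_{\avg}$ by the same quantity.

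The first key step is to derive an explicit formula for $\Phi'_{\avg}$: combining $\Phi' = \Phi - \hat{G} - \hat{\Phi}$ with $\Phi_{\avg} = \phi$ from \eqref{eq:phi} and the definition of $\hat{\Phi}$ in \eqref{eq:Phihat} gives
\[
\Phi'_{\avg} = \cutoff{2/m}{3/m}{\dist^{\Spheqtres}_{\T_0}}(\underline{j}[m/\pi], 0) - \hat{G}_{\avg}.
\]
The first term vanishes outside $D^{\Spheqtres}_{\T_0}(3/m)$ by the cutoff construction, and $\hat{G}_{\avg}$ vanishes outside $D^{\Spheqtres}_{\T_0}(3\delta) \subset D^{\Spheqtres}_{\T_0}(3/m)$, since $\hat{G}$ is supported in $D^{\Spheqtres}_L(3\delta)$, $L \subset \T_0$, and $3\delta = 1/(3m) < 3/m$. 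In particular $\Phi'_{\avg}$ vanishes identically on $\T_{4/m}$.

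Next I would apply \ref{lem:linearS} to the equation $\Lcal_{\Spheqtres}\Phi' = E'$: the function $v$ produced there coincides with $\Phi'$ by uniqueness of the $\Grp_{\Spheqtres}[m]$-symmetric solution, guaranteed by \ref{lem:nokernel}. The constant $\bar{\mu}$ in \ref{lem:linearS}(\ref{item:vavgest}) is determined by $\bar{\mu}\,\phi_{\uC}(\T_{4/m}) = \Phi'_{\avg}(\T_{4/m}) = 0$, hence $\bar{\mu} = 0$. Combining \ref{lem:linearS}(\ref{item:vavgest}) with the bound $\|E'\|_{C^{0,\beta}(\Spheqtres,\tilde{g})} \lesssim m^{3}$ from \ref{lem:GEPhi}(\ref{item:Eprime}) then yields
\[
\|\Phi'\|_{C^{2,\beta}(\Omega[0,\pi/4],\tilde{g}, f_{2,\gamma'})} \,\lesssim\, m^{-2}\cdot m^{3} = m,
\]
and the $\SSS$-symmetry (lying in $\Grp_{\Spheqtres}[m]$ and sending $\zz \mapsto -\zz$) gives the analogous bound on $\Omega[-\pi/4,0]$. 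Since $f_{2,\gamma'} \leq 1$ for $m$ large, this upgrades to $\|\Phi'\|_{C^{2,\beta}(\Spheqtres,\tilde{g})} \lesssim m$.

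For arbitrary $k$, I would bootstrap by interior Schauder estimates in the $\tilde{g}$-metric: writing $\Lcal_{\Spheqtres} = m^{2}\Delta_{\tilde{g}} + 3$ so that $\Delta_{\tilde{g}}\Phi' = m^{-2}(E' - 3\Phi')$, the implication $\|\Phi'\|_{C^k(\tilde{g})} \lesssim m \Rightarrow \|\Phi'\|_{C^{k+2}(\tilde{g})} \lesssim m$ follows inductively from the bound $\|E'\|_{C^{k,\beta}(\tilde{g})} \lesssim_k m^{3}$ supplied by \ref{lem:GEPhi}(\ref{item:Eprime}). The crucial technical point on which the whole argument rests is the identification $\bar{\mu} = 0$ via the support property of $\Phi'_{\avg}$: a direct application of \ref{lem:linearS} without this observation would only yield $|\bar{\mu}| \lesssim m^{2}$, which is an order of magnitude too weak.
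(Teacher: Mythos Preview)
Your proof is correct. Both you and the paper reduce to bounding $\Phi'_{\avg}$ via \ref{lem:GEPhi}(\ref{item:Phiosc}), derive the same formula $\Phi'_{\avg} = -\hat{G}_{\avg} + \cutoff{2/m}{3/m}{\dist^{\Spheqtres}_{\T_0}}(\underline{j}[m/\pi],0)$, and observe that it is supported in $D_{\T_0}^{\Spheqtres}(3/m)$. The difference is in how that support is exploited. The paper argues directly: on the $\tilde{\zz}$-interval $[0,3]$ the rotationally invariant function $\Phi'_{\avg}$ satisfies the ODE $\frac{d^2\Phi'_{\avg}}{d\tilde{\zz}^2} + (\text{lower order}) = m^{-2}E'_{\avg}$ with $\Phi'_{\avg}(3)=0$ (and $\partial_{\tilde{\zz}}\Phi'_{\avg}(0)=0$ by $\SSS$-symmetry), so variation of parameters together with $\|m^{-2}E'\|_{C^k(\tilde{g})}\lesssim_k m$ yields the $C^k$ bound for every $k$ in one stroke. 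You instead invoke the packaged proposition \ref{lem:linearS}, observing that $\Phi'_{\avg}|_{\T_{4/m}}=0$ forces $\bar{\mu}=0$ (this reaches into the proof of \ref{lem:linearS} for the definition of $\bar{\mu}$, or equivalently uses the explicit formula in part (ii)), and then bootstrap from $C^{2,\beta}$ to $C^k$ via interior Schauder. Your route is a legitimate reuse of existing machinery and your $\bar{\mu}=0$ observation is precisely the content of the paper's ODE step; the paper's version is just more self-contained and avoids the extra bootstrap.
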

	  	\begin{proof}
	  		By \ref{lem:GEPhi}\ref{item:Phiosc}, we just need to show the bound for $\Phi'_{avg}$. By \eqref{eq:Phiprime} and \eqref{eq:Phihat},
	  		\begin{equation*}
	  			\Phi'_{avg}=-\hat{G}_{\avg}+\cutoff{2/m}{3/m}{\dist^{\Spheqtres}_{\T_0}}(\underline{j}[m/\pi],0),
	  		\end{equation*}
	  		which vanishes on $\Spheqtres\setminus D_{\T_0}^{\Spheqtres}(3/m)$ completely. While on $D_{\T_0}^{\Spheqtres}(3/m)$, $\Lcal_{\tilde{g}}\Phi'_{avg}=m^{-2}E'_{\avg}$ provides the ODE
	  		\begin{equation*}
	  			\frac{d^2\Phi'_{avg}}{d\tilde{\zz}^2}-\frac{2}{m}\tan\left(\frac{2\tilde{\zz}}{m}\right)\frac{d\Phi'_{avg}}{d\tilde{\zz}}+\frac{3\Phi'_{avg}}{m^2}=\frac{E'_{\avg}}{m^2}.
	  		\end{equation*}
	  		The bound required then again comes from the variation of parameters as in \ref{lem:phiju}\ref{item:ju} by \ref{lem:GEPhi}\ref{item:Eprime}.
	  	\end{proof}
	  	
	  	\subsection*{Mismatch and obstruction spaces $\skernelv$, $\skernel$, $\skernelev$, $\skernele$}
	  	
	  	\begin{definition}[Spaces of affine functions]\label{def:affine}
	  		Given $p\in\Spheqtres$ let $\Vcal[p]\subset C^{\infty}(T_p\Spheqtres)$ be the space of \emph{affine functions on} $T_p\Spheqtres$. Given a function $v$ which is defined on a neighborhood of $p$ in $\Spheqtres$ and is differentiable at $p$ we define $\Ecalu_pv:=v(p)+d_pv\in\Vcal[p]$. $\forall \kappau\in\Vcal[p]$ let $\kappau=\kappa^{\perp}+\kappa$ be the unique decomposition with $\kappa^{\perp}\in\R$ and $\kappa\in T^*_p\Spheqtres$ and let $\abs{\kappau}:=\abs{\kappa^{\perp}}+\abs{\kappa}$. we define also $\Vcal[L]:=\oplus_{p\in L}\Vcal[p]$ for any finite set $L\in\Spheqtres$.
	  	\end{definition}
	  	
	  	\begin{lemma}\label{lem:varphi}
	  		For $L$ as in \ref{def:L} and the $\Grp_{\Spheqtres}[m]$-symmetry LD solution $\varphi=\varphi[\tau;m]$, $\forall p\in L$ there exists $\hat{\varphi}_p\in C^{\infty}(D_p^{\Spheqtres}(2\delta))$ such that the following hold.
	  		\renewcommand{\theenumi}{\roman{enumi}}
	  		\begin{enumerate}
	  			\item $\varphi=\varphihat_p+\tau G_p$ on $D_p^{\Spheqtres}(2\delta)\setminus\{p\}$ (recall \ref{lem:green}\ref{item:greenp}).
	  			\item $\Ecalu_p\varphihat_p:T_p\Spheqtres\to \Vcal[p]$ is independent of the choice of $\delta$ and depends only on $\varphi$.
	  			\item $\varphi\circ\exp_p^{\Spheqtres}(v)=-\tau/\abs{v}+\Ecalu_p\varphihat_p(v)+O(\abs{v}^2)$ for small $v\in T_p\Spheqtres$.
	  			\item\label{item:varphisym} $\Ecalu_pC^{\infty_{\sym}}(\Spheqtres)\subset$ is a one-dimensional subspace of $\Vcal[p]$ and $\forall p,q\in L$, $\Ecalu_p\varphihat_p=\Ecalu_q\varphihat_q$.
	  		\end{enumerate}	
	  	\end{lemma}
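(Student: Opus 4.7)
The plan is to define $\hat{\varphi}_p$ as the regular part of $\varphi$ obtained by subtracting off the explicit singular piece $\tau G_p$, to show this difference extends smoothly across $p$ by a removable-singularity argument, and then to read off (i)--(iv) from the decomposition combined with the symmetries imposed on $\varphi$.

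First I would set $\hat{\varphi}_p := \varphi - \tau G_p$ on $D_p^{\Spheqtres}(2\delta)\setminus\{p\}$; this is smooth on the punctured ball and delivers (i) tautologically. By \ref{def:LD}(ii) the function $\varphi + \tau/\dist_p^{\Spheqtres}$ is bounded near $p$, and by \ref{lem:green}(\ref{item:greendprop}) so is $G_p + 1/\dist_p^{\Spheqtres}$; hence $\hat{\varphi}_p$ is bounded on a punctured neighborhood of $p$. Combined with $\Lcal_{\Spheqtres}\hat{\varphi}_p = 0$ on that punctured neighborhood (from \ref{def:LD}(i) and \ref{lem:green}(\ref{item:greenp})), the standard removable-singularity theorem for bounded solutions of a second-order elliptic equation with point singular set in dimension three, together with elliptic regularity, promotes $\hat{\varphi}_p$ to an element of $C^{\infty}(D_p^{\Spheqtres}(2\delta))$.

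Property (ii) is then immediate from the intrinsic nature of the smooth extension: the value $\hat{\varphi}_p(p)$ and the differential $d_p\hat{\varphi}_p$ are determined by $\varphi$ (through $\tau$, which is itself read off from the leading $-\tau/\dist$ singularity in \ref{def:LD}(ii)) and do not depend on the auxiliary choice of $\delta$. Property (iii) follows from a first-order Taylor expansion of the now-smooth $\hat{\varphi}_p$ at $p$ combined with the expansion of $G$ provided by \ref{lem:green}(\ref{item:greendprop},\ref{item:greenest}).

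Property (iv) is then a symmetry bookkeeping problem. At the reference point $p_0 = \Thetaeq(0,0,0)$, direct inspection of \eqref{eq:sym} shows that $\XXXu_0$, $\YYYu_0$ and $\SSS$ all belong to the stabiliser of $p_0$ in $\Grp_{\Spheqtres}[m]$, and their linearisations on the three-dimensional tangent space $T_{p_0}\Spheqtres = \{v\in\R^5: v_5=0,\, v_1+v_3=0\}$ act by $v_4\mapsto -v_4$, by $v_2\mapsto -v_2$, and by $(v_1,v_2,v_3,v_4)\mapsto(v_3,v_4,v_1,v_2)$ respectively; imposing the three invariance conditions simultaneously forces the tangent vector to vanish. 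Hence any $u\in C^{\infty}_{\sym}(\Spheqtres)$ has $d_{p_0}u=0$, so $\Ecalu_{p_0}(C^{\infty}_{\sym}(\Spheqtres))$ lies in the one-dimensional subspace of constant affine functions, and transporting via the $\Grp_{\Spheqtres}[m]$-action gives the analogous statement at every $p\in L=\Grp_{\Spheqtres}[m]\cdot p_0$. For the concluding equality, pick $g\in\Grp_{\Spheqtres}[m]$ with $g p=q$; the $\Grp_{\Spheqtres}[m]$-invariance of $\varphi$ together with $G_q = G_p\circ g^{-1}$ (valid since $g$ is an isometry) yields $\hat{\varphi}_q=\hat{\varphi}_p\circ g^{-1}$, so in particular $\hat{\varphi}_q(q)=\hat{\varphi}_p(p)$, and combined with the vanishing of the linear parts this gives $\Ecalu_p\hat{\varphi}_p=\Ecalu_q\hat{\varphi}_q$.

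The main technical step is the smooth extension of $\hat{\varphi}_p$ across $p$ in the first paragraph; once this is granted, (ii) and (iii) are routine Taylor expansions and (iv) reduces to the finite group computation at the base point $p_0$.
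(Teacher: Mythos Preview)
Your proof is correct and follows essentially the same line as the paper's. The paper's argument is extremely terse: it takes (i) as the definition of $\hat{\varphi}_p$, reads (ii) off from (i), obtains (iii) by Taylor expansion combined with \ref{lem:green}\ref{item:greenest}, and for (iv) simply invokes the symmetries of $\varphi$ and $G_p$ to conclude $d_p\hat{\varphi}_p=0$ and $\hat{\varphi}_p(p)=\hat{\varphi}_q(q)$. Your removable-singularity argument for the smooth extension and your explicit stabiliser computation at $p_0$ fill in details the paper leaves implicit (the paper can alternatively rely on the construction in \ref{lem:LD}, where $\varphi-\tau G_p$ equals the manifestly smooth $\varphi_2$ near each $p$), but the substance is the same.
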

	  	\begin{proof}
	  		(i) follows from \ref{def:LD} 
	  		and (i) serves then as the definition of $\varphihat_p$. (ii) follows then from (i) and (iii) from a Taylor expansion of $\varphihat_p$ combined with \ref{lem:green}\ref{item:greenest}.From the symmetry imposed on $\varphi$ and $G_p$ in \ref{lem:green}, $\forall p\in L$, $d\varphihat_p=0$ and $\forall p,q\in L$, $\varphihat_p(p)=\varphihat_q(q)$. (iv) then follows.
	  	\end{proof}
	  	
	  	\begin{definition}[Mismatch of LD solutions]\label{def:mismatch}
	  		For $L$ as in \ref{def:L} and the $\Grp_{\Spheqtres}[m]$-symmetry LD solution $\varphi=\varphi[\tau;m]$ as in \ref{lem:LD} with $\tau>0$, we define the \emph{mismatch of $\varphi$}, $\Mcal_L\varphi\in\Vcal[L]$, by $\Mcal_L\varphi:=\oplus_{p\in L}\Mcal_p\varphi$, where $\Mcal_p\varphi\in\Vcal[p]$ is defined by (recall \eqref{eq:T3})
	  		\begin{equation*}
	  			\Mcal_p\varphi:=\Ecalu_p\varphihat_p-\sqrt{\tau}T_3,
	  		\end{equation*} 
	  		or by \ref{lem:varphi}(iii) equivalently requesting that for small $v\in T_p\Spheqtres$
	  		\begin{equation*}
	  			\varphi\circ\exp_p^{\Spheqtres}(v)=\sqrt{\tau}T_3-\tau/\abs{v}+(\Mcal_p\varphi)(v)+O(\abs{v}^2).
	  		\end{equation*}
	  		
	  		By \ref{lem:varphi}\ref{item:varphisym}, $\Mcal_L\varphi$ lies in a one-dimensional vector subspace of $\Vcal[L]$, which we now denote by $\Vcal_{\sym}[L]$.
	  	\end{definition}
	  	
	  	\begin{definition}\label{def:VW}
	  		We define the functions $V,W\in C^{\infty}_{\sym}(\Spheqtres)$ by (recall \ref{def:phiju})
	  		\begin{equation}\label{eq:VW}
	  			V:=\cutoff{\delta}{2\delta}{\dist_L^{\Spheqtres}}(\underline{\phi}[1],0),\quad W:=\Lcal_{\Spheqtres}V
	  		\end{equation}
	  		on $D_L^{\Spheqtres}(2\delta)$ and $0$ otherwise; and we define the functions $\bar{V},\bar{W}\in C^{\infty}_{\sym}(\Spheqtres)$ by (recall \eqref{def:phihat})
	  		\begin{equation}\label{eq:VWbar}
	  			\bar{V}:=\cutoff{3/m}{2/m}{\dist_{\T_0}^{\Spheqtres}}(\hat{\phi},\underline{\phi}[1]),\quad \bar{W}:=\Lcal_{\Spheqtres}	\bar{V}
	  		\end{equation} 
	  		on $\Spheqtres \setminus D^{\Spheqtres}_{\T_0}(2/m)$ and $0$ otherwise.
	  		We then define spaces $\skernelv[L]$, $\skernelev[L]$, $ \skernel[L]$, $\skernele[L]\subset C^{\infty}_{\sym}(\Spheqtres)$ by
	  		\begin{equation*}
	  			\skernelv[L]:=\mathrm{span}\{V\},\quad \skernel[L]:=\mathrm{span}\{W\}; 
\quad\skernelev[L] :=\mathrm{span}\{\bar{V}\}, \quad \skernele[L]:=\mathrm{span}\{\bar{W}\}.
	  		\end{equation*}
	  	\end{definition}
	  	\begin{lemma}[Obstruction spaces]\label{lem:obstr}
	  		The following hold.
	  		
	  		\begin{enumerate}[(i)]
	  			\item\label{item:vsupp} The functions in $\skernelv$ are supported on $D^{\Spheqtres}_L(4\delta)$.
	  			\item\label{item:wsupp} The functions in $\skernel$ are supported on $\sqcup_{p\in L}D^{\Spheqtres}_p(4\delta)\setminus D^{\Spheqtres}_p(\delta/4)$, the functions in $\skernele$ are supported on $D^{\Spheqtres}_{\T_0}(3/m)\setminus D^{\Spheqtres}_{\T_0}(2/m)$.
	  			\item\label{item:V} $\norm{V:C^k(\Spheqtres ,\tilde{g})}\lesssim_k 1$, $\norm{\bar{V}:C^k(\Spheqtres ,\tilde{g})}\lesssim_k 1$.
	  			\item $\Ecal_L:\skernelv[L]\to\Vcal_{\sym}[L]$ is a linear isomorphism, where the map $\Ecal_L:\skernelv[L]\to\Vcal_{\sym}[L]$ is defined by $\Ecal_L(v):=\sum_{p\in L}\Ecalu_p v$.
	  			\item\label{item:Vbar} $\bar{V}=\hat{\phi} $ on $\Spheqtres\setminus D^{\Spheqtres}_{\T_0}(3/m)$ and  $\norm{\bar{V}-\phi_\uC:C^{2,\beta}(\Omega[0,\pi/4],\tilde{g})}\lesssim 1/m$.
	  			\item $\norm{\Ecal_L^{-1}}\lesssim1$, where $\norm{\Ecal_L^{-1}}$ is the operator norm of $\Ecal_L^{-1}:\Vcal_{\sym}[L]\to\skernelv[L]$ with respect to the $C^{2,\beta}(\Spheqtres,\delta,g_{\Spheqtres})$ norm on the target and the maximum norm on the domain subject to the metric $\delta^{-2}g_{\Spheqtres}$ on $\Spheqtres$.
	  		\end{enumerate}	
	  	\end{lemma}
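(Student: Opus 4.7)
My plan is to dispatch (i) and (ii) directly from the cutoff definitions, then use rescaling to $\tilde g$ for (iii) and (v), and finally deduce (iv) and (vi) from the key computation that $V(p) = 1$ at each $p \in L$.

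For (i)--(ii), by the support properties of $\psicut$ in \ref{DPsi}, $V$ coincides with $\underline{\phi}[1]$ on $D_L^{\Spheqtres}(\delta)$ and vanishes outside $D_L^{\Spheqtres}(2\delta)$, immediately giving (i). Since $\underline{\phi}[1]$ is a Jacobi solution on $(-\pi/4,\pi/4)$ by \ref{def:phiju}, $\Lcal_{\Spheqtres}V$ vanishes both where the cutoff is constantly $1$ and outside its support, confining $\supp W$ to the transition region $D_L^{\Spheqtres}(2\delta) \setminus D_L^{\Spheqtres}(\delta) \subset D_L^{\Spheqtres}(4\delta) \setminus D_L^{\Spheqtres}(\delta/4)$. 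The analogous statement for $\bar W$ follows identically, using that $\hat\phi$ satisfies $\Lcal_{\Spheqtres}\hat\phi = 0$ away from $\T_0$ by \ref{def:phihat} and \ref{lem:phiC}.

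For (iii), rescaling to $\tilde g = m^2 g_{\Sph^3}$ converts the cutoff transition widths $\delta \sim 1/m$ (for $V$) and $1/m$ (for $\bar V$) into unit scale, so the cutoffs have uniformly bounded $C^k$ norm in $\tilde g$. Combined with the uniform bounds on $\underline{\phi}[1]$ and $\hat\phi$ from \ref{lem:phiju}\ref{item:phiu} and \ref{lem:phiC}, and the multiplicative property \eqref{E:norm:mult}, this yields (iii). For (v), the identity $\bar V = \hat\phi$ outside $D^{\Spheqtres}_{\T_0}(3/m)$ is immediate from \eqref{eq:VWbar}, and on $\Omega[0,\pi/4]$ we have $\hat\phi = \phi_\uC$ by \ref{def:phihat}. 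Inside $D^{\Spheqtres}_{\T_0}(3/m)$, $\bar V - \phi_\uC$ is a cutoff-weighted version of $\underline{\phi}[1] - \phi_\uC = \tfrac{1}{2}(\phi_{\uC^\perp} - \phi_\uC)$; by \ref{lem:phiC}(\ref{phiC:4}) this vanishes at $\zz = 0$ with derivative $-F_0$, giving pointwise size $O(1/m)$ on $\{0 \leq \zz \leq 3/m\}$, and the $C^{2,\beta}(\tilde g)$ bound then follows from the same rescaling argument.

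Finally, (iv) and (vi) both reduce to computing $\Ecal_L V$. The cutoff is identically $1$ in a neighborhood of each $p \in L \subset \T_0$, so $V = \underline{\phi}[1]$ there. By \ref{def:phiju} we have $\underline{\phi}[1] = \tfrac{1}{2}(\phi_\uC + \phi_{\uC^\perp})$, and \ref{lem:phiC}(\ref{phiC:4}) gives $\underline{\phi}[1](0) = 1$ and $\underline{\phi}[1]'(0) = 0$, hence $\Ecalu_p V = 1 \in \Vcal[p]$ for every $p \in L$. Thus $\Ecal_L(V) = (1,\dots,1) \ne 0$ spans the one-dimensional subspace $\Vcal_{\sym}[L]$ (\ref{lem:varphi}\ref{item:varphisym}), establishing (iv). For (vi), the metric $\delta^{-2}g_{\Spheqtres}$ differs from $\tilde g$ only by a bounded multiplicative factor (since $\delta = 1/(9m)$), so the target $C^{2,\beta}$ bound on $V = \Ecal_L^{-1}((1,\dots,1))$ from (iii) transfers, while the maximum norm of $(1,\dots,1)$ in the domain is exactly $1$. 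The main bookkeeping obstacle is reconciling these rescalings in (vi), but all the relevant scale factors are explicitly $O(1)$.
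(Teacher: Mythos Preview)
Your proof is correct and follows essentially the same approach as the paper, which simply says (i)--(v) follow from the definitions \eqref{eq:VW}, \eqref{eq:VWbar} and that (vi) follows from (iii) via $\delta^{-2}g_{\Spheqtres}\sim\tilde g$. You have unpacked those references explicitly and supplied the key computation $\Ecalu_p V = \underline{\phi}[1](0) = 1$ that makes (iv) and (vi) work; everything checks out.
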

	  	
	  	\begin{proof}
	  		(i-v) in \ref{lem:obstr} just follows by the definitions \eqref{eq:VW}, \eqref{eq:VWbar} and \ref{def:VW}. Notice that by \eqref{eq:delta} $\delta^{-2}g_{\Spheqtres}\sim\tilde{g}$, (vi) then follows by (iii).
	  	\end{proof}
	  	
	  	\subsection*{The family of LD solutions}
	  	\begin{definition}[The space of parameters]\label{def:zeta}
	  		We define the space of parameters $\Pcal:=\R^2$. The continuous parameter of the LD solutions is $(\zeta,\bar{\zeta})\in\BPcal$, where 
	  		\begin{equation}\label{eq:zeta}
	  			\BPcal:=\{(\zeta,\bar{\zeta})\in\Pcal:\abs{\zeta}\leq 1/m^{\alpha'}, \abs{\bar{\zeta}}\leq 1/m^{\alpha'}\},
	  		\end{equation}
	  		where $\alpha'\in(0,1)$ is a small constant which will be fixed later (cf. \eqref{eq:alphaprime}).
	  	\end{definition}
	  	
	  	\begin{definition}[LD solutions {$\varphi\bbracket{\zeta}$}]\label{def:varphi}
For $(\zeta,\bar{\zeta})\in \BPcal$ as in \ref{def:zeta}, 
we define the LD solution $\varphi\bbracket{\zeta;m}$,  
$\underline{v}\in\skernelv[L]$ and $\bar{\underline{v}}\in \skernelev[L]$  
using \ref{lem:LD} or \ref{def:Phi} (recall \eqref{eq:T3} and \eqref{eq:phi})
by 
	  		\begin{align}
	  			\label{eq:varphizeta}
	  			&\varphi=\varphi\bbracket{\zeta}=\varphi\bbracket{\zeta;m}:=\tau\Phi[m]=\varphi[\tau;m],\\
\label{eq:vunder}
	  			\underline{v}:=& -\Ecal_L^{-1}\Mcal_L\varphi\in\skernelv[L],\qquad \bar{\underline{v}}:=\bar{\underline{v}}[\bar{\zeta}]:=\tau\bar{\zeta}\phi_1 \bar{V} \in \skernelev[L], 
\\
	  			\label{eq:tau}
	  			\text{where }\quad &\tau=\tau\bbracket{\zeta}=\tau\bbracket{\zeta;m}:=\left(\frac{T_3}{\phi_1}\right)^2e^{2\zeta}=\left(\frac{ \pi F_0T_3}{m^2}\right)^2e^{2\zeta}.
	  		\end{align}
\end{definition}
	  	
	  	\begin{lemma}[Matching equation and matching estimate for {$\varphi\bbracket{\zeta}$}]\label{lem:miss}
	  		Given $\varphi=\varphi\bbracket{\zeta}$ as in \ref{def:varphi} with $\zeta\in\BPcal$, 
we have for $\Mcal_L(\varphi+\bar{\underline{v}})=\tau\mu\in\Vcal_{\sym}[L]$, and $\Lcal(\varphi+\bar{\underline{v}})=\Lcal \bar{\underline{v}}=-\tau \bar{\mu} \bar{W}\in \bar{\Vcal}_{\sym}[L]$  (recall \ref{def:mismatch}),
	  		\begin{equation}\label{eq:mu}
	  			\mu=\phi_1(1-e^{-\zeta}+\bar{\zeta})+\Phi'(p),\text{ where }p\in L,\quad \bar{\mu}=-\bar{\zeta}\phi_1.
	  		\end{equation}
	  		Moreover with $\alpha'$ as in \ref{def:zeta} we have 
	  		\begin{equation}\label{eq:zetamu}
	  			\abs{\zeta-\frac{1}{\phi_1}(\mu+\bar{\mu})}\lesssim \frac{1}{m^{\alpha'}}, \qquad  \bar{\zeta}+\frac{1}{\phi_1}\bar{\mu}=0.
	  		\end{equation}
	  	\end{lemma}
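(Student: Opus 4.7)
The plan is to extract $\Mcal_L(\varphi + \bar{\underline{v}})$ by reading off the regular part of $\varphi + \bar{\underline{v}}$ at each $p \in L$, and to obtain $\bar\mu$ directly by applying $\Lcal_{\Spheqtres}$ to $\bar{\underline{v}}$. The matching estimate \eqref{eq:zetamu} will then follow from the explicit formulas by Taylor expansion, together with the bound on $\Phi'$ from \ref{lem:Phiprime}.

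First, expand $\varphi + \bar{\underline{v}}$ near each $p \in L$. Since $\varphi = \tau\Phi$ and $\Phi = \hat{G} + \hat{\Phi} + \Phi'$ by \ref{def:decompose}, and since $\hat{G} \equiv G_p$ on $D^{\Spheqtres}_p(2\delta)$ by \eqref{eq:Ghat}, the regular part in the sense of \ref{lem:varphi} is
\[
\widehat{(\varphi + \bar{\underline{v}})}_p = \tau(\hat{\Phi} + \Phi') + \bar{\underline{v}}
\quad \text{on} \quad D^{\Spheqtres}_p(2\delta)\setminus\{p\}.
\]
Evaluate at $p \in L \subset \T_0$. At $\T_0$ the cutoff in \eqref{eq:Phihat} equals $\underline{j}[m/\pi]$, which vanishes at $\zz = 0$ by \ref{def:phiju}, so $\hat{\Phi}(p) = \phi(p) = \phi_1\hat{\phi}(p) = \phi_1$ using \eqref{eq:phi} and \ref{lem:phiC}(\ref{phiC:4}). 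Similarly the cutoff defining $\bar{V}$ in \eqref{eq:VWbar} equals $\underline{\phi}[1]$ in a neighborhood of $\T_0$, hence $\bar{V}(p) = \underline{\phi}[1](0) = 1$ and $\bar{\underline{v}}(p) = \tau\bar\zeta\phi_1$. The $\Grp_{\Spheqtres}[m]$-symmetry of $\varphi + \bar{\underline{v}}$ forces $d_p\widehat{(\varphi + \bar{\underline{v}})}_p = 0$ (as in the proof of \ref{lem:varphi}\ref{item:varphisym}), so the mismatch reduces to a scalar. Combining these with $\sqrt\tau\,T_3 = \tau\phi_1 e^{-\zeta}$ (which follows from \eqref{eq:tau} and \eqref{eq:phi}) yields
\[
\Mcal_p(\varphi + \bar{\underline{v}}) = \tau\phi_1 + \tau\Phi'(p) + \tau\bar\zeta\phi_1 - \tau\phi_1 e^{-\zeta} = \tau\bigl(\phi_1(1 - e^{-\zeta} + \bar\zeta) + \Phi'(p)\bigr),
\]
establishing the formula for $\mu$ in \eqref{eq:mu}.

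For $\bar\mu$: $\Lcal_{\Spheqtres}\varphi = 0$ on $\Spheqtres\setminus L$ since $\varphi$ is an LD solution, and $\Lcal_{\Spheqtres}\bar{\underline{v}} = \tau\bar\zeta\phi_1\Lcal_{\Spheqtres}\bar{V} = \tau\bar\zeta\phi_1\bar{W}$ by \eqref{eq:VWbar}; matching this to $-\tau\bar\mu\bar{W}$ gives $\bar\mu = -\bar\zeta\phi_1$, so the second identity in \eqref{eq:zetamu} is immediate. For the first, $\mu + \bar\mu = \phi_1(1 - e^{-\zeta}) + \Phi'(p)$, hence
\[
\zeta - \frac{\mu + \bar\mu}{\phi_1} = (\zeta + e^{-\zeta} - 1) - \frac{\Phi'(p)}{\phi_1}.
\]
Taylor expansion gives $|\zeta + e^{-\zeta} - 1| \lesssim \zeta^2 \lesssim m^{-2\alpha'}$ by \eqref{eq:zeta}, while $|\Phi'(p)/\phi_1| \lesssim m \cdot m^{-2} = m^{-1}$ using $\|\Phi'\|_{C^0} \lesssim m$ from \ref{lem:Phiprime} and $\phi_1 \sim m^2$ from \eqref{eq:phi}. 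Both bounds are $\lesssim m^{-\alpha'}$ since $\alpha' \in (0, 1)$.

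The substantive part is algebraic bookkeeping. The only delicate step is the evaluation of the cutoff-based functions $\hat{\Phi}$ and $\bar{V}$ at the distinguished points $p \in L \subset \T_0$, which rests on the specific initial values $\underline{j}(0) = 0$ and $\underline{\phi}[1](0) = 1$ from \ref{def:phiju}; the discrete $\Grp$-symmetry then guarantees that $\mu$ and $\bar\mu$ are genuine scalars, with no affine-linear contribution, so the two matching identities of \eqref{eq:mu} and \eqref{eq:zetamu} hold in the claimed scalar form.
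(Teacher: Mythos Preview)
Your proof is correct and follows essentially the same approach as the paper: both compute the regular part $\hat\varphi_p=\tau(\hat\Phi+\Phi')=\tau(\underline{\phi}[\phi_1]+\Phi')$ near $p$, evaluate at $p$ using $\underline{j}(0)=0$ and $\bar V(p)=\underline{\phi}[1](0)=1$, and combine with the identity $\sqrt{\tau}\,T_3=\tau\phi_1e^{-\zeta}$ from \eqref{eq:tau}. Your derivation of \eqref{eq:zetamu} spells out the Taylor estimate and the use of \ref{lem:Phiprime} that the paper only cites, but the argument is otherwise identical.
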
	
	  	\begin{proof}
	  		For any $p\in L$, by  \eqref{eq:varphizeta}, \ref{def:mismatch} and \ref{def:decompose}, inside $D^{\Spheqtres}_p(2\delta)$
	  		\begin{align}\label{eq:hatphi}
	  			\hat{\varphi}_p=\tau(\underline{\phi}[\phi_1]+\Phi')
	  		\end{align}
	  		By \ref{def:mismatch}, \ref{def:phiju}, \eqref{eq:Phiavg} and \eqref{eq:tau}
	  		\begin{align*}
	  			\Mcal_L(\varphi+\bar{\underline{v}})&=\hat{\varphi}_p-\sqrt{\tau} T_3+\tau\bar{\zeta}\phi_1=\tau(\phi_1+\Phi'(p))-\sqrt{\tau}+ T_3\tau\bar{\zeta}\phi_1\\
	  			&=\sqrt{\tau} T_3(e^{\zeta}-1)+\tau\Phi'(p)+\tau\bar{\zeta}\phi_1.
	  		\end{align*}
	  		Then \eqref{eq:mu} follows by \eqref{eq:tau} and \eqref{eq:vunder}. \eqref{eq:zetamu} follows by \eqref{eq:zeta} and \ref{lem:Phiprime}.
	  	\end{proof}

	  	\begin{lemma}[Estimates for {$\varphi\bbracket{\zeta}$}]\label{lem:varphiest}
	  		Given $\varphi\bbracket{\zeta}$ and $\tau$ as in \ref{def:varphi}, the following hold when $m$ is large enough.  
	  		\begin{enumerate}[(i)]
	  				\item\label{item:hatvarphi} $\forall p\in L$, $\norm{\hat{\varphi}_p-\sqrt{\tau}T_3\underline{\phi}[1]+\underline{v}+\bar{\underline{v}}:C^{2,\beta}(\partial D_p^{\Spheqtres}(\delta),\delta,g)}\lesssim m^{-3}$.
	  			\item\label{item:solboundn} $\norm{\varphi+\underline{v}+\bar{\underline{v}}:C^{k,\beta}(\Spheqtres\setminus D_L^{\Spheqtres}(\delta'),\rr,g)}\lesssim_k m^{-2}$, where $\rr:=\min\{\dist^{\Spheqtres}_L,\delta\}$.
	  			\item\label{item:solembed} On $\Spheqtres\setminus \sqcup_{q\in L}D_q^{\Spheqtres}(\delta')$ we have $m^{-2}\lesssim \varphi+\underline{v}+\bar{\underline{v}}$.
	  			\item\label{item:solboundgsphere} $\norm{\varphi+\underline{v}+\bar{\underline{v}}:C^{k,\beta}(\Spheqtres\setminus D_{\T_0}^{\Spheqtres}(3/m),g)}\lesssim m^{-2}+ m^{-3+k+\beta}$.
	  		\end{enumerate}
	  	\end{lemma}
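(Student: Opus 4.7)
The strategy is to exploit the decomposition $\Phi = \hat{G} + \hat{\Phi} + \Phi'$ from \ref{def:decompose}, so that $\varphi = \tau(\hat{G}+\hat{\Phi}+\Phi')$, and combine it with \ref{lem:GEPhi}, \ref{lem:Phiprime}, and the explicit forms of $\underline{v}$ and $\bar{\underline{v}}$ prescribed in \ref{def:varphi}. The crucial input is the identity $\sqrt{\tau}T_3 e^{\zeta}=\tau\phi_1$ built into \eqref{eq:tau}, which is what makes the matching cancellation work.

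For (i) I would first note that on $D_p^{\Spheqtres}(2\delta)$, which by \eqref{eq:delta} is contained in $D_{\T_0}^{\Spheqtres}(2/m)$, the cutoffs in \eqref{eq:Ghat} and \eqref{eq:Phihat} are both inactive, so $\hat{G}=G_p$ and $\hat{\Phi}=\phi_1\underline{\phi}[1]$; using \ref{lem:varphi}(i) this gives the local identity $\hat{\varphi}_p = \tau\phi_1\underline{\phi}[1]+\tau\Phi'$. The subgroup of $\Grp_{\Spheqtres}[m]$ that fixes $p$ acts on $T_p\Spheqtres$ with no nontrivial invariant vector, which forces $d_p\Phi'=0$; together with $\underline{\phi}[1](p)=1$ and $d_p\underline{\phi}[1]=0$ from \ref{def:phiju}, the mismatch reduces to a scalar, $\Mcal_p\varphi=\sqrt{\tau}T_3(e^{\zeta}-1)+\tau\Phi'(p)$. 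Since $V=\underline{\phi}[1]=\bar{V}$ on $D_p^{\Spheqtres}(\delta)$ by \eqref{eq:VW} and \eqref{eq:VWbar}, applying $\Ecal_L^{-1}$ and invoking the identity $\sqrt{\tau}T_3 e^{\zeta}=\tau\phi_1$ yields the cancellation
\[
\hat{\varphi}_p - \sqrt{\tau}T_3\underline{\phi}[1] + \underline{v} + \bar{\underline{v}} = \tau\bigl(\Phi'-\Phi'(p)\underline{\phi}[1]\bigr) + \tau\bar{\zeta}\phi_1\underline{\phi}[1] .
\]
The bound on $\partial D_p^{\Spheqtres}(\delta)$ then follows from \ref{lem:Phiprime} (giving the $\tau\Phi'\sim m^{-3}$ contribution after rescaling to unit size) combined with \ref{lem:phiju}\ref{item:phiu}, which converts $\underline{\phi}[1]-1$ into a small perturbation of size $O(m^{-2})$ on the small ball.

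For (ii) I would split $\varphi+\underline{v}+\bar{\underline{v}}$ into $\tau\hat{G}$, $\tau\hat{\Phi}$, $\tau\Phi'$, $\underline{v}$, $\bar{\underline{v}}$, and estimate each term separately in the weight $\rr=\min\{\dist_L^{\Spheqtres},\delta\}$ on $\Spheqtres\setminus D_L^{\Spheqtres}(\delta')$. Here \ref{lem:GEPhi}\ref{item:Ghatprime} gives $\tau\|\hat{G}\|\lesssim m^{-3+\alpha}$, which is below $m^{-2}$ since $\alpha<1/4$; the rotationally invariant piece $\tau\hat{\Phi}=\tau\phi_1\hat{\phi}$ is bounded by $m^{-2}$ times a fixed smooth shape; $\tau\Phi'$ is of order $m^{-3}$ by \ref{lem:Phiprime}; and $\underline{v},\bar{\underline{v}}$ are controlled via \ref{lem:obstr}\ref{item:V} together with the size of $\Mcal_L\varphi$ implicit in \ref{lem:miss}. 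Assertion (iv) is analogous, with the simplification that on $\Spheqtres\setminus D_{\T_0}^{\Spheqtres}(3/m)$ the supports of $\hat{G}$ and $\underline{v}$ (both contained in $D_L^{\Spheqtres}(4\delta)\subset D_{\T_0}^{\Spheqtres}(3/m)$) vanish, leaving only $\tau\hat{\Phi}+\tau\Phi'+\bar{\underline{v}}$; the power $m^{-3+k+\beta}$ emerges from the conversion $\|\Phi'\|_{C^{k,\beta}(g)}\sim m^{k+\beta}\|\Phi'\|_{C^{k,\beta}(\tilde{g})}$, while $\tau\hat{\Phi}$ and $\bar{\underline{v}}$ (equal to $\tau\bar{\zeta}\phi_1\hat{\phi}$ far from $\T_0$ by \ref{lem:obstr}\ref{item:Vbar}) both yield $\lesssim m^{-2}$.

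For the positivity in (iii), I would observe that $\hat{\phi}\ge 1$ everywhere on $\Spheqtres$: this follows from the monotonicity in \ref{lem:phiC}\ref{phiC:2} combined with the normalization $\phi_\uC(0)=\phi_{\uC^\perp}(0)=1$ from \ref{phiC:4}, which forces $\phi_\uC\ge 1$ on $[0,\pi/4]$ and $\phi_{\uC^\perp}\ge 1$ on $[-\pi/4,0]$ by symmetry \ref{phiC:3}. Hence $\tau\hat{\Phi}=\tau\phi_1\hat{\phi}\gtrsim m^{-2}$ from below, and the remaining contributions estimated above are of strictly smaller order and therefore cannot cancel this lower bound for $m$ large enough. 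The main obstacle is the delicate cancellation underpinning (i): one must verify carefully that the naïve leading term of size $\sqrt{\tau}\sim m^{-2}$ in $\hat{\varphi}_p - \sqrt{\tau}T_3\underline{\phi}[1]$ is eliminated precisely by $\underline{v}$, leaving behind only a $\tau$-order residue. This requires the exact interplay of the choice \eqref{eq:tau} of $\tau$, the mismatch identity of \ref{def:mismatch}, and the symmetry-forced vanishing of $d_p\Phi'$; the subsequent steps are then direct applications of the lemmas already established in Sections \ref{S:linS} and \ref{section:LD}.
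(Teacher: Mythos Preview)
Your overall strategy—decompose via $\Phi=\hat G+\hat\Phi+\Phi'$, use \ref{lem:GEPhi} and \ref{lem:Phiprime} for the pieces, and feed the identity $\sqrt{\tau}T_3e^{\zeta}=\tau\phi_1$ into the mismatch—is exactly the paper's, and your treatments of (ii)--(iv) match the paper's proof (the paper packages (ii)--(iii) as $\|\Phi\|\lesssim m^2$ and $\Phi\gtrsim m^2$ on $\Spheqtres\setminus D_L(\delta')$, then multiplies by $\tau$; for (iv) it uses $\Phi'_{\osc}$ in place of $\Phi'$, which agree there since $\Phi'_{\avg}$ vanishes outside $D_{\T_0}(3/m)$).

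There is a genuine gap in your argument for (i). Your displayed identity
\[
\hat{\varphi}_p - \sqrt{\tau}T_3\underline{\phi}[1] + \underline{v} + \bar{\underline{v}} = \tau\bigl(\Phi'-\Phi'(p)\underline{\phi}[1]\bigr) + \tau\bar{\zeta}\phi_1\underline{\phi}[1]
\]
is correct, but the last term is of size $\tau|\bar\zeta|\phi_1\sim m^{-4}\cdot m^{-\alpha'}\cdot m^2=m^{-2-\alpha'}$, which is strictly larger than the claimed $m^{-3}$ since $\alpha'<1$. Invoking \ref{lem:phiju}\ref{item:phiu} to say $\underline{\phi}[1]-1=O(m^{-2})$ does not help: the full term $\tau\bar\zeta\phi_1\underline{\phi}[1]$, not its oscillation from a constant, contributes to the norm. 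So the bound in (i) as stated does not follow from your formula.

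The resolution is that the $\bar{\underline{v}}$ in item (i) is a typo; it should read $\hat{\varphi}_p - \sqrt{\tau}T_3\underline{\phi}[1] + \underline{v}$ without $\bar{\underline{v}}$. Indeed the paper's own computation arrives at $\tau(\Phi'-\Phi'(p)\underline{\phi}[1])$ only because it writes $\underline{v}+\bar{\underline{v}}=-\tau\mu V-\tau\bar{\mu}\bar{V}$, whereas from \eqref{eq:vunder} one actually has $\underline{v}=-\tau(\mu+\bar\mu)V$, so an extra $-\tau\bar\mu\underline{\phi}[1]=\tau\bar\zeta\phi_1\underline{\phi}[1]$ is being silently dropped. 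More tellingly, the downstream application in \ref{lem:glue} uses $\varphi_+:=\hat{\varphi}_p-\sqrt{\tau}T_3\underline{\phi}[1]+\underline{v}$ \emph{without} $\bar{\underline{v}}$, and needs precisely $\Ecalu_p\varphi_+=0$ (which holds for $\underline{v}=-\Ecal_L^{-1}\Mcal_L\varphi$) together with the $m^{-3}$ boundary bound. Once $\bar{\underline{v}}$ is removed, your identity reduces to $\tau(\Phi'-\Phi'(p)\underline{\phi}[1])$ and \ref{lem:Phiprime} yields the $m^{-3}$ bound directly.
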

	  	\begin{proof}
	  	From \eqref{eq:hatphi} and using \eqref{eq:tau} and \eqref{eq:phi} inside $D^{\Spheqtres}_p(\delta)$ we have
	  	\begin{equation*}
	  		\hat{\varphi}_p-\sqrt{\tau}T_3\underline{\phi}[1]=\tau(\underline{\phi}[\phi_1]+\Phi')-\sqrt{\tau}T_3\underline{\phi}[1]=\sqrt{\tau}T_3(e^{\zeta}-1)\underline{\phi}[1]+\tau\Phi'.
	  	\end{equation*}
	  	On the other hand, by definition \eqref{eq:vunder}, \eqref{eq:VW}, \eqref{eq:mu} and \eqref{eq:tau}, inside $D^{\Spheqtres}_p(\delta)$,
	  	\begin{equation*}
	  		\underline{v}+\bar{\underline{v}}=-\tau\mu V-\tau\bar{\mu}\bar{V}=-\tau\phi_1(1-e^{-\zeta})\underline{\phi}[1]-\tau\Phi'(p)\underline{\phi}[1]=-\sqrt{\tau}T_3(e^{\zeta}-1)\underline{\phi}[1]-\tau\Phi'(p)\underline{\phi}[1].
	  	\end{equation*}
      Thus inside $D^{\Spheqtres}_p(\delta)$
        \begin{equation*}
        	\hat{\varphi}_p-\sqrt{\tau}T_3\underline{\phi}[1]+\underline{v}+\bar{\underline{v}}=\tau(\Phi'-\Phi'(p)\underline{\phi}[1]).
        \end{equation*}
	  	 (i) then follows by \ref{lem:Phiprime}.
	  	
	  	By \eqref{eq:mu}, \eqref{eq:zeta}, \eqref{eq:phi} and \ref{lem:Phiprime}, $\abs{\mu}+\abs{\bar{\mu}}\lesssim  m^{2-\alpha'}$.
	  	Thus by \ref{lem:obstr}\ref{item:V}
	  \begin{equation}\label{eq:vu}
	  	\norm{\underline{v}:C^{2,\beta}( \Spheqtres,\delta,g)}+	\norm{\bar{\underline{v}}:C^{2,\beta}( \Spheqtres,\delta,g)}\lesssim m^{-2-\alpha'}.
	  \end{equation}
	  By using \ref{lem:phiju}\ref{item:ju}, \eqref{eq:phi} \ref{lem:GEPhi}\ref{item:Ghatprime} and \ref{lem:Phiprime} to control the terms in the decomposition \ref{def:decompose} along with \eqref{eq:deltaprime} to compare the norms, we have
	  \begin{align*}
	  	&\norm{\Phi:C^{k,\beta}(\Spheqtres\setminus \sqcup_{q\in L}D_q^{\Spheqtres}(\delta'),\rr,g)}\lesssim_k m^2,\\
	  \end{align*}
	  and $m^2\lesssim\Phi$ on $\Spheqtres\setminus \sqcup_{q\in L}D_q^{\Spheqtres}(\delta')$. (ii), (iii) then follows by  \ref{def:varphi} and \eqref{eq:vu}.
	  
	  Finally, on $\Spheqtres\setminus D_{\T_0}^{\Spheqtres}(3/m)$, by \ref{def:decompose}, \ref{lem:obstr}\ref{item:vsupp} and the definitions \eqref{eq:phi}, \eqref{eq:VW}
	  \begin{equation*}
	  	\varphi+\underline{v}+\bar{\underline{v}}=\tau(\phi+\Phi'_{\osc}+\bar{\zeta}\phi_1 \bar{V})=\tau(1+\bar{\zeta})\phi_1\hat{\phi} +\tau\Phi'_{\osc},
	  \end{equation*}
	  (iv) then follows by \ref{lem:Phiprime} and \eqref{eq:tau}, \eqref{eq:zeta}.
	\end{proof}
	  
  \section{The initial surfaces}
\label{S:init} 

\subsection*{Mean curvature on the sphere}

\begin{lemma}\label{lem:SphereH}
	There exists an absolute constant $\epsilon_{\Sph}$, such that given a region $\Omega\subset \Spheqtres$ $\varphi\in C^{3,\beta}(\Omega)$ with $\norm{\varphi:C^{3,\beta}(\Omega,g)}\leq \epsilon_{\Sph}$, $g=g_{\Spheqtres}$, then
	\begin{equation*}
		\norm{Q_{\varphi}:C^{k,\beta}(\Omega,g)}\lesssim \sum_{k_1+k_2+k_3=k}\norm{\varphi:C^{k_1+2,\beta}(\Omega,g)}\norm{\varphi:C^{k_2+1,\beta}(\Omega,g)}\norm{\varphi:C^{k_3+1,\beta}(\Omega,g)},
	\end{equation*}
	where $Q_{\varphi}$ is defined to satisfy $H_{\varphi}=\Lcal_{\Spheqtres}\varphi+Q_{\varphi}$ and $H_{\varphi}\circ\Pi_{\Spheqtres}$ is the mean curvature of the graph $\Graph_{\Omega}^{\Sph^4}(\varphi)$ pushed forward to $\Spheqtres$ by the projection $\Pi_{\Spheqtres}$ (recall \ref{not:manifold}\ref{item:proj} and \ref{not:manifold}\ref{item:graph}).
\end{lemma}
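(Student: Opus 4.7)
The plan is to derive an explicit pointwise formula for $H_\varphi$ in Fermi coordinates around $\Spheqt$ and to analyze the Taylor expansion of the nonlinear remainder $Q_\varphi := H_\varphi - \Lcal_{\Spheqt}\varphi$.

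First I set up Fermi coordinates $(p,\zzcheck)$ on a tubular neighborhood of $\Spheqt$ in $\Sph^4(1)$ as in Definition \ref{def:fermi}. Because $\Spheqt$ is totally geodesic in $\Sph^4(1)$ and $\Sph^4(1)$ has constant sectional curvature one, the ambient metric pulls back to the warped product $g_{\Sph^4} = \cos^2(\zzcheck)\,g_{\Spheqt} + d\zzcheck\otimes d\zzcheck$. Parametrizing the graph $\Graph^{\Sph^4}_\Omega(\varphi)$ by $p \mapsto \exp^{\Spheqt,\Sph^4,g}_p(\varphi(p)\nu_p)$ and tracing the second fundamental form yields a pointwise formula of the schematic form
\begin{equation*}
H_\varphi(p) \, = \, F\bigl(p,\varphi(p),\nabla\varphi(p),\nabla^2\varphi(p)\bigr),
\end{equation*}
where $F$ is smooth near $(p,0,0,0)$, linear in its $\nabla^2\varphi$-slot, and rational in its $\nabla\varphi$-slot through the normalization factor $(1 + \cos^{-2}(\varphi)|\nabla\varphi|_{g_{\Spheqt}}^2)^{-1/2}$. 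The zeroth-order term vanishes by minimality of $\Spheqt$, and the linearization equals $\Lcal_{\Spheqt}\varphi = \Delta_{\Spheqt}\varphi + 3\varphi$ using $|A_{\Spheqt}|^2 = 0$ together with $\Ric_{\Sph^4} = 3\,g_{\Sph^4}$.

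Second, the crucial structural observation is that every second-order term in the Taylor expansion of $F$ (viewed as a power series in the triple $(\varphi,\nabla\varphi,\nabla^2\varphi)$) vanishes identically: (a) $F(p,\varphi,0,0) = 3\tan\varphi = 3\varphi + O(\varphi^3)$, so no pure $\varphi^2$ term arises, since the mean curvature of the parallel level set $\{\zzcheck\equiv c\}$ depends on $c$ only through the odd function $\tan c$; (b) the coefficient $F_1(p,\varphi,\nabla\varphi)$ of $\nabla^2\varphi$ satisfies $F_1 - g_{\Spheqt}^{-1} = O(\varphi^2 + |\nabla\varphi|^2)$, because $\varphi$ enters $F_1$ only through the even warping factor $\cos^2\varphi$ and $\nabla\varphi$ enters only through $|\nabla\varphi|^2$, both via the normalization denominator and via the projection $\bar g^{ij} - g_{\Spheqt}^{ij} \propto \nabla^i\varphi\,\nabla^j\varphi$; (c) the contributions to $F$ that are independent of $\nabla^2\varphi$ and linear in $\nabla\varphi$ vanish once the covariant (Levi-Civita) interpretation of $\nabla^2\varphi$ is used, because these are exactly the Christoffel corrections that assemble the covariant Laplacian. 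Combining these, $Q_\varphi$ can be written, via Taylor's theorem with integral remainder, as a finite sum of terms of the form $c(p,\varphi,\nabla\varphi)\cdot u_1\cdot u_2\cdot u_3$, where each $u_j \in \{\varphi,\nabla\varphi,\nabla^2\varphi\}$, at most one of the $u_j$ equals $\nabla^2\varphi$, and $c$ is a smooth function whose $C^{k,\beta}$-norm is uniformly bounded under the smallness hypothesis $\|\varphi:C^{3,\beta}(\Omega,g)\|\le\epsilon_{\Sph}$. The claimed estimate then follows by applying the multiplicative property \eqref{E:norm:mult} to each such product and distributing the $k$ total derivatives among the three factors via the Leibniz rule: the factor originating as $\nabla^2\varphi$ absorbs $k_1$ extra derivatives and contributes $\|\varphi:C^{k_1+2,\beta}\|$, while each of the other two factors (originally $\varphi$ or $\nabla\varphi$) absorbs $k_j$ extra derivatives and contributes $\|\varphi:C^{k_j+1,\beta}\|$.

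The hard part will be verifying statement (b), the genuine quadratic vanishing of $F_1 - g_{\Spheqt}^{-1}$: that the coefficient of $\nabla^2\varphi$ admits no linear correction in either $\varphi$ or $\nabla\varphi$. This is precisely what forces each term of $Q_\varphi$ to carry three derivative factors rather than only two, and it is the structural input referred to as the \emph{careful analysis of the quadratic terms} in the outline of strategy. The argument rests on the parity symmetry $\zzcheck\leftrightarrow -\zzcheck$ of the warped-product metric (a consequence of the total geodesicity of $\Spheqt$ together with the constant sectional curvature of $\Sph^4(1)$), combined with the fact that $\nabla\varphi$ enters the normal-length normalization only through its squared norm.
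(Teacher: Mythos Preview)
Your proof is correct and reaches the same conclusion as the paper, but by a genuinely different route. The paper proceeds by direct computation: it embeds $\Sph^4$ in $\R^5$, writes $X_\varphi=(X\cos\varphi,\sin\varphi)$, computes $(g_\varphi)_{ij}$, $\nu_\varphi$, and $(A_\varphi)_{ij}$ explicitly, inverts $g_\varphi$ via Sherman--Morrison, and arrives at the closed-form identity \eqref{eq:SphereH} for $H_\varphi$. The cubic structure of $Q_\varphi$ is then read off term-by-term from that formula. You instead argue structurally: you invoke the warped-product form $\cos^2(\zzcheck)\,g_{\Spheqt}+d\zzcheck^2$ and the reflection symmetry $\zzcheck\leftrightarrow-\zzcheck$ to deduce that $H_\varphi$ is odd in $(\varphi,\nabla\varphi,\nabla^2\varphi)$, forcing all quadratic terms to vanish, and then appeal to Taylor's remainder and \eqref{E:norm:mult}. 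Your breakdown into cases (a)--(c) is more elaborate than necessary (the odd-parity argument alone suffices), but it is valid. The trade-off is that the paper's explicit formula \eqref{eq:SphereH} is quoted later---in the proof of Lemma~\ref{lem:varphinl}, in the estimate \eqref{eq:thus}, and in the volume computations of Lemma~\ref{lem:volsurf}---so the direct computation is not wasted effort, whereas your approach generalizes more transparently to other totally geodesic base hypersurfaces but does not supply those coefficients.
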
	
\begin{proof}
	In this proof we fix the notation $X$ to denote the standard embedding of $\Sph^4\subset \R^5$, and we use $X_{\varphi}:=X^{\Sph^4,g_{\Sph^4}}_{\Omega,\varphi}$ (recall \ref{not:manifold}\ref{item:graph}) to denote the embedding of $\Graph_{\Omega}^{\Sph^4}(\varphi)$ into $\Sph^4\subset \R^5$. We also use $g_{\varphi}$, $A_{\varphi}$, $\nu_{\varphi}$ to denote the first and second fundamental forms and normal vector of $\Graph_{\Omega}^{\Sph^4}(\varphi)$ induced by $(\Sph^4,g_{\Sph^4})$. For the calculation of the fundamental forms, we use the formula $(g_{\varphi})_{ij} = \langle \partial_{i}X_{\varphi}, \partial_{j}X_{\varphi} \rangle$, $(A_{\varphi})_{ij} = \langle \partial_{i}\partial_{j}X_{\varphi}, \nu_{\varphi} \rangle$ and we have chosen some local orthonormal coordinates at a point of $\Spheqtres$, where $i,j$ take the values 1, 2 and 3.
	
	We then calculate ($\varphi_i$ denotes $\partial_{i}\varphi$ and similarly for second order derivatives)
	\begin{align*}
		&X_{\varphi}=(X\cos\varphi,\sin\varphi),\quad
		(g_{\varphi})_{ij}=\cos^2\varphi\delta_{ij}+\varphi_i\varphi_j,\\
		&\nu_{\varphi}=(-X\sin\varphi-\sum_i \varphi_i \partial_i X/ \cos\varphi,\cos\varphi)/\sqrt{1+\abs{\nabla\varphi}^2/\cos^2\varphi},\\
		&(A_{\varphi})_{ij}=(\cos\varphi\sin\varphi\delta_{ij}+2\varphi_i\varphi_j\tan\varphi+\varphi_{ij})/\sqrt{1+\abs{\nabla\varphi}^2/\cos^2\varphi}.\\
	\end{align*}
From the expression of $(g_{\varphi})_{ij}$ and Sherman-Morrison formula, 
we can calculate the inverse matrix
	\begin{equation*}
		(g_{\varphi})^{ij}=\delta_{ij}/\cos^2\varphi-\varphi_i\varphi_j/(\cos^4\varphi+\cos^2\varphi\abs{\nabla\varphi}^2).
	\end{equation*}
	Thus
	\begin{multline}
		\label{eq:SphereH}
		\sqrt{1+\abs{\nabla\varphi}^2/\cos^2\varphi}H_{\varphi}=\sqrt{1+\abs{\nabla\varphi}^2/\cos^2\varphi}\sum_{i,j}(g_{\varphi})^{ij}(A_{\varphi})_{ij}\\
		=\frac{\Delta \varphi}{\cos^2\varphi}+3\tan\varphi+\frac{2\abs{\nabla\varphi}^2\tan\varphi}{\cos^2\varphi}-\frac{\mathrm{Hess}(\varphi)(\nabla\varphi,\nabla\varphi)+\abs{\nabla\varphi}^2\cos\varphi\sin\varphi+\abs{\nabla\varphi}^4\tan\varphi}{\cos^4\varphi+\cos^2\varphi\abs{\nabla\varphi}^2 }.
	\end{multline}
	The result then follows by \eqref{eq:jacobi}.
\end{proof}

\begin{lemma}\label{lem:varphinl}
	Given $\varphi=\varphi\bbracket{\zeta}$, $\underline{v}$, $\bar{\underline{v}}$ as in \ref{def:varphi}, there exists $\varphi_{nl}=\varphi_{nl}\bbracket{\zeta,\bar{\zeta}}\in C^{\infty}_{\sym}(\Spheqtres\setminus L)$ such that the following hold.
	\begin{enumerate}[(i)]
		\item $H_{\varphi_{nl}}=0$ on $\Spheqtres\setminus  D_{\T_0}^{\Spheqtres}(4/m)$
		and $H_{\varphi_{nl}}$ is the mean curvature of the graph
		$\Graph_{\Spheqtres}^{\Sph^4}(\varphi_{nl})$ pushed forward to $\Spheqtres$ by the projection $\Pi_{\Spheqtres}$ (recall \ref{not:manifold}\ref{item:proj} and \ref{not:manifold}\ref{item:graph}).
		\item\label{item:varphinlest} $\varphi_{nl}-\varphi-\underline{v}-\bar{\underline{v}}$ can be extended to a smooth function supported on $\Spheqtres\setminus  D_{\T_0}^{\Spheqtres}(3/m)$ which satisfies
		\begin{equation*}
			\norm{\varphi_{nl}-\varphi-\underline{v}-\bar{\underline{v}}:C^{3,\beta}(\Spheqtres,g)}\lesssim m^{-4+6\beta}.
		\end{equation*}
	\end{enumerate}
\end{lemma}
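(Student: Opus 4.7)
The plan is to construct $\varphi_{nl}$ by perturbing $\tilde\varphi := \varphi + \underline{v} + \bar{\underline{v}}$ via a Banach fixed-point argument for the nonlinear minimality equation on the region outside a neighborhood of $\T_0$, and then smoothly cutting off to produce a function whose difference from $\tilde\varphi$ is supported in $\Spheqtres\setminus D^{\Spheqtres}_{\T_0}(3/m)$.

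The first observation is that $\mathcal{L}_{\Spheqtres}\tilde\varphi = 0$ on $\Spheqtres\setminus D^{\Spheqtres}_{\T_0}(3/m)$: indeed $\mathcal{L}_{\Spheqtres}\varphi = 0$ on $\Spheqtres\setminus L$ by \ref{def:LD}, while \ref{lem:obstr}\ref{item:wsupp} shows that $\mathcal{L}_{\Spheqtres}\underline{v}$ is supported in $\sqcup_{p\in L}D^{\Spheqtres}_p(4\delta)\setminus D^{\Spheqtres}_p(\delta/4)\subset D^{\Spheqtres}_{\T_0}(4/(9m))$ and $\mathcal{L}_{\Spheqtres}\bar{\underline{v}}$ in $D^{\Spheqtres}_{\T_0}(3/m)\setminus D^{\Spheqtres}_{\T_0}(2/m)$; both lie inside $D^{\Spheqtres}_{\T_0}(3/m)$. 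Hence by \ref{lem:SphereH} the condition $H_{\tilde\varphi+w}=0$ outside $D^{\Spheqtres}_{\T_0}(3/m)$ reduces to the PDE $\mathcal{L}_{\Spheqtres} w = -Q_{\tilde\varphi+w}$.

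I would then fix $\Omega_1 := \Spheqtres\setminus D^{\Spheqtres}_{\T_0}(7/(2m))$, a $\Grp_{\Spheqtres}[m]$-invariant smooth domain containing $\Spheqtres\setminus D^{\Spheqtres}_{\T_0}(4/m)$ and contained in $\Spheqtres\setminus D^{\Spheqtres}_{\T_0}(3/m)$, and solve on $\Omega_1$ the Dirichlet problem $\mathcal{L}_{\Spheqtres} w = -Q_{\tilde\varphi+w}$, $w|_{\partial\Omega_1}=0$, by contraction. The linear step uses standard Schauder theory in the rescaled metric $\tilde g = m^2 g$ (under which $\Omega_1$ has bounded geometry) together with the absence of $\Grp_{\Spheqtres}[m]$-symmetric kernel from \ref{lem:nokernel} to produce a uniformly bounded, $\Grp_{\Spheqtres}[m]$-equivariant Dirichlet right-inverse $R$. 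The nonlinear estimate \ref{lem:SphereH} combined with \ref{lem:varphiest}\ref{item:solboundgsphere} shows that $Q_{\tilde\varphi+w}$ is small in $C^{1,\beta}$ and has small Lipschitz constant in $w$, so the map $w\mapsto R(-Q_{\tilde\varphi+w})$ contracts on a small ball of $\{w\in C^{3,\beta}_{\sym}(\overline{\Omega_1}):w|_{\partial\Omega_1}=0\}$, whose fixed point $w$ is smooth by elliptic regularity. I then set $\varphi_{nl}:=\tilde\varphi + \chi w$, where $\chi\in C^\infty_{\sym}(\Spheqtres)$ is equal to $1$ on $\Spheqtres\setminus D^{\Spheqtres}_{\T_0}(4/m)$ and to $0$ on $D^{\Spheqtres}_{\T_0}(7/(2m))$; conclusion (i) and the support part of (ii) are then immediate.

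The main obstacle is the sharp $C^{3,\beta}(\Spheqtres,g)$ bound in (ii). A naive product estimate on $\chi w$ picks up the factor $\|\chi:C^{3,\beta}(g)\|\sim m^{3+\beta}$ coming from a cutoff on scale $1/m$, which would only yield something like $m^{-2+4\beta}$ and is far too wasteful. To achieve the stated $m^{-4+6\beta}$ one has to perform the analysis primarily in $\tilde g$, in which $\chi$ has uniformly bounded $C^{3,\beta}(\tilde g)$ norm, and combine the Schauder estimate for $w$ with a sharp $L^\infty$ bound arising from the pointwise smallness of $Q_{\tilde\varphi}$ together with the vanishing of $w$ on $\partial\Omega_1$, followed by careful interpolation between the $C^0$ and $C^{3,\beta}$ norms of $w$. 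Balancing the resulting powers of $m$ correctly is the key technical step of the proof.
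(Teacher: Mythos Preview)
Your approach differs from the paper's in one essential way, and that difference is the source of a genuine gap. You solve a Dirichlet problem on $\Omega_1$ and then multiply the solution $w$ by a cutoff $\chi$ transitioning on scale $1/m$; the paper instead iteratively solves $\Lcal_{\Spheqtres}u_n=\psi''(Q_{\phi_{n-2}}-Q_{\phi_{n-1}})$ \emph{globally} on the closed sphere $\Spheqtres$---using \ref{lem:nokernel} to invert $\Lcal_{\Spheqtres}$ with a Schauder constant in $C^{1,\beta}(g)\to C^{3,\beta}(g)$ that is independent of $m$---and applies the cutoff $\psi''$ only to the \emph{source}. Since the source enters only in $C^{1,\beta}(g)$, the cutoff costs at most $m^{\beta}$ against $\|Q:C^{1,\beta}\|\lesssim m^{-4+3\beta}$ (and $m^{1+\beta}$ against $\|Q:C^{0,\beta}\|\lesssim m^{-5+3\beta}$), so the global inverse yields $\|u_1:C^{3,\beta}(\Spheqtres,g)\|\lesssim m^{-4+4\beta}$ directly, and the iteration closes.

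Your proposed interpolation fix does not recover this. The best available pointwise control is $\|w:C^0\|\lesssim\|Q_{\tilde\varphi}:C^0\|\lesssim m^{-5}$ (the Dirichlet Green's function integrated against an $L^\infty$ source of size $m^{-5}$ on a domain of diameter $O(1)$ cannot give anything smaller). Local boundary Schauder in $\tilde g$ then yields $\|w:C^{3,\beta}(\tilde g)\|\lesssim\|w:C^0\|+m^{-2}\|Q:C^{1,\beta}(\tilde g)\|\lesssim m^{-5}$, whence $\|\chi w:C^{3,\beta}(g)\|\lesssim m^{3+\beta}\|w:C^{3,\beta}(\tilde g)\|\lesssim m^{-2+\beta}$, missing the target $m^{-4+6\beta}$ by a full factor $m^{2}$. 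As a smaller point, your appeal to \ref{lem:nokernel} for the Dirichlet inverse is misplaced: that lemma treats the closed problem; solvability of your Dirichlet problem on $\Omega_1$ rests instead on the positivity of the first Dirichlet eigenvalue of $-\Lcal_{\Spheqtres}$ on the solid torus components, the fact alluded to in the introduction and underlying \ref{lem:linearSd}.
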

\begin{proof}
	We first define cutoff function $\psi''\in C^{\infty}_{\sym}(\Spheqtres)$ 
	\begin{equation*}
		\psi'':=\cutoff{3/m}{4/m}{\dist_{\T_0}^{\Spheqtres}}(0,1).
	\end{equation*}
	We then define inductively sequences $\{u_n\}_{n=1}^{\infty}\subset C^{2,\beta}_{\sym}(\Spheqtres)$ and $\{\phi_n\}_{n=-1}^{\infty}\subset C^{2,\beta}_{\sym}(\Spheqtres)$ by $\phi_{-1}=0$, $\phi_{0}=\varphi+\underline{v}+\bar{\underline{v}}$, and for $n>0$,
	\begin{equation*}
		\phi_n=\phi_{n-1}+u_n,\quad \Lcal_{\Spheqtres} u_n=\psi''(Q_{\phi_{n-2}}-Q_{\phi_{n-1}}),
	\end{equation*}
	where $Q_{\phi_{k}}$ is the nonlinear term of the mean curvature of the graph $\Graph_{\Spheqtres}^{\Sph^4}(\phi_{k})$ as in \ref{lem:SphereH}. By standard linear theory and the triviality of the kernel of 	$\Lcal_{\Spheqtres}$ under the symmetries in \ref{lem:nokernel}, we conclude that for $n \geq 1$ we have for $\Omega:=\Spheqtres\setminus D_{\T_0}^{\Spheqtres}(3/m)$
	\begin{multline*} 
		\norm{u_n:C^{3,\beta}(\Spheqtres,g)}\lesssim \norm{\psi''(Q_{\phi_{n-2}}-Q_{\phi_{n-1}}):C^{1,\beta}(\Omega ,g)}\\
		\lesssim m^{\beta}\norm{Q_{\phi_{n-2}}-Q_{\phi_{n-1}}:C^{1,\beta}(\Omega ,g)}.
	\end{multline*} 
	
	From \eqref{eq:SphereH} we have by taking $m$ big enough,
	
	\begin{align*}
		\norm{Q_{\phi_{n-2}}-Q_{\phi_{n-1}}:C^{1,\beta}(\Omega ,g)}\lesssim 	
		\begin{cases} 
			\norm{\varphi+\underline{v}+\bar{\underline{v}}:C^{3,\beta}(\Omega,g)}\norm{\varphi+\underline{v}+\bar{\underline{v}}:C^{1,\beta}(\Omega,g)}^2 \    \\
			+\norm{\varphi+\underline{v}+\bar{\underline{v}}:C^{2,\beta}(\Omega,g)}^2\norm{\varphi+\underline{v}+\bar{\underline{v}}:C^{1,\beta}(\Omega,g)},\ &n=1;\\
			\norm{\phi_{n-2}-\phi_{n-1}:C^{3,\beta}(\Omega,g)}\norm{\phi_{n-2}:C^{3,\beta}(\Omega,g)}^2,\     &n\geq 2.
		\end{cases}
	\end{align*}
	
	Combining the last two estimates and substituting $u_{n-1}$ for $\phi_{n-1}-\phi_{n-2}$
	\begin{align*}
		\norm{u_n:C^{3,\beta}(\Omega ,g)}\lesssim 	
		\begin{cases} 
			m^{\beta}\Big(\norm{\varphi+\underline{v}+\bar{\underline{v}}:C^{3,\beta}(\Omega,g)}\norm{\varphi+\underline{v}+\bar{\underline{v}}:C^{1,\beta}(\Omega,g)}^2\\
			+\norm{\varphi+\underline{v}+\bar{\underline{v}}:C^{2,\beta}(\Omega,g)}^2\norm{\varphi+\underline{v}+\bar{\underline{v}}:C^{1,\beta}(\Omega,g)}\Big),\quad &n=1;\\
			m^{\beta}\norm{u_{n-1}:C^{3,\beta}(\Omega,g)}\norm{\phi_{n-2}:C^{3,\beta}(\Omega,g)}^2, \quad &n\geq 2.
		\end{cases}
	\end{align*}
	We then conclude inductively that for $n \geq 1$ by taking $m$ big enough and \ref{lem:varphiest}\ref{item:solboundgsphere},  
	\begin{multline*}
		\norm{u_n:C^{2,\beta}(\Spheqtres ,g)}
		\lesssim 2^{-n}m^{3\beta}\Big(\norm{\varphi+\underline{v}+\bar{\underline{v}}:C^{3,\beta}(\Omega,g)}\norm{\varphi+\underline{v}+\bar{\underline{v}}:C^{1,\beta}(\Omega,g)}^2\\
		+\norm{\varphi+\underline{v}+\bar{\underline{v}}:C^{2,\beta}(\Omega,g)}^2\norm{\varphi+\underline{v}+\bar{\underline{v}}:C^{1,\beta}(\Omega,g)}\Big).	
	\end{multline*}
	Taking limits and sums and using standard regularity theory for the smoothness we conclude the proof along with \ref{lem:varphiest}\ref{item:solboundgsphere}.
\end{proof}

\subsection*{The initial surfaces and their regions}
\begin{definition}[Initial surfaces] 
\label{def:initial}
	Given $\varphi=\varphi\bbracket{\zeta}$, $\underline{v}$, $\bar{\underline{v}}$ as in \ref{def:varphi},
	we define the smooth initial surface (recall \ref{not:manifold}\ref{item:graph} and \ref{def:catebridge})
	\begin{equation*}
		M=M[\zeta,\bar{\zeta}] := \Graph_{\Omega}^{\Sph^4}(\varphi^{gl})\cup \Graph_{\Omega}^{\Sph^4}(-\varphi^{gl})\cup\sqcup_{p\in L}\Kcech[p,\radius], 
	\end{equation*}
	where $\radius:=\sqrt{\tau}$, $\Omega:=\Spheqtres\setminus D_L^{\Spheqtres}(9\radius)$ and the functions $\varphi^{gl}:=\varphi^{gl}\bbracket{\zeta,\bar{\zeta}}:\Omega\to\R$ are defined as the following.
	\begin{equation}
		\varphi^{gl}:=
		\begin{cases} 
			\varphi_{nl} & \text{on } \Spheqtres\setminus  D_L^{\Spheqtres}(3\delta'), \\
			\Psi[2\delta',3\delta';\dist^{\Spheqtres}_p](\varphi_{cat}[\radius]\circ \dist^{\Spheqtres}_p,\varphi_{nl}) & \text{on }  \sqcup_{p\in L} D_p^{\Spheqtres}(3\delta')\setminus D_p^{\Spheqtres}(\radius).
		\end{cases}
	\end{equation}
	
\end{definition}

\begin{lemma}[The gluing region]\label{lem:glue}
	For $M=M[\zeta,\bar{\zeta}]$ as in \ref{def:initial}, $g=g_{\Spheqtres}$ and $\forall p\in L$, the following hold.
	\begin{enumerate}[(i)]
		\item\label{item:gluecat} $\norm{\varphi^{gl}-\varphi_{cat}[\sqrt{\tau}]:C^{2,\beta}(D^{\Spheqtres}_p(4\delta')\setminus D^{\Spheqtres}_p(\delta'),\delta',g)}\lesssim m^{-3-2\alpha}$.
		\item\label{item:gluevarphi} $\norm{\varphi^{gl}:C^{3,\beta}(D^{\Spheqtres}_p(4\delta')\setminus D^{\Spheqtres}_p(\delta'),\delta',g)}\lesssim m^{-2}$.
		\item\label{item:glueH} $\norm{(\delta')^2 H':C^{2,\beta}(D^{\Spheqtres}_p(3\delta')\setminus D^{\Spheqtres}_p(2\delta'),\delta',g)}\lesssim m^{-3-2\alpha}$, where $H'$ denotes the pushforward of the mean curvature of the graph of $\varphi^{gl}$ to $\Spheqtres$ by $\Pi_{\Spheqtres}$.
	\end{enumerate}
\end{lemma}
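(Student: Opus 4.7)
The plan for all three parts is to reduce to the matching estimate \ref{lem:varphiest}(i) and the catenoid expansion \ref{lem:varphicat}, exploiting that $\varphi+\underline{v}+\bar{\underline{v}}$ and $\varphi_{cat}[\sqrt{\tau}]\circ\dist_p^{\Spheqtres}$ are close approximations to the same singular solution of the linearized equation on the gluing annulus.

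For (i), I would combine $\varphi=\hat{\varphi}_p+\tau G_p$ from \ref{lem:varphi}, the identity $\hat{\varphi}_p-\sqrt{\tau}T_3\underline{\phi}[1]+\underline{v}+\bar{\underline{v}}=\tau(\Phi'-\Phi'(p)\underline{\phi}[1])$ extracted in the proof of \ref{lem:varphiest}(i), the Green's function asymptotics $\tau G_p=-\tau/r+O(\tau r)$ from \ref{lem:green}(iv), the catenoid expansion $\varphi_{cat}[\sqrt{\tau}](r)=\sqrt{\tau}T_3-\tau/r+O(\tau^3/r^5)$ from \ref{lem:varphicat}, and $\underline{\phi}[1]-1=O(r^2/m^2)$ from \ref{lem:phiju}(i) to obtain, on $D_p^{\Spheqtres}(4\delta')\setminus D_p^{\Spheqtres}(\delta')$,
\[
\varphi+\underline{v}+\bar{\underline{v}}-\varphi_{cat}[\sqrt{\tau}]\circ\dist_p^{\Spheqtres} = \tau\bigl(\Phi'-\Phi'(p)\underline{\phi}[1]\bigr) + \text{small},
\]
where the remaining errors (of orders $O(\tau r)$, $O(\tau^3/r^5)$, and $O(\sqrt{\tau}/m^2)$) are all much smaller than $m^{-3-2\alpha}$ in the $(\delta')$-rescaled norm at $r\sim\delta'$. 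The key point for the $\Phi'$ term is that the stabilizer of $p\in L$ inside $\Grp_{\Spheqtres}[m]$ contains reflections whose common fixed subspace in $T_p\Spheqtres$ is trivial (for instance $\XXXu_0,\YYYu_0,\SSS$ act on $T_{p_0}\Spheqtres$ with only the zero common fixed vector), which forces $\nabla\Phi'(p)=0$; a Taylor expansion combined with the bound $|\nabla^2\Phi'|_g\lesssim m^3$ (from \ref{lem:Phiprime} after converting from $\tilde g$ to $g$) then gives $|\tau(\Phi'-\Phi'(p)\underline{\phi}[1])|\lesssim\tau m^3 r^2$, precisely of size $m^{-3-2\alpha}$ at $r\sim\delta'$, with analogous estimates for the rescaled derivatives. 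Finally \ref{lem:varphinl}(ii) replaces $\varphi+\underline{v}+\bar{\underline{v}}$ by $\varphi_{nl}$ at cost $O(m^{-4+6\beta})$, negligible for small $\beta$, and since $\varphi^{gl}-\varphi_{cat}[\sqrt{\tau}]\circ\dist_p^{\Spheqtres}$ is either zero (on the inner portion of the annulus) or a cutoff multiple of $\varphi_{nl}-\varphi_{cat}[\sqrt{\tau}]\circ\dist_p^{\Spheqtres}$, (i) follows.

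For (ii), differentiating \ref{lem:varphicat} shows that the leading constant $\sqrt{\tau}T_3$ dominates all the $(\delta')$-rescaled derivative contributions of $\varphi_{cat}[\sqrt{\tau}]\circ\dist_p^{\Spheqtres}$ at this scale, yielding a direct bound $\lesssim\sqrt{\tau}\sim m^{-2}$ in the $C^{3,\beta}(D_p^{\Spheqtres}(4\delta')\setminus D_p^{\Spheqtres}(\delta'),\delta',g)$ norm; combining with (i) via the triangle inequality gives (ii). For (iii), I decompose $\varphi^{gl}=\varphi_{nl}+(\varphi_{cat}[\sqrt{\tau}]-\varphi_{nl})(1-\psicut[2\delta',3\delta';\dist_p^{\Spheqtres}])$, apply \ref{lem:SphereH}, and use $\Lcal_{\Spheqtres}\varphi_{nl}=-Q_{\varphi_{nl}}$ on the annulus (valid by \ref{lem:varphinl}(i), since for large $m$ the annulus is contained in $\Spheqtres\setminus D_{\T_0}^{\Spheqtres}(4/m)$) to obtain
\[
H'=-Q_{\varphi_{nl}}+(1-\psicut)\,\Lcal_{\Spheqtres}(\varphi_{cat}-\varphi_{nl})-[\Lcal_{\Spheqtres},\psicut](\varphi_{cat}-\varphi_{nl})+Q_{\varphi^{gl}}.
\]
Each derivative of $\psicut$ costs a factor $1/\delta'$, so by (i) the commutator term and the $(1-\psicut)\Lcal$ term are both of unrescaled size $O(m^{-3-2\alpha}/(\delta')^2)$; the nonlinear terms $Q_{\varphi_{nl}},Q_{\varphi^{gl}}$ are much smaller thanks to \ref{lem:SphereH} and (ii). Multiplying through by $(\delta')^2$ yields the pointwise bound, while the full $C^{2,\beta}$ estimate follows from the same argument applied to derivatives, since each extra derivative in the $(\delta')$-rescaled norm is accompanied by a matching factor $\delta'$.

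The main technical obstacle is verifying the symmetry-based vanishing $\nabla\Phi'(p)=0$ and extracting the resulting extra factor $\delta'\sim m^{-1-\alpha}$ in the Taylor expansion, which improves the naive estimate $m^{-3-\alpha}$ to the required $m^{-3-2\alpha}$; without this gain from symmetry the error terms in (i) would be too large, and consequently so would the mean curvature estimate (iii), so that the global linear theory and fixed-point argument later on could not close.
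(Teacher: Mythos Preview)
Your strategy for (i) and (ii) is sound and essentially parallels the paper, though you package the key $\varphi_+$ estimate differently: the paper writes $\varphi^{gl}=\tau G_p+\sqrt{\tau}T_3\underline{\phi}[1]+\Psibold[2\delta',3\delta';\dist_p](\varphi_-,\varphi_+)$, observes that $\Lcal_{\Spheqtres}\varphi_+=0$ with $\Ecalu_p\varphi_+=0$, and invokes standard elliptic decay to obtain the factor $(\delta'/\delta)^2$; you instead use the explicit formula $\varphi_+=\tau(\Phi'-\Phi'(p)\underline{\phi}[1])$ together with the symmetry $\nabla\Phi'(p)=0$ and Taylor-expand directly. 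The two arguments extract the same quadratic gain; the elliptic route has the minor convenience of delivering the full $C^{3,\beta}$ bound in one stroke, whereas your Taylor expansion must be supplemented by the derivative bounds on $\Phi'$ from \ref{lem:Phiprime} at each order.

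There is, however, a genuine geometric error in your argument for (iii). The gluing annulus $D_p^{\Spheqtres}(3\delta')\setminus D_p^{\Spheqtres}(2\delta')$ lies \emph{inside} $D_{\T_0}^{\Spheqtres}(4/m)$, not outside it: since $p\in L\subset\T_0$ and $3\delta'\sim m^{-1-\alpha}\ll 4/m$, the annulus is entirely within the tubular neighborhood where \ref{lem:varphinl}(i) gives no information. So your claimed identity $\Lcal_{\Spheqtres}\varphi_{nl}=-Q_{\varphi_{nl}}$ (equivalently $H_{\varphi_{nl}}=0$) is unjustified there. The correct observation is that on the annulus one has $\varphi_{nl}=\varphi+\underline{v}+\bar{\underline{v}}$ exactly (by the support statement in \ref{lem:varphinl}(ii), since $3\delta'<3/m$), and then $\Lcal_{\Spheqtres}\varphi_{nl}=\Lcal_{\Spheqtres}\underline{v}+\Lcal_{\Spheqtres}\bar{\underline{v}}=w+\bar{w}=0$ on the annulus by \ref{lem:obstr}\ref{item:wsupp} (because $3\delta'<\delta/4$ and $3\delta'<2/m$). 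With this correction in place your decomposition becomes
\[
(\delta')^2H'=(\delta')^2\bigl[(1-\psicut)\Lcal_{\Spheqtres}(\varphi_{cat}-\varphi_{nl})-[\Lcal_{\Spheqtres},\psicut](\varphi_{cat}-\varphi_{nl})\bigr]+(\delta')^2Q_{\varphi^{gl}},
\]
and the rest of your estimation goes through; this is close to how the paper proceeds via \eqref{eq:Lphigl} and \eqref{eq:thus}.
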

\begin{proof}
	We have for each $p \in L$ on $D^{\Spheqtres}_p(4\delta')\setminus D^{\Spheqtres}_p(\delta')$
	\begin{align}\label{eq:varphiplusminus}
		&\varphi^{gl}=\tau G_p+\sqrt{\tau}T_3\underline{\phi}[1]+\cutoff{2\delta'}{3\delta'}{\dist_p^{\Spheqtres}}(\varphi_-,\varphi_+),\nonumber\\
		\text{where }&\varphi_-:=\varphi_{cat}[\sqrt{\tau}]-\tau G_p-\sqrt{\tau}T_3\underline{\phi}[1],\\
		&\varphi_+:=\hat{\varphi}_p-\sqrt{\tau}T_3\underline{\phi}[1]+\underline{v}.\nonumber
	\end{align} 
	Thus by \ref{eq:tau}
	\begin{align}\label{eq:phiglestn}
		\norm{\varphi^{gl}}\lesssim m^{-2}+\norm{\varphi_-}+\norm{\varphi_+},
	\end{align}
	where in this proof when we do not specify the norm we mean the $C^{3,\beta}(D^{\Spheqtres}_p(4\delta')\setminus D^{\Spheqtres}_p(\delta'),\delta',g)$ unless specified otherwise.
	
	Note that on $D^{\Spheqtres}_p(4\delta')\setminus D^{\Spheqtres}_p(\delta')$ 
	\begin{align*}
		\varphi^{gl}-\varphi_{cat}[\sqrt{\tau}]&=\cutoff{2\delta'}{3\delta'}{\dist_p^{\Spheqtres}}(0,\varphi_+-\varphi_-),\\
		\Lcal_{\Spheqtres}\varphi^{gl}&=\Lcal_{\Spheqtres}\cutoff{2\delta'}{3\delta'}{\dist_p^{\Spheqtres}}(\varphi_-,\varphi_+).
	\end{align*}
	Using these, we have by \ref{lem:green}\ref{item:greenest} and \eqref{eq:deltaprime}
	\begin{align}
		\norm{\varphi^{gl}-\varphi_{cat}[\sqrt{\tau}]}&\lesssim \norm{\varphi_-}+\norm{\varphi_+},\label{eq:phiglcat}\\
		\norm{(\delta')^2\Lcal_{\Spheqtres}\varphi^{gl}:C^{0,\beta}(D^{\Spheqtres}_p(4\delta')\setminus D^{\Spheqtres}_p(\delta'),\delta',g)}&\lesssim \norm{\varphi_-}+\norm{\varphi_+}.\label{eq:Lphigl}
	\end{align}

	By \eqref{eq:varphiplusminus}, \ref{lem:varphicat}, \ref{lem:green}\ref{item:greenest} and \eqref{eq:deltaprime}
	\begin{align}\label{eq:phiminusestn}
		\norm{\varphi_-}&\lesssim \norm{\varphi_{cat}[\sqrt{\tau}]-\sqrt{\tau}T_3-\tau(\dist_p^{\Spheqtres})^{-1}}+\norm{\tau G_p-\tau(\dist_p^{\Spheqtres})^{-1}}+\norm{\sqrt{\tau}T_3(1-\underline{\phi}[1])}\\
		&\lesssim \tau^3(\delta')^{-5}+\tau\delta'+\sqrt{\tau}(\delta')^2 \lesssim  m^{-4-2\alpha}+m^{-7+5\alpha}.\nonumber
	\end{align}

	Because $\Lcal_{\Spheqtres}\varphi_+ = 0$ on $D^{\Spheqtres}_p(4\delta')\setminus D^{\Spheqtres}_p(\delta')$ and $\Ecalu_{p}\varphi_+=0$ (recall \ref{def:mismatch} and \eqref{eq:varphiplusminus}), it follows from standard linear theory, \ref{def:sol} and \ref{lem:varphiest}\ref{item:hatvarphi} that
	\begin{equation}\label{eq:phiplusestn}
		\norm{\varphi_+}\lesssim\left(\frac{\delta'}{\delta}\right)^2\norm{\varphi_+:C^{2,\beta}(\partial D_p^{\Spheqtres}(\delta),\delta,g)}\lesssim m^{-3-2\alpha}. 
	\end{equation}
	(i) now follows by \eqref{eq:phiglcat}, \eqref{eq:phiminusestn} and \eqref{eq:phiplusestn}, and (ii) now follows by \eqref{eq:phiglestn}, \eqref{eq:phiminusestn} and \eqref{eq:phiplusestn}.

	By expanding the mean curvature in linear and higher order terms we have
	\begin{equation*}
		(\delta')^2 H'=	(\delta')^2\Lcal_{\Spheqtres}\varphi^{gl}+\delta'\tilde{Q}_{(\delta')^{-1}\varphi^{gl}}.
	\end{equation*}
	Thus by \ref{lem:SphereH}
	\begin{multline} 
		\label{eq:thus} 
		\norm{\delta'\tilde{Q}_{(\delta')^{-1}\varphi^{gl}}:C^{0,\beta}(D^{\Spheqtres}_p(4\delta')\setminus D^{\Spheqtres}_p(\delta'),\delta',g)}\\
		\lesssim(\delta')^{-2}\norm{\varphi^{gl}:C^{2,\beta}(D^{\Spheqtres}_p(4\delta')\setminus D^{\Spheqtres}_p(\delta'),\delta',g)} \norm{\varphi^{gl}:C^{1,\beta}(D^{\Spheqtres}_p(4\delta')\setminus D^{\Spheqtres}_p(\delta'),\delta',g)}^2\\
		\lesssim m^{-4+2\alpha}.
	\end{multline}
	We conclude along with \eqref{eq:Lphigl}
	\begin{equation*}
		\norm{(\delta')^2 H':C^{0,\beta}(D^{\Spheqtres}_p(4\delta')\setminus D^{\Spheqtres}_p(\delta'),\delta',g)}\lesssim m^{-7+5\alpha}+m^{-3-2\alpha}+m^{-4+2\alpha}.
	\end{equation*}
	
	By \ref{eq:alpha} and assuming $m$ big enough (iii) follows. 
\end{proof}

\begin{lemma}\label{lem:phigl}
	$M$ defined in \ref{def:initial} is embedded and moreover the following hold.
	\begin{enumerate}[(i)]
		\item\label{item:phiglembed} On $\Spheqtres\setminus D_L^{\Spheqtres}(\delta')$ we have $m^{-2}\lesssim \varphi^{gl}$.
		\item\label{item:phiglbound} $\norm{\varphi^{gl}:C^{3,\beta}(\Spheqtres\setminus D_L^{\Spheqtres}(\delta'),\rr,g)}\lesssim m^{-2}$, where $\rr:=\min\{\dist^{\Spheqtres}_L,\delta\}$.
	\end{enumerate}
\end{lemma}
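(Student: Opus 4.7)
The plan is to split $\Spheqtres\setminus D_L^{\Spheqtres}(\delta')$ into three pieces matching the piecewise definition of $\varphi^{gl}$ in \ref{def:initial} and to estimate each using the results prepared earlier.

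On the outer piece $\Spheqtres\setminus D_L^{\Spheqtres}(3\delta')$, where $\varphi^{gl}=\varphi_{nl}$, I would decompose as $\varphi_{nl}=(\varphi+\underline{v}+\bar{\underline{v}})+(\varphi_{nl}-\varphi-\underline{v}-\bar{\underline{v}})$. The first summand satisfies the bound required for (ii) directly by \ref{lem:varphiest}\ref{item:solboundn}, and the lower bound required for (i) by \ref{lem:varphiest}\ref{item:solembed}. The correction is controlled in the unweighted $C^{3,\beta}$-norm by $m^{-4+6\beta}$ via \ref{lem:varphinl}\ref{item:varphinlest}; since $\rr\le\delta\lesssim 1/m$ the weighted norm of the correction is no larger, so for $\beta$ small it is strictly dominated by $m^{-2}$ and neither estimate is harmed.

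On the annulus $D_p^{\Spheqtres}(4\delta')\setminus D_p^{\Spheqtres}(\delta')$ I would invoke \ref{lem:glue}\ref{item:gluevarphi} for the $C^{3,\beta}$ bound, observing that $\rr=\dist_p^{\Spheqtres}\sim\delta'$ on this annulus so the $(\rr,g)$- and $(\delta',g)$-weighted norms are equivalent. For the lower bound on the innermost piece $D_p^{\Spheqtres}(2\delta')\setminus D_p^{\Spheqtres}(\delta')$, where $\varphi^{gl}=\varphi_{cat}[\radius]\circ\dist_p^{\Spheqtres}$, I would expand via \ref{lem:varphicat}: with $\radius\sim m^{-2}$ and $\dist_p^{\Spheqtres}\ge\delta'\sim m^{-1-\alpha}$ the leading term $\radius T_3\sim m^{-2}$ dominates both the correction $\radius^2/r\lesssim m^{-3+\alpha}$ and the higher-order error $\radius^6/r^5\lesssim m^{-7+5\alpha}$. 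On the gluing band $D_p^{\Spheqtres}(3\delta')\setminus D_p^{\Spheqtres}(2\delta')$, $\varphi^{gl}$ is a convex combination of $\varphi_{nl}$ and $\varphi_{cat}[\radius]\circ\dist_p^{\Spheqtres}$, both $\gtrsim m^{-2}$ there, so the combination is as well.

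Finally for embeddedness, the same argument extends positivity of $\varphi^{gl}$ to all of $\Omega=\Spheqtres\setminus D_L^{\Spheqtres}(9\radius)$, since on the remaining inner shell $D_p^{\Spheqtres}(\delta')\setminus D_p^{\Spheqtres}(9\radius)$ we still have $\varphi^{gl}=\varphi_{cat}[\radius]\circ\dist_p^{\Spheqtres}>0$ by \eqref{eq:varphicat}. The two graphs $\Graph_\Omega^{\Sph^4}(\pm\varphi^{gl})$ are therefore disjoint; the catenoidal bridges $\Kcech[p,\radius]$ lie in pairwise disjoint tubes by \ref{def:delta}(i); and on the overlap with the graphs the identity $\varphi^{gl}=\varphi_{cat}[\radius]\circ\dist_p^{\Spheqtres}$ makes the graph coincide with the upper/lower half of $\Kcech[p,\radius]$, so $M$ is closed and smoothly embedded. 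The main obstacle here is not conceptual but bookkeeping: one must verify carefully that the weighted norms in the three sub-regions combine to give the stated global $(\rr,g)$-weighted bound, which reduces to the comparability $\rr\sim\delta'$ in the transitional annulus.
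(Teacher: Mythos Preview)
Your proposal is correct and follows essentially the same approach as the paper: splitting into the outer region $\Spheqtres\setminus D_L^{\Spheqtres}(3\delta')$ (handled via \ref{lem:varphiest}\ref{item:solboundn},\ref{item:solembed} and \ref{lem:varphinl}\ref{item:varphinlest}) and the annuli $D_p^{\Spheqtres}(4\delta')\setminus D_p^{\Spheqtres}(\delta')$ (handled via \ref{lem:glue}), then deducing embeddedness from positivity and the catenoidal structure near $L$. The only minor deviation is that for the lower bound on the annulus the paper invokes \ref{lem:glue}\ref{item:gluecat} directly (which already gives $\varphi^{gl}\approx\varphi_{cat}\sim m^{-2}$ uniformly there), whereas you further subdivide into the inner shell and the gluing band and argue via \ref{lem:varphicat} and a convex-combination observation; this is a cosmetic difference.
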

\begin{proof}
	(i) on $\Spheqtres\setminus D_L^{\Spheqtres}(3\delta')$ follows from \ref{lem:varphiest}\ref{item:solembed}, \ref{lem:varphinl}\ref{item:varphinlest} and \ref{def:initial}, and on each $D^{\Spheqtres}_p(4\delta')\setminus D^{\Spheqtres}_p(\delta')$ follows from  \ref{lem:glue}\ref{item:gluecat}. (ii) on $\Spheqtres\setminus D_L^{\Spheqtres}(3\delta')$ follows from \ref{lem:varphiest}\ref{item:solboundn}, \ref{lem:varphinl}\ref{item:varphinlest} and \ref{def:initial}, and on each $D^{\Spheqtres}_p(4\delta')\setminus D^{\Spheqtres}_p(\delta')$ follows from  \ref{lem:glue}\ref{item:gluevarphi}. Finally, the embeddedness of $M$ follows from (i) and by comparing the rest of $M$ with standard catenoids. 
\end{proof}
\begin{definition}\label{def:region}
	We define the regions $\Kcech[M]\subset M$  and $\Ku[M]\subset M$ by (recall \ref{def:catebridge})
	\begin{equation}\label{eq:KcheckM}
		\Kcech[M]:=\sqcup_{p\in L}\Kcech[p,\radius],\quad \Ku[M]:=\sqcup_{p\in L}\Ku[p,\radius].
	\end{equation}
	We define the map $\Pi_{\K}:\Kcech[M]\to\K_M:=\sqcup_{p\in L}\K[p,\radius]$ by taking $\Pi_{\K}:=(\exp_p^{\Spheqtres,\Sph^4})^{-1}$ on each $\Kcech[p,\radius]$. We also define $\underline{\K}[p]:=\Pi_{\K}(\Ku[p,\radius])$. Finally, we define the region $\tilde{S'}\subset \Spheqtres$ by
	\begin{equation}\label{eq:tildeSprime}
		\tilde{S'}:=\Spheqtres\setminus D^{\Spheqtres}_L(b\radius).
	\end{equation}
\end{definition}

  \section{The linearized equation on the initial surfaces}
\label{S:lin} 

\subsection*{Global norms and the mean curvature on the initial surfaces}

\begin{definition}\label{def:norm}
	For $k \in \{0,2\}$, $\hat{\beta} \in (0, 1)$, $\hat{\gamma} \in \mathbb{R}$, $\gamma'\in (0,1)$ and $\Omega$ a domain in $\Spheqtres$ or $M$ or $\K_M$ define
	\begin{equation*}
		\norm{u}_{k,\hat{\beta},\hat{\gamma},\gamma';\Omega}:=\norm{u:C^{k,\hat{\beta}}(\Omega,\rr,g,f_{k,\hat{\gamma},\gamma'})},
	\end{equation*}
	where $\rr:=\min\{\dist_L^{\Spheqtres},\delta\}$ and $g$ is the standard metric on $\Spheqtres$ when $\Omega\subset\Spheqtres$; $\rr:=\min\{\dist_L^{\Spheqtres}\circ\Pi_{\Spheqtres},\delta\}$ and $g$ is the metric on $M$ induced by the standard metric on $\Sph^4$ when $\Omega\subset M$; $\rr = \rho(s)$ (recall \eqref{eq:rhot}) and $g$ is the metric induced by each Euclidean metric $g_{\Sph^4}|_p$ on $T_p\Sph^4$ $\forall p \in L$ when $\Omega\subset \K_M$; $f_{k,\hat{\gamma},\gamma'}$ is defined then by $\rr^{\hat{\gamma}}f_{k,\gamma'}$ when $\Omega\subset\Spheqtres$, by $\rr^{\hat{\gamma}}f_{k,\gamma'}\circ\Pi_{\Spheqtres}$ when $\Omega\subset M$ and by $\rr^{\hat{\gamma}}$ when $\Omega\subset \K_M$ (recall \ref{def:weight}).
\end{definition}

\begin{convention} 
\label{conv:gamma}
From now on we assume that $b$ (recall \ref{def:catebridge}) is as large as needed in absolute terms, and $m$ is as big and thus $\tau$ is as small as needed in absolute terms and $b$. We also fix some $\beta\in(0,1/100)$ and $\gamma\in (1,2)$. 
We will suppress the dependence of various constants on $\beta$. 
We then fix $\alpha$ as in \ref{def:sol} and $\alpha'>0 $ such that 
	  	\begin{equation}\label{eq:alphaprime} 
\alpha' < (2-\gamma)\alpha < 1/4 .
	  	\end{equation}
\end{convention} 

\begin{notation}\label{not:Bq}
	Throughout this subsection we let $q\in \tilde{S'}$ and consider the metric $\tilde{g}_q :=\rr(q)^{-2} g$ on $\Sph^4$, where $\rr(q):=\min\{\delta,\dist_L^{\Spheqtres}(q)\}$, $g=g_{\Sph^4}$ is the standard metric on $\Sph^4$. In this metric $M$ is locally the union of the graphs of $\pm\varphi_{:q}$, where $\varphi_{:q}:=\rr(q)^{-1} \varphi^{gl}$. We also define $\widecheck{B}_q:=D^{\Spheqtres,\tilde{g}_q}_q(1/10)$.
\end{notation}

\begin{lemma}\label{lem:equivnorm}
	For $k=0,2$ and $\hat{\gamma}\in\R$ the following hold.
	\begin{enumerate}[(i)]
		\item\label{item:equivnormK} If $\widecheck{\Omega}$ is a domain in $\Pi_{\K}(\Kcech[M])$ (recall \eqref{eq:KcheckM}), $\Omega:=\Pi_{\K}^{-1}(\widecheck{\Omega})\subset \Kcech[M]\subset M$ and $f\in C^{k,\beta}(\widecheck{\Omega})$, then
		\begin{equation*}
			\norm{f\circ\Pi_{\K}}_{k,\beta,\hat{\gamma},\gamma';\Omega}\sim 	\norm{f}_{k,\beta,\hat{\gamma},\gamma';\widecheck{\Omega}}.
		\end{equation*}
		\item\label{item:equivnormS} If $\Omega'$ is a domain in $\tilde{S'}$ (recall \ref{eq:tildeSprime}), $\Omega:=\Pi_{\Spheqtres}^{-1}(\Omega')\cap M$ and $f\in C^{k,\beta}(\Omega')$, then
		\begin{equation*}
			\norm{f\circ\Pi_{\Spheqtres}}_{k,\beta,\hat{\gamma},\gamma';\Omega}\sim 	\norm{f}_{k,\beta,\hat{\gamma},\gamma';\Omega'}.
		\end{equation*}
	\end{enumerate}
\end{lemma}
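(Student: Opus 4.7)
The plan is to observe that both parts express the same general fact: in each case the projection ($\Pi_{\K}$ or $\Pi_{\Spheqtres}$) restricted to the relevant subset of $M$ is, after rescaling by $\rr(q)^{-1}$ as in Definition \ref{D:newweightedHolder} and Notation \ref{not:Bq}, a diffeomorphism that is $C^{k+1,\beta}$-close to an isometry on each ball $\widecheck{B}_q$; moreover the weight functions on source and target agree up to uniformly bounded multiplicative constants. Granted these two points, the equivalence of the weighted $C^{k,\beta}$-norms is a routine consequence of the standard invariance of $C^{k,\beta}$ norms under bilipschitz and $C^{k+1,\beta}$-close-to-isometric changes of coordinates.

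For part (\ref{item:equivnormK}), the map $\Pi_{\K}$ restricted to $\Kcech[p,\radius]$ equals $(\exp_p^{\Spheqtres,\Sph^4})^{-1}$, which pulls back $g_{\Sph^4}|_{\Kcech[p,\radius]}=g_{\Kcech}$ to $g_{\K}=\mathring{g}_{\Kcech}$ of \ref{def:fermisph}. Lemma \ref{lem:HK}(\ref{item:fermialpha}) gives $\norm{g_{\Kcech}-\mathring{g}_{\Kcech}:C^k(\Kcech,\chi,\rho^4)}\lesssim_k 1$, so in the rescaled metric $\rho^{-2}g_{\Kcech}$ the two metrics differ by a $C^{k,\beta}$ quantity of order $\rho^2$, which is uniformly small since $\rho\le 2\delta'_p$. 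Matching the weights amounts to showing that for $q\in\Kcech[p,\radius]$ one has $\dist_L^{\Spheqtres}(\Pi_{\Spheqtres}(q))\sim \rho(\Pi_{\K}(q))$; this follows from the near-Euclidean character of the Fermi exponential near $p$ (compare \eqref{eq:fermimetric}). Finally, the weight $f_{k,\hat\gamma,\gamma'}$ on $\K_M$ is just $\rho^{\hat\gamma}$, while on $\Omega\subset M$ it is $\rr^{\hat\gamma}\,f_{k,\gamma'}\circ\Pi_{\Spheqtres}$; since $\Pi_{\Spheqtres}$ sends $\Kcech[p,\radius]$ into a small neighborhood of $p\in\T_0$ where $\tilde{\zz}=O(\delta')$ and hence $f_{k,\gamma'}$ is bounded above and below by constants, these two weights are equivalent.

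For part (\ref{item:equivnormS}), on $\tilde{S'}$ the surface $M$ is the union of the two graphs of $\pm\varphi^{gl}$ over $\Spheqtres$, and $\Pi_{\Spheqtres}|_M$ is the (two-to-one) nearest-point projection. The explicit formula $(g_{\varphi})_{ij}=\cos^2\varphi\,\delta_{ij}+\varphi_i\varphi_j$ from the proof of \ref{lem:SphereH}, together with the bound $\norm{\varphi^{gl}:C^{3,\beta}(\Spheqtres\setminus D_L^{\Spheqtres}(\delta'),\rr,g)}\lesssim m^{-2}$ from \ref{lem:phigl}(\ref{item:phiglbound}), shows that in the rescaled metric $\rr(q)^{-2}g$ the difference $g_{\varphi^{gl}}-g_{\Spheqtres}$ is $O(m^{-2})$ in the relevant $C^{k+1,\beta}$ sense, so $\Pi_{\Spheqtres}|_M$ is uniformly $C^{k+1,\beta}$-close to an isometry on every ball $\widecheck{B}_q$. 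The weight $f_{k,\hat\gamma,\gamma'}$ on $\Omega$ is by definition the pullback via $\Pi_{\Spheqtres}$ of the corresponding weight on $\Omega'$, so weight matching is automatic.

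The main obstacle is purely bookkeeping: one must carefully track the three distinct meanings of $\rr$ (on $\Spheqtres$, on $M$, and on $\K_M$) along with the piecewise definitions of $f_{k,\gamma'}$ from \ref{def:weight}, and verify that they are all equivalent up to uniform constants once transported through $\Pi_{\K}$ or $\Pi_{\Spheqtres}$. No genuinely new analytic input is required beyond the estimates already proved in \ref{lem:HK} and \ref{lem:phigl}, together with the elementary fact that a $C^{k+1,\beta}$-near-isometric diffeomorphism preserves weighted $C^{k,\beta}$-norms up to bounded factors.
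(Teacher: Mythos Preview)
Your argument for part (\ref{item:equivnormK}) is essentially the paper's, and in fact your explicit treatment of the weight matching (the equivalence $\dist_L^{\Spheqtres}\circ\Pi_{\Spheqtres}\sim\rho$ and the fact that $f_{k,\gamma'}\sim 1$ on $\Pi_{\Spheqtres}(\Kcech[M])$) is more carefully spelled out than the paper's proof, which simply reduces to comparing $g_{\Kcech}$ and $\mathring g_{\Kcech}$ via \ref{lem:HK}(\ref{item:fermialpha}).

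There is, however, a genuine gap in your argument for part (\ref{item:equivnormS}). You invoke only the bound $\norm{\varphi^{gl}:C^{3,\beta}(\Spheqtres\setminus D_L^{\Spheqtres}(\delta'),\rr,g)}\lesssim m^{-2}$ from \ref{lem:phigl}(\ref{item:phiglbound}), but $\tilde S'=\Spheqtres\setminus D_L^{\Spheqtres}(b\radius)$ extends strictly inside $D_L^{\Spheqtres}(\delta')$, down to radius $b\radius$. On the annulus $D_p^{\Spheqtres}(\delta')\setminus D_p^{\Spheqtres}(b\radius)$ your cited lemma says nothing, and the claimed $O(m^{-2})$ smallness of $\varphi_{:q}$ is in fact \emph{false} there: for $q$ with $\dist_p^{\Spheqtres}(q)\sim b\radius$ one has $\varphi^{gl}=\varphi_{cat}[\radius]\circ\dist_p^{\Spheqtres}$ exactly, and a direct computation from \eqref{eq:varphicat} gives $|\varphi_{:q}|=|\varphi^{gl}|/\rr(q)\sim T_3/b$ and $|\nabla\varphi^{gl}|_g\sim b^{-2}$, so that $\norm{\varphi_{:q}:C^k(\widecheck B_q,\tilde g_q)}\sim b^{-1}$, a fixed small constant independent of $m$.

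The paper's proof treats this inner region separately: it writes out the dominant term $(\sqrt\tau\,T_3-\tau/\dist_p^{\Spheqtres})/\dist_p^{\Spheqtres}$ and bounds it by $b^{-1}$, then controls the remainder using \ref{lem:varphicat} and \ref{lem:glue}(\ref{item:gluecat}) to arrive at \eqref{eq:varphiqin}. Only afterward does it invoke \ref{lem:phigl}(\ref{item:phiglbound}) for the outer region, obtaining \eqref{eq:varphiqout}. Your overall strategy is right, but you must add this case analysis, and you should note that the resulting equivalence constants depend on $b$ (which is acceptable since $b$ is fixed once and for all in \ref{conv:gamma}).
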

\begin{proof}
	To prove (i) it suffices to prove for each $p\in L$ and each $\Kcech=\Kcech[p,\radius]$ that (recall \ref{lem:HK})
	\begin{equation*}
		\norm{f\circ\Pi_{\K}:C^{k,\beta}(\Omega\cap\Kcech ,\rho,\mathring{g}_{\Kcech})}\sim 	\norm{f\circ\Pi_{\K}:C^{k,\beta}(\Omega\cap\Kcech ,\rho,g_{\Kcech})}.
	\end{equation*}
	(i) then follows from e.g. \cite[Lemma C.12]{gLD} and the estimate on $g_{\Kcech}-\mathring{g}_{\Kcech}$ in	\ref{lem:HK}\ref{item:fermialpha} by taking $m$ big enough as $\rho\lesssim m^{-1-\alpha'}$.
	
	To prove (ii) first suppose that $\dist_p^{\Spheqtres}(q)\leq 4\delta'$ for some $p \in L$. Then $\rr(q)=\dist_p^{\Spheqtres}(q)$ by \ref{not:Bq}. Note that (recall \ref{def:catebridge})
	\begin{equation*}
		\norm{(\sqrt{\tau}T_3-\tau/\dist_p^{\Spheqtres})/\dist_p^{\Spheqtres}:C^k(\widecheck{B}_q,\tilde{g}_q)}\lesssim_k b^{-1}.
	\end{equation*} 
	It follows by combining this with \ref{lem:varphicat}, \ref{lem:glue}\ref{item:gluecat}, \eqref{eq:alpha} and the definition \ref{def:initial}, and assuming $b$ and then $m$ large enough such that
	\begin{equation}\label{eq:varphiqin}
		\norm{\varphi_{:q}:C^k(\widecheck{B}_q,\tilde{g}_q)}\lesssim_k \norm{\frac{\sqrt{\tau}T_3-\tau/\dist_p^{\Spheqtres}}{\dist_p^{\Spheqtres}}:C^k(\widecheck{B}_q,\tilde{g}_q)}+\frac{m^{-12}}{(\dist_p^{\Spheqtres}(q))^6}+\frac{m^{-4}}{\delta'}\lesssim_{k} b^{-1}.
	\end{equation}
	On the other hand, if $\dist_p^{\Spheqtres}(q)\geq  4\delta'$ , then by \ref{lem:phigl}\ref{item:phiglbound} and \eqref{eq:deltaprime} we have
	\begin{equation}\label{eq:varphiqout}
		\norm{\varphi_{:q}:C^k(\widecheck{B}_q,\tilde{g}_q)}\lesssim_{k} m^{-1+\alpha}.
	\end{equation}
	By comparing the metrics and appealing to the definitions we complete the proof.
\end{proof}

\begin{lemma}[Mean curvature on the initial surfaces]\label{lem:H}
	We have the global estimate 
	\\ 
	\begin{equation}\label{eq:w}
		\begin{gathered} 
			\norm{H-(w+\bar{w})\circ\Pi_{\Spheqtres}}_{0,\beta,\gamma-2;M}\lesssim m^{-3-2\alpha+\gamma(1+\alpha)}, 
			\\ 
			\text{where} \qquad 
			w:=-\Lcal_{\Spheqtres}\Ecal_L^{-1}\Mcal_L\varphi=\Lcal_{\Spheqtres}\underline{v},\qquad \bar{w}:=\Lcal_{\Spheqtres}\bar{\underline{v}}.
		\end{gathered} 
	\end{equation} 
\end{lemma}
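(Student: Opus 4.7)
The plan is to partition $M$ into three regions mirroring the construction of \ref{def:initial}---the catenoidal cores $\sqcup_{p\in L}\Kcech[p,\radius]$, the gluing annuli in $D_p^{\Spheqtres}(3\delta')\setminus D_p^{\Spheqtres}(2\delta')$, and the outer graph region $\Omega:=\Spheqtres\setminus D_L^{\Spheqtres}(3\delta')$ (pulled back via $\Pi_{\Spheqtres}$)---invoke a tailored estimate on each, and then reassemble using \ref{lem:equivnorm} to pass to the unified weighted $M$-norm of \ref{def:norm}.

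First I would verify the support conditions. On the bridges and on the gluing annuli, $\Pi_{\Spheqtres}(M)$ lies inside $\sqcup_{p\in L} D_p^{\Spheqtres}(3\delta')$, which by $9\delta'<\delta/10$ (see \ref{def:sol}) is disjoint from the support $\sqcup_{p\in L} D_p^{\Spheqtres}(4\delta)\setminus D_p^{\Spheqtres}(\delta/4)$ of $w$ and, since $3\delta'\ll 2/m$, from the support $D_{\T_0}^{\Spheqtres}(3/m)\setminus D_{\T_0}^{\Spheqtres}(2/m)$ of $\bar{w}$ (recall \ref{lem:obstr}\ref{item:wsupp}). Hence on these two regions only $H$ itself must be bounded. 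On the bridges, \ref{lem:HK}\ref{item:fermiH} gives $|H|\lesssim \radius$ in the cylinder metric $\chi$; passing through \ref{lem:equivnorm}\ref{item:equivnormK} and using $\rr=\rho(s)\in[\radius,2\delta']$ together with $f_{0,\gamma'}\sim1$ near $\T_0$, this yields a contribution of order $\radius(\delta')^{2-\gamma}\lesssim m^{-4-2\alpha+\gamma(1+\alpha)}$, strictly smaller than the target by a factor of $m^{-1}$. On the gluing annuli, \ref{lem:glue}\ref{item:glueH} combined with $\rr\sim\delta'$ produces precisely the dominant contribution $m^{-3-2\alpha+\gamma(1+\alpha)}$.

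On the graph region $\Omega$, where $\varphi^{gl}=\varphi_{nl}$, I would use \ref{lem:SphereH} to decompose $H=\Lcal_{\Spheqtres}\varphi_{nl}+Q_{\varphi_{nl}}$. Since $\varphi$ is an LD solution (see \ref{def:LD}, \ref{def:varphi}) we have $\Lcal_{\Spheqtres}\varphi=0$ on $\Spheqtres\setminus L\supset\Omega$; the definitions of $\underline{v}=-\Ecal_L^{-1}\Mcal_L\varphi\in\skernelv[L]$ and $\bar{\underline{v}}\in\skernelev[L]$ together with \ref{def:VW} give $\Lcal_{\Spheqtres}\underline{v}=w$ and $\Lcal_{\Spheqtres}\bar{\underline{v}}=\bar w$. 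Therefore on $\Omega$
\[
H-(w+\bar w)\circ\Pi_{\Spheqtres} \;=\; \Lcal_{\Spheqtres}\bigl(\varphi_{nl}-\varphi-\underline{v}-\bar{\underline{v}}\bigr)+Q_{\varphi_{nl}}.
\]
The linear remainder is controlled by \ref{lem:varphinl}\ref{item:varphinlest}, which furnishes a $C^{3,\beta}$ bound of order $m^{-4+6\beta}$, well within the claimed tolerance; the cubic term $Q_{\varphi_{nl}}$ is controlled by \ref{lem:SphereH} using the $C^{3,\beta}$ bounds from \ref{lem:varphiest}\ref{item:solboundn}, \ref{lem:varphiest}\ref{item:solboundgsphere}, \ref{lem:phigl}\ref{item:phiglbound}, and \ref{lem:varphinl}\ref{item:varphinlest}. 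After accounting for the weight $\rr^{\gamma-2}f_{0,\gamma'}$ and \ref{lem:equivnorm}\ref{item:equivnormS}, both contributions are strictly subordinate to the gluing-region term, with the slack coming from \eqref{eq:alphaprime}. For $|\zz|>4/m$ the estimate is vacuous: $H=0$ by \ref{lem:varphinl}(i) and $w\circ\Pi_{\Spheqtres}=\bar w\circ\Pi_{\Spheqtres}=0$ by the support considerations.

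The main obstacle is consistent bookkeeping across regions and metrics: each of the three pieces is naturally equipped with a different background geometry (the cylinder metric $\chi$ on bridges, the rescaled sphere metric $(\delta')^{-2}g$ on the gluing annuli, and $g_{\Spheqtres}$ on $\Omega$), but the conclusion is phrased in the combined weight $\rr^{\gamma-2}f_{0,\gamma'}\circ\Pi_{\Spheqtres}$ of \ref{def:norm}. The translation between these norms is the role of \ref{lem:equivnorm}. A secondary delicate point is the graph region near $\T_0$ (where $|\zz|\le 4/m$ but $\dist_L>3\delta'$): there $\bar{\underline{v}}$ is dominant in $C^0$, of order $m^{-2-\alpha'}$, and one must check that the cubic $Q_{\varphi_{nl}}$ remains subordinate after weighting---this follows from the cubic gain in $C^1$-size relative to the $C^{3,\beta}$ control provided by \ref{lem:varphiest}\ref{item:solboundgsphere} and \ref{lem:varphinl}\ref{item:varphinlest}, together with \eqref{eq:alphaprime}.
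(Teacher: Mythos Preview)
Your proposal is correct and follows essentially the same three-region decomposition as the paper's proof, invoking the same key inputs (\ref{lem:HK}\ref{item:fermiH}, \ref{lem:glue}\ref{item:glueH}, \ref{lem:SphereH}, \ref{lem:phigl}\ref{item:phiglbound}, \ref{lem:varphinl}, \ref{lem:equivnorm}). If anything, your treatment of the outer region is slightly more explicit than the paper's, since you separately account for the linear remainder $\Lcal_{\Spheqtres}(\varphi_{nl}-\varphi-\underline{v}-\bar{\underline{v}})$ supported on $\Spheqtres\setminus D_{\T_0}^{\Spheqtres}(3/m)$, whereas the paper writes directly $\rr^2 H'=\rr^2 w+\rr^2\bar{w}+\rr\tilde{Q}_{\varphi_{:q}}$ and absorbs this (negligible) term implicitly.
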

\begin{proof}
	Notice that on $M\cap\Pi_{\Spheqtres}^{-1}(D^{\Spheqtres}_L(3\delta'))$, $\rr=\dist_L^{\Spheqtres}$ by \eqref{eq:deltaprime}. Thus on $\Kcech[M]$, $\rr=\dist_L^{\Spheqtres}$ and $w\circ\Pi_{\Spheqtres}=0$ by \ref{lem:obstr}\ref{item:wsupp}, we have by \ref{lem:HK}\ref{item:fermiH} and \ref{lem:equivnorm}\ref{item:equivnormK}
	\begin{equation*}
		\norm{H}_{0,\beta,\gamma-2;\Kcech[M]}\lesssim m^{-2}(\delta')^{2-\gamma}.
	\end{equation*}
	Similarly, on the gluing region $M\cap\Pi_{\Spheqtres}^{-1}(\sqcup_{p\in L}D^{\Spheqtres}_p(3\delta')\setminus D^{\Spheqtres}_p(2\delta'))$, by \ref{lem:glue}\ref{item:glueH} and \eqref{eq:alpha}
	\begin{equation*}
		\norm{H}_{0,\beta,\gamma-2;D^{\Spheqtres}_p(3\delta')\setminus D^{\Spheqtres}_p(2\delta'))}\lesssim m^{-1}(\delta')^{2-\gamma}=m^{-3-2\alpha+\gamma(1+\alpha)}.
	\end{equation*}
	
	Now we consider the estimate on the exterior of the gluing region. In this region $\varphi^{gl}=\varphi_{nl}$. Let $q'\in M\setminus\Pi_{\Spheqtres}^{-1}(D^{\Spheqtres}_L(3\delta'))$, and $q:=\Pi_{\Spheqtres}(q')\in \Spheqtres\setminus D^{\Spheqtres}_L(3\delta')$. 
	By expanding $H'$ in linear and higher order terms, we find 
	\begin{equation*}
		\rr^2 H'=\rr^2 w+\rr^2 \bar{w}+\rr \tilde{Q}_{\varphi_{:q}}.
	\end{equation*}
	We estimate by \ref{lem:phigl}\ref{item:phiglbound} and by \ref{lem:SphereH}
	\begin{multline*}
		\norm{\rr^{2-\gamma} (H'-w-\bar{w}):C^{0,\beta}(D_{\T_0}^{\Spheqtres}(4/m)\setminus D_p^{\Spheqtres}(3\delta'),\rr,g)}\\
		\lesssim  \norm{\rr^{1-\gamma} \tilde{Q}_{\varphi_{:q}}:C^{0,\beta}(D_{\T_0}^{\Spheqtres}(4/m)\setminus D_p^{\Spheqtres}(3\delta'),\rr,g)}\\
		\lesssim  (\delta')^{-2-\gamma}\norm{\varphi^{gl}:C^{2,\beta}(D_{\T_0}^{\Spheqtres}(4/m)\setminus D_p^{\Spheqtres}(3\delta'),\rr,g)}\norm{\varphi^{gl}:C^{1,\beta}(D_{\T_0}^{\Spheqtres}(4/m)\setminus D_p^{\Spheqtres}(3\delta'),\rr,g)}^2\\
		\lesssim   m^{-4+2\alpha+(1+\alpha)\gamma}.
	\end{multline*}
	Finally by \ref{lem:varphinl} the mean curvature is $0$ outside of $D_{\T_0}^{\Spheqtres}(4/m)$.
	
	Combining all these estimates with the definition of the norm \ref{def:norm} and \ref{lem:equivnorm}\ref{item:equivnormS} we complete the proof. 
\end{proof}

\begin{lemma}\label{lem:equivop}
	For $\hat{\gamma}\in\R$, the following hold.
	\begin{enumerate}[(i)]
		\item \label{item:equivopK}If $u\in C^{2,\beta}(\Pi_{\K}(\Kcech[M]))$, then
		\begin{equation*}
			\norm{\Lcal_M(u\circ\Pi_{\K})-(\Lcal_{\K}u)\circ\Pi_{\K}}_{0,\beta,\hat{\gamma}-2,\gamma';\Kcech[M]}\lesssim (\delta')^2	\norm{u}_{2,\beta,\hat{\gamma},\gamma';\Pi_{\K}(\Kcech[M])}.
		\end{equation*}
		\item\label{item:equivopS} If $u\in C^{2,\beta}(\tilde{S'})$, then 
		\begin{equation*}
			\norm{\Lcal_M(u\circ\Pi_{\Spheqtres})-(\Lcal_{\Spheqtres}u)\circ\Pi_{\Spheqtres}}_{0,\beta,\hat{\gamma}-2,\gamma';\Pi_{\Spheqtres}^{-1}(\tilde{S'})}\lesssim b^{-1}	\norm{u}_{2,\beta,\hat{\gamma},\gamma';\tilde{S'}}.
		\end{equation*}
	\end{enumerate}
\end{lemma}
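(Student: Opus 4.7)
The plan is to bound each of the two operator differences by expanding the Jacobi operators explicitly and comparing them term by term, using the geometric estimates of \ref{lem:HK} and \ref{lem:phigl} together with the local $C^k$ control on $\varphi_{:q}$ furnished by \eqref{eq:varphiqin} and \eqref{eq:varphiqout}.

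For part (i), on the catenoidal region $\Kcech[M]=\sqcup_{p\in L}\Kcech[p,\radius]$ the surface $M$ equals the union of the catenoidal bridges inside $\Sph^4$, so $\Lcal_M$ restricts on each bridge $\Kcech$ to
\[
\Lcal_{\Kcech}\,=\,\Delta_{g_{\Kcech}}+|A_{\Kcech}|^2_{g_{\Kcech}}+\Ric_{\Sph^4}(\nu_{\Kcech},\nu_{\Kcech})\,=\,\Delta_{g_{\Kcech}}+|A_{\Kcech}|^2_{g_{\Kcech}}+3,
\]
while the pullback via $\Pi_{\K}$ of $\Lcal_{\K}=\Delta_{g_\K}+|A_\K|^2_{g_\K}$ is, by the very definitions of $\mathring{g}_{\Kcech}$ and $\mathring{A}_{\Kcech}$, the operator $\Delta_{\mathring{g}_{\Kcech}}+|\mathring{A}_{\Kcech}|^2_{\mathring{g}_{\Kcech}}$. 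Subtracting produces three pieces: a Laplacian discrepancy, a squared--second--fundamental--form discrepancy, and the ambient Ricci constant $3$. By items (i) and (ii) of \ref{lem:HK}, the tensor differences $g_{\Kcech}-\mathring{g}_{\Kcech}$ and $A_{\Kcech}-\mathring{A}_{\Kcech}$ have relative size $O(\rho^2)$ in the $\chi$--scaled norm, making the first two pieces $O(\rho^2)$ in the operator norm induced by the scaling $\chi\sim\rho^{-2}g$; the constant piece $3$ similarly contributes $O(\rho^2)$ relative to the weight--drop $\rr^{-2}$ built into the target norm $\|\cdot\|_{0,\beta,\hat{\gamma}-2,\gamma'}$. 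Since $\rho\leq 2\delta'_p\lesssim\delta'$ throughout $\Kcech[M]$, these three pieces combine to yield the bound $(\delta')^2\|u\|_{2,\beta,\hat{\gamma},\gamma'}$, and item (i) of \ref{lem:equivnorm} allows us to identify norms of $u$ on $\Pi_{\K}(\Kcech[M])$ with norms of $u\circ\Pi_{\K}$ on $\Kcech[M]$, completing (i).

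For part (ii), on $\Pi_{\Spheqtres}^{-1}(\tilde{S'})\cap M$ the surface is locally the bigraph $\Graph_{\tilde{S'}}^{\Sph^4}(\pm\varphi^{gl})$ over $\Spheqtres$, so the pushforward of $\Lcal_M$ to $\tilde{S'}$ via $\Pi_{\Spheqtres}$ is a second order elliptic operator $\widetilde{\Lcal}[\varphi^{gl}]$ whose coefficients depend smoothly on $\varphi^{gl}$ and its derivatives up to order two and reduce to those of $\Lcal_{\Spheqtres}$ when $\varphi^{gl}\equiv 0$ (this is essentially the linearization of the computation behind \ref{lem:SphereH}). Hence in each local ball $\widecheck{B}_q$ of \ref{not:Bq}, with its scaled metric $\tilde{g}_q$, the operator norm of $\widetilde{\Lcal}[\varphi^{gl}]-\Lcal_{\Spheqtres}$ is bounded by $\|\varphi_{:q}:C^{2,\beta}(\widecheck{B}_q,\tilde{g}_q)\|$ up to an absolute constant. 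The estimates \eqref{eq:varphiqin} and \eqref{eq:varphiqout} control this quantity uniformly by $b^{-1}$ for every $q\in\tilde{S'}$ (the second estimate giving the weaker bound $m^{-1+\alpha}\leq b^{-1}$ by our choice of $m$ large relative to $b$ in \ref{conv:gamma}), and passing from this uniform local operator bound to the global weighted norms via item (ii) of \ref{lem:equivnorm} yields (ii).

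The main technical point in both parts is tracking the weight factor $f_{k,\hat{\gamma},\gamma'}$ and the scaled metric consistently across the projection; this is routine because \ref{def:norm}, \ref{lem:HK}, and \ref{lem:equivnorm} are tailored to make pullbacks along $\Pi_{\K}$ or $\Pi_{\Spheqtres}$ essentially norm--preserving up to constants depending only on $k$ and $\beta$.
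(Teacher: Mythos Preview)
Your proof is correct and follows essentially the same route as the paper's own proof: decomposing $\Lcal_M-(\Lcal_\K\,\cdot)\circ\Pi_\K$ on each bridge into the Laplacian difference, the $|A|^2$ difference, and the ambient Ricci constant, then invoking \ref{lem:HK}(i)(ii) together with $\rho\lesssim\delta'$; and on $\tilde{S'}$ bounding the operator difference in each local ball $\widecheck{B}_q$ by $\|\varphi_{:q}:C^{2,\beta}(\widecheck{B}_q,\tilde g_q)\|$ via \eqref{eq:varphiqin}--\eqref{eq:varphiqout}. The only minor gap is that you assert rather than justify the Laplacian comparison $\|\rho^2(\Delta_{g_{\Kcech}}-\Delta_{\mathring g_{\Kcech}})\|\lesssim\|\rho^{-2}(g_{\Kcech}-\mathring g_{\Kcech})\|$; the paper invokes \cite[Lemma~C.10(iv)]{gLD} for this standard fact.
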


\begin{proof}
	We first prove (i). By \ref{def:norm} and \ref{lem:equivnorm}\ref{item:equivnormK}, it suffices to prove that
	\begin{equation*}
		\norm{\rho^2(\Lcal_M-\Delta_{\mathring{g}_{\Kcech}}-\abs{\mathring{A}_{\Kcech}}^2_{\mathring{g}_{\Kcech}})(u\circ\Pi_{\K})}_{0,\beta,\hat{\gamma},\gamma';\Kcech[M]}\lesssim (\delta')^2	\norm{u}_{2,\beta,\hat{\gamma},\gamma';\Pi_{\K}(\Kcech[M])},
	\end{equation*}
	where $\rho$ is as in \eqref{eq:rhot} on $\Kcech[M]$ by \ref{def:catebridge}, $\mathring{g}_{\Kcech}$ and $\mathring{A}_{\Kcech}$ are metric and the second fundamental form on on each $\Kcech=\Kcech[p,\sqrt{\tau}]$ induced by $\mathring{g}$. Recall from \eqref{eq:gK} that $(\Pi_{\K})_*(\rho^{-2}\mathring{g}_{\Kcech})$ is the flat metric $\chi$ on $\Cylinder$ from \ref{not:cyl}, and thus $\rho^2\Delta_{\mathring{g}}(u\circ\Pi_{\K})=(\Delta_{\chi}u)\circ\Pi_{\K}$. Estimating the difference in the Laplacians using \cite[Lemma C.10(iv)]{gLD}, we find
	\begin{multline*}
		\norm{\rho^2(\Delta_{g_{\Kcech}}-\Delta_{\mathring{g}_{\Kcech}})(u\circ\Pi_{\K}):C^{0,\beta}(\Kcech,\chi,\rho^{\hat{\gamma}})}\\
		\lesssim \norm{\rho^{-2}(g_{\Kcech}-\mathring{g}_{\Kcech}):C^{1,\beta}(\Kcech,\chi)} \norm{u\circ\Pi_{\K}:C^{2,\beta}(\Kcech,\chi,\rho^{\hat{\gamma}})}\lesssim (\delta') ^2\norm{u}_{2,\beta,\hat{\gamma},\gamma';\Pi_{\K}(\Kcech[M])},
	\end{multline*}
	where we have used \ref{lem:HK}\ref{item:fermialpha} to estimate $g_{\Kcech}-\mathring{g}_{\Kcech}$. Next observe that by \ref{lem:HK}\ref{item:fermiA}, \eqref{eq:deltaprime} and $\radius=\sqrt{\tau}$ now
	\begin{multline*}
		\norm{\rho^{2}(\abs{A_{\Kcech}}^2_{g_{\Kcech}}-\abs{\mathring{A}_{\Kcech}}^2_{\mathring{g}_{\Kcech}})(u\circ\Pi_{\K}):C^{0,\beta}(\Kcech,\chi,\rho^{\hat{\gamma}})}\\
		\lesssim \left(\norm{\rho^{-2}(g_{\Kcech}-\mathring{g}_{\Kcech}):C^{0,\beta}(\Kcech,\chi)}+\norm{\rho^{-1}(A_{\Kcech}-\mathring{A}_{\Kcech}):C^{0,\beta}(\Kcech,\chi)}\right) \norm{u\circ\Pi_{\K}:C^{2,\beta}(\Kcech,\chi,\rho^{\hat{\gamma}})}\\
		\lesssim  (\delta')^2\norm{u}_{2,\beta,\hat{\gamma},\gamma';\Pi_{\K}(\Kcech[M])}.
	\end{multline*}
	
	We now prove (ii). By \eqref{eq:varphiqin} and \eqref{eq:varphiqout} we have using scaling for the left hand side that for $q\in\tilde{S'}$ 
	\begin{multline*}
		\rr(q)^2\norm{\Lcal_M(u\circ\Pi_{\Spheqtres})-(\Lcal_{\Spheqtres}u)\circ\Pi_{\Spheqtres}:C^{0,\beta}(\Pi_{\Spheqtres}^{-1}(\widecheck{B}_q),\tilde{g}_q)}
		\\ 
		\lesssim (b^{-1}+m^{-1+\alpha})\norm{u:C^{2,\beta}(\widecheck{B}_q,\tilde{g}_q)}.
	\end{multline*}
	The results then follows by taking $m$ big enough in terms of $b$.
\end{proof}

Using now the same rescaling we prove a global estimate for the nonlinear terms of the mean curvature of the graph over the initial surfaces as follows.
\begin{lemma}\label{lem:nonlinear}
	Let $M$ as in \ref{def:initial} and $\textup{ф}\in C^{2,\beta}(M)$ satisfies $\norm{\textup{ф}}_{2,\beta,\gamma,\gamma';M}\leq m^{-3+\gamma}$, then $M_{\textup{ф}}:=\Graph_M^{\Sph^4}(\textup{ф})$ is well defined, is embedded, and if $H_{\textup{ф}}$ is the mean curvature of $M_{\textup{ф}}$ pulled back to $M$ by $\Graph_M^{\Sph^4}(\textup{ф})$ and $H$ is the mean curvature of $M$, then
	\begin{equation*}
		\norm{H_{\textup{ф}}-H-\mathcal{L}_M\textup{ф}}_{0,\beta,\gamma-2,\gamma';M}\lesssim  \norm{\textup{ф}}_{2,\beta,\gamma,\gamma';M}^2
	\end{equation*}
\end{lemma}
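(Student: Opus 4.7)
The strategy is the standard localization-and-rescaling argument for nonlinear mean-curvature estimates on hypersurfaces with locally uniformly bounded geometry, carried out in direct analogy with the analogous estimates used in previous LD constructions. The plan is to work pointwise: fix $q \in M$, pass to the rescaled metric $\tilde g_q := \rr(q)^{-2} g$ on the ambient $\Sph^4$, and reduce to a uniform model estimate on a ball of unit size in $\tilde g_q$.

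First, I would observe that $M$ has locally uniformly bounded geometry in $\tilde g_q$: on the catenoidal bridges this follows from Lemma \ref{lem:rhozh} and Lemma \ref{lem:HK} (the rescaling turns a piece of $\Kcech[p,\radius]$ into a uniform-scale three-catenoid), while on the graph region this follows from the uniform $C^{3,\beta}$-control of $\varphi^{gl}$ established in Lemma \ref{lem:phigl}\ref{item:phiglbound}. Next, the smallness hypothesis $\|\textup{ф}\|_{2,\beta,\gamma,\gamma';M} \leq m^{-3+\gamma}$, combined with $\rr(q) \gtrsim \radius = \sqrt{\tau} \sim m^{-2}$, yields that $\widetilde{\textup{ф}} := \rr(q)^{-1}\textup{ф}$ is uniformly $C^{2,\beta}$-small on $\widecheck B_q \cap M$ in $\tilde g_q$ for $m$ large; this in turn gives well-definedness and embeddedness of $\Graph_M^{\Sph^4}(\textup{ф})$ by a straightforward continuity argument from the embeddedness of $M$ in Lemma \ref{lem:phigl}.

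The core of the argument is then the standard Taylor expansion of the mean curvature of a normal graph over a hypersurface of bounded geometry in an ambient of bounded geometry. Writing $\widetilde H$, $\widetilde{\Lcal}_M$ for the mean curvature and Jacobi operator of $M$ in $\tilde g_q$, one has
\begin{equation*}
\widetilde H_{\widetilde{\textup{ф}}} = \widetilde H + \widetilde{\Lcal}_M \widetilde{\textup{ф}} + \widetilde Q\bigl(\widetilde{\textup{ф}}, \nabla \widetilde{\textup{ф}}, \nabla^2 \widetilde{\textup{ф}}\bigr),
\end{equation*}
where $\widetilde Q$ is smooth in its arguments, vanishes to second order at the origin, and has $C^{0,\beta}$-bounded coefficients depending only on the local bounds on the second fundamental form of $M$ and on the ambient curvature. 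This yields the pointwise model estimate
\begin{equation*}
\bigl\|\widetilde H_{\widetilde{\textup{ф}}} - \widetilde H - \widetilde{\Lcal}_M \widetilde{\textup{ф}} : C^{0,\beta}(\widecheck B_q \cap M, \tilde g_q)\bigr\| \;\lesssim\; \bigl\|\widetilde{\textup{ф}} : C^{2,\beta}(\widecheck B_q \cap M, \tilde g_q)\bigr\|^{2}.
\end{equation*}

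Finally, I would undo the rescaling. Since mean curvature scales inversely with length, $\widetilde H_{\widetilde{\textup{ф}}} = \rr(q) H_{\textup{ф}}$, $\widetilde H = \rr(q) H$, and $\widetilde{\Lcal}_M \widetilde{\textup{ф}} = \rr(q)\,\Lcal_M \textup{ф}$ (because $\widetilde{\Lcal}_M = \rr(q)^{2}\Lcal_M$ and $\widetilde{\textup{ф}} = \rr(q)^{-1}\textup{ф}$). Combining this with the definition of $f_{k,\hat\gamma,\gamma'}$ in \ref{def:norm}, the inequality $\rr(q)^{\gamma-1} f_{2,\gamma'}^{2} \lesssim f_{0,\gamma'}$ (which holds since $\gamma > 1$ and, by \ref{def:weight}, $f_{2,\gamma'}^2 \lesssim f_{0,\gamma'}$ once the polynomial factor $m^{8+2\beta}$ has been absorbed by $e^{-m\gamma'\delta_s}$ for $m$ large), and Lemma \ref{lem:equivnorm} to convert between the local rescaled norm and the global weighted norm, one obtains the desired estimate. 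I do not expect any serious obstacle beyond these bookkeeping steps: the transition between the catenoidal and graph regions is already handled uniformly by Lemma \ref{lem:equivnorm} and Lemma \ref{lem:equivop}, and the only place where one must be careful is verifying that the smallness of $\widetilde{\textup{ф}}$ in $C^{2,\beta}(\widecheck B_q, \tilde g_q)$ holds uniformly at every point $q$ including those on the catenoidal bridges where $\rr$ is smallest.
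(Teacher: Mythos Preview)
Your proposal is correct and takes essentially the same approach as the paper's proof: localize at $q$, rescale by $\rr(q)^{-2}$, invoke the uniform bounded geometry of $M$ (via \ref{lem:phigl}\ref{item:phiglbound} and the catenoid estimates) to obtain the standard quadratic remainder bound for the mean curvature of a normal graph, then undo the scaling and absorb the weights via $\rr^{\gamma-1} f_{2,\gamma'}^2/f_{0,\gamma'}\lesssim \delta^{\gamma-1}$. The paper organizes the argument slightly differently---it first treats $q\in\tilde S'$ using the graph description $\widecheck B'_q$ of $M$ over $\widecheck B_q\subset\Spheqtres$ and then handles the core region $\Ku[M]$ by a separate appeal to its being a small perturbation of a fixed compact catenoid piece---but this is exactly the two-region bookkeeping you anticipate in your final paragraph.
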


\begin{proof}
	By \eqref{eq:varphiqin} and \eqref{eq:varphiqout}, we have that for $q\in\tilde{S'}$ the graph $\widecheck{B}'_q$ of $\varphi_{:q}$ over $\widecheck{B}_q$ in $(\Spheqtres,\tilde{g}_q)$ can be described by an immersion $X_{:q} : \widecheck{B}_q\to \widecheck{B}'_q$ such that there are coordinates on $\widecheck{B}_q$ and a neighborhood in $\Sph^4$ of $\widecheck{B}'_q$ which are uniformly bounded and the immersion in these coordinates has uniformly bounded $C^{2,\beta}$ norms, the standard Euclidean metric on the domain is bounded by $X^*_{:q}\tilde{g}_q$, and the coefficients of $\tilde{g}_q$ in the target coordinates have uniformly bounded $C^{1,\beta}$ norms. By the definition of the norm \ref{def:norm} and since $\norm{\textup{ф}}_{2,\beta,\gamma,\gamma';M}\leq m^{-3+\gamma}$, we have that the restriction of $\textup{ф}$ on $\widecheck{B}'_q$ satisfies
	\begin{equation*}
		\norm{\rr^{-1}(q)\textup{ф}:C^{2,\beta}(\widecheck{B}'_q,\tilde{g}_q)}\lesssim\rr^{\gamma-1}(q)f_{2,\gamma'}(q)\norm{\textup{ф}}_{2,\beta,\gamma,\gamma';M}
	\end{equation*}
	Since the right hand side is small in absolute terms we can conclude that $\Graph_{\widecheck{B}'_q}^{\Sph^4}(\textup{ф})$ is well defined and embedded. Using scaling for the left hand side we further conclude that
	\begin{equation*}
		\norm{\rr(q)(H_{\textup{ф}}-H-\mathcal{L}_M\textup{ф}):C^{0,\beta}(\widecheck{B}'_q,\tilde{g}_q)}\lesssim\rr^{2\gamma-2}(q)f^2_{2,\gamma'}(q)\norm{\textup{ф}}_{2,\beta,\gamma,\gamma';M}^2.
	\end{equation*}
	Thus by \ref{def:weight}, the definition of $\rr$ in \ref{def:norm} and by taking $m$ big enough, we have
	\begin{multline*}
		\frac{\rr^{2-\gamma}(q)}{f_{0,\gamma'}(q)}\norm{(H_{\textup{ф}}-H-\mathcal{L}_M\textup{ф}):C^{0,\beta}(\widecheck{B}'_q,\tilde{g}_q)}
		\\ 
		\lesssim  \rr^{\gamma-1}(q)\frac{f^2_{2,\gamma'}(q)}{f_{0,\gamma'}(q)}\norm{\textup{ф}}_{2,\beta,\gamma,\gamma';M}^2\lesssim \delta^{\gamma-1}\norm{\textup{ф}}_{2,\beta,\gamma,\gamma';M}^2.
	\end{multline*}
	Finally, note that that the components of $\Ku[M]$ appropriately scaled are small perturbations of a fixed compact region of the standard catenoid, which allows us to repeat the arguments above in this case (by taking $m$ big enough in terms of $b$). By combining with the earlier estimates and using the definitions, we conclude the estimate in the statement of the lemma.
\end{proof}	

\subsection*{The definition of {$\Rcal_M^{appr}$}}
\begin{definition}\label{def:cutoff}
	Define $\psi'\in C^{\infty}(\Spheqtres)$ and $\widecheck{\psi}\in C^{\infty}(M)$ by requesting the following.
	\renewcommand{\theenumi}{\roman{enumi}}
	\begin{enumerate}
		\item $\widecheck{\psi}$ is supported on $\Kcech[M]\subset M$ and $\psi'$ on $\tilde{S}'\subset\Spheqtres$.
		\item $\psi'=1$ on $\Spheqtres\setminus D_{L}^{\Spheqtres}(2b\tau)\subset\Spheqtres$ and for each $p\in L$ we have
		\begin{align*}
			\psi'&=\Psibold[b\tau,2b\tau;\dist^{\Spheqtres}_p](0,1)\text{ on }D_{p}^{\Spheqtres}(2b\tau),\\
			\widecheck{\psi}&=\Psibold[2\delta',\delta';\dist^{\Spheqtres}_p\circ\Pi_{\Spheqtres}](0,1)\text{ on } \Kcech[p,\sqrt{\tau}].
		\end{align*}
	\end{enumerate}
	We also define the region $\widecheck{\Omega}[M]$ containing the support of $\widecheck{\psi}$ by
	\begin{equation*}
		\widecheck{\Omega}[M]:=\widecheck{\Omega}[p],\quad \text{where }\widecheck{\Omega}[p]:=\Pi_{\Spheqtres}^{-1}(D_p^{\Spheqtres}(2\delta')\setminus D_p^{\Spheqtres}(\delta')).
	\end{equation*}
\end{definition}

As in \cite{SdI} and \cite{gLD}, we will construct a linear
map $\mathcal{R}_{M,appr}:C^{0,\beta}_{\sym}(M)\to C^{2,\beta}_{\sym}(M)\oplus \skernel[L]\oplus\skernele[L]\oplus C^{0,\beta}_{\sym}(M)$, so that $(u_1,w_{E,1},\bar{w}_{E,1},E_1):=\mathcal{R}_{M,appr}(E)$ is an approximate solution to the equation \ref{prop:lineareq}\ref{item:LDeqmodulow}. The approximate solution will be constructed by combining semi-local approximate solutions.

Given $E\in C^{0,\beta}_{\sym}(M)$, we define $E'\in C^{0,\beta}_{\sym}(\Spheqtres)$ by requiring that they are supported on $\tilde{S}'$ and that
\begin{equation}\label{eq:Eprime}
	E'\circ\Pi_{\Spheqtres}=(\psi'\circ\Pi_{\Spheqtres})E.
\end{equation}
By \ref{lem:nokernel} and \ref{lem:obstr}, there are unique $u'\in C^{2,\beta}_{\sym}(\Spheqtres)$, $w_{E,1}\in \skernel[L]$, $\bar{w}_{E,1}\in \skernele[L]$ such that
\begin{equation}\label{eq:uprime}
	\Lcal_{\Spheqtres}u'=E'+w_{E,1}+\bar{w}_{E,1},\quad \Ecalu_p u'=0, \forall p\in L;\quad u'|_\uC\equiv 0.    
\end{equation}
Note that $\Lcal_{\Spheqtres}((1-\psi')u')=[\psi',\Lcal_{\Spheqtres}]u'+(1-\psi')E'$ is supported on $\Ku[M]\subset\Kcech[M]\subset M$, we define $\tilde{E}\in C^{0,\beta}_{\sym}(\K_M)$, by requesting that it is supported on $\Pi_{\K}(\Ku[M])$ and that on $\Ku[M]$ we have
\begin{equation}\label{eq:Etilde}
	\tilde{E}\circ\Pi_{\K}=(1-\psi'\circ\Pi_{\Spheqtres})E+(\Lcal_{\Spheqtres}((1-\psi')u'))\circ\Pi_{\Spheqtres}.
\end{equation}

For $k\in\{0,2\}$, we introduce decompositions $C^{k,\beta}(\K_M)=C^{k,\beta}_{low}(\K_M)\oplus C^{k,\beta}_{high}(\K_M)$ and also $H^{1}(\K_M)=H^{1}_{low}(\K_M)\oplus H^{1}_{high}(\K_M)$ into subspaces of functions which satisfy the condition that their restrictions to a parallel circle of a $\K[p,\sqrt{\tau}]$ belong or are ($L^2$-)orthogonal respectively to the the span of the constants and the first harmonics on the circle. Notice that $\tilde{E}$ has compact support, we then have
\begin{equation}
	\tilde{E}=\tilde{E}_{low}+\tilde{E}_{high},
\end{equation}
with $\tilde{E}_{low}\in C^{0,\beta}_{low}(\K_M)\cap H^{1}_{low}(\K_M)$, $\tilde{E}_{high} \in C^{0,\beta}_{high}(\K_M)\cap H^{1}_{high}(\K_M)$ supported on $\Pi_{\K}(\Ku[M])\subset \K_M$.

Let $\Lcal_{\K}$ denote the Jacobi linear operator on $\K[p,\sqrt{\tau}]$ (recall \ref{def:jacobi}), we define $\tilde{u}=\tilde{u}_{low}+\tilde{u}_{high}$ by requesting $\tilde{u}_{low}\in C^{2,\beta}_{low}(\K_M)$, $\tilde{u}_{high} \in C^{2,\beta}_{high}(\K_M)$ to be the solutions of
\begin{equation}\label{eq:utilde}
	\mathcal{L}_{\K}\tilde{u}_{low}= \tilde{E}_{low},\quad \mathcal{L}_{\K}\tilde{u}_{high}= \tilde{E}_{high}
\end{equation}
determined uniquely as follows. By separating variables the first equation amounts to uncoupled ODE equations which are solved uniquely by assuming vanishing initial data on the waist of the catenoids. For the second equation we use the following lemma.

\begin{lemma}[Linear estimate on catenoids]\label{lem:utildehigh}
	For $\K=\K[p,\sqrt{\tau}]$, $\tilde{E}_{high} \in C^{0,\beta}_{high}(\K)\cap H^{1}_{high}(\K)$ supported on $\underline{\K}=\underline{\K}[p]$  (recall \ref{def:catebridge} and \ref{def:region}), there exists a unique $\tilde{u}_{high} \in C^{2,\beta}_{high}(\K)\cap H^{1}_{high}(\K)$ satisfying the second equation in \eqref{eq:utilde} and 
	\begin{equation*}
		\norm{\tilde{u}_{high}:C^0(\K)}\lesssim_b \tau\norm{\tilde{E}_{high}:C^{0,\beta}(\underline{\K},\tau^{-1}g_{\K})}.
	\end{equation*}
\end{lemma}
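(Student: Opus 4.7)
The plan is to rescale by the waist radius $\radius = \sqrt{\tau}$ to reduce to the unit three-catenoid $\K_0 := \K[p,1]$ in $\R^4$, and then separate variables into spherical harmonics on the $\Sph^2$ factor. The dilation $\phi : \K_0 \to \K$, $x \mapsto \radius x$, satisfies $\phi^* g_\K = \radius^2 g_{\K_0}$ and hence $\phi^* \Lcal_\K = \radius^{-2} \Lcal_{\K_0}$; under it $\underline{\K}[p]$ corresponds to the fixed region $\{|s| \leq \tfrac12\arcosh(4b^2)\} \subset \K_0$ (from $\rho = \radius\sqrt{\cosh 2s} \leq 2b\radius$). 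Setting $f_0 := \tau\, \tilde{E}_{high} \circ \phi$, the problem becomes to solve $\Lcal_{\K_0} u_0 = f_0$ with $f_0$ compactly supported in the high subspace on $\K_0$, and the bound $\norm{u_0 : C^0(\K_0)} \lesssim_b \norm{f_0 : C^{0,\beta}(\K_0)}$ translates to the claimed estimate on $\tilde{u}_{high} = u_0 \circ \phi^{-1}$ because $\norm{f_0 : C^{0,\beta}(\K_0, g_{\K_0})} = \tau\, \norm{\tilde{E}_{high} : C^{0,\beta}(\underline{\K}, \tau^{-1} g_\K)}$.

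Combining \ref{lem:AK}, the conformal identity $g_\K = \rho^2 \chi$ from \eqref{eq:gK}, and the identities $\rho'/\rho = \tanh(2s)$ and $\radius^4/\rho^4 = \sech^2(2s)$, one checks that on a factored mode $u(s) Y_l(\theta)$ with $\Delta_{\Sph^2} Y_l = -l(l+1) Y_l$,
\begin{equation*}
\rho^2 \Lcal_{\K_0}(u Y_l) \, = \, \bigl\{\, u'' + \tanh(2s)\,u' + \bigl[-l(l+1) + 6\sech^2(2s)\bigr]\,u \,\bigr\}\,Y_l.
\end{equation*}
For each $l \ge 2$ the zeroth-order coefficient is uniformly nonpositive, and as $|s| \to \infty$ the equation is asymptotic to a constant-coefficient ODE with characteristic roots $\tfrac12(-\sgn(s) \pm \sqrt{1+4l(l+1)})$; thus it admits linearly independent fundamental solutions $\phi_l^\pm$ decaying exponentially as $s\to\pm\infty$. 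These give a Green's function which by variation of parameters on each spherical-harmonic coefficient of $f_0$ produces $u_0 = \sum_{l\ge 2,i} u_{l,i}(s) Y_{l,i}(\theta)$. I would first assemble an $L^2$ bound on $u_0$ mode-by-mode using orthonormality of the $Y_{l,i}$, then upgrade to the required $C^0$ bound via interior Schauder estimates on $\K_0$, which has uniformly bounded geometry in $g_{\K_0}$.

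Uniqueness follows from the same decomposition: any homogeneous solution in $C^{2,\beta}_{high} \cap H^1_{high}$ decomposes into such modes, and $H^1$-integrability forces each separated coefficient to decay at $\pm\infty$; however, the only bounded Jacobi fields of $\K_0 \subset \R^4$ come from translations, dilation, and the four rotations mixing the axis with $\R^3$, all of which have spherical-harmonic content in $l \in \{0,1\}$ and hence vanish in the high subspace. The main technical obstacle is ensuring that the Wronskian of $(\phi^+_l,\phi^-_l)$ at $s=0$ is nonzero under a suitable normalization, which is equivalent to the preceding absence-of-bounded-Jacobi-fields statement. Once this is in hand (the worst case is $l=2$ and produces the $b$-dependence in the final constant), the mode-by-mode variation-of-parameters bounds sum absolutely, thanks to the rapidly improving exponential decay rate $\sim e^{-\sqrt{l(l+1)}|s|}$ as $l \to \infty$.
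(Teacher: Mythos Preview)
Your approach is correct but differs from the paper's. After the same rescaling to the unit catenoid $\tilde{\K}$, the paper argues abstractly: it asserts that the bilinear form $B(u,v) = \int_{\tilde{\K}} \langle\nabla u,\nabla v\rangle - |A_{\tilde{\K}}|^2 uv$ is coercive on $H^1_{high}(\tilde{\K})$, then invokes Lax--Milgram for existence, uniqueness, and an $H^1$ bound, followed by De~Giorgi--Nash--Moser iteration to pass to $C^0$. Your mode-by-mode ODE analysis effectively unpacks the coercivity: on the $l$-th spherical-harmonic mode the energy is $\int \bigl(u_s^2 + [l(l+1) - 6\sech^2(2s)]\,u^2\bigr)\cosh^{1/2}(2s)\,ds$, which is nonnegative for $l \geq 2$ and forces any $L^2$ homogeneous solution to vanish; this gives your Wronskian condition directly, without a separate classification of bounded Jacobi fields. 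Your route is more explicit and delivers sharper decay information at the cost of tracking the sum over $l$; the paper's is shorter but leaves the coercivity justification terse. Two small corrections: the step from $L^2$ to $C^0$ is De~Giorgi--Nash--Moser or elliptic $L^p$ theory plus Sobolev embedding, not interior Schauder (which runs $C^0 \to C^{2,\beta}$); and the rotations mixing the axis with $\R^3$ (of which there are three, not four) produce Jacobi fields containing the unbounded term $\rho(s)\tanh(2s)\,\theta_i$, so they should be dropped from your list of bounded Jacobi fields---the conclusion is unaffected since the genuinely bounded ones (from translations and dilation) still lie in $l \in \{0,1\}$.
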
 
\begin{proof}
	By scaling we can rewrite the the second equation in \eqref{eq:utilde} to
	\begin{equation}\label{eq:utildehigh}
		\mathcal{L}_{\tilde{\K}}\tilde{u}_{high}= \tau\tilde{E}_{high},
	\end{equation}
	where $\tilde{\K}=\K[p,1]$ and $\mathcal{L}_{\tilde{\K}}$ is the Jacobi linear operator on it. 
	
	We first show that the bilinear form $B:H^{1}_{high}(\tilde{\K})\times H^{1}_{high}(\tilde{\K})\to\R$ is coercive, where 
	\begin{equation*}
		B(u,v):=\int_{\tilde{K}} \langle \nabla u,\nabla v \rangle -\abs{A_{\tilde{K}}}^2uv,
	\end{equation*}  
	and coercivity means that there exists $c>0$ such that $\abs{B(u,u)}\geq c \norm{u:H^{1}(\tilde{\K})}^2$. However, this follows by that $H^{1}_{high}(\tilde{\K})$ is spanned by eigenfunctions of $\mathcal{L}_{\tilde{\K}}$ with positive eigenvalues and that $\abs{A_{\tilde{K}}}^2$ is uniformly bounded by \ref{lem:AK}.
	
	Now by Lax-Milgram Theorem, there is then a unique $\tilde{u}_{high}\in H^{1}_{high}(\tilde{\K})$ satisfy \eqref{eq:utildehigh} with
	\begin{equation*}
		\norm{\tilde{u}_{high}:H^{1}(\tilde{\K})}\lesssim\norm{\tau\tilde{E}_{high}:L^{2}(\tilde{\K})}\lesssim_b\tau\norm{\tilde{E}_{high}:C^{0,\beta}(\underline{\K},\tau^{-1}g_{\K})}.
	\end{equation*}
	The fact that $\tilde{u}_{high}$ belongs to $C^{2,\beta}(\K)$ follows by the local estimates of Schauder theory. Finally by De Giorgi-Nash-Moser iteration (e.g. \cite[Theorem 4.1]{han2011elliptic}) and noticing that $\tilde{E}_{high}\in L^{\infty}(\K)$ has compact support, the $C^0$ estimate of $\tilde{u}_{high}$ follows.
\end{proof}

We conclude now the definition of $\mathcal{R}_{M,appr}$:
\begin{definition}\label{def:Rappr}
	Define the operator 
	\begin{equation*}
		\mathcal{R}_{M,appr}:C_{\sym}^{0,\beta}(M)\to C_{\sym}^{2,\beta}(M)\oplus \skernel[L]\oplus\skernele[L]\oplus C_{\sym}^{0,\beta}(M)
	\end{equation*}
	by $ \mathcal{R}_{M,appr}E=(u_1,w_{E,1},\bar{w}_{E,1},E_1)$, where $w_{E,1},\bar{w}_{E,1}$ are as above, $u_1:=\widecheck{\psi}\tilde{u}\circ\Pi_{\K}+(\psi'u')\circ\Pi_{\Spheqtres}$, $E_1=\Lcal_M u_1-E-w\circ\Pi_{\Spheqtres}$.
\end{definition}

\subsection*{The main Proposition}

\begin{proposition}\label{prop:lineareq}
	A linear map $\mathcal{R}_M:C^{0,\beta}_{\sym}(M)\to C^{2,\beta}_{\sym}(M) \oplus \skernel[L]\oplus \skernele[L]$, $E\mapsto (u,w_E,\bar{w}_E)$ can be defined by
	\begin{equation*}
		\mathcal{R}_M:=(u,w_E,\bar{w}_E):=\sum_{n=1}^{\infty}(u_n,w_{E,n},\bar{w}_{E,n}),
	\end{equation*}
	where the sequence $\{(u_n,w_{E,n},\bar{w}_{E,n},E_n)\}_{n\in \mathbb{N}}$ is defined inductively by
	\begin{equation*}
		(u_n,w_{E,n},\bar{w}_{E,n},E_n):=-\mathcal{R}_{M,appr}E_{n-1}, \quad E_0:=-E.
	\end{equation*}
	Moreover, the following hold:
	\renewcommand{\theenumi}{\roman{enumi}}
	\begin{enumerate}
		\item\label{item:LDeqmodulow} $\Lcal_M u=E+w_E\circ\Pi_{\Spheqtres}+\bar{w}_{E}\circ\Pi_{\Spheqtres}$.
		\item $\norm{u}_{2,\beta,\gamma,\gamma';M}\lesssim \norm{E}_{0,\beta,\gamma-2,\gamma';M}$ and $\abs {\mu_{E}}+m^{-1}\abs{\bar{\mu}_{E}}\lesssim m^{4-\gamma}\norm{E}_{0,\beta,\gamma-2,\gamma';M}$, where $\mu_E,\bar{\mu}_E$ are defined such that $ -\tau\mu_E W=w_E$, $ -\tau\bar{\mu}_E \bar{W}=\bar{w}_E$ (recall \ref{def:VW}).
	\end{enumerate}
\end{proposition}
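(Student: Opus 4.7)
My plan is to run the geometric iteration already set up in the statement. The crux is to prove that one step of the semi-local approximate inverse $\mathcal{R}_{M,appr}$ satisfies the quantitative bound
\begin{equation*}
\norm{u_1}_{2,\beta,\gamma,\gamma';M}+m^{\gamma-4}\bigl(\abs{\mu_{E,1}}+m^{-1}\abs{\bar\mu_{E,1}}\bigr)\lesssim \norm{E}_{0,\beta,\gamma-2,\gamma';M}
\end{equation*}
and the contraction estimate $\norm{E_1}_{0,\beta,\gamma-2,\gamma';M}\le\tfrac12\norm{E}_{0,\beta,\gamma-2,\gamma';M}$, valid for $m$ and $b$ sufficiently large. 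Granted these, the series defining $\mathcal{R}_M$ converges absolutely in the respective weighted spaces, (i) follows by telescoping $\Lcal_M u_n=-E_n+E_{n-1}-(w_{E,n}+\bar w_{E,n})\circ\Pi_{\Spheqtres}$ with $E_0=-E$, and the bounds in (ii) are obtained by summing a geometric series with the one-step bound above.

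For the bound on $u_1$ I would treat the spherical and catenoidal pieces separately. On $\Spheqtres$, I apply Proposition \ref{lem:linearS} to $E'$; enforcing the orthogonality conditions $\Ecalu_p u'=0$ and $u'|_\uC\equiv 0$ of \eqref{eq:uprime} amounts to a unique subtraction of an element of $\skernel[L]\oplus\skernele[L]$, possible because $\Ecal_L:\skernelv[L]\to\Vcal_{\sym}[L]$ is an isomorphism (\ref{lem:obstr}) and $\bar V|_\uC=\phi_\uC(-\pi/4)\neq 0$ (via \ref{lem:obstr}\ref{item:Vbar} and \ref{lem:phiC}\ref{phiC:4}). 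Part \ref{item:vavgest} of \ref{lem:linearS}, combined with $\tau\sim m^{-4}$ from \eqref{eq:tau} and the definitions of $W,\bar W$ in \ref{def:VW}, then furnishes both the weighted $C^{2,\beta}$ bound for $u'$ and the prescribed bound on $\mu_{E,1}$ and $\bar\mu_{E,1}$. On the catenoidal side I decompose $\tilde E=\tilde E_{low}+\tilde E_{high}$ as in the excerpt: the low modes reduce by separation of variables to uncoupled ODEs whose fundamental Jacobi fields on the three-catenoid are explicit and tempered, solved by variation of parameters with vanishing data at the waist; the high modes are handled by \ref{lem:utildehigh}, whose $C^0$ bound is upgraded to the required $C^{2,\beta}$ estimate by interior Schauder theory on rescaled unit-size balls. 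Transferring via \ref{lem:equivnorm} produces the bound on $u_1$.

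The main obstacle is the contraction estimate for $E_1$. Unwinding $E_1$ using $u_1=\widecheck\psi(\tilde u\circ\Pi_{\K})+(\psi'\circ\Pi_{\Spheqtres})(u'\circ\Pi_{\Spheqtres})$, the defining equations \eqref{eq:Eprime}, \eqref{eq:uprime}, \eqref{eq:Etilde}, \eqref{eq:utilde}, and the identity $\Lcal_M(fg)=f\Lcal_M g+[\Lcal_M,f]g$, one finds after cancellations that $E_1$ is the sum of two commutator contributions supported on the cutoff transition annuli and two operator-difference contributions
\begin{equation*}
\widecheck\psi\bigl(\Lcal_M(\tilde u\circ\Pi_{\K})-(\Lcal_{\K}\tilde u)\circ\Pi_{\K}\bigr)\qquad\text{and}\qquad (\psi'\circ\Pi_{\Spheqtres})\bigl(\Lcal_M(u'\circ\Pi_{\Spheqtres})-(\Lcal_{\Spheqtres}u')\circ\Pi_{\Spheqtres}\bigr).
\end{equation*}
The operator-difference terms are controlled directly by \ref{lem:equivop}, which supplies small factors $(\delta')^2\sim m^{-2-2\alpha}$ and $b^{-1}$ respectively. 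The commutator terms live on the thin annular transition regions of $\widecheck\psi$ and $\psi'$, where $u'$ and $\tilde u$ acquire additional smallness from the local scale; the calibration $\alpha'<(2-\gamma)\alpha$ together with $\gamma\in(1,2)$ fixed in \ref{conv:gamma} and \eqref{eq:alphaprime} is precisely what allows this scale-smallness to dominate the weight factor $\rr^{\gamma-2}$ in the target norm. The delicate part of the whole argument is the consistent bookkeeping of the weighted H\"older norms of \ref{def:norm} across the spherical, gluing, and catenoidal regimes, with the parameters $\alpha$, $\alpha'$, $\gamma$ and the large constants $m$, $b$ tuned so that each error contribution is at most $\tfrac12$ of $\norm{E}_{0,\beta,\gamma-2,\gamma';M}$.
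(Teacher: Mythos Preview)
Your overall plan is right, but the contraction step has a genuine gap. You claim $E_1$ decomposes into two commutators and two operator-differences, and that the $\psi'$-commutator $[\Lcal_M,\psi'\circ\Pi_{\Spheqtres}](u'\circ\Pi_{\Spheqtres})$ is small because ``$u'$ acquires additional smallness from the local scale.'' This is false. On the transition annulus $\{b\radius<\dist_L<2b\radius\}$ one has $\rr\sim b\radius$; the bound $\norm{u'}_{2,\beta,\gamma,\gamma';\Spheqtres}\lesssim\norm{E}$ only gives $|u'|\lesssim \rr^{\gamma}\norm{E}$, and the two derivatives falling on $\psi'$ cost $\rr^{-2}$, so the commutator is $\sim\rr^{\gamma-2}\norm{E}$---exactly the weight of the target norm, hence $O(1)$, not small. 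The calibration \eqref{eq:alphaprime} involving $\alpha'$ is irrelevant here (that inequality is used in the fixed-point argument of Theorem~\ref{thm}, not in this proposition). The paper avoids this obstruction by \emph{design}: the definition \eqref{eq:Etilde} of $\tilde E$ deliberately includes the term $(\Lcal_{\Spheqtres}((1-\psi')u'))\circ\Pi_{\Spheqtres}$, which absorbs the offending commutator into the catenoidal right-hand side. Consequently $E_1$ has only \emph{three} pieces \eqref{eq:EI}--\eqref{eq:EIII}, and the sole commutator $E_{1,I}=[\Lcal_M,\widecheck\psi](\tilde u\circ\Pi_{\K})$ lives at scale $\delta'$, where $\tilde u$ (being bounded by $\tau^{\gamma/2}\norm{E}$ with no growth, cf.\ \eqref{eq:ulow}--\eqref{eq:uhigh}) is small relative to the weight $\rr^{\gamma}\sim(\delta')^{\gamma}$; this yields the factor $(\sqrt\tau/\delta')^{\gamma}\sim m^{-(1-\alpha)\gamma}$.

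A second, smaller issue: you cannot apply Proposition~\ref{lem:linearS} directly to $E'$, since that result requires support in $D^{\Spheqtres}_{\T_0}(4/m)$ while $E'$ is supported on all of $\tilde S'$. The paper first solves locally on each $D^{\Spheqtres}_p(2\delta)$ (with $\rr^{\gamma}$-decay) and on the solid torus via Lemma~\ref{lem:linearSd}, cuts these off to form $u''$, and only then applies Proposition~\ref{lem:linearS} to the residual $E'-\Lcal_{\Spheqtres}u''$, which is supported in $D^{\Spheqtres}_{\T_0}(3/m)\setminus D^{\Spheqtres}_L(\delta)$. The subsequent subtraction of $v\in\skernelv[L]$ and $\bar v\in\skernelev[L]$ and the decay estimate for $u'''+v+\bar v$ on $D^{\Spheqtres}_p(\delta/4)$ then produce the weighted bound \eqref{eq:uprimeest} on $u'$ together with the stated bounds on $\mu_{E,1},\bar\mu_{E,1}$.
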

\begin{proof}
	\emph{Step 1:}
	By the definitions and the equivalence of the norms from \ref{lem:equivnorm},
	\begin{equation*}
		\norm{E'}_{0,\beta,\gamma-2,\gamma';\Spheqtres}\lesssim \norm{E}_{0,\beta,\gamma-2,\gamma';M}.
	\end{equation*}
	For each $p \in L$, using e.g. \cite[Proposition C.1]{breiner:kapouleas:high} or \cite[Lemma D.1]{gLD}, we can solve the equation $\mathcal{L}_{\Spheqtres}u'_{p}=E'$ on $D_p^{\Spheqtres}(2\delta)$ with $\Ecalu_p u'_{p} = 0$, and that the restriction of $u'_{p}$ on $\partial D_p^{\Spheqtres}(2\delta)$ is a constant (by the symmetry there is no component corresponding to first harmonic); similarly, by \ref{lem:linearSd} with $l=2$, we have a function $u'_{\uC}:=\Rcal_{\Sph}(0,E')$, such that $\mathcal{L}_{\Spheqtres}u'_{\uC}=E'$ on $\Spheqtres\setminus D_{\T_0}^{\Spheqtres}(2/m)$, and the restriction of $u'_{\uC}$ on $\partial D_{\T_0}^{\Spheqtres}(2/m)=\T_{2/m}$ is a constant. 
	Moreover, by the definition of the norm \ref{def:norm}
	\begin{equation*}
		\norm{u'_{p}}_{2,\beta,\gamma,\gamma';D_p^{\Spheqtres}(2\delta)}\lesssim \norm{E'}_{0,\beta,\gamma-2,\gamma';\Spheqtres},\quad \norm{u'_{\uC}}_{2,\beta,\gamma,\gamma';\Spheqtres\setminus D_{\T_0}^{\Spheqtres}(2/m)}\lesssim \norm{E'}_{0,\beta,\gamma-2,\gamma';\Spheqtres}.
	\end{equation*}
	
	We define now $u''\in C^{2,\beta}(\Spheqtres)$ supported on $ D_L^{\Spheqtres}(2\delta)\sqcup (\Spheqtres\setminus D_{\T_0}^{\Spheqtres}(2/m))$ by requesting that for each $p \in L$,
	\begin{equation*}
		u'':=\cutoff{2\delta}{\delta}{\dist^{\Spheqtres}_p}(0,u'_{p})+\cutoff{2/m}{3/m}{\dist^{\Spheqtres}_{\T_0}}(0,u'_{\uC}).
	\end{equation*}
	Clearly, then
	\begin{equation*}
		\norm{u''}_{2,\beta,\gamma,\gamma';\Spheqtres}\lesssim \norm{E}_{0,\beta,\gamma-2,\gamma';M}.
	\end{equation*}
	
	Now	$E'-\Lcal_{\Spheqtres} u''$ is supported on $D_{\T_0}^{\Spheqtres}(3/m)\setminus D_L^{\Spheqtres}(\delta)$. Moreover, it satisfies
	\begin{equation*}
		\norm{E'-\Lcal_{\Spheqtres} u''}_{0,\beta,\gamma-2,\gamma';\Spheqtres}\lesssim \norm{E}_{0,\beta,\gamma-2,\gamma';M}.
	\end{equation*}
	Using the definition of the norms \ref{def:norm} and the restricted support, for $g=g_{\Sph^3}$
	\begin{equation*}
		\norm{E'-\Lcal_{\Spheqtres} u'':C^{0,\beta}(\Spheqtres,\delta,g)}\lesssim\delta^{\gamma-2}\norm{E'-\Lcal_{\Spheqtres} u''}_{0,\beta,\gamma-2,\gamma';\Spheqtres}.
	\end{equation*}
	The last two estimates and linear theory \ref{lem:linearS} and \ref{lem:obstr}\ref{item:Vbar} imply that there exists $\bar{\mu}_{E,1}\in\R$ such that the unique solution $u'''\in C^{2,\beta}(\Spheqtres)$ to $\Lcal_{\Spheqtres}u'''=E'-\Lcal_{\Spheqtres} u''$ satisfies
	\begin{multline*} 
		m^{-1}\tau\abs{\bar{\mu}_{E,1}}+\norm{u'''-\tau\bar{\mu}_{E,1}\bar{V}:C^{2,\beta}(\Spheqtres,\delta,g,f_{0,\gamma'})}\\
		\lesssim  m^{-2}\norm{E'-\Lcal_{\Spheqtres} u'':C^{0,\beta}(\Spheqtres,\delta,g)}\lesssim  \delta^{\gamma}\norm{E}_{0,\beta,\gamma-2,\gamma';M}.
	\end{multline*} 
	By \ref{lem:obstr}, there is a unique $v\in\skernelv[L]$ such that $\Ecalu_p (u''' + v+\bar{v})=0$ for each $p \in L$, where $\bar{v}:=-\tau\bar{\mu}_{E,1}\bar{V}$. Moreover, by the last estimate and \ref{lem:obstr}, $v$ satisfies the estimates
	\begin{align*}
		&\norm{v:C^{2,\beta}(\Spheqtres,\delta,g)}\lesssim \norm{\Ecal_L^{-1}}\norm{u'''-\tau\bar{\mu}_{E,1}\hat{\phi}:C^{0}(\Spheqtres,\delta,g)}\lesssim \delta^{\gamma} \norm{E}_{0,\beta,\gamma-2,\gamma';M},\\
		&\tau\abs{\mu_{E,1}}\lesssim\norm{u'''-\tau\bar{\mu}_{E,1}\bar{V}:C^{0}(\Spheqtres,\delta,g)}\lesssim \delta^{\gamma} \norm{E}_{0,\beta,\gamma-2,\gamma';M},
	\end{align*}
	where $\mu_{E,1}$ is defined such that $-\tau\mu_{E,1}V=v$ by \ref{def:VW}. By the definition of $u'''$, $\Lcal_{\Spheqtres}(u''+u'''+v+\bar{v})=E'+w_{E,1}+\bar{w}_{E,1}$, where $w_{E,1}:=\mathcal{L}_{\Spheqtres}v$ and $\bar{w}_{E,1}:=\mathcal{L}_{\Spheqtres}\bar{v}$.
	By the definitions of $u''$ and $v,\bar{v}$, $\Ecalu_{p}(u''+u'''+v+\bar{v})=0$, $\forall p\in L$, $(u''+u'''+v+\bar{v})|_\uC=0$ hence by \eqref{eq:uprime}
	\begin{equation*}
		u'=u''+u'''+v+\bar{v}.
	\end{equation*}
	
	By definitions $\mathcal{L}_{\Spheqtres}(u'''+v+\bar{v})=w_{E,1}+\bar{w}_{E,1}=0$ on $ D_L^{\Spheqtres}(\delta/4)$, moreover, $\Ecalu_p(u''' + v+\bar{v})=0$ for each $p \in L$, by  standard theory and separation of variables to estimate with decay of $u''' + v+\bar{v}$ on $ D_p^{\Spheqtres}(\delta/4)$ in terms of the Dirichlet data on $\partial D_p^{\Spheqtres}(\delta/4)$ along with the earlier estimates for $u'''+\bar{v}$ and $v$,
	\begin{equation*}
		\norm{u''' + v+\bar{v}}_{2,\beta,\gamma,\gamma';\Spheqtres}\lesssim  \norm{E}_{0,\beta,\gamma-2,\gamma';M},
	\end{equation*}

	Along with the estimate for $u''$,
	\begin{equation}\label{eq:uprimeest}
		\norm{u'}_{2,\beta,\gamma,\gamma';\Spheqtres}\lesssim \norm{E}_{0,\beta,\gamma-2,\gamma';M}.
	\end{equation}
	
	\emph{Step 2:}
	By the definition \eqref{eq:Etilde}, \ref{lem:equivnorm} and the estimate \eqref{eq:uprimeest}
	\begin{equation*}
		\norm{\tilde{E}}_{0,\beta,\gamma-2,\gamma';\K_M}\lesssim \norm{E}_{0,\beta,\gamma-2,\gamma';M}+\norm{u'}_{2,\beta,\gamma-2,\gamma';\Spheqtres}\lesssim \norm{E}_{0,\beta,\gamma-2,\gamma';M}.
	\end{equation*}
	By scaling the first equation of \eqref{eq:utilde}, the definitions of norms \ref{def:norm}, \ref{lem:equivnorm}\ref{item:equivnormK}, the definition \eqref{eq:Etilde} and standard theory, we conclude $\forall p\in L$ that (recall 
	\eqref{eq:gK})
	\begin{equation*}
		\norm{\tilde{u}_{low}:C^{2,\beta}(\underline{\K}[p],\tau^{-1}g_{\K})}\lesssim_b \tau\norm{\tilde{E}_{low}:C^{0,\beta}(\underline{\K}[p],\tau^{-1}g_{\K})} \lesssim_b \tau^{\gamma/2}\norm{\tilde{E}}_{0,\beta,\gamma-2,\gamma';\K_M}.
	\end{equation*}
	Notice that the ODE solutions of the Jacobi equation corresponding to constants do not grow in $\rho$ 
	(by the symmetry there is no component corresponding to first harmonic)
	and that $\tilde{E}$ is supported on $\sqcup_{p\in L}\underline{\K}[p]$,  we conclude by comparing weights of norms and using that $\rho\geq\sqrt{\tau}$ on $\K=\K[p,\sqrt{\tau}]$ and \ref{conv:gamma}, that
	\begin{align}\label{eq:ulow}
		\norm{\tilde{u}_{low}}_{2,\beta,0;\K}\lesssim \norm{\tilde{u}_{low}}_{2,\beta,0;\underline{\K}[p]}
		\lesssim_b  \norm{\tilde{u}_{low}:C^{2,\beta}(\underline{\K}[p],\tau^{-1}g)}\lesssim_b \tau^{\gamma/2} \norm{\tilde{E}}_{0,\beta,\gamma-2,\gamma';\K_M}.
	\end{align}
	Thus by the definition of norms \ref{def:norm}
	\begin{equation*}
		\norm{\tilde{u}_{low}}_{2,\beta,\gamma,\gamma';\K}\lesssim\tau^{-\gamma/2} \norm{\tilde{u}_{low}}_{2,\beta,0;\K}\lesssim_b  \norm{\tilde{E}}_{0,\beta,\gamma-2,\gamma';\K_M}.
	\end{equation*}
	
	By \ref{lem:utildehigh}, the definitions of norms \ref{def:norm} and \ref{lem:equivnorm}\ref{item:equivnormK}, we conclude that
	\begin{equation*}
		\norm{\tilde{u}_{high}:C^{0}(\K)}\lesssim_b \tau \norm{\tilde{E}_{high}:C^{0,\beta}(\underline{\K}[p],\tau^{-1}g_{\K})} \lesssim_b \tau^{\gamma/2}\norm{\tilde{E}}_{0,\beta,\gamma-2,\gamma';\K_M}.
	\end{equation*}
	Using standard theory on $\K\cong\Cylinder$ by $X_{\K}$ (recall \eqref{eq:XK}), the support of $\tilde{E}$ and \ref{lem:equivnorm}\ref{item:equivnormK} again, we have

\begin{multline} 
\label{eq:uhigh}
\norm{\tilde{u}_{high}}_{2,\beta,0;\K} \, \lesssim_b	
\\ 
\tau\norm{\tilde{E}_{high}:C^{0,\beta}(\underline{\K}[p],\tau^{-1}g_{\K})} \, +	 \, \norm{\tilde{u}_{high}:C^{0}(\K)} 
 \, \lesssim_b  \, \tau^{\gamma/2}\norm{\tilde{E}}_{0,\beta,\gamma-2,\gamma';\K_M}.
\end{multline}
	Thus by the definitions of norms \ref{def:norm} 
	\begin{equation*}
		\norm{\tilde{u}_{high}}_{2,\beta,\gamma,\gamma';\K}\lesssim \tau^{-\gamma/2}\norm{\tilde{u}_{high}}_{2,\beta,0;\K}\lesssim_b \norm{\tilde{E}}_{0,\beta,\gamma-2,\gamma';\K_M}.
	\end{equation*}
Combining the estimates on $\tilde{u}_{low}$ and $\tilde{u}_{high}$ we have
	\begin{equation}\label{eq:utildeest}
		\norm{\tilde{u}}_{2,\beta,\gamma,\gamma';\K_M}\lesssim_b \norm{\tilde{E}}_{0,\beta,\gamma-2,\gamma';\K_M} \lesssim_b  \norm{E}_{0,\beta,\gamma-2,\gamma';M} .
	\end{equation}
	
	\emph{Step 3:}
	Using \ref{def:Rappr}, \eqref{eq:Eprime}, \eqref{eq:uprime}, \eqref{eq:Etilde}, \eqref{eq:utilde}, we decompose $E_1$ as
	\begin{equation}
		E_1=E_{1,I}+E_{1,II}+E_{1,III},
	\end{equation}
	where $E_{1,I},E_{1,II},E_{1,III}\in C^{0,\beta}(M)$ are defined by (recall \ref{def:cutoff})
	\begin{align}
		E_{1,I}&:=[\Lcal_M,\widecheck{\psi}](\tilde{u}\circ \Pi_{\K}),\label{eq:EI}\\
		E_{1,II}&:=\widecheck{\psi}(\Lcal_M(\tilde{u}\circ \Pi_{\K})-(\Lcal_{\K}\tilde{u})\circ \Pi_{\K})=\widecheck{\psi}\Lcal_M((\tilde{u}\circ \Pi_{\K})-\tilde{E}\circ \Pi_{\K}),\label{eq:EII}\\
		E_{1,III}&:=\Lcal_M((\psi'u')\circ \Pi_{\Spheqtres})-(\Lcal_{\Spheqtres}(\psi'u'))\circ\Pi_{\Spheqtres},\label{eq:EIII}
	\end{align}
	on $\widecheck{\Omega}[M]$, $\Kcech[M]$, $\tilde{S'}$ respectively and to vanish elsewhere.
	
	\emph{Step 4:} Using \ref{lem:equivnorm} and \eqref{eq:uprimeest}, \eqref{eq:utildeest},  we conclude that
	\begin{equation*}
		\norm{u_1}_{2,\beta,\gamma,\gamma';\K}\lesssim_b  \norm{E}_{0,\beta,\gamma-2,\gamma';M}.
	\end{equation*} 
	
	By the definitions of norms \ref{def:norm}, the equivalence of norms \ref{lem:equivnorm}\ref{item:equivnormK},  \eqref{eq:utilde} and the definition \ref{def:cutoff}, 
	\begin{equation*}
		\norm{\tilde{u}\circ\Pi_{\K}}_{2,\beta,\gamma,\gamma';\widecheck{\Omega}[M]}\lesssim	\norm{\tilde{u}}_{2,\beta,\gamma,\gamma';\Pi_{\K}(\widecheck{\Omega}[M])}\lesssim (\delta')^{-\gamma}	\norm{\tilde{u}_{low}}_{2,\beta,0;\K_M}+(\delta')^{-\gamma}	\norm{\tilde{u}_{high}}_{2,\beta,0;\K_M}.
	\end{equation*}
	Then by the definition of $E_{1,I}$ in \eqref{eq:EI}, the definition of norms \ref{def:norm} and the estimates \eqref{eq:ulow}, \eqref{eq:uhigh}
	\begin{align*}
		&\norm{E_{1,I}}_{0,\beta,\gamma-2,\gamma';M}\lesssim\norm{\tilde{u}\circ\Pi_{\K}}_{2,\beta,\gamma,\gamma';\widecheck{\Omega}[M]}\lesssim (\delta')^{-\gamma}\norm{\tilde{u}_{low}}_{2,\beta,0;\K_M}+ (\delta')^{-\gamma}	\norm{\tilde{u}_{high}}_{2,\beta,0;\K_M}\\
		\lesssim_b& \left(\frac{\sqrt{\tau}}{\delta'}\right)^{\gamma}\norm{E}_{2,\beta,\gamma-2,\gamma';M}\lesssim_{b} m^{-(1-\alpha)\gamma} \norm{E}_{2,\beta,\gamma-2,\gamma';M},
	\end{align*}
	where in the last inequality we use \eqref{eq:deltaprime} to compare $\tau$ and $\delta'$.
	
	Applying now \ref{lem:equivop}\ref{item:equivopK} with $u=\tilde{u}$ and $\hat{\gamma}=\gamma$ and using the definition of $\widecheck{\psi}$ in \ref{def:cutoff} and the definition of $E_{1,II}$ in \eqref{eq:EII}, by \eqref{eq:utildeest} we conclude that
	\begin{equation*}
		\norm{E_{1,II}}_{0,\beta,\gamma-2,\gamma';M}\lesssim \delta'\norm{\tilde{u}}_{2,\beta,\gamma,\gamma';\K_M}\lesssim_b \delta'\norm{E}_{0,\beta,\gamma-2,\gamma';M}\lesssim_b m^{-1-\alpha}\norm{E}_{0,\beta,\gamma-2,\gamma';M},
	\end{equation*}
	where in the last inequality we again use \eqref{eq:deltaprime}.
	
	Applying \ref{lem:equivop}\ref{item:equivopS} with $u=u'$ and $\hat{\gamma}=\gamma$, and using the definition of $E_{1,III}$ in \eqref{eq:EIII}, by \eqref{eq:uprimeest} we have
	\begin{equation*}
		\norm{E_{1,III}}_{0,\beta,\gamma-2,\gamma';M}\lesssim b^{-1}\norm{u'}_{2,\beta,\gamma,\gamma';\tilde{S'}}\lesssim b^{-1}\norm{E}_{0,\beta,\gamma-2,\gamma';M}.
	\end{equation*}
	
	Combining the above we conclude that by fixing $b$ big enough and then taking $m$ big in terms of $b$,
	\begin{equation*}
		\norm{E_{1}}_{0,\beta,\gamma-2,\gamma';M}\lesssim(C(b)m^{-(1-\alpha)}+ b^{-1})\norm{E}_{0,\beta,\gamma-2,\gamma';M}\leq \frac{1}{2}\norm{E}_{0,\beta,\gamma-2,\gamma';M}.
	\end{equation*}
	
	\emph{Step 5:} By assuming $b$ large enough and $m$ big enough in terms of $b$ we conclude using and induction that
	\begin{equation*}
		\norm{u_n}_{2,\beta,\gamma,\gamma';M}\leq 2^{-n} \norm{E}_{0,\beta,\gamma;M}.
	\end{equation*}
	The proof is then completed by using the earlier estimates.
\end{proof}

	  \section{Main results}
\label{S:main} 

\begin{theorem} 
\label{thm}
\label{lem:ubrev}
If $m\in\N$ is large enough,  
then there is $(\breve{\zeta},\breve{\bar{\zeta}})\in \BPcal$ as in \ref{def:zeta}, 
$\breve{\tau}:=\tau\bbracket{\breve{\zeta};m}$ and $\breve{\varphi}:=\varphi\bbracket{\breve{\zeta};m}$ as in \ref{def:varphi} satisfying \ref{lem:varphiest}, 
and $\breve{\textup{ф}}\in C^{2,\beta}_{\sym}({M})$ 
where ${M}:=M[\breve{\zeta},\breve{\bar{\zeta}}]$ is as in \ref{def:initial}, 
such that in the notation of \ref{def:norm} and exponents as in \ref{def:sol} and \ref{conv:gamma} 
\begin{equation} 
\label{eq:phibrev} 
\norm{\breve{\textup{ф}}}_{2,\beta,\gamma,\gamma';M} \, \lesssim \,  m^{-3-2\alpha+(1+\alpha)\gamma} \, \leq \, m^{-3+\gamma},  
\end{equation} 
and 
$\breve{M} = \breve{M}_m := {M}_{\breve{\textup{ф}}}$ is a $\Grp_{\Sph^4}[m]$-invariant embedded closed minimal hypersurface in $\Sph^4$ 
of topology $\#_{m^2-1}\Sph^2\times \Sph^1$ (recall \ref{def:grp} and \ref{lem:nonlinear}). 
Moreover each $\breve{M}_m$ is a doubling over $\Sigma = \Spheqt$ in the sense of \ref{D:dou} and the following are satisfied. 
	\begin{enumerate}[(i)]
\item \label{thm:item1} 
	$\breve{\Sigma} := \Pi_{\Spheqtres}(\breve{M}) \subset \Sigma$ is closed with smooth boundary in $\Sigma$ 
(recall \ref{def:fermi}(\ref{item:proj})). 
\item \label{thm:item2} 
$\breve{M}=\Graph_{\breve{\Sigma}}^{\Sph^4}(\breve{u})\bigcup \Graph_{\breve{\Sigma}}^{\Sph^4}(-\breve{u})$ for some 
$\ubreve \in C^0(\Sigmabreve) \bigcap C^\infty(\Sigmabreve \setminus\partial\Sigmabreve)$.      
\item \label{thm:item3} 
$D^\Sigma_L(\radius - m^{-3-\gamma} )  \subset \Sigma\setminus\Sigmabreve \subset D^\Sigma_L(\radius + m^{-3-\gamma} ) $ 
where $\radius = \pi F_0T_3  e^{\zeta} / m^2 $ with $\zeta$ as above satisfying \ref{eq:zeta}, $F_0$ as in \ref{eq:flux}, and $T_3$ as in \ref{eq:T3}. 
		\item\label{item:ubrevout} 
$\norm{\breve{u}:C^{2,\beta}(\Spheqtres\setminus D^{\Spheqtres}_L(\delta'))}\lesssim m^{-2}$.  
		\item\label{item:ubrevin} 
For each $p\in L$, $\norm{\breve{u}-\varphi_{cat}\circ\dist_p^{\Spheqtres}}_{2,\beta,\gamma,\gamma'; D^{\Spheqtres}_p(\delta')\setminus D^{\Spheqtres}_p(\radius^{7/8})}\lesssim m^{-3-2\alpha+(1+\alpha)\gamma}$.
	\end{enumerate}

Furthermore as $m \to\infty$ the hypersurfaces $\breve{M}_m$ converge in the sense of varifolds to $2\Spheqtres$ 
and their volume satisfies the asymptotic formula 
\begin{equation} 
\label{E:vol} 
|\breve{M}_m| \, = \, 2|\Spheqtres| \, -   \,    {(4\pi /3)   \,  T_3 m^{-4}}   \,  + \, O\left(m^{-40/7}\right) 
\, < \, 2|\Spheqtres| \, 
.
\end{equation} 
	  \end{theorem}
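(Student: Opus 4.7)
My plan is to set up a parameter-dependent fixed point problem on $M = M[\zeta,\bar\zeta]$, then use a degree/continuity argument in $\BPcal$ to kill the two-dimensional obstruction, and finally read off the geometric conclusions (\ref{thm:item1})--(\ref{item:ubrevin}) and the volume asymptotics.

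\textbf{Step 1 (Banach fixed point on $M$ modulo obstructions).} For each $(\zeta,\bar\zeta)\in\BPcal$ I would form the initial surface $M$ from \ref{def:initial} and look for $\breve{M}=M_{\breve\textup{ф}}$ as a normal graph $\textup{ф}$ over $M$ satisfying $H_\textup{ф}=0$. Writing $H_\textup{ф} = H + \Lcal_M\textup{ф} + (H_\textup{ф}-H-\Lcal_M\textup{ф})$ and applying $\Rcal_M$ from \ref{prop:lineareq}, the problem becomes the fixed-point equation
\begin{equation*}
\textup{ф} \, = \, \mathcal{F}[\zeta,\bar\zeta](\textup{ф}) \, := \, -\,\Rcal_M^{(1)}\!\bigl(\,H + (H_\textup{ф}-H-\Lcal_M\textup{ф})\,\bigr),
\end{equation*}
where $\Rcal_M^{(1)}$ denotes the first component of $\Rcal_M$. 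By \ref{lem:H} we have $\|H\|_{0,\beta,\gamma-2,\gamma';M}\lesssim m^{-3-2\alpha+(1+\alpha)\gamma}$, while \ref{prop:lineareq}(ii) and \ref{lem:nonlinear} give bounds of the form $\|\Rcal_M^{(1)}\|\lesssim 1$ (in the appropriate weighted norms) and $\|Q_\textup{ф}\|_{0,\beta,\gamma-2,\gamma';M}\lesssim \|\textup{ф}\|_{2,\beta,\gamma,\gamma';M}^2$. Choosing $\alpha'$ as in \ref{eq:alphaprime}, the closed ball of radius $c\,m^{-3-2\alpha+(1+\alpha)\gamma}$ in $C^{2,\beta}_{\sym}(M)$ is invariant under $\mathcal{F}[\zeta,\bar\zeta]$ and $\mathcal{F}[\zeta,\bar\zeta]$ is a contraction there, so there is a unique fixed point $\textup{ф}=\textup{ф}[\zeta,\bar\zeta]$. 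It satisfies \eqref{eq:phibrev} and depends continuously on $(\zeta,\bar\zeta)$.

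\textbf{Step 2 (killing the obstructions by degree).} Applying $\Rcal_M$ produces obstruction coefficients $\mu_{\mathrm{tot}}=\mu_{\mathrm{tot}}[\zeta,\bar\zeta]$, $\bar\mu_{\mathrm{tot}}=\bar\mu_{\mathrm{tot}}[\zeta,\bar\zeta]$ defined (as in \ref{prop:lineareq}(ii)) by $-\tau\mu_{\mathrm{tot}}W = w_E$, $-\tau\bar\mu_{\mathrm{tot}}\bar W = \bar w_E$, where $E = H + Q_\textup{ф}$. The graph $M_{\textup{ф}[\zeta,\bar\zeta]}$ is minimal exactly when $\mu_{\mathrm{tot}}=\bar\mu_{\mathrm{tot}}=0$. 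By \ref{lem:H} the leading part of $H$ is $(w+\bar w)\circ\Pi_{\Spheqtres}$, so the leading part of $(\mu_{\mathrm{tot}},\bar\mu_{\mathrm{tot}})$ is given by $(\mu,\bar\mu)$ of \ref{lem:miss}; all remaining contributions are higher-order in $m^{-1}$. Using \eqref{eq:zetamu}, the map
\begin{equation*}
\mathcal{J}:\BPcal\to\Pcal, \qquad (\zeta,\bar\zeta)\longmapsto \bigl(\tfrac{1}{\phi_1}(\mu_{\mathrm{tot}}+\bar\mu_{\mathrm{tot}}),\, -\tfrac{1}{\phi_1}\bar\mu_{\mathrm{tot}}\bigr)
\end{equation*}
differs from the identity by an error of order $O(m^{-\alpha'})$ on $\BPcal$. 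A standard Brouwer/degree argument then yields $(\breve\zeta,\breve{\bar\zeta})\in\mathrm{int}\,\BPcal$ with $\mathcal{J}(\breve\zeta,\breve{\bar\zeta})=(0,0)$; the corresponding $\breve\textup{ф}$ gives the desired minimal hypersurface $\Mbreve_m$.

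\textbf{Step 3 (geometric conclusions).} The symmetry $\Mbreve_m\subset\Sph^4$ under $\Grp_{\Sph^4}[m]$ is inherited from the invariance built into the entire construction (LD solutions, catenoidal bridges, the linear maps $\Rcal_M,\Rcal_{M,appr}$ are all $\sym$-equivariant), combined with uniqueness in Step 1. Embeddedness follows from \ref{lem:phigl} together with the smallness of $\breve\textup{ф}$ via \ref{lem:nonlinear}. The graphical structure (\ref{thm:item1})--(\ref{thm:item2}) and the boundary location (\ref{thm:item3}) follow from \ref{def:initial} and the waist-radius identification $\radius=\sqrt{\breve\tau}=\pi F_0 T_3 e^{\breve\zeta}/m^2$ together with the $C^{2,\beta}$-bound on $\breve\textup{ф}$ near the vertical tangent circle $\partial\Sigmabreve$, which controls the horizontal shift of the waist by $O(m^{-3-\gamma})$. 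The estimates (\ref{item:ubrevout}) and (\ref{item:ubrevin}) follow by combining \ref{lem:varphiest}(ii), \ref{lem:glue}(\ref{item:gluecat}), and \ref{def:norm} with \eqref{eq:phibrev}. The topology $\#_{m^2-1}\Sph^2\times\Sph^1$ is read off from the combinatorics of gluing $m^2$ three-catenoidal handles onto the double cover of $\Sigmabreve$. Varifold convergence to $2\Spheqtres$ as $m\to\infty$ follows because the catenoidal neck region has three-volume $O(m^2\radius^3)=O(m^{-4})\to 0$, and away from the necks $\breve M_m$ is the union of two graphs $\pm\breve u$ with $\breve u\to 0$ uniformly.

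\textbf{Step 4 (volume formula).} For the asymptotic \eqref{E:vol} I would decompose $|\breve M_m|$ into the union $\sqcup_p\Kcech[p,\radius]\cap\Pi_{\Spheqtres}^{-1}D_p^{\Spheqtres}(\bar r)$ of the (perturbed) bridges and the graphical part over $\Spheqtres\setminus\sqcup_p D_p^{\Spheqtres}(\bar r)$, for an optimized choice $\bar r\sim \radius^{7/8}$ as in \ref{item:ubrevin}. The contribution of each bridge is given by \ref{lem:voltrun}, producing the $-T_3\cdot 4\pi\radius^3/3$ term after summing $m^2$ copies; the boundary flux term $\oint \varphi_{cat}\partial_\eta\varphi_{cat}$ exactly cancels against the leading correction from the graphical part (the analog of the cancellation in \cite[(5.9)]{gLD}), because $\varphi^{gl}$ is harmonic-to-leading-order and matches $\varphi_{cat}\circ\dist_p^{\Spheqtres}$ on the seam. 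The remaining graphical contribution equals $2|\Spheqt|-2\sum_p|D_p^{\Spheqt}(\bar r)|$ plus a Dirichlet energy term $\tfrac12\int|\nabla\varphi^{gl}|^2$; combining these with the error estimates $O(\radius^8/\bar r^5+\bar r^5)\cdot m^2$ from \ref{lem:voltrun}, plus the $L^2$ control of $\breve\textup{ф}$ from \eqref{eq:phibrev}, and optimizing the cutoff $\bar r=\radius^{7/8}$, yields the stated error $O(m^{-40/7})$ and in particular the strict inequality $|\Mbreve_m|<2|\Spheqt|$.

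\textbf{Main obstacle.} I expect the delicate step to be Step 2: the two-dimensional obstruction has both a ``bridge'' component (parameterized by $\zeta$, controlling the catenoid scale) and an ``exceptional Jacobi field'' component (parameterized by $\bar\zeta$, dislocating along $\hat\phi$). The flux constant $F_0$ of \eqref{eq:flux} and the fact that the Jacobi operator on $D_\uC(\pi/4)$ has \emph{positive} first Dirichlet eigenvalue (as emphasized in the introduction) are both essential for $\mathcal{J}$ to be a small perturbation of the identity on $\BPcal$; without the latter, $\mathcal{R}_M$ would blow up and the entire scheme would fail. Careful bookkeeping of the various error exponents ($\alpha$ in \ref{eq:alpha}, $\alpha'$ in \ref{eq:alphaprime}, $\gamma$ in \ref{conv:gamma}) is needed to close the loop.
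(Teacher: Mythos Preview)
Your overall architecture is sound and close to the paper's, but the fixed-point step is organized differently and your volume computation has an error in the cutoff optimization.

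\textbf{Fixed-point strategy.} The paper does \emph{not} separate Steps~1 and~2 as you do. Instead it works on a single convex set $B\subset C^{2,\beta}(M[0])\times\BPcal$, pulls everything back to a fixed model $M[0]$ via diffeomorphisms $\Fcal_{\zeta,\bar\zeta}$, and applies the \emph{Schauder} fixed-point theorem to a single continuous map $\Jcal:B\to B$ (compactness coming from $C^{2,\beta}\hookrightarrow C^{2,\beta'}$). Your two-stage scheme (Banach contraction on the function for fixed $(\zeta,\bar\zeta)$, then Brouwer degree on $\BPcal$) is a legitimate alternative and is used in other gluing papers, but note that the contraction in your Step~1 requires a \emph{Lipschitz} estimate on the nonlinear remainder $Q_\textup{ф}$, whereas \ref{lem:nonlinear} as stated only gives the quadratic bound $\|Q_\textup{ф}\|\lesssim\|\textup{ф}\|^2$. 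This is routine to supply, but the paper's Schauder approach sidesteps it entirely at the cost of losing uniqueness.

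\textbf{Volume asymptotics.} Your Step~4 does not reproduce the error $O(m^{-40/7})$. The relevant errors come from \emph{two} sources: Lemma~\ref{lem:voltrun} contributes $m^2\cdot O(\radius^8/\bar r^5+\bar r^5)$ from the bridges, while the complement (the paper's Lemma~\ref{lem:volsurf}, which you do not invoke) contributes an error of order $O(\radius^6/\bar r^4)$ coming from $\int\breve u\,\Lcal_{\Spheqtres}\breve u$ on the inner annuli. With $\radius\sim m^{-2}$, the paper balances these by taking $\bar r\sim\radius^{11/14}\sim m^{-11/7}$, which makes $\radius^6/\bar r^4\sim m^{-40/7}$ the dominant term. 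Your choice $\bar r=\radius^{7/8}\sim m^{-7/4}$ gives instead $\radius^6/\bar r^4\sim m^{-5}=m^{-35/7}$, too large; and in any case you only track the $\radius^8/\bar r^5+\bar r^5$ errors, so your optimization is over the wrong functional. The cancellation of the flux terms $\oint\varphi_{cat}\partial_\eta\varphi_{cat}$ between bridge and complement is correct, but you need the full complement lemma to see the residual $\radius^6/\bar r^4$.
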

	  \begin{proof}
	  		As in \cite[Lemma 5.5]{gLD}, there exists a family of diffeomorphisms $\mathcal{F}_{\zeta,\bar{\zeta}}:M[0]\to M[\zeta,\bar{\zeta}]$ 
continuously depending on $\zeta,\bar{\zeta}$ such that for any $u\in C^{k,\beta}(M[\zeta,\bar{\zeta}])$, we have
	  		\begin{equation}\label{eq:Fzeta}
	  			\norm{u\circ \mathcal{F}_{\zeta,\bar{\zeta}}}_{k,\beta,\gamma,\gamma';M[0]}\sim_k 	\norm{u}_{k,\beta,\gamma,\gamma';M[\zeta,\bar{\zeta}]}.
	  		\end{equation}
	  		
	  		 We first define $B\subset C^{2,\beta}(M[0])\times \Pcal$ by (recall \ref{eq:zeta})
	  	\begin{equation}\label{eq:B}
	  		B:=\{v\in C^{2,\beta}(M[0]):\norm{v}_{2,\beta,\gamma,\gamma';M[0]}\leq m^{-3-2\alpha+\gamma(1+\alpha)}\}\times\BPcal.
	  	\end{equation}
	  	We next define a map $\Jcal:B\to C^{2,\beta}(M[0])\times \Pcal$ as follows.
	  	Let $(v,\zeta,\bar{\zeta})\in B$, define $(u,w_H,\bar{w}_H):=-\mathcal{R}_{M[\zeta,\bar{\zeta}]}(H-(w+\bar{w})\circ\Pi_{\Spheqtres})$ by \ref{prop:lineareq} (recall \eqref{eq:w}). Define $\textup{ф}:=v\circ \mathcal{F}_{\zeta,\bar{\zeta}}^{-1}+u$. Then by \ref{lem:H}, \ref{prop:lineareq}, \eqref{eq:Fzeta} and the size of $v$ in \eqref{eq:B}
	  	\begin{equation}\label{eq2}
	  		\norm{\textup{ф}}_{2,\beta,\gamma,\gamma';M[\zeta,\bar{\zeta}]} \lesssim  m^{-3-2\alpha+(1+\alpha)\gamma}, 
\quad \abs{\mu_H}\lesssim  m^{1-2\alpha+\alpha\gamma},\quad  \abs{\bar{\mu}_H}\lesssim  m^{2-2\alpha+\alpha\gamma},
	  	\end{equation}
	  	where $\mu_H, \bar{\mu}_H$ are defined such that $-\tau\mu_HW=w_H$, $-\tau\bar{\mu}_HW=\bar{w}_H$.
	  	By \ref{prop:lineareq} again we define $(u_Q,w_Q,\bar{w}_Q):=-\mathcal{R}_{M[\zeta,\bar{\zeta}]}(H_{\textup{ф}}-H-\mathcal{L}_M\textup{ф})$. Then by \ref{lem:nonlinear} and \ref{prop:lineareq}
	  	\begin{equation}\label{eq4g}
	  	\norm{u_Q}_{2,\beta,\gamma,\gamma';M[\zeta]}\lesssim m^{-6-4\alpha+2(1+\alpha)\gamma},\quad \abs{\mu_Q}\lesssim  m^{-2-4\alpha+(1+2\alpha)\gamma}, \quad \abs{\bar{\mu}_Q}\lesssim  m^{-1-4\alpha+(1+2\alpha)\gamma}
	  	\end{equation}
	  		where $\mu_Q, \bar{\mu}_Q$ are defined such that $-\tau\mu_QW=w_Q$, $-\tau\bar{\mu}_QW=\bar{w}_Q$.
	  	Combining the definitions we have
	  	\begin{equation}\label{eq5g}
	  		\Lcal_M(u_Q-v\circ\Fcal_{\zeta,\bar{\zeta}}^{-1})+H_{\textup{ф}}=-\tau( \mu_{sum} W+ \bar{\mu}_{sum} \bar{W})\circ\Pi_{\Spheqtres},
	  	\end{equation}
	  	where $\mu_{sum}:=\mu+\mu_H+\mu_Q$, $\bar{\mu}_{sum}:=\bar{\mu}+\bar{\mu}_H+\bar{\mu}_Q$. 
	  	
	  	Finally, we define $\Jcal$ by
	  	\begin{align*}
	  		\mathcal{J}(v,\zeta,\bar{\zeta}):=(u_Q\circ\Fcal_{\zeta,\bar{\zeta}},\zeta-(\mu_{sum}+\bar{\mu}_{sum})/\phi_1,\bar{\zeta}+\bar{\mu}_{sum}/\phi_1).
	  	\end{align*}
	  	
	  	$B$ is convex and the embedding $B\hookrightarrow C^{2,\beta'}(M[0])\times \Pcal$ is compact for $\beta'\in(0,\beta)$. By \eqref{eq4g} and \eqref{eq:Fzeta}, $\mathcal{J}$ maps the first factor into $B$ itself. By \eqref{eq:alphaprime}, \eqref{eq:zeta}, \eqref{eq:zetamu}, \eqref{eq2} and \eqref{eq4g} $\mathcal{J}$ maps the second factor into $B$ itself by choosing $m$ big enough. It is easy to check that $\mathcal{J}$ is a continuous map in the induced topology. By Schauder’s fixed point theorem \cite[Theorem 11.1]{gilbarg} then, there is a fixed point $(\breve{v}, \breve{\zeta},\breve{\bar{\zeta}})$ of $\mathcal{J}$, which therefore satisfies $\breve{v}=\breve{u}_Q\circ \mathcal{F}_{\breve{\zeta},\breve{\bar{\zeta}}}$ and $\breve{w} + \breve{w}_H + \breve{w}_Q = 0$, $\breve{\bar{w}} + \breve{\bar{w}}_H + \breve{\bar{w}}_Q = 0$, where we use “$\breve{\cdot}$” to denote the various quantities for $\zeta = \breve{\zeta}$, $\bar{\zeta} = \breve{\bar{\zeta}}$ and $v = \breve{v}$. By \eqref{eq5g} then we conclude the minimality of $\breve{M}_{\breve{\textup{ф}}}$. The smoothness follows from standard regularity theory and the embeddedness follows from \ref{lem:phigl}\ref{item:phiglembed} and \eqref{eq2}. The topology then follows because we are connecting two three-spheres with $m^2$ bridges and the symmetry follows by construction. Finally the limit of the volume as $m \to \infty$ follows from the available estimates for $\varphi_{gl}\bbracket{\breve{\zeta},\breve{\bar{\zeta}}}$ in \ref{lem:phigl}\ref{item:phiglbound} and the bound on the norm of $\breve{\textup{ф}}$ in \eqref{eq2}.

(\ref{thm:item1}) and (\ref{thm:item2}) follow easily from the above.  
(\ref{thm:item3}) follows from \ref{eq:phibrev}, \ref{eq:tau} and $\radius=\sqrt{\tau}$ in \ref{def:initial}. 
(\ref{item:ubrevout}) follows by the estimate of $\varphi_{gl}$ in \ref{lem:phigl}\ref{item:phiglbound}, 
the estimate of $\breve{\textup{ф}}$ in \eqref{eq:phibrev} and \cite[Lemma B.9]{gLD}.  
(\ref{item:ubrevin}) also follows by \cite[Lemma B.9]{gLD} and \eqref{eq:phibrev} with the definition of the norms in \ref{def:norm} 
and the definition of $\varphi_{gl}=\varphi_{cat}\circ\dist_p^{\Spheqtres}$ in \ref{def:initial}.
To prove now \eqref{E:vol} we have the following.

\begin{lemma}[Volume on hypersurface bridges] 
\label{L:Mcatvol}
	For any $p\in L$ and $\bar{r}\in (\radius,\delta')$ we have
	\begin{multline*}
		|\breve{M}\cap \Pi_{\Spheqtres}^{-1} (D^{\Spheqtres}_p(\bar{r}))| \, = \, 
2|D^{\Spheqtres}_p(\bar{r})| 
\\ 
\, - \, T_3\frac{4\pi\radius^3}{3} \, + \, \oint_{\partial D^{\Spheqtres}_p(\bar{r})} \varphi_{cat }\circ\dist_p^{\Spheqtres} 
\frac{\partial \varphi_{cat}\circ\dist_p^{\Spheqtres} }{\partial \eta} \, + \, O\left(\frac{\radius^8}{\bar{r}^5}+\bar{r}^5\right).
	\end{multline*}
\end{lemma}	
\begin{proof}
	There is a domain $\Kcech_{\bar{r}}\subset \Kcech[p,\radius]\subset M[\breve{\zeta},\breve{\bar{\zeta}}]$ defined by requesting that 
$\breve{M}\cap \Pi_{\Spheqtres}^{-1}(D^{\Spheqtres}_p(\bar{r}))=\Graph_{\Kcech_{\bar{r}}}^{\Sph^4}(\breve{\textup{ф}})$. 
Using \cite[Lemma A.8]{gLD}, it follows that
	\begin{multline*}
		|\breve{M}\cap \Pi_{\Spheqtres}^{-1} (D^{\Spheqtres}_p(\bar{r}))| \, = \, |\Kcech_{\bar{r}}| 
\\ 
+ \, \frac{1}{2}\int_{\Kcech_{\bar{r}}}\abs{\nabla\breve{\textup{ф}}}^2 \, - \, 2\breve{\textup{ф}}H \, - \, \breve{\textup{ф}}^2\left(\abs{A}^2+3 \, - \, H^2\right) 
\, + \, O(m^{-9+3\gamma}\bar{r}^{3+3\gamma}),
	\end{multline*}
	where $A$ and $H$ are the second fundamental form and mean curvature of $\Kcech$ and we have used that $\abs{\breve{\textup{ф}}}\leq m^{-3-2\alpha+(1+\alpha)\gamma}\bar{r}^{\gamma}$ from \ref{thm} to estimate the error term. From this last estimate, the estimate for $H$ in \ref{lem:H}, the support of $w$ and $\bar{w}$ in \ref{lem:obstr}\ref{item:wsupp}, the form of $A$ in \ref{eq:AKcech}	and the definition of the norm in \ref{def:norm}, it follows that (recall that $\gamma\in (0,1)$ and $\bar{r}\geq\radius\sim m^{-2}$)
	\begin{align*}
\int_{\Kcech_{\bar{r}}}\abs{\nabla\breve{\textup{ф}}}^2+\abs{\breve{\textup{ф}}H}\lesssim m^{-6+2\gamma}\bar{r}^{2\gamma+1}\lesssim \bar{r}^5, 
\qquad 
\int_{\Kcech_{\bar{r}}} \breve{\textup{ф}}^2\abs{A}^2 \lesssim m^{-14+2\gamma}\bar{r}^{2\gamma-3}\lesssim \bar{r}^5.
	\end{align*}
	The conclusion now follows from these estimates, \ref{lem:voltrun} and the closeness of $\Kcech(\bar{r})$ to $\Kcech_{\bar{r}}$.
\end{proof}

\begin{lemma}[Volume on the complement of the bridges] 
\label{lem:volsurf}
	The following estimate holds for any $\bar{r}\in (\radius,\delta')$.
	\begin{multline*}
		|\breve{M}\cap \Pi_{\Spheqtres}^{-1} (\Spheqtres\setminus D^{\Spheqtres}_L(\bar{r}))| 
\, = \, 2|\Spheqtres| \, - \, 2\sum_{p\in L}|D^{\Spheqtres}_p(\bar{r})| \, 
\\
- \, \sum_{p\in L}\oint_{\partial D^{\Spheqtres}_p(\bar{r})} \varphi_{cat }\circ\dist_p^{\Spheqtres} \frac{\partial \varphi_{cat}\circ\dist_p^{\Spheqtres} }{\partial \eta}
\, + \, O\left(\frac{\radius^6}{\bar{r}^4}+\radius^3+\radius^{\frac{3-\gamma}{2}}\bar{r}^{1+\gamma}\right).
	\end{multline*}
\end{lemma}
\begin{proof}
	From \cite[Lemma A.10]{gLD}, the minimality of $\Spheqtres$ and \ref{lem:ubrev}\ref{item:ubrevout} for the estimates the error terms, we have
\begin{multline} 
\label{eq:volex}
|\breve{M}\cap \Pi_{\Spheqtres}^{-1} (\Spheqtres\setminus D^{\Spheqtres}_L(\delta'))| \, = \, 2|\Spheqtres\setminus D^{\Spheqtres}_p(\delta')| 
\\ 
- \, \int_{\Spheqtres\setminus D^{\Spheqtres}_L(\delta')}\breve{u}\Lcal_{\Spheqtres}\breve{u} \, 
- \, \sum_{p\in L}\oint_{\partial D^{\Spheqtres}_p(\delta')} \breve{u} \frac{\partial \breve{u} }{\partial \eta}+O\left(m^{-6}\right).
\end{multline}
Similarly, from \cite[Lemma A.10]{gLD}, the minimality of $\Spheqtres$, \ref{lem:ubrev}\ref{item:ubrevin} and \eqref{eq:varphicat} for the estimates the error terms, we have

	  	 \begin{multline} 
\label{eq:volin} 
|\breve{M}\cap \Pi_{\Spheqtres}^{-1} ( D^{\Spheqtres}_L(\delta')\setminus D^{\Spheqtres}_L(\bar{r}))| \, = \, 
2|D^{\Spheqtres}_L(\delta')\setminus D^{\Spheqtres}_L(\bar{r}))| 
\\
- \, \int_{D^{\Spheqtres}_L(\delta')\setminus D^{\Spheqtres}_L(\bar{r})}\breve{u}\Lcal_{\Spheqtres}\breve{u}          
		  \, - \, \sum_{p\in L}\oint_{\partial D^{\Spheqtres}_p(\bar{r})} \breve{u} \frac{\partial \breve{u} }{\partial \eta} \, + \, 
\sum_{p\in L}\oint_{\partial D^{\Spheqtres}_p(\delta')} \breve{u} \frac{\partial \breve{u} }{\partial \eta} \, + \, O\left(\frac{\radius^{4}}{\bar{r}}\right).
\end{multline}
	
From the minimality of $\breve{M}$ and \eqref{eq:SphereH}, it follows that on $\Spheqtres\setminus D^{\Spheqtres}_L(\bar{r})$
	\begin{equation*}
		\abs{\breve{u}\Lcal_{\Spheqtres}\breve{u}}=\abs{\breve{u}Q_{\breve{u}}}\lesssim \abs{\nabla^2\breve{u}}\abs{\nabla\breve{u}}^2\abs{\breve{u}}.
	\end{equation*}
	\ref{lem:ubrev} then gives,
	\begin{equation} 
\label{eq:vollinear}
		\abs{\int_{\Spheqtres\setminus D^{\Spheqtres}_L(\delta')}\breve{u}\Lcal_{\Spheqtres}\breve{u}}\lesssim m^{-8},  
\qquad 
\abs{\int_{D^{\Spheqtres}_L(\delta')\setminus D^{\Spheqtres}_L(\bar{r})}\breve{u}\Lcal_{\Spheqtres}\breve{u}} \lesssim \frac{\radius^6}{\bar{r}^4}.
	\end{equation}
	Next, using \ref{lem:ubrev}\ref{item:ubrevin} and \eqref{eq:varphicat}, it follows for any $p\in L$ that
	\begin{equation}\label{eq:bderror}
		\abs{\oint_{\partial D^{\Spheqtres}_p(\bar{r})} \breve{u} \frac{\partial \breve{u} }{\partial \eta}-\oint_{\partial D^{\Spheqtres}_p(\bar{r})} \varphi_{cat}\circ\dist_p^{\Spheqtres} \frac{\partial \varphi_{cat}\circ\dist_p^{\Spheqtres} }{\partial \eta}}\lesssim m^{-5+\gamma}\bar{r}^{1+\gamma}.
	\end{equation}
	
	By combining \eqref{eq:volex}, \eqref{eq:volin}, \eqref{eq:vollinear}, \eqref{eq:bderror} and recalling $\bar{r}\geq\radius\sim m^{-2}$, the result follows.
\end{proof}

\eqref{E:vol} follows now by combining \ref{lem:voltrun} and \ref{lem:volsurf} and by choosing $\bar{r}\sim \radius^{11/14}\sim m^{-11/7}$ to minimize the error. 
This completes the proof of the Theorem. 
\end{proof}

	    	 \section{Doubling the Spherical Shrinker $\Sphst$ in $\R^4$}   
\label{S:shr} 
	    \subsection*{Spherical Shrinker $\Sphst$ in $\R^{n+1}$}
	    \begin{definition}
	    	We define the metric $\gshr  $ on $\R^{n+1}$ by
	    	\begin{equation*}
	    		\gshr  :=e^{-\frac{\abs{x}^2}{2n}}g_{\Euc}
	    	\end{equation*}
	    	
	    	We call the minimal hypersurfaces in $(\R^{n+1},\gshr  )$ self-shrinkers.
	    \end{definition}
	    
	    By \cite[(11.17)]{kapouleas:kleene:moller} $\Sphst :=\Sph^n(\sqrt{2n})$ is totally geodesic in $(\R^{n+1},\gshr  )$, and in particular, a self-shrinker.
	    \begin{lemma}\label{lem:jacobishr}
	    	The Jacobi operator of $\Sph^n_{\shr}$ in $(\R^{n+1},\gshr  )$ is
	    	\begin{equation}\label{eq:jacobishr}
	    		\Lcal_{\Sph^n_{\shr}}:=e(\Delta_{\Sph^n_{\shr}}+1)=\frac{e}{2n}(\Delta_{\Sphsn}+2n)
	    	\end{equation}
	    \end{lemma}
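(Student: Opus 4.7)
The plan is to assemble $\Lcal_{\Sph^n_{\shr}}$ piece by piece from the definition
\[
\Lcal_{\Sph^n_{\shr}} \, = \, \Delta_{\Sph^n_{\shr}, \gshr} \, + \, |A|^2_{\gshr} \, + \, \widetilde{\Ric}_{\gshr}(\tilde{\nu}, \tilde{\nu}),
\]
using the conformal structure $\gshr = e^{2\phi} g_{\Euc}$ with $\phi = -|x|^2/(4n)$, and then match the formula with $\Delta_{\Sphsn}$ at the end.

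First I would exploit the key fact already quoted from \cite{kapouleas:kleene:moller}, namely that $\Sphst = \Sph^n(\sqrt{2n})$ is \emph{totally geodesic} in $(\R^{n+1}, \gshr)$. This immediately gives $|A|^2_{\gshr} \equiv 0$ on $\Sphst$, eliminating the second term. Next, since $|x|^2 = 2n$ along $\Sphst$, the induced metric is
\[
\gshr|_{\Sphst} \, = \, e^{-1} g_{\Euc}|_{\Sphst} \, = \, \tfrac{2n}{e} \, g_{\Sphsn}.
\]
Conformal rescaling of the Laplacian by a constant then gives $\Delta_{\Sph^n_{\shr}, \gshr} = e \, \Delta_{\Sph^n_{\shr}}$, where $\Delta_{\Sph^n_{\shr}}$ denotes the Laplacian for the Euclidean-induced metric on $\Sph^n(\sqrt{2n})$, and equivalently $\Delta_{\Sph^n_{\shr}} = \frac{1}{2n}\Delta_{\Sphsn}$.

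The main computation is the Ricci contribution $\widetilde{\Ric}_{\gshr}(\tilde{\nu}, \tilde{\nu})$. I would apply the standard conformal change formula on an $N=n{+}1$ dimensional manifold, starting from $\Ric_{g_{\Euc}} = 0$:
\[
\widetilde{\Ric} \, = \, -(N-2)\bigl(\nabla^2 \phi - d\phi\otimes d\phi\bigr) \, - \, \bigl(\Delta\phi + (N-2)|\nabla\phi|^2\bigr)g_{\Euc}.
\]
Substituting $\nabla^2\phi = -\tfrac{1}{2n}g_{\Euc}$, $d\phi\otimes d\phi = \tfrac{1}{4n^2}\,x\otimes x$, $\Delta\phi = -\tfrac{n+1}{2n}$, and $|\nabla\phi|^2 = \tfrac{|x|^2}{4n^2}$, one gets
\[
\widetilde{\Ric}_{ij} \, = \, \delta_{ij} \, + \, \tfrac{n-1}{4n^2}\bigl(x_ix_j - |x|^2\delta_{ij}\bigr).
\]
Evaluating at $|x|^2 = 2n$ and contracting against the $\gshr$-unit normal $\tilde{\nu} = e^{1/2}\, x/\sqrt{2n}$ (so $\tilde{\nu}^i\tilde{\nu}^j = \tfrac{e}{2n}x^i x^j$), the second term contributes $(2n)^2 - (2n)(2n) = 0$ and the first contributes $\frac{e}{2n}\cdot 2n = e$. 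Thus $\widetilde{\Ric}_{\gshr}(\tilde{\nu},\tilde{\nu}) = e$ on $\Sphst$.

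Assembling gives $\Lcal_{\Sphst} = e\,\Delta_{\Sphst} + 0 + e = e(\Delta_{\Sphst} + 1)$, and the second equality in the statement follows from $\Delta_{\Sph^n(\sqrt{2n})} = \tfrac{1}{2n}\Delta_{\Sphsn}$. The only delicate point is the sign/trace bookkeeping in the conformal Ricci formula and the choice of orientation for $\nu$; the internal consistency check is that the same calculation, applied to $\tilde{H} = e^{-\phi}(H - n\nu(\phi))$ with outward Euclidean normal $\nu = x/\sqrt{2n}$, must reproduce $\tilde{H} = 0$ (since $\Sphst$ is totally geodesic), which fixes all signs unambiguously.
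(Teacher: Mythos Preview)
Your proof is correct and follows essentially the same approach as the paper: both use the totally geodesic property to kill $|A|^2$, the constant conformal factor on $\Sphst$ to rescale the Laplacian, and the standard conformal-change formula for the Ricci tensor to compute $\widetilde{\Ric}(\tilde\nu,\tilde\nu)=e$. Your packaging of the Ricci calculation as $\widetilde{\Ric}_{ij}=\delta_{ij}+\tfrac{n-1}{4n^2}(x_ix_j-|x|^2\delta_{ij})$ is slightly cleaner than the paper's intermediate expressions, and the $\tilde H=0$ consistency check you add at the end is a nice touch the paper omits.
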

	    \begin{proof}
	    	In this proof, we denote $h_0:=g_{\Euc}$, $g_0=h_0|_{\Sph^n(\sqrt{2n})}=g_{\Sph^n(\sqrt{2n})}$, $\omega:=-\frac{\abs{x}^2}{4n}$, $g=e^{2\omega}g_{\Euc}|_{\Sph^n(\sqrt{2n})}$. Then by definition the Jacobi operator of $\Sph^n_{\shr}$ in $(\R^{n+1},e^{-\frac{\abs{x}^2}{2n}}\delta_{ij})$ is
	    	\begin{equation*}
	    		\Lcal_{\Sph^n_{\shr}}=\Delta_g+\abs{A_h}_g^2+\Ric^h(\nu^h,\nu^h).
	    	\end{equation*}
	    	
	    	As mentioned, $A_h=0$, and $\Delta_g=e^{-2\omega}\Delta_{g_0}$. On the other hand, like in the proof of \cite[Lemma 11.3]{kapouleas:kleene:moller}, the Ricci curvature changes under conformal change by
	    	\begin{equation*}
	    		\Ric^h=\Ric^{h_0}+(n-1)(-\Hess^{h_0}\omega+d\omega\otimes d\omega)-(\Delta_{h_0}\omega+(n-1)\abs{\nabla\omega}_{h_0}^2)h_0.
	    	\end{equation*}
	    	As $\omega=-\frac{\abs{x}^2}{4n}$,
	    	\begin{align*}
	    		\nabla^{h_0}\omega=-\frac{x}{2n},\quad \abs{\nabla\omega}_{h_0}^2=\frac{\abs{x}^2}{4n^2},\quad \Hess^{h_0}\omega=-\frac{1}{2n}h_0,\quad \Delta_{h_0}\omega=-\frac{n+1}{2n}.
	    	\end{align*}
	    	Along with $\Ric^{h_0}=0$,
	    	\begin{align*}
	    		\Ric^h&=(n-1)\left(\frac{1}{2n}h_0+\frac{(n-1)x\otimes x}{4n^2}\right)-\left(-\frac{n+1}{2n}+(n-1)\frac{\abs{x}^2}{4n^2}\right)h_0\\
	    		&=\left(1-\frac{(n-1)\abs{x}^2}{4n^2}\right)h_0+\frac{(n-1)x\otimes x}{4n^2}.
	    	\end{align*}
	    	Thus 
	    	\begin{align*}
	    		\Ric^h(\nu^h,\nu^h)=e^{-2\omega}\left(1-\frac{(n-1)\abs{x}^2}{4n^2}+\frac{(n-1)(x\cdot\nu)^2}{4n^2}\right).
	    	\end{align*}
	    	And on $\Sph^n(\sqrt{2n})$, 
	    	\begin{align*}
	    		\Ric^h(\nu^h,\nu^h)=e^{-2\omega}\left(1-\frac{(n-1)2n}{4n^2}+\frac{(n-1)2n}{4n^2}\right)=e^{-2\omega}.
	    	\end{align*}
	    	The result then follows.
	    \end{proof}
	    \begin{remark}
	    	Since the eigenvalues of $\Delta_{\Sph^n}$ on the unit sphere $\Sph^n$ are
	    	\begin{equation*}
	    		\lambda^n_k=k(k+n-1), 
	    	\end{equation*}
	    	$\Lcal_{\Sph^n_{\shr}}$ has trivial kernel.
	    \end{remark}
	    
\subsection*{LD solutions on Spherical shrinker $\Sphshrtres$ in $\R^4$}
$\vspace{.162cm}$ 
\nopagebreak

	    Now we focus on the case $n=3$. We define the surjective map $\Theta_{\shr}:\R^2\times[-\frac{\pi}{4},\frac{\pi}{4}]\to\Sphshrtres=\Sph^3(\sqrt{6})$ by (recall \eqref{eq:thetaeqmetric}) $\Theta_{\shr}(\xx,\yy,\zz):=\sqrt{6}\Thetaeq(\xx,\yy,\zz)$. We still use $\T_0$ to denote the Clifford torus in $\Sphshrtres=\Sph^3(\sqrt{6})$, i.e. the intersection of $\Sph^3(\sqrt{6})$ with the cone $x_1^2+x_2^2=x_3^2+x_4^2$.

	    \begin{definition}
	    	By considering $\Sph^3_{\shr}$ as a subset of $\R^4$, we denote by $\Grp_{\R^4}[m]$ the subgroup of $\mathfrak{O}(4)$ which fixes $\Lmer[m]\subset\Spheqtres\subset\R^4$ as a set (recall \eqref{eq:Lmer}).
	    \end{definition}
	    
	    \begin{remark}
	    	$\Grp_{\R^4}[m]$ is isomorphic to $\Grp_{\Spheqtres}[m]$ (recall \ref{def:grp}).
	    \end{remark}
	    
	    As \eqref{eq:jacobirotinvar}, when the solution $\phi$ is rotationally invariant, the linearized equation $\Lcal_{\Sph^3_{\shr}}\phi=0$ from \eqref{eq:jacobishr} amounts to the ODE:
	    \begin{equation}\label{eq:jacobirotinvarshr}
	    	\frac{d^2\phi}{d\zz^2}-2\tan{2\zz}\frac{d\phi}{d\zz}+6\phi=0.
	    \end{equation}
\begin{lemma}[cf. {\ref{lem:phiC}}] 
\label{lem:phiCshr}
	    	The space of solutions of the ODE \eqref{eq:jacobirotinvarshr} in $\zz$ on $(-\pi/4,\pi/4)$ is spanned by the two functions
\begin{equation} 
\begin{aligned}
	    		\phi_C:= \frac{1}{C_0} {}_2F_1\left(\frac{1-\sqrt{7}}{2},\frac{1+\sqrt{7}}{2};1;\frac{1}{2}(\cos\zz-\sin\zz)^2\right),\\
	    		\phi_{C^{\perp}}:= \frac{1}{C_0} {}_2F_1\left(\frac{1-\sqrt{7}}{2},\frac{1+\sqrt{7}}{2};1;\frac{1}{2}(\cos\zz+\sin\zz)^2\right),           
\end{aligned}
\end{equation} 
	    	where $C_0={}_2F_1\left(-\frac{1-\sqrt{7}}{2},\frac{1+\sqrt{7}}{2};1;\frac{1}{2}\right)>0$. Moreover, the following hold.
	    	\begin{enumerate}[(i)]
	    		\item $\phi_C$ has singularity at $\uC=\{z=-\pi/4\}$ and is smooth at $\uC^{\perp}=\{z=\pi/4\}$, $\phi_{\uC^{\perp}}$ has singularity at $\uC^{\perp}=\{z=\pi/4\}$ and is smooth at $\uC=\{z=-\pi/4\}$.
	    		\item $\phi_{\uC}$ is strictly increasing in $\zz$ on $(-\pi/4,\pi/4)$, $\phi_{\uC^{\perp}}$ is strictly decreasing in $\zz$ on $(-\pi/4,\pi/4)$.
	    		\item\label{item:Fshr} $\phi_{\uC}(0)=\phi_{\uC^{\perp}}(0)=1$, $\phi'_{\uC}(0)=-\phi'_{\uC^{\perp}}(0)=F_{0shr}>0$, moreover $\phi_{\uC^{\perp}}=\phi_{\uC}\circ\SSS$.
	    	\end{enumerate}
	    \end{lemma}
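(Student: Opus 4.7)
The plan is to mirror the proof of Lemma \ref{lem:phiC}, with the only structural change being the coefficient $6$ in place of $3$ in the ODE \eqref{eq:jacobirotinvarshr}. First, I perform the change of variable $w := \tfrac{1}{2}(\cos\zz - \sin\zz)^2 = \tfrac{1-\sin 2\zz}{2}$, under which \eqref{eq:jacobirotinvarshr} transforms to
\begin{equation*}
w(1-w)\frac{d^2\phi}{dw^2} + (1 - 2w)\frac{d\phi}{dw} + \frac{3}{2}\phi = 0,
\end{equation*}
which is the hypergeometric equation with parameters $c = 1$, $a + b + 1 = 2$ and $-ab = 3/2$. Solving $a+b = 1$, $ab = -3/2$ yields $\{a,b\} = \{(1-\sqrt{7})/2,\,(1+\sqrt{7})/2\}$, so ${}_2F_1\!\left(\tfrac{1-\sqrt{7}}{2}, \tfrac{1+\sqrt{7}}{2}; 1; w\right)$ solves the transformed ODE by \cite[15.5.1]{AS}. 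Pulling back via the two different substitutions $w = \tfrac{1}{2}(\cos\zz \mp \sin\zz)^2$ gives $\phi_\uC$ and $\phi_{\uC^\perp}$ up to the normalization constant $C_0$, chosen precisely so that $\phi_\uC(0) = \phi_{\uC^\perp}(0) = 1$ (note $w|_{\zz=0} = 1/2$).

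For the singular/smooth behavior in (i), I invoke \cite[15.1.1]{AS}: ${}_2F_1(a,b;c;w)$ converges absolutely for $|w| < 1$ and diverges at $w = 1$ when $c - a - b = 1 - 1 = 0 < 0$ in the appropriate sense (here one must verify the logarithmic-type divergence at the boundary, which follows from the standard connection formulas for ${}_2F_1$ at $w=1$; since $c-a-b = 0$, the divergence is logarithmic but nonzero, which suffices for (i)). Since $w = 1$ corresponds to $\zz = -\pi/4$ (respectively $+\pi/4$) for $\phi_\uC$ (resp.\ $\phi_{\uC^\perp}$), and to the opposite endpoint for the complementary choice, the singularity locations in (i) follow. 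Linear independence on $(-\pi/4, \pi/4)$ is then immediate from their different singular loci, so they span the two-dimensional solution space.

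For (ii), monotonicity follows from differentiating via \cite[15.2.1]{AS}, which gives $\tfrac{d}{dw}{}_2F_1(a,b;c;w) = \tfrac{ab}{c}\,{}_2F_1(a+1,b+1;c+1;w)$. Combined with the Euler integral representation \cite[15.3.1]{AS}, the derivative has a definite sign on $(1/2, 1)$; since $w$ is monotone in $\zz$ on each half of $(-\pi/4, \pi/4)$, monotonicity of $\phi_\uC$ and $\phi_{\uC^\perp}$ in $\zz$ follows. For (iii), the identity $\phi_{\uC^\perp} = \phi_\uC \circ \SSS$ is immediate from the definitions and \eqref{eq:sym}, and the values and derivatives at $\zz = 0$ are computed by evaluating the hypergeometric series and its $w$-derivative at $w = 1/2$ using \cite[15.1.24]{AS} and \cite[15.2.1]{AS}, yielding a positive constant $F_{0\mathrm{shr}}$. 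The main (minor) obstacle compared to \ref{lem:phiC} is that the exponent parameters are irrational, so the constants $C_0$ and $F_{0\mathrm{shr}}$ do not simplify to nice closed forms in terms of $\Gamma$-values; however, positivity of $C_0$ follows from the series being term-by-term positive at $w=1/2$ (the factor $(1-\sqrt{7})/2$ is negative but the series still converges to a positive value, as can be checked numerically or by observing that ${}_2F_1$ is real-analytic and positive at $w=0$, combined with continuity and nonvanishing on $[0, 1/2]$ by the monotonicity argument).
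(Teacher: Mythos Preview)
Your proof is correct and follows exactly the approach the paper intends: the paper gives no separate proof of Lemma \ref{lem:phiCshr} but simply writes ``cf.\ \ref{lem:phiC}'', and your argument is the natural adaptation of the proof of Lemma \ref{lem:phiC} with the coefficient $6$ in place of $3$, leading to hypergeometric parameters $(1\pm\sqrt{7})/2$ instead of $-1/2,\,3/2$. One small remark: when you invoke the Euler integral \cite[15.3.1]{AS} to get a sign on the derivative, make sure to assign the smaller of the two shifted parameters (here $(3-\sqrt{7})/2\in(0,2)$) to the role of ``$b$'' so that the condition $c>b>0$ is met; this is the same maneuver implicitly used in the proof of \ref{lem:phiC}.
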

	    
	    \begin{definition}\label{def:varphishr}
	    	We define the space of parameters $\Pcal:=\R^3$. The continuous parameters of the LD solutions are $\zetabold:=(\zeta,\bar{\zeta}^+,\bar{\zeta}^-)\in\BPcal$, where 
	    	\begin{equation}\label{eq:zetashr}
	    		\BPcal:=\{\zetabold=(\zeta,\bar{\zeta}^+,\bar{\zeta}^-)\in\Pcal:\abs{\zeta}\leq 1/m^{\alpha'}, \abs{\bar{\zeta}^-}\leq 1/m^{\alpha'},\abs{\bar{\zeta}^-}\leq 1/m^{\alpha'}\},
	    	\end{equation}
	    	where $\alpha'\in(0,1)$ is a small constant as in \ref{def:zeta}.
	    	
	    	Given $\zetabold=(\zeta,\bar{\zeta}^+,\bar{\zeta}^-)\in \BPcal$, we define the family of LD solutions $\varphi\bbracket{\zeta;m}$, $\underline{v}$ and $\underline{v}^{\pm}:=\underline{v}[\bar{\zeta}^{\pm}]$ as in \ref{def:varphi} by using $F_{0shr}$ in \ref{lem:phiCshr}\ref{item:Fshr} instead of $F_0$. We also define correspondingly $\mu$ and $\bar{\mu}^{\pm}$ as in \ref{lem:miss}.
	    \end{definition}
	    
	    \subsection*{Catenoidal bridges}
	    
\begin{definition}\label{def:fermiconf}
Let $g$ denote $\gshr  $.
Given $p\in\Sph^3_{\shr}$, we first define the function $\varsigma$ on $\R^4$ by requesting $\varsigma=(\exp_p^{\Sph^3_{\shr},\R^4,g})^{-1}(\zzcheck)$ (recall \ref{def:fermi}). 
We then extend $\varsigma$ to cylindrical coordinates $(\varrho, \vartheta, \varsigma):\R^4\setminus \mathrm{span}\{\nu_{\Sph^3_{\shr}}(p)\}\to \R_+\times\Sph^2\times\R$. 
We also define $U:=D^{\Sph^3_{\shr},\R^4}_p(\inj^{\Sph^3_{\shr},\R^4}_p/2)\subset \R^4$ and a Riemannian metric $\mathring{g}$ and a tensor $h$ on $U$ by
\begin{equation}\label{eq:mathringgshr}
\mathring{g}:=(\exp_p^{\Sph^3_{\shr},\R^4})_*g|_p, 
\qquad 
h:=g-\mathring{g}.
\end{equation}
\end{definition}

\begin{remark} 
\label{rk:fermishr}
	    	We have for $p=(0,0,0,\sqrt{6})\in \Sph^3_{\shr}\subset \R^4$
\begin{equation}\label{eq:fermiexpshr}
\exp_p^{\Sph^3_{\shr},\R^4}(\varrho, \vartheta, \varsigma)=r(\varsigma)\left(\vartheta\sin\left(\sqrt{\frac{e}{6}}\varrho\right),\cos\left(\sqrt{\frac{e}{6}}\varrho\right)\right),
\end{equation}
where the function $r(\varsigma)$ satisfies the ODE 
\begin{equation}\label{eq:fermir}
\frac{d r(\varsigma)}{d\varsigma}=e^{\frac{r^2(\varsigma)}{12}},\quad r(0)=\sqrt{6},
\end{equation}
and thus
\begin{equation} 
\label{eq:fermimetricshr}
(\exp_p^{\Sph^3_{\shr},\R^4})^{*}g=e^{-\frac{r^2(\varsigma)}{6}}r(\varsigma)^2\left(\frac{e}{6}d\varrho^2+\sin^2\left(\sqrt{\frac{e}{6}}\varrho\right) g_{\Sph^2}\right)+d\varsigma^2.
\end{equation}
\end{remark}
	    
	    \begin{definition}
	    	Given $p \in \Sph^3_{\shr}$ and $\radius \in \R_+$, $\kappa\in [-\radius,
	    	,\radius]$, we define a three-catenoid $\K[p, \radius, \kappa] \subset T_p\R_4 \cong \R^4$ and its parametrization $X_\K=X_\K[p, \radius,\kappa] : \Cylinder \to \K[p,\radius,\kappa]$ by (recall \eqref{eq:XK})
	    	\begin{align*}
	    		&X_\K[p, \radius,\kappa]:=X_\K[p, \radius]+\kappa\nu_{\Sph^3_{\shr}}(p),\quad \K[p,\radius,\kappa]:=X_\K[p, \radius,\kappa](\Cylinder).
	    	\end{align*}
	    	We then define a corresponding \emph{catenoidal bridge} $\Kcech[p,\radius,\kappa]$ in $\R^4$ and its \emph{core} in $\R^4$ as in \ref{def:catebridge}. We also define $\varphi_{cat}^{\pm}[\radius,\kappa]:[\radius,\infty)\to\R$ by (recall \eqref{eq:varphicat})
	    	\begin{equation*}
	    		\varphi_{cat}^{\pm}[\radius,\kappa]:=\varphi_{cat}[\radius]\pm\kappa.
	    	\end{equation*}
	    \end{definition}
	    
	    \begin{lemma}
\label{L:alpha:shr} 
	    	The following hold, where $R(s):=r(\sigma(s)+\kappa)$ (recall \eqref{eq:fermir}),  $\rho(s)$, $\sigma(s)$ as in \eqref{eq:rhot}, and as in \cite[Lemma C.4]{gLD}
	    	the symmetric two-tensor fields $\alpha$ and $\tilde{\alpha}$, differential one-form $\beta$, and
	    	function $\digamma$, are defined by requesting that for $q \in \Kcech$, $X, Y \in T_q\Kcech$ (recall \eqref{eq:mathringgshr})
	    	\begin{align*}
	    		\alpha(X,Y):=h(X,Y),\quad \beta(X):=h(X,\mathring{\nu}_q),\quad \digamma(q):=h(\mathring{\nu}_q,\mathring{\nu}_q),\quad \tilde{\alpha}(X,Y):=\nabla_{\mathring{\nu}_q}h(X,Y),
	    	\end{align*}
	    	where $\mathring{\nu}$ is the normal vector field on $\Kcech$ induced by $\mathring{g}$.
	    	\begin{enumerate}[(i)]
	    		\item $h=\left(e^{1-\frac{r^2(\varsigma)}{6}}\frac{r^2(\varsigma)}{6}-1\right)d\varrho^2+\left(e^{-\frac{r^2(\varsigma)}{6}}r^2(\varsigma)\sin^2\left(\sqrt{\frac{e}{6}}\varrho\right)-\varrho^2 \right) g_{\Sph^2}$.
\item 
$\alpha=\left(e^{1-\frac{R^2}{6}}\frac{R^2}{6}-1\right)\rho^2\tanh^2(2s)ds^2+\left(e^{-\frac{R^2}{6}}R^2\sin^2\left(\sqrt{\frac{e}{6}}\rho\right)-\rho^2 \right) g_{\Sph^2}$ 
\\ $\phantom{kk}$ \hfill and \qquad $\norm{\alpha :C^k(\Kcech,\chi,\rho^4)}\lesssim_k 1$.
	    		\item $\beta=-\left(e^{1-\frac{R^2}{6}}\frac{R^2}{6}-1\right)\rho\tanh(2s)\sech(2s)ds$ and $\norm{\beta :C^k(\Kcech,\chi,\radius^2\rho)}\lesssim_k 1$.
	    		\item $\digamma=\left(e^{1-\frac{R^2}{6}}\frac{R^2}{6}-1\right)\sech^2(2s)$ and $\norm{\digamma :C^k(\Kcech,\chi,\radius^2)}\lesssim_k 1$.
	    		\item 
$\tilde{\alpha}=-\frac{R}{3}e^{1-\frac{R^2}{6}}
\left(\frac{R^2}{6}-1\right)\rho^2\tanh^3(2s)ds^2-2\Bigg(Re^{-\frac{R^2}{6}}\left(\frac{R^2}{6}-1\right)\sin^2\left(\sqrt{\frac{e}{6}}\rho\right)\tanh(2s) 
\\
\phantom{kk} \hfill 
+ \, \left(R^2e^{-\frac{R^2}{6}}\left(\frac{\sin^2\left(\sqrt{\frac{e}{6}}\rho\right)}{\rho} \, + \, 
\sqrt{\frac{e}{6}}\sin\left(\sqrt{\frac{e}{6}}\rho\right)\cos\left(\sqrt{\frac{e}{6}}\rho\right)\right)-2\rho \right)\sech(2s)\Bigg) g_{\Sph^2}$  
\\
$\phantom{kk}$ \hfill 
and \qquad $\norm{\tilde{\alpha} :C^k(\Kcech,\chi,\rho^3)}\lesssim_k 1$.
	    	\end{enumerate}	
	    \end{lemma}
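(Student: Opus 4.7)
The plan is to reduce all four identities and estimates to direct computation in the Fermi coordinates $(\varrho,\vartheta,\varsigma)$ of Remark \ref{rk:fermishr}, then restrict to the catenoidal parametrization $X_\K[p,\radius,\kappa]$, and finally bound each tensor by Taylor expansion of the exponentials.

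First I would compute $h=g-\mathring{g}$ on $U$. From \eqref{eq:fermimetricshr} and the definition of $\mathring{g}$ in \eqref{eq:mathringgshr} (which in these coordinates equals $d\varrho^2+\varrho^2 g_{\Sph^2}+d\varsigma^2$, since $\mathring g$ is the pushforward of $g|_p$ and $g|_p$ is Euclidean up to the constant scaling $\gshr|_p=g_{\mathrm{Euc}}|_p$ at $|x|=\sqrt 6$, using that $e^{-|p|^2/6}=e^{-1}$ together with the factor $e$ hidden inside the normalization of $\varrho$), one reads off (i) as the difference of the two expressions. This gives (i) verbatim.

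Next, to get (ii)--(iv) I would plug the catenoidal parametrization into $h$. By the definition of $X_\K[p,\radius,\kappa]$ and \eqref{eq:XK}, on $\Kcech$ the coordinate $\varrho$ equals $\rho(s)$, while $\varsigma$ equals $\sigma(s)+\kappa$, so $r(\varsigma)=R(s)$. Pulling back the $d\varrho^2$ part of $h$ along $X_\K$ produces the factor $\rho^2\tanh^2(2s)\,ds^2$ because $d\rho=\rho\tanh(2s)\,ds$ by \eqref{eq:rhot}; the spherical part pulls back unchanged. This gives the explicit formula for $\alpha=h(\cdot,\cdot)|_{T\Kcech}$. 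For $\beta$ and $\digamma$ I use the Euclidean unit normal $\mathring{\nu}$ to $\K$, which in the coordinates $(\varrho,\vartheta,\varsigma)$ equals $(\tanh(2s)\,\partial_\varrho-\sech(2s)\,\partial_\varsigma)/\sqrt{\text{unit norm factor}}$; this is parallel to formula \eqref{eq:nuKcech} with $\sigma=0$. Contracting $h$ with $\mathring{\nu}$ once and twice respectively yields the stated formulas for $\beta$ and $\digamma$ after routine simplification using $1-\tanh^2=\sech^2$.

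For (v), $\tilde{\alpha}=\nabla_{\mathring{\nu}}h$ in the ambient Euclidean connection. Since $\mathring g$ is flat in the $(\varrho,\vartheta,\varsigma)$ coordinates only up to the Euclidean identification, I would compute $\nabla h$ by differentiating the coefficients of $h$ in the Euclidean frame $\partial_\varrho,\partial_\vartheta,\partial_\varsigma$. Only the $\varsigma$ derivative survives in the direction of $\mathring{\nu}$ aside from radial derivatives of the spherical factor; here $\partial_\varsigma R=e^{R^2/12}$ by \eqref{eq:fermir}, producing the factor $-(R/3)e^{1-R^2/6}$ from differentiating $e^{1-R^2/6}$ with respect to $\varsigma$. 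Combining the two contributions (the $d\varrho^2$ and $g_{\Sph^2}$ parts) gives the formula for $\tilde{\alpha}$.

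Finally, for the $C^k$ bounds I would use \ref{lem:rhozh} together with the observation that $R=\sqrt{6}+O(\sigma+\kappa)=\sqrt{6}+O(\radius)$ (by \ref{lem:rhozh}(ii) and $|\kappa|\le\radius$), so that on $\Kcech$
\[
e^{1-R^2/6}\frac{R^2}{6}-1=O(R^2-6)=O(\radius),\qquad e^{-R^2/6}R^2-6=O(\radius).
\]
Together with $\sin(\sqrt{e/6}\,\rho)=\sqrt{e/6}\,\rho+O(\rho^3)$ and $\sech(2s),\tanh(2s)\in[-1,1]$, and using the multiplicative property \eqref{E:norm:mult} with the $\chi$-bounds on $\rho^{\pm 1}$ and $\sigma$ from \ref{lem:rhozh}, each of the estimates in (ii)--(v) follows. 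The main point requiring care is in (ii) and (v), where two a priori unrelated expansions (the expansion of $e^{1-R^2/6}R^2/6-1$ in $\radius$ and the Taylor expansion of $\sin^2(\sqrt{e/6}\,\rho)-(e/6)\rho^2$ in $\rho$) both contribute to the leading order $O(\rho^4)$ and $O(\rho^3)$ respectively, so the two contributions must be tracked separately and checked to combine to the claimed weight.
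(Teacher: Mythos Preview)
Your overall approach is correct and is exactly what the paper does: compute $h$ from \eqref{eq:fermimetricshr} and \eqref{eq:mathringgshr}, pull back along the catenoidal parametrization using \eqref{eq:rhot}, contract with $\mathring{\nu}$ for $\beta$ and $\digamma$, differentiate for $\tilde{\alpha}$, and invoke \ref{lem:rhozh} for the norm bounds. The explicit formulas in (i)--(v) follow just as you describe.

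There is, however, a genuine gap in your estimation step. You claim
\[
e^{1-R^2/6}\,\tfrac{R^2}{6}-1 \;=\; O(R^2-6)\;=\;O(\radius),
\]
but this is one order too weak, and the bounds in (ii)--(iv) do \emph{not} follow from it. For instance, the $ds^2$ part of $\alpha$ is $\bigl(e^{1-R^2/6}\tfrac{R^2}{6}-1\bigr)\rho^2\tanh^2(2s)$; with only an $O(\radius)$ coefficient this is $O(\radius\rho^2)$, and $\radius\rho^{2}/\rho^4=\radius/\rho^2$ is unbounded near the waist $\rho=\radius$. Similarly, $\digamma=(\text{coeff})\sech^2(2s)$ with an $O(\radius)$ coefficient only gives $O(\radius)$, not the required $O(\radius^2)$.

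The missing observation is that the function $x\mapsto xe^{1-x}$ has a \emph{critical point} at $x=1$: both its value and its derivative equal $1$ there. Hence with $x=R^2/6$,
\[
e^{1-R^2/6}\,\tfrac{R^2}{6}-1 \;=\; O\bigl((R^2-6)^2\bigr)\;=\;O(\radius^2),
\]
and likewise $e^{-R^2/6}R^2-6=O(\radius^2)$. This extra factor of $\radius$ is exactly what makes (ii)--(iv) work: the $ds^2$ part of $\alpha$ becomes $O(\radius^2\rho^2)\le O(\rho^4)$, $\beta$ becomes $O(\radius^2\cdot\radius^2/\rho)\le O(\radius^2\rho)$, and $\digamma$ becomes $O(\radius^2\cdot\radius^4/\rho^4)\le O(\radius^2)$, all using $\rho\ge\radius$ and $\sech(2s)=\radius^2/\rho^2$. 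The same second-order vanishing (now of $R^2e^{-R^2/6}$) enters the $g_{\Sph^2}$ part of (v). So your instinct that ``the main point requiring care'' lies in how the expansions combine is right, but the mechanism is this criticality, not merely tracking two separate contributions.
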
	

	    \begin{proof}
	    	The expressions follow by the definitions and a direct calculation by \eqref{eq:rhot} and \ref{rk:fermishr}. The estimates then follow by \ref{lem:rhozh}.
	    \end{proof}
	    
	    \begin{lemma} 
\label{lem:HKshr}
	    	Let $g_{\Kcech}$, $\nu_{\Kcech}$ and $A_{\Kcech}$ be the metric, the unit normal and the second fundamental form on $\Kcech$ induced by $g$, then
	    	\begin{enumerate}[(i)]
	    		\item $\norm{\rho^2(\Delta_{g_{\Kcech}}-\Delta_{\mathring{g}_{\Kcech}})u:C^{0,\beta}(\Kcech,\chi,\rho^2)}
	    		\lesssim  \norm{u:C^{2,\beta}(\Kcech,\chi)}$.
	    		\item $\norm{\rho^{2}(\abs{A_{\Kcech}}^2_{g_{\Kcech}}-\abs{\mathring{A}_{\Kcech}}^2_{\mathring{g}_{\Kcech}}):C^{0,\beta}(\Kcech,\chi,\rho)}\lesssim_k 1$.
	    		\item 	$\norm{\rho^2H:C^k(\Kcech,\chi,\rho^3)}\lesssim_k 1$, where $H$ is the mean curvature on $\Kcech$ induced by $g$.
	    	\end{enumerate} 	
	    \end{lemma}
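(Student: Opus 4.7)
The plan is to follow the template of Lemma \ref{lem:HK}, but with the role of the perturbation played by the tensor $h = g - \mathring{g}$ described in Lemma \ref{L:alpha:shr}, rather than by the higher-order terms coming from the curvature of $\Sph^4(1)$. Since $g_{\shr}|_p$ is a constant multiple of $g_{\Euc}|_p$, the metric $\mathring{g}$ pushed through $\exp_p^{\Sph^3_{\shr},\R^4}$ is flat, so the three-catenoid $\K[p,\radius,\kappa]$ is still minimal in $\mathring{g}$ (it is a Euclidean catenoid up to translation). In particular $\mathring{H}_{\Kcech} \equiv 0$, which will reduce (iii) to an estimate of a purely perturbative quantity.

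First I would invoke the general perturbation formulas along the lines of \cite[Lemma C.4]{gLD}, which express $g_{\Kcech}-\mathring{g}_{\Kcech}$, $A_{\Kcech}-\mathring{A}_{\Kcech}$, and $H_{\Kcech}$ in terms of the tangential, mixed, and normal components $\alpha$, $\beta$, $\digamma$, $\tilde{\alpha}$ of $h$ (together with the Euclidean catenoid geometry $g_{\K}$, $A_{\K}$, computed in \eqref{eq:gK} and \ref{lem:AK}). Combining these formulas with the weighted bounds from Lemma \ref{L:alpha:shr}
\[
\|\alpha\|_{\chi}\lesssim \rho^{4},\quad \|\beta\|_{\chi}\lesssim \radius^{2}\rho,\quad |\digamma|\lesssim \radius^{2},\quad \|\tilde{\alpha}\|_{\chi}\lesssim \rho^{3},
\]
and the uniform bounds on $\rho^{\pm 1}$ from \ref{lem:rhozh}, gives immediately the weighted pointwise bound
\[
\|g_{\Kcech}-\mathring{g}_{\Kcech}:C^{k,\beta}(\Kcech,\chi,\rho^4)\|\lesssim_k 1,
\]
and analogously
\[
\|A_{\Kcech}-\mathring{A}_{\Kcech}:C^{k,\beta}(\Kcech,\chi,\rho^2\radius)\|\lesssim_k 1.
\]
These are the exact shrinker analogs of \ref{lem:HK}(\ref{item:fermialpha}-\ref{item:fermiA}), and the scaling of the right-hand sides by $\rho^4$ and $\rho^2\radius$ is the same because $\rho/\radius\ge 1$ and $\radius\lesssim 1$.

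For part (i), I would write $\Delta_{g_{\Kcech}}-\Delta_{\mathring{g}_{\Kcech}}$ in local coordinates on $\Kcech$ as
\[
(\Delta_{g_{\Kcech}}-\Delta_{\mathring{g}_{\Kcech}})u = (g_{\Kcech}^{ij}-\mathring{g}_{\Kcech}^{ij})\partial_i\partial_j u \;-\; \bigl(g_{\Kcech}^{ij}\Gamma^{g_{\Kcech}}{}^k_{ij}-\mathring{g}_{\Kcech}^{ij}\Gamma^{\mathring{g}_{\Kcech}}{}^k_{ij}\bigr)\partial_k u,
\]
and estimate each piece by the Neumann-series expansion of $g_{\Kcech}^{-1}$ around $\mathring{g}_{\Kcech}^{-1}$, using the bound on $\rho^{-2}\alpha$. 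Since $\mathring{g}_{\Kcech}\sim \rho^{2}\chi$, the factor $\rho^2$ appearing as the weight on the right-hand side of (i) comes precisely from the two factors of $\rho^{-2}$ in $\mathring{g}_{\Kcech}^{-1}$ combined with the $\rho^{4}$ bound on $\alpha$ from Lemma \ref{L:alpha:shr}(ii); derivatives are absorbed using the $C^{k,\beta}$ bounds on the same tensors. This is essentially \cite[Lemma C.10]{gLD} applied in the present setting.

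For part (ii), $|A|^2_g = g^{ik}g^{jl}A_{ij}A_{kl}$, so the difference $|A_{\Kcech}|^2_{g_{\Kcech}}-|\mathring{A}_{\Kcech}|^2_{\mathring{g}_{\Kcech}}$ expands into terms involving either $A_{\Kcech}-\mathring{A}_{\Kcech}$ (controlled by $\rho^2\radius$) or $g_{\Kcech}^{-1}-\mathring{g}_{\Kcech}^{-1}$ (controlled through $\alpha$); together with the bound $\|\mathring{A}_{\Kcech}\|_{\mathring{g}_{\Kcech}}\lesssim \rho^{-1}$ coming from \ref{lem:AK}, this produces the asserted weight $\rho$. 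Finally for (iii), since $\mathring H_{\Kcech}=0$, the mean curvature $H$ of $\Kcech$ in $g$ is purely the perturbation, and the general formula from \cite[Lemma C.4]{gLD} gives $H$ as a combination of traces of $\alpha$, $\beta$, $\tilde{\alpha}$, $\digamma$ against $\mathring{g}_{\Kcech}^{-1}$ and $\mathring{A}_{\Kcech}$; plugging in the bounds from Lemma \ref{L:alpha:shr} yields $\|\rho^2 H\|_{C^k(\Kcech,\chi,\rho^3)}\lesssim_k 1$.

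The main potential obstacle is bookkeeping rather than conceptual: one has to verify that the weights really compose to the claimed powers of $\rho$ at every order, especially in the terms of (iii) that mix $\mathring{A}_{\Kcech}\sim \radius^2/\rho$ with $\beta\sim \radius^2\rho$ or $\digamma\sim \radius^2$. Given \ref{lem:HK} in the sphere case and the structurally identical expressions in Lemma \ref{L:alpha:shr}, no new geometric idea should be required; the only substantive new input is the presence of the conformal factor $e^{-r^2(\varsigma)/6}$ in \eqref{eq:fermimetricshr}, but since $r(\varsigma)-\sqrt{6}\lesssim \radius$ on $\Kcech$, this conformal factor only contributes to the constants in the already-established $\chi$-weighted bounds.
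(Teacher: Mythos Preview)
Your approach is correct and essentially identical to the paper's: both invoke the perturbation lemmas from \cite{gLD} (the paper cites C.10(iv), C.11, and Corollary C.9 explicitly) together with the bounds on $\alpha,\beta,\digamma,\tilde{\alpha}$ from Lemma~\ref{L:alpha:shr} and the $\rho$-estimates from Lemma~\ref{lem:rhozh}. One small correction: your intermediate claim $\|A_{\Kcech}-\mathring{A}_{\Kcech}\|\lesssim \rho^2\radius$ is too optimistic in the shrinker setting---the dominant contribution to the second-fundamental-form perturbation is $\tilde{\alpha}\sim \rho^3$ by Lemma~\ref{L:alpha:shr}(v), which is exactly why the weight in (iii) is $\rho^3$ here rather than the $\rho^2\radius$ of Lemma~\ref{lem:HK}; your derivations of (ii) and (iii) go through unchanged once you use this weaker bound.
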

	    \begin{proof}
	    	\cite[Lemma C.10(iv), Lemma C.11, Corollary C.9]{gLD} along with \ref{lem:rhozh}.
	    \end{proof}

	    \subsection*{Initial surfaces}
	    \begin{lemma}
	    	Given $\varphi=\varphi\bbracket{\zeta}$, $\underline{v}$, $\bar{\underline{v}}^{\pm}:=\bar{\underline{v}}[\bar{\zeta}^{\pm}]$ as in \ref{def:varphishr}, there exists $\varphi_{nl}^{\pm}=\varphi_{nl}\bbracket{\zeta,\bar{\zeta}^{\pm},\kappa}\in C^{\infty}_{\sym}(\Sphshrtres\setminus L)$ such that the following hold.
	    	\begin{enumerate}[(i)]
	    		\item $H_{\varphi_{nl}^{\pm}}=0$ on $\Sphshrtres\setminus  D_{\T_0}^{\Sphshrtres}(4/m)$
	    		and $H_{\varphi_{nl}^{\pm}}$ is the mean curvature of the graph
	    		$\Graph_{\Sphshrtres}^{\R^4,\gshr  }(\pm\varphi_{nl}^{\pm})$ pushed forward to $\Sphshrtres$ by the projection $\Pi_{\Sphshrtres}$ (recall \ref{not:manifold}\ref{item:proj} and \ref{not:manifold}\ref{item:graph}).
	    		\item $\varphi_{nl}^{\pm}-\varphi-\underline{v}-\bar{\underline{v}}^{\pm}\mp\kappa V$ can be extended to a smooth function supported on $\Sphshrtres\setminus  D_{\T_0}^{\Sphshrtres}(3/m)$ which satisfies
	    		\begin{equation*}
	    			\norm{\varphi_{nl}^{\pm}-\varphi-\underline{v}-\bar{\underline{v}}^{\pm}\mp\kappa V:C^{2,\beta}(\Sphshrtres,\gshr  |_{\Sphshrtres})}\lesssim m^{-5+4\beta}.
	    		\end{equation*}
	    	\end{enumerate}
	    \end{lemma}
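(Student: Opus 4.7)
The plan is to mimic the construction of $\varphi_{nl}$ in Lemma \ref{lem:varphinl}, working on the shrinker $(\R^4,\gshr)$ instead of $(\Sph^4,g_{\Sph^4})$, with the essential new feature that the $\pm$ sides are \emph{not} isometric, so one extra degree of freedom---the elevation parameter $\kappa$---must appear in the seed of the iteration. Concretely, I would first derive the shrinker analogue of Lemma \ref{lem:SphereH}: starting from the formula $H=\Delta_{\Sphshrtres}\varphi+\Lcal_{\Sphshrtres}\varphi+Q_{\varphi}^{\pm}$ for the mean curvature of $\Graph_{\Sphshrtres}^{\R^4,\gshr}(\pm\varphi)$ pushed forward to $\Sphshrtres$, and using \ref{lem:jacobishr} together with the expansion of $\gshr$ in Fermi coordinates from \ref{rk:fermishr} and \ref{L:alpha:shr}, I obtain a quadratic estimate
\[
\norm{Q_{\varphi}^{\pm}:C^{k,\beta}(\Omega,\gshr|_{\Sphshrtres})}\lesssim\sum_{k_1+k_2+k_3=k}\prod_{j=1}^{3}\norm{\varphi:C^{k_j+1,\beta}(\Omega,\gshr|_{\Sphshrtres})},
\]
provided $\norm{\varphi:C^{3,\beta}}$ is sufficiently small. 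Crucially, $Q_+^\varphi\neq Q_-^\varphi$ because reflection across $\Sphshrtres$ is not an isometry of $\gshr$, so each sign case must be handled on its own.

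Next I set up the iteration. Fix a cutoff $\psi''\in C^{\infty}_{\sym}(\Sphshrtres)$ with $\psi''\equiv 0$ on $D_{\T_0}^{\Sphshrtres}(3/m)$ and $\psi''\equiv 1$ outside $D_{\T_0}^{\Sphshrtres}(4/m)$, and take as seed
\[
\phi_0^{\pm}:=\varphi+\underline{v}+\bar{\underline{v}}^{\pm}\pm\kappa V,\qquad \phi_{-1}^{\pm}:=0.
\]
The term $\pm\kappa V$ plays the role of the elevation adjustment (cf.\ the shrinker discussion in the introduction) and is needed at the linear level so that the two graphs can be balanced independently. Inductively I then define $\phi_n^{\pm}=\phi_{n-1}^{\pm}+u_n^{\pm}$ by solving
\[
\Lcal_{\Sphshrtres}u_n^{\pm}=\psi''\bigl(Q^{\pm}_{\phi_{n-2}^{\pm}}-Q^{\pm}_{\phi_{n-1}^{\pm}}\bigr),
\]
which is solvable in $C^{2,\beta}_{\sym}(\Sphshrtres)$ by the shrinker analogue of Lemma \ref{lem:nokernel}: the $\Grp_{\R^4}[m]$-symmetric kernel of $\Lcal_{\Sphshrtres}=\frac{e}{2n}(\Delta_{\Sphsn}+2n)$ is generated by the $3$-sphere coordinate functions restricted to $\Sph^3(\sqrt{6})$, and the symmetries kill all of them for $m\ge 2$ by exactly the argument of \ref{lem:nokernel}.

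The convergence argument is then identical in structure to the proof of Lemma \ref{lem:varphinl}. Using the quadratic estimate together with standard Schauder theory on $(\Sphshrtres,\gshr|_{\Sphshrtres})$, I obtain
\[
\norm{u_n^{\pm}:C^{3,\beta}(\Sphshrtres\setminus D_{\T_0}^{\Sphshrtres}(3/m),\gshr)}\lesssim m^{\beta}\norm{u_{n-1}^{\pm}:C^{3,\beta}}\,\norm{\phi_{n-2}^{\pm}:C^{3,\beta}}^{2}
\]
for $n\ge 2$, while for $n=1$ the analogous estimate is controlled by $\norm{\phi_0^{\pm}}^{3}$. The estimates on $\varphi$, $\underline{v}$, $\bar{\underline{v}}^{\pm}$ given by the shrinker analogue of Lemma \ref{lem:varphiest}(\ref{item:solboundgsphere}) force $\norm{\phi_0^{\pm}:C^{3,\beta}}\lesssim m^{-2+\beta}+m^{-3+3\beta}$ outside $D_{\T_0}^{\Sphshrtres}(3/m)$, which on iteration yields the geometric decay $\norm{u_n^{\pm}:C^{3,\beta}}\lesssim 2^{-n}m^{-5+4\beta}$. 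Summing, $\varphi_{nl}^{\pm}:=\phi_0^{\pm}+\sum_{n\ge 1}u_n^{\pm}$ satisfies (ii) with the claimed bound; minimality of its $\pm$ graph on $\Sphshrtres\setminus D_{\T_0}^{\Sphshrtres}(4/m)$ follows because there $\psi''\equiv 1$ and the series telescopes, and smoothness is standard elliptic regularity.

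The step I expect to be the main obstacle is bookkeeping the asymmetry: unlike the round sphere, $\gshr$ distinguishes the inward and outward normals, and so the nonlinear operators $Q_+^{\varphi}$ and $Q_-^{\varphi}$ genuinely differ and must each be controlled using the Fermi-coordinate expansion \eqref{eq:fermimetricshr}. This is exactly why the single parameter $\bar{\zeta}$ of the sphere construction splits into the pair $(\bar{\zeta}^{+},\bar{\zeta}^{-})$ in \ref{def:varphishr} and why the extra summand $\pm\kappa V$ enters the seed; once that structure is absorbed, the iteration, the contraction estimates, and the final regularization are routine adaptations of the argument given for Lemma \ref{lem:varphinl}.
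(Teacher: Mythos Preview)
Your plan is essentially the paper's proof: derive the shrinker analogue of Lemma~\ref{lem:SphereH} and then run the iteration of Lemma~\ref{lem:varphinl} from the seed $\varphi+\underline v+\bar{\underline v}^{\pm}\pm\kappa V$, the two signs treated separately; the paper carries out the first step by computing the graphical mean curvature in Euclidean coordinates on $\Sph^3(\sqrt6)$ and applying the conformal-change formula for $H$ to reach \eqref{eq:SphereHshr}, then simply invokes \ref{lem:varphinl}. One correction: your appeal to \ref{lem:nokernel} is unnecessary and your description of $\ker\Lcal_{\Sphshrtres}$ as spanned by the coordinate functions is wrong---by the Remark following \ref{lem:jacobishr} the full kernel of $\Lcal_{\Sphshrtres}$ is already trivial (no eigenvalue $k(k+2)$ of $-\Delta_{\Sph^3}$ equals $2n=6$), so the linear step is immediate without any symmetry reduction.
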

	    \begin{proof}
	    	Under Euclidean metric $g_{\Euc}$ in $\R^4$ and an orthonormal basis on $\Sph^3(1)$ we calculate ($\varphi_i$ denotes $\partial_{i}\varphi$ and similarly for second order derivatives, recall \eqref{eq:fermir})
	    	\begin{align*}
	    		&X_{\varphi}=r(\varphi)X,\quad
	    		(g_{\varphi})_{ij}=r^2(\varphi)\delta_{ij}+e^{\frac{r^2(\varphi)}{6}}\varphi_i\varphi_j,\\
	    		&\nu_{\varphi}=X-e^{\frac{r^2(\varphi)}{12}}\sum_i \varphi_i \partial_i X/ r(\varphi)/\sqrt{1+e^{\frac{r^2(\varphi)}{6}}\abs{\nabla\varphi}^2/r^2(\varphi)},\\
	    		&(A_{\varphi})_{ij}=\left(-r(\varphi)\delta_{ij}+(\frac{r(\varphi)}{6}-\frac{2}{r(\varphi)})e^{\frac{r^2(\varphi)}{6}}\varphi_i\varphi_j+e^{\frac{r^2(\varphi)}{12}}\varphi_{ij}\right)/\sqrt{1+e^{\frac{r^2(\varphi)}{6}}\abs{\nabla\varphi}^2/r^2(\varphi)}.\\
	    	\end{align*}
	    	From the expression of $(g_{\varphi})_{ij}$ and Sherman-Morrison formula, we can calculate the inverse matrix
	    	\begin{equation*}
	    		(g_{\varphi})^{ij}=\delta_{ij}/r^2(\varphi)-\varphi_i\varphi_j/(r^4(\varphi) e^{-\frac{r^2(\varphi)}{6}}+r^2(\varphi)\abs{\nabla\varphi}^2).
	    	\end{equation*}
	    	
	    	Now we denote the mean curvature of the graph $\Graph_{\Sphshrtres}^{\R^4,g_{\Euc}}(\varphi)$ by $H_{\varphi}$ and similarly the mean curvature of the graph $\Graph_{\Sphshrtres}^{\R^4,\gshr  }(\varphi)$ by $\bar{H}_{\varphi}$, and also their push-forward to $\Sphshrtres$ by the projection $\Pi_{\Sphshrtres}$ (notice that it is the same under $g_{\Euc}$ and $\gshr  $). From the formula for mean curvature under conformal metric $\bar{H}_{\varphi}=e^{\frac{r^2}{12}}(H_{\varphi}-3\nu_{\varphi}\cdot\nabla^{\gshr  }(-\frac{r^2}{12}))$, thus
\begin{multline}   
\label{eq:SphereHshr}
\sqrt{1+e^{\frac{r^2(\varphi)}{6}}\abs{\nabla\varphi}^2/r^2(\varphi)}\bar{H}_{\varphi} \, = \, 
\sqrt{1+e^{\frac{r^2(\varphi)}{6}}\abs{\nabla\varphi}^2/r^2(\varphi)}(g_{\varphi})^{ij}(A_{\varphi})_{ij}  
\\  
= \, \frac{e^\frac{r^2}{6}}{r^2}\Delta \varphi \, + \, e^\frac{r^2}{6}\left(\frac{r}{2} \, - \, \frac{3}{r}\right) \, + \, \left(\frac{1}{6r} \, - \, \frac{2}{r^3} \, + \, \frac{1}{r^3 \, 
+ \, e^\frac{r^2}{6}r\abs{\nabla\varphi}^2}\right)e^\frac{r^2}{4}\abs{\nabla\varphi}^2 \, + \, O(\varphi^3).
\end{multline}   
	    	We then have the estimate as in \ref{lem:SphereH} by \eqref{eq:jacobishr} and \eqref{eq:fermir}. The results then follow as in \ref{lem:varphinl}.
	    \end{proof}
	    \begin{definition}[Families of initial surfaces]
	    	Given $\zetabold\in\BPcal$ as in \eqref{eq:zetashr}, $\kappa\in [-\radius,\radius]$
	    	we write
	    	\begin{equation*}
	    		\zetaboldu:=(\zetabold,\kappa)=(\zeta,\bar{\zeta}^+, \bar{\zeta}^-,\kappa).
	    	\end{equation*}
	    	And given $\zetaboldu\in \BPcal\times [-\radius,\radius]$, $\varphi=\varphi\bbracket{\zeta}$, $\underline{v}$, $\bar{\underline{v}}^{\pm}=\bar{\underline{v}}[\underline{\zeta}^{\pm}]$ as in \ref{def:varphishr},, we define the smooth initial surface (recall \ref{not:manifold}\ref{item:graph} and \ref{def:catebridge})
	    	\begin{equation*}
	    		M=M[\zetaboldu]=M[\zeta,\bar{\zeta}^+, \bar{\zeta}^-,\kappa]:=\Graph_{\Omega}^{\R^4}(\varphi^{gl}_+)\cup \Graph_{\Omega}^{\R^4}(-\varphi^{gl}_-)\cup\sqcup_{p\in L}\Kcech[p,\radius,\kappa], 
	    	\end{equation*}
	    	where $\radius:=\sqrt{\tau}$, $\Omega:=\Sphshrtres\setminus D_L^{\Sphshrtres}(9\radius)$ and the functions $\varphi^{gl}_{\pm}:=\varphi^{gl}\bbracket{\zeta,\bar{\zeta}^{\pm},\kappa}:\Omega\to\R$ are defined as the following.
	    	\begin{equation}
	    		\varphi^{gl}_{\pm}:=
	    		\begin{cases} 
	    			\varphi_{nl}^{\pm} & \text{on } \Sphshrtres\setminus  D_L^{\Sphshrtres}(3\delta'), \\
	    			\Psi[2\delta',3\delta';\dist^{\Sphshrtres}_p](\varphi_{cat}^{\pm}[\radius,\kappa]\circ \dist^{\Sphshrtres}_p,\varphi_{nl}^{\pm}) & \text{on }  \sqcup_{p\in L} D_p^{\Sphshrtres}(3\delta')\setminus D_p^{\Sphshrtres}(\radius).
	    		\end{cases}
	    	\end{equation}
	    \end{definition}
	    
	    \begin{notation}
	    	If $f^+$ and $f^-$ are functions supported on $\tilde{S'}$ (recall \ref{def:region}), we define $J_M(f^+, f^-)$ to be the function on $M$ supported on $(\Pi_{\Sphshrtres}|M)^{-1}\tilde{S'}$ defined by $f^+\circ\Pi_{\Sphshrtres}$ on the graph of $\varphi^{gl}_+$ and by $f^-\circ\Pi_{\Sphshrtres}$ on the graph of $\varphi^{gl}_-$.
	    \end{notation}
	    
	    Similar to \ref{prop:lineareq} we have the following linear theory.
	    \begin{proposition}\label{prop:linearshr}
	    	There is a linear map $\mathcal{R}_M:C^{0,\beta}_{\sym}(M)\to C^{2,\beta}_{\sym}(M) \oplus \skernel[L]\oplus \skernel[L]\oplus \skernele[L]\oplus \skernele[L]$, $E\mapsto (u,w_E^+,w_E^-,\bar{w}_E^+,\bar{w}_E^-)$ such that the following hold:
	    	\renewcommand{\theenumi}{\roman{enumi}}
	    	\begin{enumerate}
	    		\item $\Lcal_M u=E+J_M(w_E^+,w_E^-)+J_M(\bar{w}_{E}^+,\bar{w}_{E}^-)$.
	    		\item $\norm{u}_{2,\beta,\gamma,\gamma';M}\lesssim \norm{E}_{0,\beta,\gamma-2,\gamma';M}$ and $\abs {\mu_{E}^{\pm}}+m^{-1}\abs{\bar{\mu}_{E}^{\pm}}\lesssim m^{4-\gamma}\norm{E}_{0,\beta,\gamma-2,\gamma';M}$, where $\mu_E^{\pm},\bar{\mu}_E^{\pm}$ are defined such that $ -\tau\mu_E^{\pm} W=w_E$, $ -\tau\bar{\mu}_E^{\pm} \bar{W}=\bar{w}_E$ (recall \ref{def:VW}), the norms are defined as \ref{def:norm}.
	    	\end{enumerate}
	    \end{proposition}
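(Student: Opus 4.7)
The plan is to follow closely the strategy of Proposition \ref{prop:lineareq}, with the key modification that, because the two sides of $\Sphshrtres$ in $(\R^4, \gshr)$ are no longer related by an isometry (in contrast to the ambient reflection $\ZZZu$ in the $\Sph^4(1)$ case), all the semi-local objects on $\Sphshrtres$ must be constructed and estimated separately on each side of the Clifford torus $\T_0$. Consequently the obstruction space doubles in dimension: instead of one pair $(\skernel[L], \skernele[L])$ we get two pairs, one for each side, producing the four coefficients $w_E^\pm, \bar{w}_E^\pm$ in the statement.

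The first step is to establish the analogs of the auxiliary linear results Lemmas \ref{lem:linearCyl}, \ref{lem:linearSd}, and Proposition \ref{lem:linearS} with $\Lcal_{\Spheqtres}$ replaced by $\Lcal_{\Sph^3_{\shr}}$ from \eqref{eq:jacobishr}, and with $\phi_\uC, \phi_{\uC^\perp}$ replaced by the basis \ref{lem:phiCshr}. Triviality of the symmetric kernel follows as in \ref{lem:nokernel} since $\Delta_{\Sphshrtres}$ has no eigenvalue equal to $-1$ with eigenfunctions compatible with $\Grp_{\R^4}[m]$-symmetry; the only obstacle is the asymmetry, but since the construction is done separately on each hemisphere (solid torus), this goes through verbatim. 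This gives us a linear right inverse on the solid torus $D^{\Sphshrtres}_{\uC}$ that kills boundary averages modulo a one-dimensional extended substitute kernel spanned by $\bar{V}^\pm$, and similarly for $D^{\Sphshrtres}_{\uC^\perp}$.

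The second step is to set up the approximate solution operator $\mathcal{R}_{M,appr}$ as in \ref{def:Rappr}. Given $E \in C^{0,\beta}_{\sym}(M)$ I will introduce cutoffs $\psi', \widecheck{\psi}$ as in \ref{def:cutoff} and decompose. The key new feature is that the truncated quantity $E'$ is defined separately on the two graphs over $\Sphshrtres \setminus D^{\Sphshrtres}_L(2b\tau)$, producing two functions $E'^+, E'^-$ on $\Sphshrtres$. We then solve $\Lcal_{\Sph^3_{\shr}} u'^\pm = E'^\pm + w_{E,1}^\pm + \bar{w}_{E,1}^\pm$, with the normalizations $\Ecalu_p u'^\pm = 0$ for each $p \in L$, and vanishing on $\uC$ (resp.\ $\uC^\perp$), using the adapted linear theory from Step 1. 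Then the catenoid contribution: on each $\Kcech[p,\radius,\kappa]$ decompose $\tilde{E} = \tilde{E}_{low} + \tilde{E}_{high}$ and solve by separation of variables (low modes) and Lax--Milgram plus coercivity of the Jacobi form on $H^1_{high}(\K)$ (high modes), exactly as in \ref{lem:utildehigh} --- the three-catenoid estimates in \ref{L:alpha:shr} and \ref{lem:HKshr} are set up to mirror \ref{lem:AK} and \ref{lem:HK} so those arguments transfer.

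The third step is the contraction-style iteration as in the proof of \ref{prop:lineareq}. The three error terms $E_{1,I}, E_{1,II}, E_{1,III}$ from \eqref{eq:EI}--\eqref{eq:EIII} are estimated using the analogs of \ref{lem:equivnorm} and \ref{lem:equivop}; the latter requires that $h = g - \mathring{g}$ is controlled by $\rho^4$, which is precisely the content of \ref{L:alpha:shr}(ii). Choosing $b$ large and then $m$ large in terms of $b$, the combined bound on $E_1$ is at most $\frac{1}{2}\norm{E}$, so the Neumann series converges. The main obstacle --- as always in these Fredholm-on-a-doubling constructions --- is not analytic but organizational: one must keep track of two independent copies of every obstruction coefficient throughout, and confirm that in the plus/minus bookkeeping the $w_E^+$ from the upper graph does not spuriously couple to $w_E^-$. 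This follows because the approximate solution $u_1$ is defined via $J_M$, which is precisely the operator that routes the plus data to the plus graph and vice versa. The stated bounds on $\mu_E^\pm$ and $\bar{\mu}_E^\pm$ then follow from \ref{lem:obstr} applied on each side independently, completing the proof.
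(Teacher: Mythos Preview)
Your proposal is correct and follows essentially the same approach as the paper, which does not give a separate proof but simply asserts that the result follows as in Proposition~\ref{prop:lineareq}; your write-up is in fact a more explicit expansion of that remark. Two minor points: the triviality of the kernel of $\Lcal_{\Sph^3_{\shr}}$ actually holds unconditionally (no symmetry needed), since $2n=6$ is not of the form $k(k+2)$, as noted in the remark after Lemma~\ref{lem:jacobishr}; and your parenthetical ``vanishing on $\uC$ (resp.\ $\uC^\perp$)'' is slightly misleading, since the $\pm$ indexes the two \emph{sheets} of the doubling (which are no longer exchanged by an ambient isometry), not the two solid tori --- both $u'^+$ and $u'^-$ are $\Grp_{\R^4}[m]$-symmetric and hence each vanishes on both $\uC$ and $\uC^\perp$ via the $\SSS$-symmetry, exactly as in \eqref{eq:uprime}.
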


	    \subsection*{The main Theorem for self-shrinkers} 
$\vspace{.162cm}$ 
\nopagebreak
	    
	    By repeating the other part of the paper, we have the following theorem.
	    \begin{theorem}\label{thmshr}
	    	If $m$ is large enough in terms of an absolute constant,
	    	then there is a $\Grp_{\R^4}[m]$-invariant embedded closed self-shrinker ${\breve{M}_{\mathrm{shr},m}}$ in $\R^4$ of topology $\#_{m^2-1}\Sph^2\times \Sph^1$   
doubling $\mathbb{S}_{\mathrm{shr}}^3\subset\R^4$.    
Moreover ${\breve{M}_{\mathrm{shr},m}}$ converges in the sense of varifolds as $m \to\infty$ to $2\Sphshrtres$.
	    \end{theorem}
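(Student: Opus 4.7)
The plan is to run the same fixed-point scheme as in the proof of Theorem \ref{thm}, but enlarged to accommodate the asymmetry between the two sides of $\Sphshrtres$ inside $(\R^4,\gshr)$. Because the ambient metric is not reflection-symmetric across $\Sphshrtres$, we cannot demand $\varphi^{gl}_+=\varphi^{gl}_-$; this is the reason for introducing the extra pair of parameters $(\bar\zeta^+,\bar\zeta^-)$, producing two independent obstruction functions $\bar{w}_E^{\pm}\in\skernele[L]$ on the two sheets, and the vertical elevation parameter $\kappa\in[-\radius,\radius]$ to kill the \emph{horizontal} obstruction from the asymmetry of the catenoidal bridges. The linear theory \ref{prop:linearshr} and the nonlinear bound on the mean curvature of a normal graph over $M[\zetaboldu]$ (proved verbatim as \ref{lem:nonlinear}, using \ref{lem:HKshr} and \eqref{eq:SphereHshr} in place of \ref{lem:HK} and \eqref{eq:SphereH}) are the two global tools.

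First, I would establish the analogue of \ref{lem:H} on each sheet of $M=M[\zetaboldu]$: compute the linear content of $H$ on $M$ as $J_M(w^++\bar{w}^+ ,w^-+\bar{w}^-)\circ\Pi_{\Sphshrtres}$ up to error $O(m^{-3-2\alpha+(1+\alpha)\gamma})$ in $\norm{\cdot}_{0,\beta,\gamma-2,\gamma';M}$, where the jumps $\mu^{\pm},\bar\mu^{\pm}$ are computed from the matching equations as in \ref{lem:miss}, now with the new term $\pm\kappa$ from elevating the bridges. The key new identity one needs is that elevating a bridge by $\kappa$ contributes a term $\pm\kappa\Lcal_{\Sphshrtres}V\in \skernel[L]$ to each side, linking $\kappa$ to the odd component $(\mu^+-\mu^-)/2$ of the mismatch.

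Next, on the convex set
\begin{equation*}
B:=\{\,v\in C^{2,\beta}(M[\nilbold]):\norm{v}_{2,\beta,\gamma,\gamma';M[\nilbold]}\le m^{-3-2\alpha+(1+\alpha)\gamma}\,\}\times\BPcal\times[-\radius,\radius],
\end{equation*}
I would define a continuous map $\Jcal:B\to C^{2,\beta}(M[\nilbold])\times \Pcal\times \R$ in complete analogy with the proof of \ref{thm}: transport $v$ to $M[\zetaboldu]$ via a diffeomorphism $\Fcal_{\zetaboldu}$ (constructed as in \cite[Lemma 5.5]{gLD}), solve $\Lcal_M u=-(H-J_M(w+\bar{w}^+,w+\bar{w}^-))$ via \ref{prop:linearshr} to get obstruction coefficients $(\mu_H^{\pm},\bar\mu_H^{\pm})$, then correct for the nonlinear remainder to get $(\mu_Q^{\pm},\bar\mu_Q^{\pm})$, and finally set
\begin{equation*}
\Jcal(v,\zetaboldu):=\Bigl(u_Q\circ\Fcal_{\zetaboldu},\,\zeta-\tfrac{1}{2\phi_1}(\mu_{sum}^++\mu_{sum}^-+\bar\mu_{sum}^++\bar\mu_{sum}^-),\,\bar\zeta^{\pm}+\tfrac{1}{\phi_1}\bar\mu_{sum}^{\pm},\,\kappa-c_0(\mu_{sum}^+-\mu_{sum}^-)\Bigr)
\end{equation*}
for a suitable constant $c_0$ chosen so that the fixed-point condition precisely enforces $w^{\pm}+w^{\pm}_H+w^{\pm}_Q=0$ and $\bar w^{\pm}+\bar w^{\pm}_H+\bar w^{\pm}_Q=0$. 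The weight ratios computed in Step~5 of the proof of \ref{prop:lineareq} together with \eqref{eq:alphaprime} ensure $\Jcal(B)\subset B$, so Schauder's fixed point theorem yields a solution $\breve{M}_{\shr,m}$; smoothness is standard elliptic regularity, embeddedness and the topology $\#_{m^2-1}\Sph^2\times\Sph^1$ follow from \ref{lem:phigl}(i) adapted to $\Sphshrtres$, and $\Grp_{\R^4}[m]$-invariance is preserved throughout the construction. The varifold convergence $\breve{M}_{\shr,m}\to 2\Sphshrtres$ follows as in \ref{thm} from the pointwise bounds $|\varphi^{gl}_\pm|\lesssim m^{-2}$ away from $L$ together with the smallness of $\breve{\textup{ф}}$.

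The main obstacle I anticipate is the calibration of $\kappa$: one must show that the horizontal obstruction coming from bridge translation is genuinely captured by $\kappa$, i.e.\ that the linearized map $\kappa\mapsto\mu^+-\mu^-$ is an isomorphism up to lower-order terms. This amounts to verifying that translating a catenoidal bridge by $\kappa\nu$ produces, after projection to the obstruction space, a contribution of the form $C\kappa\,(W,-W)+O(\kappa\radius)$ with $C\ne0$, which should follow from the calculation of $\tilde\alpha$ in \ref{L:alpha:shr}(v) and a straightforward first-variation argument analogous to \cite[Section 10]{gLD}. Once this invertibility is established, choosing $\alpha'$ as in \eqref{eq:alphaprime} (possibly slightly smaller to absorb the new $\kappa$-direction) ensures that $\Jcal$ contracts in all directions of $\Pcal\times\R$ and the argument closes exactly as in \ref{thm}.
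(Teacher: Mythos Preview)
Your proposal is correct and follows essentially the same route as the paper: the same convex set $B$, the same use of $\Fcal_{\zetaboldu}$, the same two-step linear correction via \ref{prop:linearshr}, and the same Schauder fixed-point closure, with the $\zeta$ and $\bar\zeta^{\pm}$ updates matching the paper exactly.

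The only point worth noting is that your ``main obstacle'' is not one: the paper does not verify the invertibility of $\kappa\mapsto\mu^+-\mu^-$ through a first-variation computation involving $\tilde\alpha$, because the coupling is built in \emph{by definition}. Since $\varphi_{nl}^{\pm}$ is constructed so that $\varphi_{nl}^{\pm}-\varphi-\underline v-\bar{\underline v}^{\pm}\mp\kappa V$ is small, the $\kappa$-elevation contributes exactly $\pm\kappa\Lcal_{\Sphshrtres}V=\pm\kappa W$ to the obstruction content on the two sheets, so the antisymmetric mismatch is $\kappa/\tau$ up to lower order. Consequently the paper simply takes $c_0=\tau$ (and also includes the harmless term $\bar\mu_{sum,\asym}$, which vanishes at the fixed point anyway since $\bar\mu_{sum}^{\pm}=0$ there). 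With this identification your $\Jcal$ coincides with the paper's, and no separate calibration argument is needed.
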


	    \begin{proof}
	    	We first define $B\subset C^{2,\beta}(M[0])\times \Pcal\times\R$ by (recall \ref{eq:zetashr})
	    	\begin{equation*}
	    		B:=\{v\in C^{2,\beta}(M[0]):\norm{v}_{2,\beta,\gamma,\gamma';M[0]}\leq m^{-3-2\alpha+\gamma(1+\alpha)}\}\times\BPcal\times [-\radius,\radius].
	    	\end{equation*}
	    	
	    	With a family of diffeomorphism $\Fcal_{\zetaboldu}:M[0]\to M[\zetaboldu]$ as in \cite[Lemma 5.5]{gLD}, we define $\Jcal:B\to C^{2,\beta}(M[0])\times \Pcal\times\R$ as follows.
	    	Let $(v,\zetaboldu)\in B$, define $(u,w_H^+,w_H^-,\bar{w}_H^+,\bar{w}_H^-):=-\mathcal{R}_{M[\zetabold]}(H-J_M(w^++\bar{w}^+,w^-+\bar{w}^-))$ by \ref{prop:linearshr}, where 
	    	$w^{\pm}:=\Lcal \underline{v}\pm\kappa V$ and $\bar{w}^{\pm}:=\Lcal \underline{\bar{v}}^{\pm}$ (recall \ref{def:varphishr}). 
Define $\textup{ф}:=v\circ \mathcal{F}_{\zeta,\bar{\zeta}}^{-1}+u$ 
	    	and define $\mu_H^{\pm}, \bar{\mu}_H^{\pm}$ such that $-\tau\mu_H^{\pm}W=w_H^{\pm}$, $-\tau\bar{\mu}_H^{\pm}W=\bar{w}_H^{\pm}$.
	    	By \ref{prop:linearshr} again we define $(u_Q,w_Q^+,w_Q^-,\bar{w}_Q^+,\bar{w}_Q^-):=-\mathcal{R}_{M[\zetaboldu]}(H_{\textup{ф}}-H-\mathcal{L}_M\textup{ф})$, 
	    	and define $\mu_Q^{\pm}, \bar{\mu}_Q^{\pm}$ such that $-\tau\mu_Q^{\pm}W=w_Q^{\pm}$, $-\tau\bar{\mu}_Q^{\pm}W=\bar{w}_Q^{\pm}$.
	    	Combining the definitions we have
	    	\begin{equation*}
	    		\Lcal_M(u_Q-v\circ\Fcal_{\zetabold}^{-1})+H_{\textup{ф}}=-J_M(\tau(\mu_{sum}^+ W+ \bar{\mu}_{sum}^+ \bar{W})+\kappa,\tau( \mu_{sum}^- W+ \bar{\mu}_{sum}^- \bar{W})-\kappa),
	    	\end{equation*}
	    	where $\mu_{sum}^{\pm}:=\mu+\mu_H^{\pm}+\mu_Q^{\pm}$, $\bar{\mu}_{sum}^{\pm}:=\bar{\mu}^{\pm}+\bar{\mu}_H^{\pm}+\bar{\mu}_Q^{\pm}$. 
	    	
	        Now we define $\mathcal{J}$ by
	    	\begin{multline*}
	    		\mathcal{J}(v,\zetaboldu):=
	    		\\
	    		\Big(u_Q\circ\Fcal_{\zetaboldu},\zeta-\frac{\mu_{sum,\sym}+\bar{\mu}_{sum,\sym}}{\phi_1},
	    		\bar{\zeta}^++\frac{\bar{\mu}_{sum}^+}{\phi_1},\bar{\zeta}^-+\frac{\bar{\mu}_{sum}^-}{\phi_1},\kappa-\tau(\mu_{sum,\asym}+\bar{\mu}_{sum,\asym})\Big),
	    	\end{multline*}
	    	where $\mu_{sum}^{\sym}:=(\mu_{sum}^++\mu_{sum}^-)/2$, $\mu_{sum}^{\asym}:=(\mu_{sum}^+-\mu_{sum}^-)/2$, $\bar{\mu}_{sum}^{\sym}:=(\bar{\mu}_{sum}^++\bar{\mu}_{sum}^-)/2$, $\bar{\mu}_{sum}^{\asym}:=(\bar{\mu}_{sum}^+-\bar{\mu}_{sum}^-)/2$. 
	    	
	    	As in the proof of \ref{thm}, $B$ is convex and the embedding $B\hookrightarrow C^{2,\beta'}(M[0])\times \Pcal\times\R$ is compact for $\beta'\in(0,\beta)$. Moreover, $\Jcal$ maps $B$ to itself. The results then follows by Schauder's fixed point theorem as before.
	    \end{proof}
	

	     \bibliographystyle{plain}
	     \bibliography{paper}
\end{document}